\def\calign@preamble{%
   &\hfil\strut@
    \setboxz@h{\@lign$\m@th\displaystyle{##}$}%
    \ifmeasuring@\savefieldlength@\fi
    \set@field
    \hfil
    \tabskip\alignsep@
}
\let\cmeasure@\measure@
\patchcmd\cmeasure@{\divide\@tempcntb\tw@}{}{}{}
\patchcmd\cmeasure@{\divide\@tempcntb\tw@}{}{}{}
\patchcmd\cmeasure@{\ifodd\maxfields@
  \global\advance\maxfields@\@ne
  \fi}{}{}{}    
\newenvironment{calign}
{%
  \let\align@preamble\calign@preamble
  \let\measure@\cmeasure@
  \align
}
{%
  \endalign
}  
\newenvironment{tz}[1][]{\begin{tikzpicture}[baseline={([yshift=-.8ex]current bounding box.center)},#1]}{\end{tikzpicture}}
\newenvironment{tztiny}[1][]{\begin{aligned}\begin{tikzpicture}[#1] \tiny}{\end{tikzpicture}\end{aligned}}
\newcommand\redscalar[1]{{\color{red} #1}}
\newlength\xsep
\newlength\ysep
\newlength\xdelta
\definecolor{GluingColor}{rgb}{0.0,0.5,0.75}       
\def\inv{{\hspace{0.5pt}\text{-} \hspace{-0.5pt}1}}
\theoremstyle{plain}
\newtheorem{theorem}{Theorem}[section]
\newaliascnt{lemma}{theorem}  
\newtheorem{lemma}[lemma]{Lemma}
\newaliascnt{proposition}{theorem}  
\newtheorem{proposition}[proposition]{Proposition}
\newaliascnt{corollary}{theorem}  
\newtheorem{corollary}[corollary]{Corollary}
\newaliascnt{conjecture}{theorem}  
  \newtheorem{introthm}{Theorem}
\theoremstyle{definition}
\newaliascnt{defn}{theorem}  
\newtheorem{defn}[defn]{Definition}
\newaliascnt{remark}{theorem}  
\newtheorem{remark}[remark]{Remark}
\newaliascnt{example}{theorem}  
\newtheorem{example}[example]{Example}
\newcommand\Autoref[1]{\@first@ref#1,@}
\def\@throw@dot#1.#2@{#1}
\def\@set@refname#1{
    \edef\@tmp{\getrefbykeydefault{#1}{anchor}{}}%
    \def\@refname{\@nameuse{\expandafter\@throw@dot\@tmp.@autorefname}s}%
}
\def\@first@ref#1,#2{%
  \ifx#2@\autoref{#1}\let\@nextref\@gobble
  \else%
    \@set@refname{#1}
    \@refname~\ref{#1}
    \let\@nextref\@next@ref
  \fi%
  \@nextref#2%
}
\def\@next@ref#1,#2{%
   \ifx#2@ and~\ref{#1}\let\@nextref\@gobble
   \else, \ref{#1}
   \fi%
   \@nextref#2%
}
\newcounter{brucedefncountera}
\newcounter{brucedefncounterb}
\newcommand\relref[1]{\ensuremath{\text{(\hyperref[#1]{#1})}}{}}
\newcounter{Hequation}
\g@addto@macro\equation{\stepcounter{Hequation}}
\newcommand\ignore[1]{}
\renewcommand{\_}[0]{\nobreakdash--\hspace{0pt}}
\newcommand\red{\color{red}}
\newcommand\isd{\ensuremath{\smash{\widetilde Z}}\xspace}
\newcommand\uR{\mathrm{R}}
\renewcommand\d{\mathrm{d}}
\newcommand\fixboundingbox{\path [use as bounding box, draw=none] (current bounding box.north west) rectangle (current bounding box.south east);}
\newcommand\selectpart[2][\selectcolour]{\fixboundingbox\begin{pgfonlayer}{selectionbox}\node [draw, fit=#2, inner sep=0.8*\cobordismlinewidth, #1, line width=\cobordismlinewidth] {};\end{pgfonlayer}}
\newcommand*\arrowoffset{0.4pt}
\pgfpoint{\arrowoffset}{0.25\pgfutil@tempdima}}
\pgfpoint{\arrowoffset}{-0.25\pgfutil@tempdima}}
\def\proarrow{\relbar\joinrel\mapstochar\joinrel\rightarrow}
\tikzset{nomorepostaction/.code={\let\tikz@postactions\pgfutil@empty}}
\tikzset{double arrow scope/.style={\ignore{every node/.style={font=\scriptsize},}every path/.style={
        double, -new double arrowhead}}}
\newcommand\doubleto[1][0.5]{\mathbin{\ensuremath{\begin{tikzpicture}
   \node (A) at (0,0) [inner xsep=1.2pt] {};
   \node (B) at (#1,0) [inner xsep=1pt] {};
   \draw [double, -new double arrowhead] (A) to (B);
\end{tikzpicture}}}}
\renewcommand\to[0]{\mathbin{\ensuremath{\begin{tikzpicture}
   \node (A) at (0,0) [inner xsep=1.2pt] {};
   \node (B) at (0.5,0) [inner xsep=1pt] {};
   \draw [-new single arrowhead] (A) to (B);
\end{tikzpicture}}}}
\newcommand\xdoubleto[1]{\mathbin{\begin{tikzpicture}[baseline={([yshift=-2pt]
current bounding box.south)}]\node (A) at (0,0) [inner xsep=0pt, inner ysep=2pt, minimum width=0.2cm, font=\scriptsize] {\ensuremath{\smash{#1\strut}}};\draw [double,-new double arrowhead] ([xshift=-2.5pt] A.south west) to ([xshift=3pt] A.south east);\end{tikzpicture}}}
\newcommand\longxdoubleto[2][0.5cm]{\mathbin{\begin{tikzpicture}[baseline={([yshift=1pt]current bounding box.south)}]\node (A) at (0,0) [inner xsep=0pt, inner ysep=2pt, minimum width=#1] {\ensuremath{#2}};\path [use as bounding box] ([xshift=-2.5pt, yshift=-3pt] A.south west) rectangle ([xshift=2.5pt] A.north east);\draw [double,-new double arrowhead] ([xshift=-2.5pt] A.south west) to ([xshift=3pt] A.south east);
\end{tikzpicture}}}
\newcommand\xleftdoubleto[1]{\mathbin{\begin{tikzpicture}[baseline={([yshift=-3pt]
current bounding box.south)}]
    \node (A) at (0,0) [inner xsep=0pt, inner ysep=-1pt, minimum width=0.2cm] {\ensuremath{\scriptstyle #1 \strut}};
    \draw [double,new double arrowhead-]
        ([xshift=-2.5pt] A.south west)
        to ([xshift=3pt] A.south east);
\end{tikzpicture}}}
\newcommand\xto[1]{\mathbin{\begin{tikzpicture}[baseline={([yshift=-3pt]
current bounding box.south)}]
    \node (A) at (0,0) [inner xsep=0pt, inner ysep=2pt, minimum width=0.2cm] {$\scriptstyle #1$};
    \draw [-new single arrowhead]
        ([xshift=-2.5pt] A.south west)
        to ([xshift=3pt] A.south east);
\end{tikzpicture}}}
\newcommand\xmapsto[1]{\mathbin{\begin{tikzpicture}[baseline={([yshift=-3pt]
current bounding box.south)}]
    \node (A) at (0,0) [inner xsep=0pt, inner ysep=2pt, minimum width=0.2cm] {$#1$};
    \draw [|-new single arrowhead]
        ([xshift=-2.5pt] A.south west)
        to ([xshift=3pt] A.south east);
\end{tikzpicture}}}
\newcommand\longxarrow[2]{\mathbin{\begin{tikzpicture}[baseline={([yshift=-2pt]
current bounding box.south)}]
   \node (1) at (0,0) [inner sep=1pt] {};
   \node (2) at (#2,0) [inner sep=1pt] {};
   \draw [-new single arrowhead] (1) to node [above, inner xsep=0pt, inner ysep=2pt]
{\ensuremath{\scriptstyle #1}} (2);
\end{tikzpicture}}}
\newcommand\longxmapsto[2]{\mathbin{\begin{tikzpicture}[baseline={([yshift=-2pt]
current bounding box.south)}]
   \node (1) at (0,0) [inner sep=1pt] {};
   \node (2) at (#2,0) [inner sep=1pt] {};
   \draw [|-new single arrowhead] (1) to node [above, inner xsep=0pt, inner ysep=2pt]
{\ensuremath{\scriptstyle #1}} (2);
\end{tikzpicture}}}
\renewcommand\xrightarrow[1]{\xto{#1}}
\newcommand\doublearrowspace{15pt}
\newcommand\rightleftdoublearrow[2]{\mathbin{\begin{array}{@{}c@{}}#1\\[-6pt]\,\xdoubleto{\hspace{\doublearrowspace}}\\[-10pt]\xleftdoubleto{\hspace{\doublearrowspace}}\,\\[-6pt]#2\end{array}}}
\newcommand{\cat}[1]{\ensuremath{\mathrm{#1}}}
\newcommand{\bicat}[1]{\ensuremath{\mathbf{#1}}}
\newcommand{\id}{\ensuremath{\mathrm{id}}}
\renewcommand{\dag}{\ensuremath{\dagger}}
\newcommand\Hom{\mathrm{Hom}}
\renewcommand\hom{\Hom}
\newcommand\fd{\mathrm{fd}}
\newcommand\fgp{\mathrm{fg,proj}}
\newcommand\Ob{\ensuremath{\mathrm{Ob}}}
\def\phiN{\ensuremath{\phi_{\mathrm{\scriptscriptstyle N}}}}
\def\phiM{\ensuremath{\phi_{\reflectbox{$\mathrm{\scriptscriptstyle N}$}}}}
\def\phiN{\ensuremath{\phi_1}}
\def\phiM{\ensuremath{\phi_2}}
\newcommand\II{\ensuremath{\mathrm{II}}}
\newcommand\III{\ensuremath{\mathrm{III}}}
\newcommand\op{\ensuremath{\mathrm{op}}}
\newcommand\Tau{\ensuremath{\mathrm{T}}}
\DeclareMathOperator{\Fun}{Fun}
\newcommand\set{\cat{Set}}
\newcommand\shrunkenf{\hspace{-1pt}f\hspace{-1pt}}
\newcommand\vect{\ensuremath{\cat{Vect}_k}}
\newcommand\alg{\ensuremath{\bicat{Alg}_k}}
\newcommand\ALG{\ensuremath{\bicat{ALG}_k}\xspace}
\newcommand\bimod{{\ensuremath{\bicat{Bimod}_k}}\xspace}
\newcommand\lincat{\ensuremath{\bicat{LinCat}_k}}
\newcommand\Rep{\mathrm{Rep}}
\newcommand\kvtwovect{\ensuremath{\bicat{K \hspace{-0.7pt} V \hspace{-0.7pt} 2 \hspace{-1pt} Vect}_k}\xspace}
\newcommand\twovect{{\ensuremath{\bicat{2 \hspace{-1pt} Vect}_k}}\xspace}
\newcommand\comp{\ensuremath{\mathrm{compose}}}
\def\L{\ensuremath{\mathrm{L}}}
\def\vect{\ensuremath{\cat{Vect}_k}}
\def\perspectivecircle{{\scalecobordisms{0.5}\raisebox{-3pt}{\ensuremath{\tikz{\node [Cobordism Top End 3D] at (0,0) {};}}}}}
\DeclareMathOperator{\Tr}{Tr}
\definecolor{CSPcolor}{rgb}{0.0,0.5,0.75}       
\definecolor{BBcolor}{rgb}{0.5,0.0,0.5}   
\definecolor{CDcolor}{rgb}{0.8,0.0,0.2}   
\definecolor{JVcolor}{rgb}{0.0,0.6,0.0}   
\def\NN{\mathbb N}
\def\RR{\mathbb R}
\def\ZZ{\mathbb Z}
\DeclareMathOperator{\End}{End}
\newcommand{\Bord}{\ensuremath{\bicat{Bord}^{\mathrm{or}}_{1,2,3}}\xspace}
\newcommand\BordMF{\ensuremath{\bicat{Bord}^{\mathrm{csig}}_{1,2,\sim}}\xspace}
\newcommand{\BordS}{\ensuremath{\smash{\bicat{Bord}^{\smash{\S}}_{1,2,3}}}\xspace}
\newcommand{\Bordsig}{\ensuremath{\smash{\bicat{Bord}^{\mathrm{sig}}_{1,2,3}}}\xspace}
\newcommand{\Bordcsig}{\ensuremath{\smash{\bicat{Bord}^{\mathrm{csig}}_{1,2,3}}}\xspace}
\newcommand{\Bordp}{\ensuremath{\smash{\bicat{Bord}^{\text{$p$}_1}_{1,2,3}}}\xspace}
\def\B{\ensuremath{\mathcal B}\xspace}
\def\F{\ensuremath{\mathcal F}\xspace}
\def\G{\ensuremath{\mathcal G}\xspace}
\def\M{\ensuremath{\mathcal M}\xspace}
\def\N{\ensuremath{\mathcal N}\xspace}
\def\O{\ensuremath{\mathcal O}\xspace}
\def\P{\ensuremath{\mathcal R}\xspace}
\def\Q{\ensuremath{\mathcal Q}\xspace}
\def\R{\ensuremath{\mathcal R}\xspace}
\def\S{\ensuremath{\mathcal S}\xspace}
\newcommand{\Vect}{\mathrm{Vect}}
\newcommand\qs{\mathrm{qs}}
\newcommand{\interchangor}{\varphi}
\newcommand{\yonsphere}{y'}
 \tikzset{box style/.style={draw,minimum width=0.5\cobwidth,minimum height=0.2\cobwidth,inner sep=1pt,black,fill=white, line width=\cobordismlinewidth}}
\tikzset{tempstyle/.style={draw, red, inner sep=0pt, line width=\cobordismlinewidth, font=\tiny, minimum width=13pt, minimum height=7pt, fill=white}}
\tikzset{dotted guideline/.style={green}}
\tikzset{helpingbox/.style={green}}
\tikzset{singularity/.style={red, thick}}
\tikzset{singularity color/.style={red, thick}}
\tikzset{little dot/.style={inner sep=0pt, shape=circle, blue, minimum size=0.13cm, fill=blue}}
\tikzset{comment_bb/.style={rectangle,minimum size=6mm,rounded corners=3mm,          very thick,draw=black!50, top color=white, bottom color=blue!20, execute at begin node={\begin{varwidth}{32em}}, execute at end node={\end{varwidth}}}}
\tikzset{comment_jv/.style={rectangle,minimum size=6mm,rounded corners=3mm,          very thick,draw=black!50, top color=white, bottom color=green!20, execute at begin node={\begin{varwidth}{32em}}, execute at end node={\end{varwidth}}}}
\tikzset{comment_csp/.style={rectangle,minimum size=6mm,rounded corners=3mm,          very thick,draw=black!50, top color=white, bottom color=yellow!20, execute at begin node={\begin{varwidth}{32em}}, execute at end node={\end{varwidth}}}}
\tikzset{comment_cd/.style={rectangle,minimum size=6mm,rounded corners=3mm,          very thick,draw=black!50, top color=white, bottom color=red!20, execute at begin node={\begin{varwidth}{32em}}, execute at end node={\end{varwidth}}}}
\definecolor{CSPcolor}{rgb}{0.0,0.5,0.75}       
\definecolor{CDcolor}{rgb}{0.8,0.0,0.2}         
\newenvironment{customthm}[1]
  {\innercustomthm\em}
  {\endinnercustomthm}
\newenvironment{customprop}[1]
  {\innercustomprop\em}
  {\endinnercustomprop}
\def\minus{\ensuremath{\text{-}}}
\def\microbordisms{\scalecobordisms{0.35}\setlength\obscurewidth{0pt}}
\def\smallbordisms{\scalecobordisms{0.5}\setlength\obscurewidth{0pt}}
\def\normalbordisms{\scalecobordisms{1}\setlength\obscurewidth{4pt}\def\toff{0.2cm}\def\boff{0.3cm}}
\def\mediumbordisms{\scalecobordisms{0.8}\setlength\obscurewidth{4pt}\def\toff{0.2cm}\def\boff{0.3cm}}
\def\gap{\hspace{5pt}}
\newcommand*\xcobpos[2]{#1*\the\cobwidth,#2*\the\cobheight}
\tikzset{knot/clip radius=\obscurewidth}
\tikzset{knot/clip width=0.1*\obscurewidth, knot/end tolerance=2pt}
\tikzset{xmorphismlabel/.style={draw=red, thin, circle, inner sep=-100pt, minimum width=5pt, fill=white, font=\tiny}}
\tikzset{morphlabel/.style={draw=black, thin, rectangle, minimum width=7pt, fill=white, font=\scriptsize}}
\tikzset{morphismlabel/.style={draw=red, thin, circle, inner sep=-1pt, minimum width=7pt, fill=white}}
\tikzset{above strand label/.style={yshift=0.4\cobwidth, anchor=mid, font=\footnotesize}}
\tikzset{below strand label/.style={yshift=-0.4\cobwidth, anchor=mid, font=\footnotesize}}
\def\myred{red}
\def\myblue{blue}
\def\mygreen{green}
\def\mypurple{purple}
\def\mybrown{brown}
\def\myblack{black}
\def\colourA{\myred}
\def\colourB{\myblue}
\def\colourC{\mygreen}
\def\colourD{\mypurple}
\def\colourE{\mybrown}
\def\colourF{\myblack}
\tikzset{style 1/.style={draw=white, double distance=\cobordismlinewidth, line width=0.5\obscurewidth, double=\colourA}}
\tikzset{style 2/.style={draw=white, double distance=\cobordismlinewidth, line width=0.5\obscurewidth, double=\colourB}}
\tikzset{style 3/.style={draw=white, double distance=\cobordismlinewidth, line width=0.5\obscurewidth, double=\colourC}}
\tikzset{style 4/.style={draw=white, double distance=\cobordismlinewidth, line width=0.5\obscurewidth, double=\colourD}}
\tikzset{style 5/.style={draw=white, double distance=\cobordismlinewidth, line width=0.5\obscurewidth, double=\colourE}}
\tikzset{red strand/.style={draw=white, double distance=\cobordismlinewidth, line width=0.5\obscurewidth, double=\colourA}}
\tikzset{red strand blue back/.style={draw=\outermorphismcolor, double distance=\cobordismlinewidth, line width=0.5\obscurewidth, double=\colourA}}
\tikzset{red strand no back/.style={draw=\colourA}}
\tikzset{blue strand/.style={draw=white, double distance=\cobordismlinewidth, line width=0.5\obscurewidth, double=\colourB}}
\tikzset{green strand/.style={draw=white, double distance=\cobordismlinewidth, line width=0.5\obscurewidth, double=\colourC}}
\tikzset{purple strand/.style={draw=white, double distance=\cobordismlinewidth, line width=0.5\obscurewidth, double=\colourD}}
\tikzset{brown strand/.style={draw=white, double distance=\cobordismlinewidth, line width=0.5\obscurewidth, double=\colourE}}
\tikzset{black strand/.style={draw=white, double distance=\cobordismlinewidth, line width=0.5\obscurewidth, double=\colourF}}
\tikzset{invisible strand/.style={draw=none}}
\newcommand\templabel[1]{}
\setlist[itemize,1]{leftmargin=\dimexpr 26pt-.1in}
\renewcommand\cat[1]{\ensuremath{\mathit{#1}}}
\renewcommand\P{\ensuremath{\mathcal P}}
\newcommand\Rib{\ensuremath{\mathcal R}}
\tikzset{bot=true}
\begin{document}

\normalem 

\title[Modular categories as representations of the 3d bordism 2-category]{Modular categories as representations of the 3-dimensional bordism 2-category}

\author[B. Bartlett]{Bruce Bartlett}
\address{Department of Mathematics, University of Stellenbosch}
\curraddr{Mathematical Institute, University of Oxford}
\email{brucehbartlett@gmail.com}

\author[C. L. Douglas]{Christopher L. Douglas}
\address{Mathematical Institute, University of Oxford}
\email{cdouglas@maths.ox.ac.uk}

\author[C. Schommer-Pries]{Christopher J. Schommer-Pries}
\address{Max Planck Institute for Mathematics, Bonn}
\email{schommerpries.chris@gmail.com}

\author[J. Vicary]{Jamie Vicary}
\address{Department of Computer Science, University of Oxford}
\email{jamie.vicary@cs.ox.ac.uk}

\begin{abstract}
We show that once-extended anomalous 3\-dimensional topological quantum field theories valued in the 2\-category of $k$-linear categories are in canonical bijection with modular tensor categories equipped with a square root of the global dimension in each factor.
\end{abstract}

\maketitle

\setcounter{tocdepth}{1}
\tableofcontents

\section{Introduction}

\subsection{Modular tensor categories and TQFTs}

The 3\-dimensional topological quantum field theories (TQFTs) developed in the late 1980s and the 1990s launched the new field of Quantum Topology, connecting such diverse subjects as the representation theory of quantum groups, the representation theory of loop groups, von Neumann algebras, and link invariants.  At the center of this development was the construction by Reshetikhin and Turaev \cite{RT91} of a 3\-dimensional TQFT from a modular tensor category.

An \textit{extended 3\-dimensional topological quantum field theory} is a symmetric monoidal functor
$$Z : \BordS \to \twovect.$$
Here $\BordS$ is a symmetric monoidal 2\-category of closed 1\-dimensional manifolds, 2\-dimensional bordisms with boundary, and diffeomorphism classes of 3\-dimensional bordisms with boundaries and corners, equipped with a geometrical structure $\S$; we consider the cases where this structure is an orientation, signature\footnote{A bordism has \textit{signature structure} when it is equipped with a bounding manifold.}, componentwise signature\footnote{A bordism has \textit{componentwise signature structure} when each connected component is equipped separately with a bounding manifold.}, or $p_1$\-structure\footnote{A bordism has \textit{$p_1$-structure} when it is equipped with a trivialization of the first Pontryagin class.}. We say that $Z$ is a \textit{linear representation} of $\BordS$. The target $\twovect$ is the symmetric monoidal 2\-category of Cauchy-complete $k$-linear categories\footnote{A category enriched in $k$-vector spaces is \emph{Cauchy-complete} if it is additive and idempotent complete.}, functors, and natural transformations, with the Deligne tensor product. Here and throughout $k$ is an algebraically closed field, and all categorical structures are fully weak.

In this paper we classify linear representations of these structured bordism categories, as follows.\footnote{We fix some notation, much of which is standard~\cite{bk01-ltc}.  We do not require the unit object $I$ of an MTC to be simple; let $I \cong \oplus s$ be a decomposition of the unit into simple objects, which we refer to as \textit{factors} (these are necessarily pairwise non-isomorphic). We say an object $T$ is \textit{preserved} by a factor $s$ if  $s \otimes T \cong T$, and we write $[s]$ for a maximal set of non-isomorphic simple objects preserved by $s$. For each factor $s$ the \textit{Gauss sums} $p_s^+,p_s^- \in k$ are defined by $p_s^+ = \sum_{i \in [s]} \theta _i ^{} d_i ^2$ and $\smash{p_s^- = \sum _{i \in [s]} \theta _i ^\inv d_i ^2}$, where $\theta$ is the twist of the MTC, and $d_i$ is the quantum dimension of the simple object $i \in [s]$.  The ratio $\smash{p _s^+/p_s ^-}$ is the \emph{anomaly} of the factor, and the product $p _s^+ p _s^-$ is the \emph{global dimension} of the factor.}
\begin{introthm}
\label{thm:introcsig}
Linear representations of the componentwise-signature bordism 2\-category $\Bordcsig$ are classified by modular tensor categories (MTCs) equipped with a square root of the anomaly of each factor.
\end{introthm}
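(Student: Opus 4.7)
The plan is to present $\Bordcsig$ as a symmetric monoidal 2-category via generators and relations, then check that the resulting data for a symmetric monoidal functor into $\twovect$ is exactly a modular tensor category equipped with a square root of the anomaly in each factor.

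First, I would establish a tractable presentation of $\Bordcsig$. A natural set of generating 1-morphisms consists of the elementary surface pieces (pair of pants, copants, cup, cap) together with the symmetry; generating 2-morphisms come from handle attachments for 3-bordisms and Morse- and Cerf-theoretic moves (cup-cap cancellations, handle slides, birth/death, etc.). The componentwise signature is carried by recording, on each connected component, a bounding 4-manifold; changes under handle moves are tracked by Rokhlin-style $\mu$-invariants. This presentation step presumably draws on the companion paper referenced as \texttt{paperII}.

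Second, I would decode what a symmetric monoidal functor $Z: \Bordcsig \to \twovect$ assigns to these generators. The value $\mathcal{C} := Z(S^1)$ is a Cauchy-complete $k$-linear category. The pair of pants equips $\mathcal{C}$ with a tensor product, the cap supplies a unit, the braiding cylinder gives a braiding, and the twist cylinder gives the ribbon twist $\theta$. The generating 2-morphism relations then force associativity together with the pentagon and hexagon axioms, rigidity, and the compatibility of the twist with duality and braiding, yielding a ribbon category structure on $\mathcal{C}$. Semisimplicity and finiteness are forced by the handle-cancellation relation, which expresses the identity of $Z(S^1)$ as a finite biproduct of identities on simple objects; hence $\mathcal{C}$ is a ribbon fusion category, which need not have a simple unit but decomposes over its factors.

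Third, I would extract modularity and the square root of the anomaly. Non-degeneracy of the $S$-matrix emerges from the bordism relation built from surgery on the Hopf link in $S^3$: a transparent nontrivial simple object would obstruct the relation, forcing the modularity axiom. The componentwise signature introduces, on each connected component of each 3-bordism, a scalar correction governed by the signature of the bounding 4-manifold; matching these corrections against Turaev's formulas $p_s^+ p_s^- = \dim(\mathcal{C}_s)$ and $p_s^+/p_s^-$ on each factor $s$ shows that the remaining freedom after specifying the MTC structure is exactly a choice of a square root of $p_s^+/p_s^-$ in each factor. Conversely, given an MTC with such square roots, the Reshetikhin-Turaev construction assembles an honest (non-projective) functor out of $\Bordcsig$, with the chosen square roots providing the phases that make the otherwise projective action of the mapping class groups coherent.

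The main obstacle is the first step: producing a complete, manageable generators-and-relations presentation of $\Bordcsig$ in which the componentwise signature decoration is bookkept through every 2-morphism relation. Once that presentation is in hand, the translation into MTC data is a detailed but essentially mechanical exercise; the role of the componentwise (rather than global) signature is precisely to localize the anomaly to each factor, which is what produces the per-factor square root in the classification.
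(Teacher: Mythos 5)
Your high-level strategy (present $\Bordcsig$ by generators and relations, then decode a functor into $\twovect$) is the same as the paper's, but there is a genuine gap at the step you dismiss as mechanical: \emph{rigidity} of the circle category. You assert that ``the generating 2-morphism relations then force \ldots rigidity,'' but this is exactly the point where Bakalov--Kirillov and Kerler--Lyubashenko got stuck — from the surface-level data one only obtains \emph{weak} rigidity, which is strictly weaker than rigidity for semisimple tensor categories. The paper's proof of rigidity (Proposition~\ref{prop:Frob_to_rigid}) is not a formal consequence of the relations: it transports the representation into $\bimod$ via the internal string diagram construction, uses the invertibility of the Frobeniusator $\phiM$ together with the elementary-form lemma for the genus-zero ``snake'' surface~\eqref{eq:snakesurface} to produce candidate evaluation and coevaluation morphisms $f,g$, and then splits the resulting idempotent using Cauchy-completeness of the target to manufacture the dual object $A^*$. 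Without an argument of this kind your decoding step does not produce a ribbon category at all, and the rest of the classification collapses.

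A second, smaller gap concerns semisimplicity: there is no ``handle-cancellation relation expressing the identity of $Z(S^1)$ as a finite biproduct of identities on simple objects'' in the presentation. The paper instead derives semisimplicity from dualizability being hereditary: the generating object and 1-morphisms of $\bicat F(\M)$ all admit (ambidextrous) adjoints, so every representation factors through the fully dualizable sub-2-category of $\twovect$, which is shown in the Appendix to be $\kvtwovect$, i.e.\ the finite semisimple linear categories. Your account of modularity and of the per-factor square root is in the right spirit (the paper realizes the $S$-matrix as the action of the invertible composite $\III$ on the torus, and pins down the undetermined scalar $p$ via $p^2=p^+p^-$), but to get an actual \emph{bijection} of equivalence classes you also need the converse construction $Z_C^p$ with an explicit verification of all relations, and a proof that a balanced braided equivalence of MTCs lifts to an equivalence of modular structures — the latter uses the non-trivial equivalence of cores $\overline{\Rep_{\bicat C}(\G^\L)}\simeq\overline{\Rep_{\bicat C^\L}(\G)}$ rather than a bare appeal to Reshetikhin--Turaev.
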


\begin{introthm}
\label{thm:introoriented}
Linear representations of the oriented bordism 2\-category $\Bord$ are classified by MTCs whose anomaly is 1 in each factor.
\end{introthm}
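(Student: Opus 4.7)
The plan is to derive Theorem \ref{thm:introoriented} from Theorem \ref{thm:introcsig} by comparing the oriented and componentwise-signature bordism 2\-categories. There is a forgetful symmetric monoidal 2\-functor $F : \Bordcsig \to \Bord$ that discards the choice of bounding 4\-manifold on each component of a 3\-bordism, and precomposition with $F$ sends linear representations of $\Bord$ to linear representations of $\Bordcsig$. My first task is to show this pullback $F^*$ is an injection on (isomorphism classes of) TQFTs. Since every oriented bordism admits some componentwise signature structure, $F$ is essentially surjective at every level, so $F^*$ is faithful; injectivity on isomorphism classes then reduces to showing that any isomorphism of the pullbacks descends, which holds for the same reason.

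Next I will identify the image of $F^*$ as those csig TQFTs that are invariant under change of signature structure. Any two componentwise signature structures on a fixed oriented 3\-bordism differ by gluing in a closed 4\-manifold $W$, and invariance amounts to $Z$ being unchanged by this modification. By Theorem \ref{thm:introcsig} a csig TQFT corresponds to a pair $(\M, \{\beta_s\}_s)$ where $\beta_s \in k$ satisfies $\beta_s^2 = p_s^+/p_s^-$. From the construction underlying Theorem \ref{thm:introcsig}, modifying the bounding manifold of a component whose factor is $s$ by a closed 4\-manifold of signature $\sigma$ multiplies the TQFT value on that component by $\beta_s^{\sigma}$. Demanding triviality for all $\sigma \in \ZZ$ forces $\beta_s = 1$ for every factor, which is equivalent to the anomaly being $1$ in each factor together with the canonical $+1$ choice of square root.

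Conversely, when the anomaly is $1$ in each factor, choosing all square roots to be $+1$ yields a csig TQFT via Theorem \ref{thm:introcsig} that is manifestly constant on the fibers of $F$, and so descends uniquely to an oriented TQFT. Combining the two directions gives the bijection between oriented TQFTs and MTCs with anomaly $1$ in each factor. The main technical obstacle is the precise bookkeeping of the signature-change factor $\beta_s^{\sigma}$: extracting it requires decomposing the change-of-bounding-manifold into elementary generators and relations of $\Bordcsig$ from the presentation used to prove Theorem \ref{thm:introcsig}, and tracking how the factor-wise square-root data enters the value of $Z$ on each such generator, in particular on the generators that encode the signature defect. Once this formula is pinned down the descent criterion $\beta_s = 1$ is immediate, and the theorem follows.
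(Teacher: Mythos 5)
Your route is genuinely different from the paper's, and as written it has two real gaps, both sitting exactly where the mathematical content of the theorem lives. The paper does not compare $\Bordcsig$ and $\Bord$ geometrically at all: it uses the fact (from the presentation results) that $\Bord \simeq \bicat F(\O)$, where $\O$ is the modular presentation $\M$ with one additional \emph{relation}, the anomaly-freeness relation \eqref{AF}. A linear representation of $\O$ is therefore literally a modular structure satisfying one extra equation, and the whole proof is a two-line internal-string-diagram computation showing that the composite in \eqref{AF} acts as $p^+/p$ times the identity, so the relation holds iff $p = p^+$, which together with $p^2 = p^+p^-$ (Lemma \ref{lem:psquared}) forces $p^+ = p^-$ and kills the square-root freedom. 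Your first gap is that you defer precisely the analogue of this computation: the assertion that changing the bounding $4$\-manifold by a closed $W$ of signature $\sigma$ rescales the value by $\beta_s^{\sigma}$ is the entire content of the argument, and you flag it as "the main technical obstacle" without supplying it. Establishing it would require decomposing the change of signature structure into generators of the presentation and running the same internal-string-diagram computation the paper runs — at which point you have reproduced the paper's proof inside your own.

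The second gap is the descent step. From "the csig TQFT is constant on the fibers of $F$" it does not follow that it descends to a symmetric monoidal $2$\-functor out of $\Bord$: a functor that is merely surjective on cells is not automatically a $2$\-categorical quotient, and constancy on fibers of the underlying maps of cells does not by itself produce the coherence data of a functor out of the target. The clean way to get descent is to exhibit $\Bord$ as obtained from $\Bordcsig$ by imposing explicit relations — which is exactly what the equivalences $\Bordcsig \simeq \bicat F(\M)$ and $\Bord \simeq \bicat F(\O)$ provide, and which your argument avoids using. Relatedly, your injectivity/essential-surjectivity claims for $F$ rest on the geometric fact that every oriented $3$\-bordism with corners admits a componentwise signature structure; this is true but is never needed in the paper's presentation-based proof and would need a separate justification in yours. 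In short: the strategy could likely be made rigorous, but only by importing the presentation machinery and the $p = p^+$ computation that constitute the paper's actual proof.
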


\begin{introthm}
\label{thm:introsig}
Linear representations of the signature bordism 2\-category $\Bordsig$ are classified by MTCs such that the anomaly is the same in each factor, equipped with a single choice of square root of this anomaly.
\end{introthm}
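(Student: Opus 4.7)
The plan is to deduce the theorem from Theorem~\ref{thm:introcsig} by comparing the signature and componentwise-signature bordism 2-categories. There is a natural symmetric monoidal 2-functor
\[
\iota \colon \Bordcsig \longrightarrow \Bordsig
\]
that leaves the underlying 3-bordism unchanged and replaces a collection of per-component bounding 4-manifolds by their disjoint union as a single global bounding manifold. Precomposition with $\iota$ gives a restriction
\[
\iota^* \colon \Rep(\Bordsig) \longrightarrow \Rep(\Bordcsig).
\]
I would first show that $\iota^*$ is fully faithful, identifying $\Rep(\Bordsig)$ with the full sub-2-category of $\Rep(\Bordcsig)$ consisting of representations that factor through $\iota$. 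By Novikov additivity the signature is additive under disjoint union, and since the equivalence relations in both bordism 2-categories are controlled by the signature class of the bounding manifold, every signature bordism is equivalent to one in the image of $\iota$, providing the necessary essential surjectivity at the level of 3-morphisms.

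Next I would identify the image of $\iota^*$ inside the data classified by Theorem~\ref{thm:introcsig}. A $\Bordcsig$-representation corresponds to a pair $(\mathcal{C}, \{\zeta_s\})$ with $\zeta_s^2 = p_s^+/p_s^-$ a chosen square root of anomaly for each factor $s$. Such a representation factors through $\iota$ precisely when it assigns the same scalar to any two bounding 4-manifolds with equal total signature, independently of how that signature is distributed across components. To test the condition, compare for a 3-bordism with underlying manifold $X_1 \sqcup X_2$ the bounding manifold $W_1 \sqcup W_2$ against $(W_1 \# \overline{\mathbb{CP}^2}) \sqcup (W_2 \# \mathbb{CP}^2)$: both have the same total signature, but their componentwise signatures differ by $(-1,+1)$. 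The scalar ratio between these assignments is $\zeta_{s_1}^{-1}\zeta_{s_2}$, where $s_i$ is the factor preserved by $X_i$, so the factorization condition forces $\zeta_{s_1} = \zeta_{s_2}$ for every pair of factors. Squaring yields $p_{s_1}^+/p_{s_1}^- = p_{s_2}^+/p_{s_2}^-$; the anomaly is thus uniform across factors, and the common value $\zeta := \zeta_s$ provides the single square root required.

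Combining these steps yields the bijection: given an MTC $\mathcal{C}$ with uniform anomaly $\alpha$ and square root $\zeta$, applying Theorem~\ref{thm:introcsig} to $(\mathcal{C}, \{\zeta_s := \zeta\})$ produces a $\Bordcsig$-representation which, by the computation above, factors through $\iota$ and descends uniquely to a $\Bordsig$-representation. The main obstacle will be making the fully faithful embedding $\iota^*$ rigorous: one must show that the presentation of $\Bordsig$ differs from that of $\Bordcsig$ only by relations expressing additivity of the signature invariant over connected components, so that the algebraic constraint picks out exactly the uniform-$\zeta_s$ condition. This amounts to comparing the two generator-and-relations presentations (presumably following the scheme used to prove Theorem~\ref{thm:introcsig}) and leveraging Novikov additivity for the signature cobordism invariant.
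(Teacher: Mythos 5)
Your strategy is essentially the geometric shadow of what the paper does, and it can be made to work, but as written it has two gaps, one of which you flag yourself. The paper does not construct the comparison functor $\iota$ directly from geometry; instead it invokes the presentation $\Bordsig \simeq \F(\N_1)$ from the third paper in the series (\autoref{thm:sigpresentation}). The presentation $\N_1$ is the modular presentation $\M$ (which presents $\Bordcsig$) extended by a single invertible $2$-morphism $\zeta$ of the \emph{empty} surface, together with one relation identifying $\zeta$, placed alongside a $2$-sphere, with the anomaly composite on that sphere. The inclusion of presentations $\M \to \N_1$ induces exactly your $\iota$, and restriction along it is your $\iota^*$; full faithfulness then comes for free, because the only new generator is a single global scalar $a = Z(\zeta)$ which the relation forces to equal $p_s^+/p_s$ in every factor $s$ simultaneously. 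Combined with $p_s^2 = p_s^+ p_s^-$ this yields $a^2 = p_s^+/p_s^-$ for all $s$: uniform anomaly plus one chosen square root, with $p_s = p_s^+/a$ then determined. So the algebraic content of your observation that ``a global bounding manifold cannot see how signature is distributed among components'' is concentrated entirely in that one relation of $\N_1$.

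The two gaps are as follows. First, the full faithfulness of $\iota^*$ \emph{is} the theorem modulo Theorem~\ref{thm:introcsig}, and your proposed route to it (Novikov additivity plus a comparison of the equivalence relations on structured $3$-bordisms) amounts to re-deriving the presentation comparison of the third paper; without that input the claim is unproven, as you acknowledge. Second, your identification of the image is imprecise: a connected component $X_i$ of a $3$-bordism is not ``preserved by'' a single factor $s_i$ --- the field theory is a direct sum of simple ones (\autoref{directsumofsimples}), so the value on $X_i$ receives contributions from every factor, and the ratio of the two assignments is not simply $\zeta_{s_1}^{-1}\zeta_{s_2}$. To isolate the factors you must run the comparison on bordisms with nonempty boundary, so that the decomposition of $Z(\Sigma)$ into factor summands separates the scalars, or, as the paper effectively does, work in the endomorphism algebra of the sphere, where the projectors onto the factors are available.
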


\begin{introthm}
\label{thm:introp1}
Linear representations of the $p_1$\-structure bordism 2\-category $\Bordp$ are classified by MTCs equipped with a sixth root of the anomaly of each factor.
\end{introthm}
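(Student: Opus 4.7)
The plan is to deduce \autoref{thm:introp1} from \autoref{thm:introcsig} by exhibiting $\Bordp$ as a controlled finer extension of $\Bordcsig$, and then tracking the extra data needed to lift a representation across that extension.

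First I would construct a symmetric monoidal 2-functor $\Bordcsig \to \Bordp$. Given a closed $3$-manifold $M$ together with a bounding $4$-manifold on each connected component, restriction of the first Pontryagin class of the bounding manifold trivializes $p_1$ on that component, giving a canonical $p_1$-structure. The crucial numerical input is then the Hirzebruch signature theorem: for a closed $4$-manifold one has $\int p_1 = 3\sigma$. Consequently, on each connected component the $\ZZ$-torsor of componentwise-signature structures maps into the $\ZZ$-torsor of $p_1$-structures via multiplication by $3$. In this sense $\Bordp$ is a threefold-finer $\ZZ^{\pi_0}$-cover of the oriented bordism $2$-category $\Bord$ than $\Bordcsig$ is.

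Next I would analyze the effect on representations. By \autoref{thm:introcsig}, a representation of $\Bordcsig$ is an MTC together with a square root $\sqrt{\alpha_s}$ of the anomaly in each factor $s$, and $\sqrt{\alpha_s}$ is exactly the scalar by which the TQFT multiplies under a unit shift of the signature structure on the connected component to which $s$ is assigned. A representation of $\Bordp$ must additionally specify a scalar $\gamma_s$ by which it multiplies under a unit shift of the $p_1$-trivialization on that component. Compatibility with the $3\!:\!1$ map above then forces $\gamma_s^3 = \sqrt{\alpha_s}$, so that $\gamma_s$ is a sixth root of $\alpha_s$, exactly the data appearing in the theorem.

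To close the argument I would exhibit $\Bordp$ as $\Bordcsig$ augmented by a single additional automorphism of the empty $1$-manifold per connected component, namely the generator shifting $p_1$ by $1$, subject only to the relation that its cube equals the shift-signature generator already present in $\Bordcsig$. Combined with \autoref{thm:introcsig} this gives precisely the classification by MTCs equipped with a sixth root of the anomaly of each factor. The main obstacle will be establishing this presentation rigorously: one must verify that every relation among $p_1$-structures coming from diffeomorphisms of $3$-manifolds is already a consequence of the Hirzebruch relation, and this presumably proceeds via the presentation-theoretic machinery developed for the other structured bordism $2$-categories elsewhere in the paper.
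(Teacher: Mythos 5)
Your overall strategy is the one the paper uses: reduce to \autoref{thm:introcsig} by presenting $\Bordp$ as the modular (componentwise-signature) presentation $\M$ extended by one componentwise generator whose cube is the anomaly, and then solve the resulting system of scalar equations to get a sixth root of the anomaly in each factor with the square root $p_s$ then determined. The differences are in how the extension is justified and encoded. First, the paper does not derive the presentation of $\Bordp$ from the Hirzebruch relation $\int p_1=3\sigma$ inside this paper at all; it imports the presentation $\M_3$ wholesale from the companion paper \cite{PaperIII}. Your Hirzebruch argument correctly identifies \emph{why} the cube relation holds, but the completeness of the presentation (that no further relations among $p_1$-structures arise from diffeomorphisms) is exactly the content you defer, and it is supplied externally rather than proved here; so your ``main obstacle'' is not resolved by anything in this paper except the citation.

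Second, and more substantively, your encoding of the extra generator as ``an automorphism of the empty $1$-manifold per connected component'' does not work as stated. A $2$-automorphism of the identity of the empty $1$-manifold is a single invertible scalar in any linear representation, with no access to the factor decomposition --- that is precisely the \emph{global} signature presentation $\N_1$ of Theorem~\ref{thm:introsig}, and it yields one root for all factors simultaneously. To get componentwise data the paper takes $y'$ to be an endomorphism of the $2$-sphere, whose representation is the algebra spanned by the projectors $\pi_s$ onto the factors, and it needs the additional centrality relation \eqref{eq:p1centrality} against $\mu$ to force $Z(y')$ to be diagonal in that basis; only then is ``a scalar $\gamma_s$ per factor'' meaningful, and only then does the cube relation \eqref{eq:p1axiom} decouple into the per-factor equations $a_s^3=p_s^-/p_s$. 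Without that relation (or some substitute) your proposed presentation would admit representations in which the $p_1$-shift mixes factors, and the classification statement would fail. With those two corrections your argument matches the paper's.
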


\noindent Note that our classification applies in arbitrary, in particular finite, characteristic. For each theorem, `classified' means that we provide explicit canonical bijections between equivalence classes of linear representations,  and equivalence classes of modular tensor categories equipped with the extra data. Also, rather than roots of the anomaly, it would be equivalent in Theorems~\ref{thm:introcsig} and \ref{thm:introsig} (but not~\ref{thm:introp1}) to ask instead for roots of the global dimension of the factor. 

There are a number of 2\-categories that are used in the literature as targets for TQFTs, and it may seem that this fragments the classification question. However, we show in the Appendix that for a wide variety of target 2\-categories $\bicat T$, for all the structures $\S$ that we consider, every symmetric monoidal functor $\BordS \to \bicat T$ factors through $\twovect$---in fact, through the full sub-2\-category given by the finite $k$\-linear semisimple categories. Thus, for the family of targets we consider, the classification question can be answered once and for all.

\subsection{TQFTs and modular functors}

There is a large literature on 3\-dimensional TQFTs extended to 1\-manifolds, including substantial texts by Bakalov and Kirillov~\cite{bk01-ltc}, Kerler and Lyubashenko~\cite{kl01}, Turaev~\cite{t94-qik}, and Walker~\cite{walker-notes}.\footnote{For work on the rather different and heretofore rarely considered problem of classifying {\em non}-extended 3-dimensional TQFTs, see the recent paper of Juhasz~\cite{juhasz2014defining}.} Our approach uses higher category theory, which is fast becoming the dominant tool for tackling structural questions in TQFT: developed to great effect by Baez--Dolan, Lurie, and many others~\cite{bd95-hda,lurie-tft,freed-higher,freed-expository,feshbach,kapustin,kl01,CSPthesis,segal2010locality}, it builds on Freed's and Lawrence's notion of an extended TQFT~\cite{freed-higher,Lawrence}, which in turn builds on the original compositional approach of Atiyah and Segal~\cite{a88-tqft, segal}.

Kerler and Lyubashenko~\cite{kl01} have a functorial setup similar to ours, based on double categories rather than 2\-categories. However, their bordism double category is more restricted, as it includes only the connected surfaces; for us, the disconnected surfaces play a crucial role.  In all other treatments, for example Walker's notion of `TQFT with corners' \cite{walker-notes}, the extension to 1\-manifolds is controlled by the auxiliary notion of a \emph{modular functor}. There are a number of definitions of modular functor in the literature and the relationships between these various notions are not clear.  In its simplest incarnation, a modular functor is a symmetric monoidal functor $\BordMF \to \twovect$, where $\BordMF$ is the 2\-category of (componentwise-signature structured) 1\-manifolds, 2\-manifold bordisms, and isotopy classes of diffeomorphisms between these bordisms. This notion of modular functor is studied by Tillmann~\cite{t98-sskl} and Kerler-Lyubashenko~\cite{kl01}, and advocated by Segal \cite{segal}. 

Many previous `modular-functor approaches' to extended TQFTs avoid dealing with 2-categories directly by introducing the category of \textit{$C$\-extended surfaces}, meaning that the surfaces are equipped with marked points, labeled by objects of $C$. Depending on the particular axiomatization, $C$ is a pre-specified linear category, a tensor category, or simply a labeling set. One notable difference between the `$C$\-extended approach' and the `2-categorical functorial approach' to modular functors, is that in the former no distinction is made between in-coming and out-going points. A modular functor in the $C$\-extended sense includes the data of a symmetric monoidal functor from the 1\-category of $C$\-extended surfaces and diffeomorphisms, to vector spaces and linear maps, and the structure of composition of surfaces with boundary is encoded by a collection of `gluing isomorphisms'. Such modular functors are required to satisfy a host of axioms, but the particulars of each axiomatization vary widely from author to author; we mention here in particular the notions of Andersen~\cite{a06-mfgz} (a version of Walker's~\cite{walker-notes} topological reformulation of Segal's~\cite{segal} complex-analytic definition; see also Grove~\cite{g99-mf}), Bakalov and Kirillov~\cite{bk01-ltc}, and Turaev~\cite{t94-qik}.  As far as we know there has been no attempt in the literature to compare these various notions. 

Modular functors (in whichever formulation) are weaker structures than TQFTs because they assign data only to invertible 3\-dimensional bordisms (viewed as mapping cylinders of diffeomorphisms). However, in some cases it is possible to construct a TQFT out of a modular functor. This was first claimed in an unpublished preprint by Kontsevich~\cite{k-88} which also provides a proof sketch. Walker gave an independent proof of a result of this type in his highly influential unpublished 1991 TQFT notes~\cite{walker-notes}. Later Grove~\cite{MR1871220} gave a different argument based on Kontsevich's original sketch (and using Walker's notion of modular functor).  In all of these treatments the modular functor must be equipped with the additional structure of a non-degenerate duality pairing between the vector spaces assigned to a given $C$-extended surface and to the same surface with `reversed' labels, and must satisfy further conditions. This additional duality pairing is crucial to the construction of the TQFT. 

In Grove's construction he requires that the modular functor with duality pairing satisfies a single additional condition (that the $(1,1)$-entry of the $S$-matrix is non-vanishing), and from this he produces a 1\-parameter family of non-extended TQFTs. Subsequently, Andersen-Ueno \cite{a06-mfgz} established the remarkable fact that modular functors in Walker's sense are determined by their genus zero data, and in the same paper they show (Proposition~7.1) that all modular functors satisfy Grove's condition. In contrast, Walker's construction, which produces what he calls a `TQFT with corners', requires that the modular functor satisfy a different additional condition (relating entries of the $S$-matrix via decompositions of a genus-two handlebody~\cite[equation~(4.7)]{walker-notes}. He speculates that this condition is always satisfied, but to our knowledge that remains an open question.   

These results all give (various versions of) an association
\begin{equation*}
        \{ \textrm{MF + duality pairing + conditions} \}  \to \{ \textrm{TQFT} \}
\end{equation*}
Note that the duality pairing required in this association may not exist, and in particular the association does not provide a bijection between modular functors and field theories---indeed there exist modular functors which do not give rise to any extended 3d TQFTs.

\subsection{Modular functors and modular tensor categories}

The main contribution of this paper is the first complete proof that extended 3\-dimensional TQFTs can be classified in terms of modular tensor categories; more precisely, that up to equivalence these TQFTs are in canonical bijective correspondence with isomorphism classes of modular tensor categories equipped with a choice of square root of the global dimension. While direct constructions of a TQFT from an MTC have been described in several places~\cite{bk01-ltc, t94-qik, kl01,walker-notes}, to our knowledge the seemingly easier direction, that the circle sector of an extended TQFT is an MTC, has not been previously established. 
Precise results relating MTCs to modular functors are given by each of the authors Bakalov and Kirillov~\cite{bk01-ltc}, Kerler and Lyubashenko~\cite{kl01}, Turaev~\cite{t94-qik}, and Walker~\cite{walker-notes}, but contrary to common perception, none of these authors claim to prove a bijection between the two notions.

The situation was recently described by Huang and Lepowsky \cite{MR3146015} in their construction of the modular tensor category associated to a conformal field theory (vertex operator algebra):   
\begin{quote}
        ``Many mathematicians have believed for a long time (at least twenty years) that these (rigid and) modular tensor categories must have been constructed either by using the works of Tsuchiya-Ueno-Yamada, and/or Beilinson-Feigin-Mazur and Bakalov-Kirillov, or by using the works of Kazhdan-Lusztig and Finkelberg. ...
        
        But unfortunately, it turns out that this belief has recently been shown to be wrong. ... it has now been known, and acknowledged, for a while, that [these works] cannot in fact be used to prove the rigidity of such tensor categories." 
\end{quote} 

Bakalov and Kirillov~\cite{bk01-ltc} give an approach to constructing a modular tensor category from their notion of modular functor, but run into and emphasize the following difficulty: they can only show that the circle sector is \textit{weakly rigid}, meaning that there is a contravariant monoidal involution $(-)^*$ with natural isomorphisms $\Hom_{\cat C} (S, T \otimes U) \simeq \Hom _\cat C (T^* \otimes S, U) \simeq \Hom_\cat C(S \otimes U^*, T)$. Weak rigidity is necessary but not sufficient for rigidity of a semisimple tensor category~\cite{crystals}.\footnote{Kuperberg's example~\cite{crystals} of a weakly-rigid non-rigid semisimple tensor categories has infinitely many simple objects; we do not know if there are examples of weakly-rigid non-rigid semisimple tensor categories with finitely many simples.} The same problem is encountered by Kerler and Lyubashenko~\cite[page~304]{kl01}.

Turaev obtains a concise result relating modular tensor categories and modular functors, but his notion of modular functor includes an additional axiom \cite[Axiom~1.5.8\ignore{,~page~245}]{t94-qik} which seems unmotivated from the functorial perspective, and indeed this axiom effectively asserts that the weakly rigid tensor category underlying the modular functor is in fact rigid. Assuming this axiom, Turaev constructs the following maps, where $\sqrt{\text{GD}}$ refers to a choice of square root of the global dimension:
\begin{multline*}
\{ C = \textrm{MTC + }\sqrt{\text{GD}} \}
\to \{C\textrm{-labeled MF + duality pairing + rigidity} \}
\\
\to \{ C = \textrm{MTC + }\sqrt{\text{GD}} \}
\end{multline*}
Concerning these he says~\cite[page~268]{t94-qik}:
\begin{quote}
        ``The composition of these arrows (in this order) is the identity. It seems natural to conjecture that the composition in the other order is also the identity."
\end{quote}

Walker~\cite[Theorem~6.4]{walker-notes}, inspired by Moore and Seiberg~\cite{ms89-cqcft}, classifies his notion of modular functor in terms of `basic data' satisfying a series of relations. It is not clear to us what relationship this data bears to the notion of modular tensor category.

Though as noted above they could not rely on the results of Bakalov--Kirillov, Turaev, or Walker to conclude rigidity, Huang and Lepwosky proved that, given not just a modular functor but a conformal field theory associated to a vertex operator algebra, the tensor category (associated to the circle) is indeed rigid.  They relate this rigidity proof back to Andersen--Ueno's work on genus-zero determination of modular functors:
 \begin{quote}
        ``In \cite{MR2568402} it was pointed out that while the \emph{statement} of rigidity in fact  involves only the genus-\emph{zero} conformal field theory, the \emph{proof} of rigidity in \cite{MR2468370} needs genus-\emph{one} conformal field theory."
 \end{quote}
In one of our main results, we are able to establish the rigidity of the tensor category (associated to a circle by an extended TQFT) by exploiting not a conformal structure but instead utilizing non-invertible 3\-dimensional bordisms.  Like Huang and Lepowsky's construction in the conformal setting, our rigidity proof passes through the genus-one sector.

It is not true that modular functors (in the 2-categorical functorial sense) all arise from modular tensor categories. The easiest counter-example is Dijkgraaf-Witten theory in finite characteristic dividing the order of the group.  This theory is well-defined on 1-manifolds, surfaces, and invertible 3-dimensional bordisms, but cannot be extended to all 3-dimensional bordisms.\footnote{In Dijkgraaf-Witten theory (see \cite{BBthesis, freed-higher, MR3348259}), the partition function of the circle is the representation category of the Drinfeld double of a finite group.  When the characteristic of the field divides the order of the group, this representation category is not semisimple (in particular not modular) and so by Theorem~\ref{thm:introcsig} the modular functor of this theory cannot be expanded to an extended TQFT.}

\subsection{Universal skein theory for TQFTs}
\label{sec:intro-isd}

In the early 1990s, Blanchet, Habegger, Masbaum, and Vogel~\cite{bhmv-tmi, bhmv-tqft, b00-hamc} developed a novel approach to the construction of 3d TQFTs based on the skein theory associated to the Kauffman bracket. In this approach the vector space associated to a surface, equipped with a choice of bounding handlebody, consists of linear combinations of links in the handlebody, modulo certain local relations. Variations of this construction were given by Lickorish~\cite{lsm92-tsm} and Roberts~\cite{r-st,robertsthesis}. It was extended to 3\-manifolds with corners by Gelca~\cite{g97-tqft}, and Andersen--Ueno~\cite{au14-cwrt} identified the resulting modular functor with one arising from conformal field theory.   
Walker \cite{walker-note-2006} described a method for computing the partition function of a field theory as a vector space of equivalence classes of local fields, and a connection between 4- and 3-dimensional field theory under which the 4d local-field viewpoint would explain the 3d skein-theoretic picture; motivated by this connection, he suggested that every 3d TQFT coming from an MTC should admit a `skein theoretic' description.  

In the course of establishing our classification theorem we will show that indeed every extended 3d TQFT admits an analogous `skein theoretic' description. Specifically we will see that the vector space assigned to a surface, by the extended TQFT associated to the MTC $\cat{S}$, can be identified with the vector space of ``internal string diagrams" (labeled by the objects and morphisms of $\cat{S}$) that can be drawn inside a handlebody bounding the surface. Unlike earlier approaches that take the skein theoretic description as an input, in our approach the skein description emerges from an analysis of the representation theory of the bordism 2\-category.

\subsection{Overview}

This is the fourth paper in a series. In the first paper~\cite{PaperI} we give a finite presentation of $\Bord$ in terms of generators and relations, and in the second paper~\cite{PaperII} we substantially simplify this presentation. In the third paper~\cite{PaperIII} we obtain finite presentations of $\Bordsig$, $\Bordcsig$, and $\Bordp$.

Here, in \autoref{sec:presentationsrepresentations} we outline the theory of presentations of symmetric monoidal 2\-categories, and discuss aspects of their representation theory in $k$\-linear categories. In \autoref{sec:geometricalpresentations} we recall the presentations of the bordism 2\-categories whose representations we will classify. In \autoref{sec:internal} we present the calculus of internal string diagrams, and use it to analyze linear representations of ribbon structures, showing that such representations correspond to ribbon categories. In \autoref{sec:modcatmodobj} we examine the representation theory of modular structures, and show that for every linear modular structure, the associated ribbon category is a modular tensor category. With this result, we then complete the proofs of the classification Theorems 1, 2, 3, and 4. In \autoref{sec:examples} we give examples of the internal string diagram calculus for calculations involving Dehn twists, mapping class groups, and lens spaces. In the Appendix we consider a variety of different target 2\-categories for TQFTs, and show that in each case the TQFTs factor through the sub-2\-category of \twovect consisting of the finite $k$-linear semisimple categories.

\subsection{Conventions}

Throughout this paper we adhere to the following conventions: 
\begin{itemize}
        \item \textbf{Base field.} We fix $k$ to be an algebraically-closed field of arbitrary characteristic. Whenever we say that a structure is linear, we mean with respect to $k$.
        \item \textbf{Cardinality issues.} To address potential  set-theoretical complications, we fix a chosen inaccessible cardinal $\kappa$. We will implicitly be utilizing $\kappa$\-small sets throughout this paper. Thus the objects of $\vect$ are the $\kappa$\-small $k$\-vector spaces, the objects of \ALG are the $\kappa$-small algebras, and the objects of $\twovect$ are the $\kappa$-small Cauchy-complete $\vect$-enriched categories, and so on. 
        \item \textbf{Weak structure.} All categorical structures are in their fully weak form, unless explicitly stated. In particular, this applies to the terms 2\-category, functor, natural transformation, modification, and monoid.

\item
\textbf{Graphical calculus.} We draw diagrams to represent composite 1\-morphisms in symmetric monoidal 2\-categories. That these diagrams are unambiguous follows from the coherence results of~\cite[Chapter~2]{CSPthesis}, which we will implicitly be using. We refer the reader to~\cite{b14-wd} and~\cite[Appendix~C]{PaperII} for a further discussion of diagram calculus in symmetric monoidal 2\-categories. 
\end{itemize}

\subsection{Acknowledgments}

We are indebted to John Baez, who encouraged and supported this project from the beginning, and provided key insights and inspiration.  We are grateful to Andr\'e Henriques, who provided crucial ideas, particularly regarding how to extend the classification results to the anomalous case.  We would like to thank Dan Freed for his sustained interest and for many informative conversations, and we also give a special acknowledgment to Kevin Walker, whose work has been highly influential to us.   We also appreciate helpful comments from Justin Roberts and Peter Teichner.

\section{Presentations and representations}
\label{sec:presentationsrepresentations}

In this section we review the theory of presentations of symmetric monoidal 2\-categories, and their linear representations in Cauchy-complete 2\_vector spaces.  Readers more interested in the geometry of 3-dimensional bordisms, who are willing to take on faith more technical aspects of the categorical algebra, may wish to skim or skip this section.

\subsection{Presentations of symmetric monoidal 2-categories}

We give a short account of the theory of presentations of symmetric monoidal 2\-categories, following the approach of the third author~\cite{CSPthesis}.


\tikzset{bot=true}
\setlength\obscurewidth{0pt}

We will be concerned in this paper with representations of certain higher algebraic structures: {\em finitely-presented symmetric monoidal 2\-categories}. These are the analogs of finite CW-complexes in the world of symmetric monoidal 2\-categories. A finite CW-complex is built inductively by specifying in each dimension a finite set of $n$-disks $D^n$ together with attaching maps from $\partial D^n$ to the $(n-1)$-skeleton (the space that has previously been constructed). If we are only interested in the 2\-type of the CW-complex, then we need only remember the 3-skeleton, and moreover the effect of the 3-disks is only to add relations to the second fundamental group.  A finitely-presented symmetric monoidal 2\-category is likewise built inductively by specifying in each dimension $n \in \{0,1,2\}$ a finite set of generating cells $C_n$, together with attaching maps from $\partial C_n$ to what has previously been constructed, and with a set of relations among the 2\-morphisms.

In more detail, a presentation $\mathcal G$ for a symmetric monoidal 2\-category consists of the following finite collection of data:
\begin{itemize}
\item generating objects;
\item generating 1\-morphisms, whose sources and targets are composites of generating objects;
\item generating 2\-morphisms, whose sources and targets are composites  of generating 1\-morphisms;
\item relations, which are equations between composites of generating 2\-morphisms.
\end{itemize}
At each level any composites are allowed that can be formed using the symmetric monoidal 2\-category structure. Examples of such presentations are given in \Autoref{sec:rightadjointpsmonoiddef,sec:modularobjectdefinition}.

Given a presentation $\mathcal G$, we write $\bicat F(\mathcal{G})$ for the fully-weak symmetric monoidal 2\-category generated by $\mathcal G$, in the sense of~\cite[page 161]{CSPthesis}. Such presented symmetric monoidal 2\-categories were called {\em computadic} in~\cite{CSPthesis}; another acceptable term might be {\em cofibrant}. Note that this differs somewhat from the notation used in~\cite{PaperII}, where $\bicat F(\mathcal{G})$ denotes a stricter object, the \textit{quasistrict} symmetric monoidal 2-category generated by $\mathcal{G}$, which we will denote by $\bicat F_\text{qs}(\mathcal{G})$ to avoid confusion. The symmetric monoidal 2\-categories $\bicat F(\mathcal{G})$  and $\bicat F_\qs(\mathcal{G})$ are related by a strict symmetric monoidal equivalence  \mbox{$\bicat F(\mathcal{G}) \to \bicat F_\qs(\mathcal{G})$}~\cite[Theorem~2.96]{CSPthesis}. This implies that we may directly employ the coherence results of~\cite[Chapter~2]{CSPthesis} to the 2\-category $\bicat F(\mathcal{G})$ and to functors out of it; in particular we are free to use the wire diagram calculus developed in~\cite{b14-wd} (see also~\cite[Appendix~C]{PaperII}.)

It is straightforward to describe strict symmetric monoidal functors \mbox{$Z :\bicat F(\mathcal{G}) \to \bicat C$}.  They are uniquely specified by the images of the generating objects, 1\-morphisms and 2\-morphisms; such an assignment gives rise to a strict functor precisely when the relations of $\mathcal{G}$ are satisfied.
\begin{defn}
        Given a symmetric monoidal 2\-category $\bicat C$ and a presentation $\mathcal G$, the 2\-category of {\em $\mathcal{G}$-structures in \bicat C}, denoted $\Rep _{\bicat C} (\mathcal{G})$, is defined as follows: 
\begin{itemize}
        \item an object is a strict symmetric monoidal functor $\bicat F(\mathcal G) \to \bicat C$;
        \item a 1\-morphism is a strict symmetric monoidal natural transformation;
        \item a 2\-morphism is a symmetric monoidal modification.
\end{itemize}
\end{defn}

\noindent
This definition of $\Rep _\bicat C (\G)$ can be spelled out explicitly as follows (see~\cite{CSPthesis} for details):
        \begin{itemize}
                \item An object $Z$ is an assignment of an object $Z(X)$ of $\bicat C$ for each generating object $X$ in $\mathcal G$, a 1\-morphism $Z(p)$ of $\bicat C$ for each generating 1\-morphism $p$ in $\mathcal G$, and a 2\-morphism $Z(\sigma)$ of $\bicat C$ for each generating 2\-morphism $\sigma$ in $\mathcal G$, subject to the obvious source-target matching conditions and the relations of $\mathcal G$.
                \item A 1\-morphism $\zeta : Z \rightarrow Z'$ is an assignment of a 1\-morphism \mbox{$\zeta_X : Z(X) \rightarrow Z'(X)$} of $\bicat C$ for each generating object $X$ of $\bicat{C}$, and a 2\-isomorphism {$\zeta_p : Z'(p) \circ \zeta_{s(p)} \xdoubleto{} \zeta_{t(p)} \circ Z_p$ }for each generating 1\-morphism $p$, where $\zeta_{s(p)}$ and $\zeta_{t(f)}$ refer to the tensor product of copies of the 1\-morphisms $\zeta_X$, appropriately parenthesized to match the source and target of $p$ respectively. This assignment $X \mapsto \zeta_X$ and $p \mapsto \zeta_p$ must be such that for each generating 2\-morphism $\sigma$ in $\mathcal{G}$, the corresponding diagram of 2\-morphisms in $\bicat{C}$ commutes.  

                \item A 2\-morphism $\omega : \zeta \xdoubleto{} \zeta'$ is an assignment of a 2\-morphism $\omega_X : \zeta_X \xdoubleto{} \zeta'_X$ for each generating object $X$ of $\bicat{C}$, such that for each generating 1\-morphism in $\mathcal{G}$ the corresponding diagram of 2\-morphisms in $\bicat{C}$ commutes. 
        \end{itemize}
        
Given two symmetric monoidal 2\-categories $\bicat{C}$ and $\bicat{D}$, we write $\bicat{SymBicat}(\bicat{C}, \bicat{D})$ for the 2\-category of (weak) symmetric monoidal homomorphisms from $\bicat{C}$ into $\bicat{D}$, (weak) symmetric monoidal transformations between them, and symmetric monoidal modifications, in the sense of~\cite{CSPthesis}; in particular, everything is as weak as possible.
\begin{theorem}[\cite{CSPthesis}, Theorems 2.96 and 2.78]
\label{thm:equivpres}
For a presentation $\G$, there is a natural equivalence of 2\-categories $\Rep _\bicat C (\G) \simeq \bicat{SymBicat}(\bicat F(\mathcal{G}), \bicat C)$. \qed
\end{theorem}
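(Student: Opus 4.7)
The plan is to exhibit the equivalence by unpacking the universal property of $\bicat{F}(\mathcal{G})$ in two stages: first a tautological strict universal property, then a coherence/strictification argument bridging strict and weak symmetric monoidal functors.

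First I would establish the strict version: there is a tautological isomorphism of 2-categories between $\Rep_{\bicat{C}}(\G)$ (as defined in the paper by strict functors, strict transformations, and modifications) and the 2-category of strict symmetric monoidal functors $\bicat{F}_\qs(\G) \to \bicat{C}$, strict symmetric monoidal transformations, and modifications. The construction goes in both directions by reading off the assignments: a strict functor $Z$ determines $Z(X), Z(p), Z(\sigma)$ on the generators, and, conversely, such an assignment satisfying the relations of $\G$ extends uniquely along the generation process to a strict symmetric monoidal functor on $\bicat{F}_\qs(\G)$. The analogous statements at the 1- and 2-morphism levels come from the fact that a strict symmetric monoidal transformation (respectively modification) between strict functors out of a computadic source is determined by its components on generating objects (respectively generating 1-morphisms), subject to the naturality-type conditions prescribed by the generating 1-morphisms (respectively 2-morphisms). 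This is a universal property statement about the computadic construction and requires no coherence input.

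Next I would invoke the two coherence theorems from \cite{CSPthesis}. Theorem~2.96 provides the strict symmetric monoidal equivalence $\bicat{F}(\G) \to \bicat{F}_\qs(\G)$, so precomposition induces a strict biequivalence
\[
\bicat{SymBicat}(\bicat{F}_\qs(\G), \bicat{C}) \simeq \bicat{SymBicat}(\bicat{F}(\G), \bicat{C}).
\]
Then Theorem~2.78 (a strictification result for symmetric monoidal homomorphisms out of a quasistrict source) gives a biequivalence between the full 2-category $\bicat{SymBicat}(\bicat{F}_\qs(\G), \bicat{C})$ and its sub-2-category of strict symmetric monoidal functors, strict symmetric monoidal transformations, and modifications---i.e., the strict model identified in the previous paragraph with $\Rep_{\bicat{C}}(\G)$. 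Composing these two equivalences with the tautological isomorphism yields the desired equivalence
\[
\Rep_{\bicat{C}}(\G) \simeq \bicat{SymBicat}(\bicat{F}(\G), \bicat{C}).
\]

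Naturality in $\bicat{C}$ will follow because each step of the construction is manifestly functorial in the target: the tautological correspondence is defined by evaluation on generators, and both coherence biequivalences are induced by precomposition with a fixed strict functor or by the pointwise strictification procedure, both of which commute with postcomposition by symmetric monoidal homomorphisms $\bicat{C} \to \bicat{C}'$.

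The main obstacle is entirely located in the coherence machinery of \cite{CSPthesis}; once Theorems~2.78 and~2.96 of that work are granted, the argument above is formal bookkeeping. In particular, the subtle point is that a general weak symmetric monoidal homomorphism out of $\bicat{F}(\G)$ carries compositor and unitor 2-isomorphisms that must be coherently trivialized; this trivialization is supplied by Theorem~2.78, whose proof constructs an explicit strict replacement by absorbing the coherence data using the freeness of $\bicat{F}_\qs(\G)$. Hence the substantive mathematical work is delegated to~\cite{CSPthesis}, and the proof here amounts to assembling those two theorems with the tautological description of $\Rep_{\bicat{C}}(\G)$.
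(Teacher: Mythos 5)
Your proposal is correct and matches the paper's approach: the paper gives no independent proof, simply delegating to Theorems~2.96 and~2.78 of~\cite{CSPthesis}, and your assembly of those two results with the tautological generators-and-relations description of $\Rep_{\bicat C}(\G)$ is exactly the intended argument. The only cosmetic difference is that the paper states the tautological universal property directly for $\bicat F(\G)$ rather than routing it through $\bicat F_\qs(\G)$, but the strict equivalence between the two makes this immaterial.
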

\noindent
The conclusion is that symmetric monoidal homomorphisms from a symmetric monoidal 2\-category generated by a presentation $\mathcal G$ into another symmetric monoidal 2\-category $\bicat C$ can equivalently be described in terms of \emph{strict} symmetric monoidal functors, i.e. $\mathcal G$-structures in $\bicat{C}$.

There is a naive notion of morphism of presentation $\G \to \G'$ which consists of maps taking the generating objects, 1\-morphisms, and 2\-morphisms of $\G$ to  generating objects, 1\-morphisms, and 2\-morphisms of $\G'$, provided the relations in $\G$ hold. Such a map induces a strict symmetric monoidal functor $\bicat F(\G) \to \bicat F(\G')$, but there are many such functors which do not arise from maps of presentations. For a fixed 2\-category $\bicat{C}$ we also obtain a strict functor of representation 2\-categories: $\Rep_{\G'}(\bicat C) \to \Rep_\G(\bicat C)$. This permits us to study representations of 2\-categories generated by complicated presentations by first studying representations of 2\-categories generated by simpler presentations. In particular we will often be in the situation of the following definition.

\begin{defn} \label{def:2extend}
A presentation \emph{k\-extends} a presentation $\mathcal T$ when it has just the same $j$\-morphism generators as $\mathcal T$ for $j<k$, and $j$\-morphism generators that include those of $\mathcal T$ for $j \geq k$.
\end{defn}

An algebra structure on a vector space $V$ can be transported across an isomorphism $V \simeq W$ to an algebra structure on the vector space $W$.  Similarly, higher-dimensional algebraic structures can be transported across isomorphisms.  One kind of such transportation is recorded in the following result.
\def\t{\mathrm{t}}
\begin{lemma}[{\cite[Lemma~A.19]{pstragowski-thesis}}]
\label{uniqueconjugate}
For a \G-structure $Z$ in a symmetric monoidal 2\-category \bicat C, given for each 1\-generator $p \in \G_1$ an invertible 2\-morphism \mbox{$\zeta_p : Z(p) \Rightarrow S_p$} for some 1\-cell $S_p$, there is a unique \G-structure $Z'$ in \bicat C such that the family $\zeta _p$ gives an identity-on-objects morphism of \G-structures. \qed
\end{lemma}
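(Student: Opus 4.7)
The plan is to transport the structure of $Z$ along the invertible 2-cells $\zeta_p$ to obtain $Z'$, verifying that every component of $Z'$ is forced by the identity-on-objects hypothesis. First I observe that identity-on-objects requires the 1-cell components of $\zeta$ to be $\id_{Z(X)}$ for every generating object $X$, and in particular $Z'(X) = Z(X)$. The structure 2-cell of $\zeta$ at each 1-generator $p$ has the form $Z'(p) \circ \zeta_{s(p)} \Rightarrow \zeta_{t(p)} \circ Z(p)$, which reduces, via the canonical unitors, to an invertible 2-cell between $Z'(p)$ and $Z(p)$. Matching against the given data $\zeta_p : Z(p) \Rightarrow S_p$ forces $Z'(p) := S_p$ (with $\zeta_p$ or $\zeta_p^{-1}$ serving as the structure 2-cell, depending on the chosen orientation of the morphism).

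Next I would define $Z'$ on 2-generators by conjugation. For each formal composite $w$ of generating 1-morphisms under $\otimes$ and $\circ$, the horizontal and vertical pastings of the $\zeta_p$'s, assembled with the appropriate unitors, associators, and braidings supplied by the coherence theorem cited earlier in the paper, produce a canonical invertible 2-cell $\zeta_w : Z(w) \Rightarrow Z'(w)$, where $Z'(w)$ denotes the same formal composite evaluated with $S_p$ in place of each $Z(p)$. For each 2-generator $\sigma : w_1 \Rightarrow w_2$ in $\G$, I set
\[
Z'(\sigma) := \zeta_{w_2} \cdot Z(\sigma) \cdot \zeta_{w_1}^{-1},
\]
so that the naturality square $\zeta_{w_2} \cdot Z(\sigma) = Z'(\sigma) \cdot \zeta_{w_1}$ holds by construction. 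This is precisely the 2-cell coherence condition that $\zeta$ must satisfy at $\sigma$ to be a morphism of $\G$-structures, so the morphism axiom at the level of 2-generators is automatic.

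It remains to check that $Z'$ respects every relation of $\G$. A relation has the form $\alpha = \beta$ for two parallel composites of 2-generators with common source $w_1$ and target $w_2$; the equation $Z(\alpha) = Z(\beta)$ holds because $Z$ is a $\G$-structure, and conjugating both sides by the invertible 2-cells $\zeta_{w_1}$ and $\zeta_{w_2}$ yields $Z'(\alpha) = Z'(\beta)$, since conjugation is compatible with both vertical and horizontal composition. Uniqueness is then immediate: any $Z''$ satisfying the hypotheses must agree with $Z'$ on objects and 1-generators by the forcing arguments above, and the coherence condition at each 2-generator $\sigma$ uniquely determines $Z''(\sigma)$ via the same conjugation formula.

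The main obstacle is purely bookkeeping: one must manage the unitors, associators, and braidings of the symmetric monoidal 2-category \bicat C when assembling the $\zeta_w$ for composite words $w$, and verify that the conjugation formula is well-defined and compatible with all relations. This is entirely handled by the symmetric monoidal 2-category coherence theorem of \cite{CSPthesis} cited in the paper, so no new difficulty arises beyond the ordinary calculus of pasting diagrams.
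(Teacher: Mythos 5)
Your proposal is correct and matches the paper's approach: the paper defers the full argument to the cited reference but immediately after the lemma spells out exactly this construction ($Z'$ agrees with $Z$ on objects, $Z'(p) := S_p$ on 1-generators, and $Z'(\mu) := \zeta_Q \circ Z(\mu) \circ \zeta_P^{\inv}$ on 2-generators via assembled transport maps). Your additional verification that relations are preserved under conjugation and that each component of $Z'$ is forced by the identity-on-objects condition is the standard completion of that sketch.
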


\noindent
Concretely, $Z'$ is defined as follows:
\begin{itemize}
\item On objects, $Z'$ agrees with $Z$.
\item On a 1\-morphism generator $p$, we have $Z'(p) := S_p$.
\item On a 2\-morphism generator $\mu$, we transport $Z(\mu)$ using the 2\-morphisms $\zeta _{-}$, as follows: $Z'(\mu) := \zeta_Q \circ Z(\mu) \circ \zeta_P ^\inv$, where $\mu : P \Rightarrow Q$ in $\bicat F (\G)$, and where $\zeta_P: Z(P) \Rightarrow Z'(P)$ and $\zeta_Q: Z(Q) \Rightarrow Z'(Q)$ are composites of the transport maps.
\end{itemize}

\renewcommand\hat[1]{\smash{\widehat{#1}}}
\label{sec:Cauchycomplete}

\subsection{Linear categories and bimodules}
\label{sec:linearcategories}

We will take representations of our presentations in a symmetric monoidal 2\-category \twovect of Cauchy-complete $k$\-linear categories, functors and natural transformations. We will also be interested in \bimod, the symmetric monoidal 2\-category of $k$\-linear categories, bimodules and natural transformations. In this section we introduce these 2\-categories and examine their relationship. We write $\vect$ for the symmetric monoidal category of vector spaces over $k$.

\begin{defn}[Linear category, enriched tensor product]
 A {\em linear category} is a category enriched over $\vect$, meaning its hom-sets are $k$\-vector spaces, and composition is $k$\-linear. The {\em enriched tensor product} $C \boxtimes D$ of two linear categories $C$ and $D$ has objects given by pairs $(c,d) \in \Ob (\cat C) \times \Ob (\cat D)$, and morphism vector spaces given by $\cat (C \otimes \cat D) \big( (c,d), (c', d') \big) = \cat C(c,c') \otimes_k \cat D(d,d')$.
 \end{defn}

\begin{defn}
A linear category is \emph{Cauchy complete} if it has all \textit{absolute colimits}, meaning that it has all those colimits which are preserved by all linear functors. (See \autoref{pro:CauchyComplete} for alternative characterizations of linear Cauchy-complete categories.)
\end{defn}

We can now define our main 2\-categories of interest.
\begin{defn}[See~\cite{t98-sskl}]
The symmetric monoidal 2\-category $\twovect$ is defined in the following way:
\begin{itemize}
\item objects are Cauchy-complete linear categories;
\item 1\-morphisms are linear functors;
\item 2\-morphisms are natural transformations;
\item the monoidal structure $\hat{\otimes}$ is the Cauchy completion of the enriched tensor product.
\end{itemize}
\end{defn}

\begin{defn}
The symmetric monoidal 2\-category $\bimod$ of linear categories and bimodules\footnote{What we are calling `bimodules' here are sometimes also called `profunctors' or `distributors'.} is defined as follows:
\begin{itemize}
\item objects are linear categories;
\item 1\-morphisms $P: \cat C \proarrow \cat D$ are \textit{bimodules}, defined as linear functors \mbox{$P : \cat D ^\op \boxtimes C \to \vect$};
\item 2\-morphisms are natural transformations between the associated linear functors;
\item the identity 1\-morphisms are given by the Hom-functor \mbox{$\cat{C}^\text{op} \boxtimes \cat{C} \rightarrow \vect$};
\item the monoidal structure is the enriched tensor product.
\end{itemize}
Composition of bimodules $P: \cat C \proarrow \cat D$ and $Q: \cat D \proarrow \cat E$ is defined as follows:
\begin{equation}
\label{bimodulecomposition}
(Q\circ P)(e,c) = \coprod_{d \in \cat D} \big(Q(e,d) \otimes P(d,c) \big) \, / \, {\sim},
\end{equation}
Here $c$, $d$ and $e$ are objects of \cat C, \cat D and \cat E respectively, and we quotient by the least equivalence relation generated by the relation
\begin{equation}
\label{eq:bimodulecomposition}
(q \cdot f, p) \sim (q,f \cdot p)
\end{equation}
for all $q \in Q(e,d)$, $p \in P(d',c)$, and $f: d \to d'$ in \cat D.
\end{defn}

There are interesting relationships between \twovect and \bimod, which we now explore.
\begin{defn}
Let $F \colon C \rightarrow D$ be a linear functor between linear categories. Then there are associated bimodules
\begin{align}
    F_* :  C & \proarrow D  \\
    F^* : D & \proarrow C
\end{align}
defined as $F_*(d,c) = \Hom(d, F(c))$ and $F^*(c,d) = \Hom(F(c), d)$. A bimodule which is isomorphic to $F_*$ (resp. $F^*$) for some linear functor $F$ is called {\em representable} (resp. {\em corepresentable}). 
\end{defn}

\noindent Sending a linear functor $F : \cat{C} \rightarrow D$ to its associated bimodule $F_* \colon \cat{C} \proarrow \cat{D}$ induces a covariant functor
\[
 (-)_* : \twovect \rightarrow \bimod \,.
\]

\begin{lemma}
\label{adj_lemma}
For a linear functor $F : C \rightarrow D$, we have $F_* \dashv F^*$ in $\bimod$.
\end{lemma}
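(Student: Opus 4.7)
The plan is to exhibit explicit unit and counit 2-morphisms $\eta : \id_C \Rightarrow F^* \circ F_*$ and $\epsilon : F_* \circ F^* \Rightarrow \id_D$ in \bimod, and then verify the triangle identities directly, using the coend (equivalently, quotient) description of bimodule composition.

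For the counit, I first observe that composition in $D$ gives a family
\[
    \Hom_D(d_1, F(c)) \otimes \Hom_D(F(c), d_2) \longrightarrow \Hom_D(d_1, d_2), \qquad g \otimes h \longmapsto h \circ g,
\]
which is dinatural in $c$ (the required equality follows from associativity of composition in $D$) and so by the universal property of the coend descends to a natural transformation $\epsilon : F_* \circ F^* \Rightarrow \Hom_D(-,-) = \id_D$. For the unit, the co-Yoneda lemma identifies
\[
    (F^* \circ F_*)(c_1, c_2) \;=\; \int^{d} \Hom_D(F(c_1), d) \otimes \Hom_D(d, F(c_2)) \;\cong\; \Hom_D(F(c_1), F(c_2))
\]
canonically, via $[g \otimes h] \mapsto h \circ g$. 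I then define $\eta_{c_1, c_2}$ to be the action of $F$ on morphisms, sending $f \in \Hom_C(c_1, c_2)$ to the class $[\id_{F(c_1)} \otimes F(f)]$, equivalently $[F(f) \otimes \id_{F(c_2)}]$. Naturality of both $\eta$ and $\epsilon$ in their respective variables is immediate from bifunctoriality of composition in $D$.

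Next I verify the two triangle identities by chasing representatives through the iterated coends. For $(\epsilon \ast F_*) \circ (F_* \ast \eta) = \id_{F_*}$, take $h \in F_*(d, c) = \Hom_D(d, F(c))$: whiskering by $\eta$ sends $h$ to $[h \otimes \id_{F(c)} \otimes \id_{F(c)}]$ in $F_* \circ F^* \circ F_*$, and whiskering by $\epsilon$ then composes the middle pair of identities to produce $[h \otimes \id_{F(c)}]$, which represents $h$ under the canonical unitor $\id_D \circ F_* \cong F_*$. The other triangle identity is dual and follows by the same calculation with the roles of $F_*$ and $F^*$ swapped.

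There is no deep obstacle here: the statement is the bicategorical analog of the standard adjunction between extension and restriction of scalars along a functor. The only care required is in tracking the variance conventions built into the definition of bimodule, to ensure the dinaturality / quotient relations are respected at each step; in every case this reduces to associativity of composition in $D$.
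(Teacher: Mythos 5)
Your proof is correct and follows essentially the same route as the paper's: identify $(F^*\circ F_*)(c,c')$ with $\Hom_D(F(c),F(c'))$, take the unit to be the action of $F$ on hom spaces and the counit to be induced by composition in $D$. You go further than the paper by explicitly checking dinaturality and the triangle identities (which the paper leaves implicit), but the underlying argument is the same.
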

\begin{proof} The bimodule composite $F^* \circ F_*$ is given by $(F^* \circ F_*)(c,c') \cong \Hom (F(c), F(c'))$. The unit and counit natural transformations
        \begin{align*}
                \eta _{c,c'} &: \Hom (c, c') \to \Hom (F(c), F(c')) \\
                \epsilon _{d,d'} &: (F_* \circ F^*) (d,d') = \oplus_{c \in C} \Hom (d, F(c)) \otimes \Hom ( F(c), d') \,/\sim  \\
                & \hspace{1cm} \to \Hom (d,d')
        \end{align*}
are induced by the functor $F$ and by composition respectively. 
\end{proof}

The following proposition, which is proved in \autoref{sec:cauchycompletionresult}, gives alternative characterizations of Cauchy completeness which are more useful in practice.

\begin{proposition}\label{pro:CauchyComplete}
For a linear category $\cat C$, the following are equivalent:
        \begin{enumerate}
                \item $\cat C$ is Cauchy-complete;
                \item $\cat C$ admits finite direct sums and all idempotents split;
\label{item:idempotentssplit}
                \item every bimodule $P: k \proarrow \cat C$ that is a left adjoint, is representable;
                \item for all small $\vect$-enriched categories $\cat B$, every bimodule $S: \cat B \proarrow \cat C$ that is a left adjoint, is representable. 
        \end{enumerate} 
\end{proposition}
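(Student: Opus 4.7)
The plan is to close the cycle $(1) \Leftrightarrow (2) \Rightarrow (3) \Leftrightarrow (4) \Rightarrow (2)$. The equivalence $(1) \Leftrightarrow (2)$ is a standard result in enriched category theory: for $\vect$-enriched categories, the absolute weighted colimits are exactly those generated by finite biproducts and splittings of idempotents, so I would invoke this directly. The implication $(4) \Rightarrow (3)$ is simply the special case $\cat B = k$, where $k$ is the one-object $\vect$-enriched category with endomorphism algebra $k$.

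For $(3) \Rightarrow (4)$, given a left adjoint $S \colon \cat B \proarrow \cat C$ with right adjoint $T$, I would restrict at each object $b \in \cat B$ to obtain a bimodule $S_b \colon k \proarrow \cat C$ defined by $S_b(c) := S(c,b)$, and observe that the unit and counit of $S \dashv T$ restrict to an adjunction $S_b \dashv T_b$ with $T_b(c) := T(b,c)$. Hypothesis $(3)$ then provides a representing object $F(b) \in \cat C$, and the Yoneda lemma promotes the assignment $b \mapsto F(b)$ to a $\vect$-enriched functor $F \colon \cat B \to \cat C$ with $S \cong F_*$. For $(3) \Rightarrow (2)$, the direct sum $c_1 \oplus c_2$ is represented by the left-adjoint bimodule $\Hom_{\cat C}(-, c_1) \oplus \Hom_{\cat C}(-, c_2)$, whose right adjoint is the pointwise direct sum of corepresentables; and the splitting of an idempotent $e \colon c \to c$ is represented by the retract of $\Hom_{\cat C}(-, c)$ cut out by $e \circ -$ on hom-spaces, which remains a left adjoint because left adjointness is preserved under retracts in any 2-category.

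The main content is the implication $(2) \Rightarrow (3)$. Given a left adjoint $P \colon k \proarrow \cat C$ with right adjoint $Q$, unit $\eta$, and counit $\epsilon$, the unit picks out an element of the coend $\int^{c} Q(c) \otimes P(c)$, represented by a finite sum $\sum_i q_i \otimes p_i$ with $q_i \in Q(c_i)$ and $p_i \in P(c_i)$. Using (2), I form the direct sum $D := \bigoplus_i c_i$; since $P$ and $Q$ are $\vect$-enriched they preserve biproducts, so the tuples $(q_i)$ and $(p_i)$ assemble into elements $\tilde q \in Q(D)$ and $\tilde p \in P(D)$. The counit component $\epsilon_{D,D} \colon P(D) \otimes Q(D) \to \Hom_{\cat C}(D, D)$ sends $\tilde p \otimes \tilde q$ to an endomorphism $e \colon D \to D$, and the first triangle identity forces $e$ to be idempotent. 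Splitting $e$ using (2) yields an object $F \in \cat C$, and a further unwinding of the second triangle identity converts the splitting into the desired natural isomorphism $P(-) \cong \Hom_{\cat C}(-, F)$. I expect this final verification---distilling the triangle identities into the required isomorphism of presheaves---to be the main technical obstacle.
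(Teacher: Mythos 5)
Your proposal is correct and follows essentially the same route as the paper: $(1)\Leftrightarrow(2)$ is cited from the enriched-absolute-colimits literature, $(3)\Rightarrow(2)$ uses closure of representables under direct sums and retracts among left adjoints, the passage from the single-object case to general $\cat B$ is the same restriction/composition-with-$b_*$ argument followed by Yoneda, and the core implication $(2)\Rightarrow(3)$ is exactly the paper's Borceux-style argument (write the unit as a finite sum $\sum_i q_i\otimes p_i$, assemble the counit components into an idempotent on $\bigoplus_i c_i$, split it, and read off representability from the triangle identities). The only differences are cosmetic bookkeeping in how the cycle of implications is closed.
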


Given a $\vect$-enriched category $\cat C$ we may form its \textit{biproduct completion} $\cat{Mat(C)}$, whose objects are finite tuples of objects of $\cat C$ and whose morphisms are matrices of morphisms from $\cat C$. The \textit{idempotent completion} of a category $\cat C$ has as object pairs $(x, p)$ where $x \in \cat C$ and $p: x \to x$ is an idempotent. A morphism from $(x,p)$ to $(x',p')$ is a morphism $f: x \to x'$ such that $f = p' f = f p$, with composition induced by composition in $\cat C$. The \emph{Cauchy completion} of $\cat C$, denoted $\hat{\cat C}$,  is the idempotent completion of $\cat{Mat(C)}$. Any functor from $\cat C$ to a Cauchy-complete category factors, up to essentially unique natural isomorphism, through the Cauchy completion of $\cat C$, via the canonical fully-faithful inclusion $i: \cat C \to \hat{\cat C}$. 

\begin{example}
        If $A$ is an algebra, viewed as a one-object $\vect$-enriched category, then the Cauchy completion $\hat{A}$ is equivalent to the category of finitely generated projective $A$-modules. The inclusion $i$ sends the unique object of the category $A$ to $A$ itself viewed as a free $A$-module.  
\end{example}

\begin{proposition}\label{pro:Cauchycompletionisequiv}
        Let $C$ be a $\vect$-enriched category and let $i: C \to \hat{C}$ be the natural inclusion into its Cauchy completion. Then the bimodule $i_*: C \proarrow \hat{C}$ is an equivalence in $\bimod$. 
\end{proposition}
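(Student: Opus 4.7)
The plan is to use the adjunction $i_* \dashv i^*$ from \autoref{adj_lemma} and show that both its unit and its counit are invertible; this will directly exhibit $i_*$ as an equivalence with inverse $i^*$.

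For the unit $\eta \colon \id_C \Rightarrow i^* \circ i_*$, I unwind the bimodule composition formula: its component at $(c',c)$ is the coend
\[
\int^{y \in \hat{C}} \Hom_{\hat{C}}(i(c'), y) \otimes \Hom_{\hat{C}}(y, i(c)),
\]
which collapses to $\Hom_{\hat{C}}(i(c'), i(c))$ by the standard co-Yoneda (density) formula for composition in a hom-category, and then to $\Hom_C(c', c)$ since $i$ is fully faithful. Tracing through definitions identifies $\eta$ with this composite, hence $\eta$ is an isomorphism.

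The substantive step is the counit $\epsilon \colon i_* \circ i^* \Rightarrow \id_{\hat{C}}$, whose component at $(y',y)$ is the composition map
\[
\int^{c \in C} \Hom_{\hat{C}}(y', i(c)) \otimes \Hom_{\hat{C}}(i(c), y) \longrightarrow \Hom_{\hat{C}}(y', y).
\]
Here the coend is only over $C$, not $\hat{C}$, so density does not apply directly. I would reduce to the case $y, y' \in i(C)$ in stages. When $y = i(c_0)$ and $y' = i(c_0')$, fully-faithfulness of $i$ rewrites the coend as $\int^c \Hom_C(c_0', c) \otimes \Hom_C(c, c_0)$, which collapses by density in $C$ to $\Hom_C(c_0', c_0) \cong \Hom_{\hat{C}}(i(c_0'), i(c_0))$. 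To extend this to arbitrary $y, y'$, I would use that by the construction of $\hat{C}$ as the idempotent completion of the biproduct completion $\cat{Mat}(C)$, every object of $\hat{C}$ is a retract of a finite biproduct of objects in $i(C)$. Both the source and target of $\epsilon$ are linear functors $\hat{C}^\op \boxtimes \hat{C} \to \vect$, and because $\vect$ is itself additive and Cauchy-complete, any such functor preserves finite biproducts and split idempotents in each variable. Combined with naturality of $\epsilon$, being an isomorphism on the generating objects $i(c)$ propagates through biproducts and then through retracts to all of $\hat{C}$.

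The main conceptual obstacle is this last propagation step: one needs to verify that the coend on the source of $\epsilon$ genuinely commutes with biproducts and with splittings of idempotents in each of its two variables. Biproduct-compatibility follows from bilinearity of $\otimes$ together with the fact that coends over $C$ preserve direct sums in any parameter; idempotent-compatibility uses Cauchy-completeness of $\vect$. The cleanest way to package the argument is as a general principle: a natural transformation between additive, idempotent-splitting functors $\hat{C}^\op \boxtimes \hat{C} \to \vect$ is an isomorphism as soon as it is an isomorphism on the subcategory $i(C) \times i(C)$.
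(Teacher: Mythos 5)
Your proposal is correct and follows essentially the same route as the paper: invertibility of the unit from fully-faithfulness of $i$, invertibility of the counit on objects in the image of $i$, and then propagation to all of $\hat{C}$ one variable at a time using closure under finite direct sums and retracts (the paper phrases this via the classes $\O_1$ and $\O_2$ of objects where the counit is invertible, which is exactly your ``general principle'' about natural transformations between additive, idempotent-splitting-preserving functors agreeing on generators).
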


\begin{proof}
        The inverse equivalence, if it exists, is necessarily given by the right adjoint $i^*$. In the proof of \autoref{adj_lemma}, we saw that the unit $\eta$ for the adjunction is the action of $i$ on the hom vector-spaces. Since $i$ is fully faithful, $\eta$ is an isomorphism. Similarly, the counit $\epsilon$ was induced by composition:
        \begin{equation*}
                \varepsilon_{x,x'}: (i_* \circ i^*) (x,x') = \bigoplus_{c \in C} \Hom_{\hat C}(x, i(c)) \otimes \Hom_{\hat C}(i(c), x') / \sim \to \Hom_{\hat C}(x, x').
        \end{equation*}
        Moreover, $\varepsilon_{x,x'}$ is certainly an isomorphism when $x$ and $x'$ are both in the image of $C$.  
        
        Let $\O_1$ be the collection of all objects $x \in \hat C$ such that $\varepsilon_{x,c}$ is an isomorphism for all $c \in C$. Then we see that $\O_1$
        \begin{itemize}
                \item contains $C$;
                \item is closed under direct sums;
                \item is closed under retracts. 
        \end{itemize} 
        Hence we conclude that $\O_1 = \hat C$. Now let $\O_2$ be the collection of all $x' \in \hat C$ such that $\varepsilon_{x,x'}$ is an isomorphism for all $x \in \hat C$. Again we have that $\O_2$
        \begin{itemize}
                \item contains $C$;
                \item is closed under direct sums;
                \item is closed under retracts. 
        \end{itemize}
        We conclude that $\O_2 = \hat C$, and hence that $i_*: C \proarrow \hat C$ is an equivalence. 
\end{proof}

\subsection{Extending presentations}
Given a presentation $\mathcal{G}$, we write $\mathcal{G}^\L$ for the extension of $\mathcal{G}$ obtained by freely adding right adjoints for all the generating 1\-morphisms. For each generating 1-morphism $f: x \to y$ of $\mathcal{G}$, we add to $\mathcal{G}^\L$ a generating 1-morphism $f^\uR: y \to x$ and generating 2-morphisms $\varepsilon_f: f \circ f^\uR \Rightarrow \id_y$ and $\eta_f: \id_x \Rightarrow f^\uR \circ f$ such that the following `zig-zag' relations hold:
\begin{align*}
\id_f &= (\varepsilon_f * \id_f) \circ (\id_f * \eta_f)
\\
\id_{f^\uR} &= (\id_{f^R} * \varepsilon_f) \circ (\eta_f * \id_{f^\uR})
\end{align*}

In this subsection we investigate the relationship between $\Rep_\bicat C(\G^\L)$ and $\Rep _{\bicat C ^\L} (\G)$, where $\bicat C^\L$ is the full subcategory of $\bicat C$ containing all objects, and all 1-morphisms that are left adjoints.  (This relationship justifies the notation $\mathcal{G}^\L$ for the addition of right adjoints to the presentation $\mathcal{G}$.) In particular, we demonstrate that the cores (ie maximal sub-2-groupoids) of these two representation 2-categories are equivalent. (This equivalence will play an important role in our proof of \autoref{equivalentMstructures}, where we show that a balanced braided equivalence of modular tensor categories can be promoted to an equivalence of linear modular structures.)
\begin{defn}
For a 2\-category $\bicat C$, its \emph{core} $\overline{\bicat{C}}$ is the sub-2-category with the same objects as $\bicat C$, with 1\-morphisms being all 1-morphisms of $\bicat C$ that are equivalences, and with 2\-morphisms being all invertible 2\-morphisms between these equivalences.
\end{defn}
\begin{proposition}
\label{pro:addingrightadjointsforrepinprof}
Let $\mathcal G$ be a presentation, and let $\bicat C$ be a symmetric monoidal 2\-category. Let $\mathcal G^\L$ denote the extension of $\mathcal G$ obtained by freely adding right adjoints for all the generating 1\-morphisms. Then the forgetful functor between the cores of the representation categories $\overline{\Rep_\bicat{C} (  \mathcal G^\L )} \to \overline{\Rep_{\bicat C ^\L} (\mathcal G)}$ is an equivalence.
\end{proposition}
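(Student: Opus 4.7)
The plan is to verify that the forgetful functor $F \colon \overline{\Rep_{\bicat C}(\G^\L)} \to \overline{\Rep_{\bicat C^\L}(\G)}$ is essentially surjective and induces equivalences on hom-categories; the central tool throughout is the mate correspondence for adjunctions, which produces only invertible 2-cells here because in the cores every component 1-morphism is already an equivalence, and by hypothesis every generating 1-morphism is mapped to a left adjoint that comes equipped, on the $\G^\L$-side, with chosen right adjoint data.

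For essential surjectivity, start from a $\G$-structure $Z$ in $\bicat C^\L$. By the definition of $\bicat C^\L$, each $Z(f)$ for $f$ a 1-generator admits a right adjoint in $\bicat C$. Choose such right adjoints $Z(f)^\uR$ together with units $\eta_f^Z$ and counits $\varepsilon_f^Z$ satisfying the zig-zag identities, and extend $Z$ to a $\G^\L$-structure $\widetilde Z$ by sending the new 1-generator $f^\uR$ and the new 2-generators $\eta_f$, $\varepsilon_f$ to these chosen data. The zig-zag relations of $\G^\L$ then hold by construction, so $F(\widetilde Z) = Z$.

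For the hom-level equivalence, fix $\widetilde Z$ and $\widetilde Z'$ in $\overline{\Rep_{\bicat C}(\G^\L)}$ with underlying $\G$-structures $Z$ and $Z'$, and consider a 1-morphism $\zeta \colon Z \to Z'$ in the core of $\Rep_{\bicat C^\L}(\G)$. Each component $\zeta_x$ is an equivalence and hence participates in an adjoint equivalence. Using the chosen adjunction data for $f$ and $f^\uR$ on both sides, define the required 2-cell $\widetilde \zeta_{f^\uR} \colon \widetilde Z'(f^\uR) \circ \zeta_{t(f)} \Rightarrow \zeta_{s(f)} \circ \widetilde Z(f^\uR)$ to be the mate of the given invertible 2-cell $\zeta_f$. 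The mate is automatically invertible, and its defining property is precisely that the two diagrams expressing the compatibility of $\widetilde \zeta$ with the 2-generators $\eta_f$ and $\varepsilon_f$ commute. Uniqueness of the lift then follows because any $\widetilde \zeta_{f^\uR}$ satisfying those two compatibility squares must coincide with the mate by the standard adjunction argument.

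For 2-morphisms, since $\G$ and $\G^\L$ share the same object generators, a 2-morphism $\omega \colon \zeta \Rightarrow \zeta'$ carries the same underlying data $(\omega_X)$ on either side of $F$, and it remains only to verify that the compatibility square at each new 1-generator $f^\uR$ follows from the one at $f$. This is immediate: the two squares are mates of one another with respect to the chosen adjunctions, and the mate operation is a bijection on invertible 2-cells that is compatible with vertical composition. The main obstacle in the argument is the diagrammatic bookkeeping of mate coherences in the 1-morphism step, namely the verification that the lifted $\widetilde \zeta_{f^\uR}$ is genuinely compatible with the $\eta_f$ and $\varepsilon_f$ generators; this reduces via the zig-zag identities for both $\widetilde Z$ and $\widetilde Z'$ to the defining equations of the mate calculus, but writing it out carefully is the most delicate part of the proof.
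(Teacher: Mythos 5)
Your proposal is correct and follows essentially the same route as the paper: the paper constructs the inverse to the forgetful functor by choosing right adjoints on objects, defining the component at $f^\uR$ of a transformation as the mate of $\zeta_f^{\inv}$ (written out as an explicit string-diagram formula), and verifying the compatibility squares for $\eta_f$ and $\varepsilon_f$ together with invertibility of the mate using the adjoint-equivalence structure on the components $\zeta_X$. The only place you gloss over what the paper spells out is the invertibility of the lifted 2-cell, which does require the components to be equivalences (it is a Beck--Chevalley-type condition, not a formal consequence of the mate calculus alone), but you correctly identify this as the reason it holds.
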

\begin{proof}
We will construct an inverse equivalence $(-)' : \overline{\Rep _{\bicat C^\L} (\G)} \to \overline{\Rep_{\bicat C} (\G^\L)}$ for the forgetful functor. For this proof, $L:A \to B$ is any 1\-morphism generator of \G, and the corresponding adjunction in $\bicat F(\G^\L)$ is $L \dashv R$, witnessed by 2\-morphism generators $\eta$ and $\epsilon$ of $\G^\L$.

On an object $Z$ in $\overline{\Rep _{\bicat C^\L} (\G)}$, define $Z'$ as follows. On generating objects, 1\-morphisms and 2\-morphisms of $\G^\L$ which arise from $\G$, $Z'$ agrees with $Z$.  For the other generating 1\-morphisms, we choose $Z'(R) = Z'(L)^*$, a right adjoint of $Z'(L)$ in $\bicat C$. For the other generating 2\-morphisms, we choose $\Phi'(\eta)$ and $\Phi'(\epsilon)$ to be witnesses for $Z'(L) \dashv Z'(L)^*$ in $\bicat C$. Since the choice of an adjoint is a contractible choice.

On a 1\-morphism $\mu: Y \to Z$ in $\overline{\Rep _{\bicat C^\L} (\G)}$, define $\mu' : Y' \to Z'$ as follows. On generating objects and 1\-morphisms of $\G^\L$ that arise from $\G$, $\mu'$ agrees with $\mu$. On a right-adjoint generator $R$ of $\G^\L$, we define $\mu'(R)$ as follows, using string diagram notation in the 2\-category \bicat C:
\tikzset{box/.style={draw, font=\tiny, node on layer=foreground, fill=white, inner sep=0pt, minimum height=0.34cm, minimum width=1cm}}
\tikzset{halfbox/.style={box, minimum width=0.3cm}}
\tikzset{every node/.style={font=\tiny}}
\tikzset{every picture/.style={xscale=0.7, yscale=0.5}}
\def\p{{\vphantom{(}}}
\begin{equation}
\label{muRassignment}
\begin{tz}
\node (b) [box] at (0,0) {$\mu_R$};
\draw (-0.5,1) node [above] {$Z(R)$} to (-0.5,-1) node [below] {$\mu_A\p$};
\draw (0.5,1) node [above] {$\mu_B$} to (0.5,-1) node [below] {$Y(R)\p$};
\end{tz}
\quad=\quad
\begin{tz}
\node (1) [box] at (0,0) {$\mu_L^\inv$};
\node (2) [box] at (-1,-1) {$Z(\eta)$};
\node (3) [box] at (1,1) {$Y(\epsilon)$};
\draw (-1.5,2) node [above, font=\tiny] {$Z(R)\p$} to +(0,-3);
\draw (1.5,-2) node [below] {$Y(R)$} to +(0,3);
\draw (-0.5,2) node [above] {$\mu_B\p$} to +(0,-3);
\draw (0.5,-2) node [below] {$\mu_A\p$} to +(0,3);
\end{tz}
\end{equation}
Note we are using here that $\mu$ has invertible components. The generators $\eta$ and $\epsilon$ give rise to equations that this 2\-morphism must satisfy, as follows:
\begin{align}
\begin{tz}
\node [box] at (0,0) {$\mu_R$};
\node [box] at (1,-1) {$Y(\eta)$};
\node [box] at (1,1) {$\mu_L$};
\draw (-0.5,2) node [above] {$Z(R)$} to +(0,-4) node [below] {$\mu_A\p$};
\draw (0.5,2) node [above] {$Z(L)$} to +(0,-3);
\draw (1.5,2) node [above] {$\mu_A$} to +(0,-3);
\end{tz}
&=
\begin{tz}
\node [box] at (0,0) {$Z(\eta)$};
\draw (-0.5,2) node [above] {$Z(R)$} to +(0,-2);
\draw (0.5,2) node [above] {$Z(L)$} to +(0,-2);
\draw (1.5,2) node [above] {$\mu_A$} to +(0,-4) node [below] {$\mu_A\p$};
\end{tz}
&
\begin{tz}[xscale=-1, yscale=-1]
\node [box] at (0,0) {$\mu_L$};
\node [box] at (1,-1) {$Z(\epsilon)$};
\node [box] at (1,1) {$\mu_R$};
\draw (-0.5,2) node [below] {$Y(L)$} to +(0,-4) node [above] {$\mu_A$};
\draw (0.5,2) node [below] {$Y(R)$} to +(0,-3);
\draw (1.5,2) node [below] {$\mu_A\p$} to +(0,-3);
\end{tz}
&=
\begin{tz}[xscale=-1, yscale=-1]
\node [box] at (0,0) {$Y(\epsilon)$};
\draw (-0.5,2) node [below] {$Y(L)$} to +(0,-2);
\draw (0.5,2) node [below] {$Y(R)$} to +(0,-2);
\draw (1.5,2) node [below] {$\mu_A\p$} to +(0,-4) node [above] {$\mu_A$};
\end{tz}
\end{align}
We verify the first of these as follows:
\begin{equation*}
\begin{tz}
\node [box] at (0,0) {$\mu_R$};
\node [box] at (1,-1) {$Y(\eta)$};
\node [box] at (1,1) {$\mu_L$};
\draw (-0.5,2) node [above] {$Z(R)$} to +(0,-4) node [below] {$\mu_A\p$};
\draw (0.5,2) node [above] {$Z(L)$} to +(0,-3);
\draw (1.5,2) node [above] {$\mu_A$} to +(0,-3);
\end{tz}
=
\begin{tz}
\node [box] (1) at (1,0) {$Y(\eta)$};
\node [box] (2) at (-1,1) {$\mu_L ^\inv$};
\node [box] (4) at (0,2) {$Y(\epsilon)$};
\node [box] (5) at (-2,0) {$Z(\eta)$};
\node [box] (3) at (0,3) {$\mu_L$};
\draw (-2.5,4) node [above] {$Z(R)$} to +(0,-4);
\draw (-0.5,4) node [above] {$Z(L)$} to +(0,-1);
\draw (0.5,4) node [above] {$\mu_A\p$} to +(0,-1);
\draw (-1.5,1) to +(0,-1);
\draw (-0.5,2) to +(0,-3) node [below] {$\mu_A\p$};
\draw (0.5,2) to +(0,-2);
\draw (0.5,2.72) to [out=down, in=up] (1.5,2) to +(0,-2);
\draw (-0.5,2.72) to [out=down, in=up] (-1.5,2) to (-1.5,0);
\end{tz}
=
\begin{tz}
\node [box] at (0,0) {$Z(\eta)$};
\node [box] at (1,1) {$\mu_L ^\inv$};
\node [box] at (1,2) {$\mu_L$};
\draw (-0.5,3) node [above] {$Z(R)$} to +(0,-3);
\draw (0.5,3) node [above] {$Z(L)$} to +(0,-3);
\draw (1.5,3) node [above] {$\mu_A$} to +(0,-4) node [below] {$\mu_A\p$};
\end{tz}
=
\begin{tz}
\node [box] at (0,0) {$Z(\eta)$};
\draw (-0.5,2) node [above] {$Z(R)$} to +(0,-2);
\draw (0.5,2) node [above] {$Z(L)$} to +(0,-2);
\draw (1.5,2) node [above] {$\mu_A$} to +(0,-4) node [below] {$\mu_A\p$};
\end{tz}
\end{equation*}
The second follows similarly. These equations are seen to be equivalent to the assignment~\eqref{muRassignment} in the case that $\mu$ is invertible, so this definition of $\mu'$ is unique given the constraint that $(-)'$ is to be an equivalence.

We must show that $\mu_R$ is invertible. Its inverse takes the following form, where we use cusps to represent the adjoint equivalence structure between $\mu_A$ and its inverse, and between $\mu_B$ and its inverse:
\begin{equation}
\begin{tz}
\node (b) [box] at (0,0) {$\mu_R ^\inv$};
\draw (-0.5,1) node [above] {$\mu_A\p$} to (-0.5,-1) node [below] {$Z(R)\p$};
\draw (0.5,1) node [above] {$Y(R)$} to (0.5,-1) node [below] {$\mu(B)\p$};
\end{tz}
\quad=\quad
\begin{tz}
\node [box] at (0,0) {$\mu_L$};
\draw (-0.5,0) to (-0.5,1) to [out=up, in=down] (0.5,2) to (1.5,2) to [out=down, in=up] (2.5,1) to (2.5,-3.5) node [below] {$Z(R)\p$};
\draw (0.5,0) to (0.5,-1) to [out=down, in=up] (-0.5,-2) to (-1.5,-2) to [out=up, in=down] (-2.5,-1) to (-2.5,3) node [above] {$Y(R)$};
\draw (0.5,0) to (0.5,0.4) to [out=up, in=-145] (1,1) to [out=-35, in=up] (1.5,0.4) to (1.5,-2) to [out=down, in=35] (-1,-3.5) to [out=145, in=down] (-3.5,-2) to (-3.5,3) node [above] {$\mu_A$};
\node [box] at (-1,-2.3) {$Y(\eta)$};
\node [box] at (1,2.3) {$Z(\epsilon)$};
\draw (-0.5,0) to (-0.5,-0.4) to [out=down, in=35] (-1,-1) to [out=145, in=down] (-1.5,-0.4) to (-1.5,2) to [out=up, in=-145] (1,3.5) to [out=-35, in=up] (3.5,2) to (3.5,-3.5) node [below] {$\mu_B\p$};
\end{tz}
\end{equation}
Composing this with $\mu_R$ on either side, it can be shown that the identity is obtained, using the duality equations for $\eta$ and $\epsilon$ under the images of $Y$ and $Z$, and the adjoint equivalence structures.

Finally, for a 2\-morphism $\zeta: \mu \doubleto \nu$ in $\overline{\Rep_{\bicat C^\L} (\G)}$, define $\zeta' = \zeta$. We must check the following equation:
\begin{equation}
\begin{tz}
\node (1) [box] at (0,0) {$\mu_R$};
\node (2) [halfbox] at (0.5,1) {$\zeta$};
\draw (-0.5,2) node [above] {$Z(R)$} to +(0,-3) node [below] {$\mu_A\p$};
\draw (0.5,2) node [above] {$\nu_B\p$} to +(0,-3) node [below] {$Y(R)$};
\end{tz}
=
\begin{tz}
\node (1) [box] at (0,1) {$\nu_R$};
\node (2) [halfbox] at (-0.5,0) {$\zeta$};
\draw (-0.5,2) node [above] {$Z(R)$} to +(0,-3) node [below] {$\mu_A\p$};
\draw (0.5,2) node [above] {$\nu_B\p$} to +(0,-3) node [below] {$Y(R)$};
\end{tz}
\end{equation}
We verify this as follows:
\begin{align*}
\nonumber
&\begin{tz}
\node (1) [box] at (0,0) {$\mu_R$};
\node [halfbox] at (0.5,1) {$\zeta$};
\draw (-0.5,2) node [above] {$Z(R)$} to +(0,-3) node [below] {$\mu_A\p$};
\draw (0.5,2) node [above] {$\nu_B\p$} to +(0,-3) node [below] {$Y(R)$};
\end{tz}
=
\begin{tz}
\node (1) [box] at (0,0) {$\mu_L^\inv$};
\node (2) [box] at (-1,-1) {$Z(\eta)$};
\node (3) [box] at (1,1) {$Y(\epsilon)$};
\node [halfbox] at (-0.5,1) {$\zeta$};
\draw (-1.5,2) node [above, font=\tiny] {$Z(R)\p$} to +(0,-3);
\draw (1.5,-2) node [below] {$Y(R)$} to +(0,3);
\draw (-0.5,2) node [above] {$\nu_B\p$} to +(0,-3);
\draw (0.5,-2) node [below] {$\mu_A\p$} to +(0,3);
\end{tz}
=
\begin{tz}
\node [box] at (0,0) {$\mu_L^\inv$};
\node [box] at (0,2) {$\nu_L$};
\node [box] at (0,3) {$\nu_L^\inv$};
\node [box] at (-1,-1) {$Z(\eta)$};
\node [box] at (1,4) {$Y(\epsilon)$};
\node [halfbox] at (-0.5,1) {$\zeta$};
\draw (-1.5,5) node [above, font=\tiny] {$Z(R)\p$} to +(0,-6);
\draw (1.5,-2) node [below] {$Y(R)$} to +(0,6);
\draw (-0.5,5) node [above] {$\nu_B\p$} to +(0,-6);
\draw (0.5,-2) node [below] {$\mu_A\p$} to +(0,6);
\end{tz}
\\[-0.5cm]
&=
\begin{tz}
\node [box] at (0,0) {$\mu_L^\inv$};
\node [box] at (0,1) {$\mu_L$};
\node [box] at (0,3) {$\nu_L^\inv$};
\node [box] at (-1,-1) {$Z(\eta)$};
\node [box] at (1,4) {$Y(\epsilon)$};
\node [halfbox] at (0.5,2) {$\zeta$};
\draw (-1.5,5) node [above, font=\tiny] {$Z(R)\p$} to +(0,-6);
\draw (1.5,-2) node [below] {$Y(R)$} to +(0,6);
\draw (-0.5,5) node [above] {$\nu_B\p$} to +(0,-6);
\draw (0.5,-2) node [below] {$\mu_A\p$} to +(0,6);
\end{tz}
=
\begin{tz}
\node [box] at (0,3) {$\nu_L^\inv$};
\node [box] at (-1,2) {$Z(\eta)$};
\node [box] at (1,4) {$Y(\epsilon)$};
\node [halfbox] at (0.5,2) {$\zeta$};
\draw (-1.5,5) node [above, font=\tiny] {$Z(R)\p$} to +(0,-3);
\draw (1.5,1) node [below] {$Y(R)$} to +(0,3);
\draw (-0.5,5) node [above] {$\nu_B\p$} to +(0,-3);
\draw (0.5,1) node [below] {$\mu_A\p$} to +(0,3);
\end{tz}
=
\begin{tz}
\node (1) [box] at (0,1) {$\nu_R$};
\node (2) [halfbox] at (-0.5,0) {$\zeta$};
\draw (-0.5,2) node [above] {$Z(R)$} to +(0,-3) node [below] {$\mu_A\p$};
\draw (0.5,2) node [above] {$\nu_B\p$} to +(0,-3) node [below] {$Y(R)$};
\end{tz}
\end{align*}
This completes the proof.
\end{proof}

\section{Presentations of the bordism categories}
\label{sec:geometricalpresentations}

Here we review the presentations given in~\cite{PaperIII} of the geometrical 2\-categories \Bord, \Bordcsig, \Bordsig, and \Bordp. These are the presentations whose linear representations we will classify in \autoref{sec:classification}. Throughout we make use of a cobordism-style notation, and also manifold terminology such as circle, surface, genus and so on, to discuss the structures given below. This is a useful notation, but we emphasize that there is no cobordism theory as such in this paper; technically we are studying representations of particular algebraic structures. The connection to cobordism theory arises entirely from the papers~\cite{PaperI, PaperII, PaperIII}. When we refer to rotations and daggers of the equations below, this is in reference to~\cite[Appendix~A]{PaperII}.

\subsection{The monoid and balanced presentations}
\label{sec:rightadjointpsmonoiddef}

In this section we will consider the monoid presentation, the balanced presentation, and variations thereof.
 Following our weakness convention, by a `monoid' in a monoidal 2\-category we mean the fully weak structure, which is sometimes called a `pseudomonoid'~\cite{s04-fmp}.
\smallbordisms
 \setlength\obscurewidth{0pt}
 \begin{defn}
 \label{monoidpresentation}
 The \emph{monoid presentation} $\P$ is defined as follows:
 \setlength\obscurewidth{0pt}
 \begin{itemize}
  \item Generating object:
 \begin{equation}
 \begin{tz}
         \node[Cyl, top, height scale=0]  at (0,0) {};
 \end{tz}
 \end{equation}
  \item Generating 1\-morphisms:
  \begin{equation}
  \label{onecellgeneratorsmonoid}
  \begin{tz}
         \node[Pants, top, bot] (A) at (0,0) {};
         \node[Cup, top] (C) at (4,0.1) {};
  \end{tz}
  \end{equation}
  \item Invertible generating 2\-morphisms:
 \begin{calign}
 \label{MCmonoid}
 \begin{tz}
         \node[Pants, top, bot, wide] (A) at (0,0) {};
     \node[Pants,  bot, anchor=belt] (B) at (A.leftleg) {};    
     \node[Cyl, bot, anchor=top] at (A.rightleg) {}; 
 \end{tz}    
 \rightleftdoublearrow{\alpha}{\alpha^\inv}
 \begin{tz}
         \node[Pants, top, bot, wide] (A) at (0,0) {};
     \node[Pants,  bot, anchor=belt] (B) at (A.rightleg) {};    
     \node[Cyl, bot, anchor=top] at (A.leftleg) {}; 
 \end{tz} 
&
 \begin{tz}
         \node[Pants, top, bot] (A) at (0,0) {};
         \node[Cyl, bot, anchor=top] at (A.leftleg) {};
         \node[Cup] at (A.rightleg) {};  
 \end{tz}
 \rightleftdoublearrow{\rho}{\rho^\inv}
 \begin{tz}
         \node[Cyl, bot, top, tall] at (0,0) {};
 \end{tz}
 \rightleftdoublearrow{\lambda^\inv}{\lambda}
 \begin{tz}
         \node[Pants, top, bot] (A) at (0,0) {};
         \node[Cyl, bot, anchor=top] at (A.rightleg) {};
         \node[Cup] at (A.leftleg) {};   
 \end{tz}
 \end{calign}
 \end{itemize}
 This data must satisfy the following relations:
 \begin{itemize}
 \item (Inverses) Each of the invertible generating 2\-morphisms $\omega$ satisfies \mbox{$\omega \omega^\inv = \id$} and $\omega^\inv \omega = \id$. 

\item (Monoidal) The generators in \eqref{MCmonoid} obey the pentagon and triangle equations:
\begin{equation} \label{eq:pentagon}
\begin{tz}[xscale=2.1, yscale=1.2]
\node (1) at (0,0)
{
$\begin{tikzpicture}
        \node [Pants, wider, top] (A) at (0,0) {};
        \node [Pants, wide, anchor=belt] (B) at (A.leftleg) {};
        \node [Cyl, tall, anchor=top] (C) at (A.rightleg) {};
        \node[Pants, anchor=belt] (D) at (B.leftleg) {};
        \node[Cyl, anchor=top] (E) at (B.rightleg) {};
        \selectpart[green] {(A-belt) (B-leftleg) (A-rightleg)};
        \selectpart[red] {(A-leftleg) (D-leftleg) (B-rightleg)};
\end{tikzpicture}$
};
\node (2) at (1,1)
{
$\begin{tikzpicture}
        \node [Pants, verywide, top] (A) at (0,0) {};
        \node [Pants, anchor=belt] (B) at (A.rightleg) {};
        \node [Cyl, anchor=top] (C) at (A.leftleg) {};
        \node[Pants, anchor=belt] (D) at (C.bottom) {};
        \node[Cyl, anchor=top] (E) at (B.leftleg) {};
        \node[Cyl, anchor=top] (F) at (B.rightleg) {};
        \selectpart[green] {(A-leftleg) (A-rightleg) (D-leftleg) (F-bottom)};
\end{tikzpicture}$
};
\node (3) at (2,1)
{
$\begin{tikzpicture}
        \node [Pants, verywide, top] (A) at (0,0) {};
        \node [Pants, anchor=belt] (B) at (A.leftleg) {};
        \node[Cyl, anchor=top] (C) at (A.rightleg) {};
        \node[Cyl, anchor=top] (D) at (B.leftleg) {};
        \node[Cyl, anchor=top] (E) at (B.rightleg) {};
        \node[Pants, anchor=belt] (F) at (C.bottom) {};
        \selectpart[green] {(A-belt) (B-leftleg) (A-rightleg)};
\end{tikzpicture}$
};
\node (4) at (3,0)
{
$\begin{tikzpicture}
        \node [Pants, wider, top] (A) at (0,0) {};
        \node [Cyl, tall, anchor=top] (B) at (A.leftleg) {};
        \node[Pants, anchor=belt, wide] (C) at (A.rightleg) {};
        \node[Cyl, anchor=top] (D) at (C.leftleg) {};
        \node[Pants, anchor=belt] (E) at (C.rightleg) {};
\end{tikzpicture}$
};
\node (5) at (1,-1)
{
$\begin{tikzpicture}
        \node [Pants, veryverywide, top] (A) at (0,0) {};
        \node [Pants, wide, anchor=belt] (B) at (A.leftleg) {};
        \node [Cyl, tall, anchor=top] (C) at (A.rightleg) {};
        \node[Pants, anchor=belt] (D) at (B.rightleg) {};
        \node[Cyl, anchor=top] (E) at (B.leftleg) {};
                \selectpart[green] {(A-belt) (B-leftleg) (A-rightleg)};
\end{tikzpicture}$
};
\node (6) at (2,-1)
{
$\begin{tikzpicture}
        \node [Pants, veryverywide, top] (A) at (0,0) {};
        \node [Pants, wide, anchor=belt] (B) at (A.rightleg) {};
        \node [Cyl, tall, anchor=top] (C) at (A.leftleg) {};
        \node[Pants, anchor=belt] (D) at (B.leftleg) {};
        \node[Cyl, anchor=top] (E) at (B.rightleg) {};
        \selectpart[green] {(A-rightleg) (D-leftleg) (B-rightleg)};
\end{tikzpicture}$
};
\begin{scope}[double arrow scope]
    \draw (1) --  node[above left, green, pos=0.65]{$\alpha$} (2);
    \draw (2) --  node[above]{$\interchangor$} (3);
    \draw (3) --  node[above right, pos=0.35]{$\alpha$} (4);
    \draw (1) --  node[below left, red, pos=0.65]{$\alpha$} (5);
    \draw (5) --  node[below]{$\alpha$} (6);
    \draw (6) --  node[below right, pos=0.35]{$\alpha$} (4);
\end{scope}
\end{tz}
\end{equation}
\begin{equation}
\label{eq:triangle}
\begin{tz}[xscale=1, yscale=2, every to/.style={out=down,in=up}]
\node (1) at (-1,1)
{
$\begin{tikzpicture}
    \node (A) [Pants] at (0,0) {};
    \node (B) [Pants, wide, top, anchor=leftleg] at (A.belt) {};
    \node (C) [Cup] at (A.rightleg) {};
    \node [Cyl, anchor=top] (D) at (A.leftleg) {};
    \node [Cyl, tall, anchor=top] (E) at (B.rightleg) {};
    \selectpart[green] {(B-belt) (A-leftleg) (B-rightleg)};
    \selectpart[red] {(B-leftleg) (D-bottom) (A-rightleg)};
\end{tikzpicture}$
};
\node (2) at (1,1)
{
$\begin{tikzpicture}
    \node (A) [Pants] at (0,0) {};
    \node (B) [Pants, wide, top, anchor=rightleg] at (A.belt) {};
    \node (C) [Cup] at (A.leftleg) {};
    \node (D) [Cyl, anchor=top] at (A.rightleg) {};
    \node [Cyl, tall, anchor=top] at (B.leftleg) {};
    \selectpart[green] {(B-rightleg) (A-leftleg) (D-bottom)};
\end{tikzpicture}$
};
\node (3) at (0,0)
{
$\begin{tikzpicture}
    \node (A) [Pants, top] at (0,0) {};
\end{tikzpicture}$
};
\begin{scope}[double arrow scope]
    \draw (1) --  node[above, green]{$\alpha$} (2);
    \draw (2) --  node[below right, pos=0.35]{$\lambda$} (3);
    \draw (1) --  node[below left, red, pos=0.35]{$\rho$} (3);
\end{scope}
\end{tz}
\end{equation}
\end{itemize}
\end{defn}

\noindent
The 2-morphism labelled $\varphi$ is an \textit{interchanger}, one of a canonical family of 2-morphisms in any monoidal 2-category that switches the 1-morphism composition order of two 1-morphisms that have been tensored together (see~\cite{b14-wd} for details).

A representation of $\P$ in a symmetric monoidal 2\-category $\bicat C$ is the same thing as a monoid in $\bicat C$. Let us record this explicitly for the case $\bicat C = \twovect$.

\begin{lemma}
\label{lem:monoidrep}
A representation of the monoid presentation $\P$ in $\twovect$ is the same thing as a Cauchy-complete linear monoidal category. A morphism between representations of $\P$ is the same thing as a monoidal functor, and a 2-morphism is the same thing as a monoidal transformation.
\end{lemma}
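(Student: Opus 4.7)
The proof is essentially a bookkeeping exercise: unwind the explicit description of $\Rep_{\twovect}(\P)$ provided just after \autoref{thm:equivpres}, and match each piece of data and each relation with the corresponding piece of the definition of a monoidal category. The plan is to go through the three levels (objects, 1-morphisms, 2-morphisms) of $\Rep_{\twovect}(\P)$ in turn.

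\textbf{Objects.} Given a representation $Z$, its value on the single generating object is an object $C := Z(\text{\smallbordisms\begin{tz}\node[Cyl, top, height scale=0] at (0,0) {};\end{tz}})$ of $\twovect$, that is, a Cauchy-complete linear category. The generating 1-morphism pants has source the two-circle object and target the circle, so $Z(\text{pants})$ is a linear functor $C \,\hat{\otimes}\, C \to C$; since $C$ is Cauchy-complete, this is equivalent by the universal property of Cauchy completion to a linear functor $\otimes \colon C \boxtimes C \to C$, i.e.\ a bilinear tensor product. The generating 1-morphism cup has empty source, so $Z(\text{cup})$ is a linear functor $\vect \to C$, which is determined up to unique isomorphism by the object $I := Z(\text{cup})(k) \in C$. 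The three invertible 2-morphism generators give natural isomorphisms $\alpha_{X,Y,W}\colon (X\otimes Y)\otimes W \xrightarrow{\sim} X\otimes(Y\otimes W)$, $\rho_X \colon X \otimes I \xrightarrow{\sim} X$, and $\lambda_X \colon I \otimes X \xrightarrow{\sim} X$; invertibility of these generators is guaranteed by the explicit $\omega\omega^{-1} = \id$, $\omega^{-1}\omega = \id$ relations in the presentation. The pentagon and triangle relations \eqref{eq:pentagon}, \eqref{eq:triangle} are exactly the pentagon and triangle axioms for a monoidal category (the 2-morphism $\varphi$ in \eqref{eq:pentagon} is the standard interchanger, which becomes the identity under strictification). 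Thus representations of $\P$ in $\twovect$ correspond bijectively to Cauchy-complete linear monoidal categories.

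\textbf{1-morphisms.} Unpacking the explicit description, a 1-morphism $\zeta\colon Z \to Z'$ consists of a linear functor $F := \zeta_C\colon C \to C'$ together with 2-isomorphisms $\zeta_{\text{pants}}$ and $\zeta_{\text{cup}}$ filling the squares associated to the pants and cup 1-generators. Translated, these are natural isomorphisms $F(X) \otimes' F(Y) \xrightarrow{\sim} F(X \otimes Y)$ and $I' \xrightarrow{\sim} F(I)$. The coherence conditions forced by the three 2-morphism generators $\alpha$, $\lambda$, $\rho$ are exactly the three commutative diagrams (associativity, left unitality, right unitality) in the definition of a weak monoidal functor.

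\textbf{2-morphisms.} Finally, a 2-morphism $\omega \colon \zeta \Rightarrow \zeta'$ consists of a single natural transformation $\omega_C \colon F \Rightarrow F'$, subject to the commutative diagrams coming from the two 1-morphism generators pants and cup. These are precisely the compatibility conditions of a monoidal natural transformation with the tensor product and with the unit.

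All three correspondences are manifestly inverse to one another on the nose, so we obtain an isomorphism (not merely an equivalence) of the two 2-categories in question. The only mildly delicate point, which I would handle at the very start of the argument, is the identification of a functor $\vect \to C$ with an object of $C$: this uses that $\vect$ is the monoidal unit of $\twovect$ together with Cauchy-completeness of $C$, so that $Z(\text{cup})$ is determined by its value on $k$, and that value can be chosen to be an arbitrary object of $C$.
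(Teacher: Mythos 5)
Your proof is correct and is exactly the argument the paper has in mind: the paper states this lemma without proof, treating it as a direct unwinding of the explicit description of $\Rep_{\twovect}(\P)$ given after \autoref{thm:equivpres}, which is precisely the bookkeeping you carry out. The only tiny imprecision is that the monoidal unit of $\twovect$ is the Cauchy completion of the one-object category $k$ (i.e.\ finite-dimensional vector spaces) rather than all of $\vect$, but this does not affect your key point that $Z(\text{cup})$ is determined up to unique isomorphism by its value on $k$.
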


This can be generalized to include braidings, twists, and other familiar categorical structures.

\begin{defn}
\label{balancedpresentation}        
The \emph{balanced presentation} $\B$ is a 2\-extension of the monoid presentation $\P$, with the following additional generators and relations:
\begin{itemize}
\item  Additional invertible generating 2\-morphisms:
\begin{align}
\begin{tz}
        \node[Pants, top, bot] (A) at (0,0) {};
\end{tz}
&\rightleftdoublearrow{\beta}{\beta^\inv}
\begin{tz}
        \node[Pants, top, bot] (A) at (0,0) {};
        \node[BraidB, anchor=topleft, bot] at (A.leftleg) {};
\end{tz}
&
\begin{tz}
        \node[Cyl, top, bot] (A) at (0,0) {};
\end{tz}
&\rightleftdoublearrow{\theta}{\theta^\inv}
\begin{tz}
        \node[Cyl, top, bot] (A) at (0,0) {};
\end{tz} \label{RC}
\end{align}

\item (Additional inverses) Each of the additional invertible generating 2\-morphisms $\omega$ satisfies \mbox{$\omega \omega^\inv = \id$} and $\omega^\inv \omega = \id$. 
 \item (Balanced) The generating data gives a braided monoidal object equipped with a compatible twist:
\begin{equation}
\label{eq:hexagon}
\begin{tz}[xscale=2.4, yscale=1.2]
\node (1) at (0,0)
{
$\begin{tikzpicture}
    \node [Pants, wide, top] (A) at (0,0) {};
    \node [Pants, anchor=belt] (B) at (A.leftleg) {};
    \node [Cyl, anchor=top] (C) at (A.rightleg) {};
    \selectpart[green]{(A-leftleg) (B-leftleg) (B-rightleg)};
\end{tikzpicture}$
};

\node (2) at (0.75,1)
{
$\begin{tikzpicture}
    \node [Pants, wide, top] (A) at (0,0) {};
    \node [Pants, anchor=belt] (B) at (A.rightleg) {};
    \node [Cyl, anchor=top] (C) at (A.leftleg) {};
    \selectpart[green]{(A-belt) (A-leftleg) (A-rightleg)};
\end{tikzpicture}$
};

\node (3) at (1.5,1)
{
$\begin{tikzpicture}
    \node [Pants, wide, top] (A) at (0,0) {};
    \node[BraidB, wide, anchor=topleft] (B) at (A.leftleg) {};
    \node [Pants, anchor=belt] (C) at (B.bottomright) {};
    \node [Cyl, anchor=top] (D) at (B.bottomleft) {};
\end{tikzpicture}$
};

\node (4) at (2.25,1)
{
$\begin{tikzpicture}
    \node [Pants, wide, top] (A) at (0,0) {};
    \node [Pants, anchor=belt] (C) at (A.leftleg) {};
    \node [Cyl, anchor=top] (D) at (A.rightleg) {};
    \node [BraidB, anchor=topleft] (E) at (C.rightleg) {};
    \node[Cyl, anchor=top] (F) at (C.leftleg) {};
    \node[BraidB, anchor=topleft] (G) at (F.bottom) {};
    \node[Cyl, anchor=top] (H) at (G.bottomright) {};
    \node[Cyl, anchor=top] (I) at (G.bottomleft) {};
    \node[Cyl, anchor=top, tall] (J) at (E.bottomright) {};
    \selectpart[green]{(A-belt) (C-leftleg) (D-bottom)};
\end{tikzpicture}$
};

\node (5) at (3,0)
{
$\begin{tikzpicture}
    \node [Pants, wide, top] (A) at (0,0) {};
    \node [Pants, anchor=belt] (C) at (A.rightleg) {};
    \node [Cyl, anchor=top] (D) at (A.leftleg) {};
    \node [BraidB, anchor=topleft] (E) at (C.leftleg) {};
    \node[Cyl, tall, anchor=top] (F) at (A.leftleg) {};
    \node[BraidB, anchor=topleft] (G) at (F.bottom) {};
    \node[Cyl, anchor=top] (H) at (E.bottomright) {};
\end{tikzpicture}$
};

\node (6) at (1,-1)
{
$\begin{tikzpicture}
    \node [Pants, wide, top] (A) at (0,0) {};
    \node [Pants, anchor=belt] (B) at (A.leftleg) {};
    \node [Cyl, anchor=top, tall] (C) at (A.rightleg) {};
    \node [BraidB, anchor=topleft] (D) at (B.leftleg) {};
    \selectpart[green]{(A-belt) (B-leftleg) (A-rightleg)};
\end{tikzpicture}$
};

\node (7) at (2.0,-1)
{
$\begin{tikzpicture}
    \node [Pants, wide, top] (A) at (0,0) {};
    \node [Pants, anchor=belt] (B) at (A.rightleg) {};
    \node [Cyl, anchor=top] (C) at (A.leftleg) {};
    \node[BraidB, anchor=topleft] (D) at (C.bottom) {};
    \node[Cyl, anchor=top] (E) at (B.rightleg) {};
    \selectpart[green]{(A-rightleg) (B-leftleg) (B-rightleg)};
\end{tikzpicture}$
};

\begin{scope}[double arrow scope]
    \draw (1) --  node[above left]{$\alpha$} (2);
    \draw (2) --  node[above]{$\beta$} (3);
    \draw[-=, shorten <=-4pt] (3) --  (4);
    \draw (4) --  node[above right]{$\alpha$} (5);
    \draw (1) --  node[below left]{$\beta$} (6);
    \draw (6) --  node[below]{$\alpha$} (7);
    \draw (7) --  node[below right]{$\beta$} (5);
\end{scope}
\end{tz}
\end{equation}
\begin{equation}
\label{balanced1}
\begin{tz}[xscale=1.4, yscale=1.5]

\node (1) at (0,0)
{
$\begin{tikzpicture}
        \node[Pants, top] (A) at (0,0) {};
        \selectpart[green, inner sep=1pt]{(A-belt)};
        \selectpart[red] {(A-leftleg) (A-rightleg) (A-belt)};
\end{tikzpicture}$
};
\node (2) at (1,0)
{
$\begin{tikzpicture}
        \node[Pants, top] (A) at (0,0) {};
\end{tikzpicture}$
};

\node (3) at (0,-1)
{
$\begin{tikzpicture}
        \node[Pants, top] (A) at (0,0) {};
        \selectpart[green, inner sep=1pt]{(A-leftleg)};
\end{tikzpicture}$
};

\node (4) at (1,-1)
{
$\begin{tikzpicture}
        \node[Pants, top] (A) at (0,0) {};
        \selectpart[green, inner sep=1pt]{(A-rightleg)};
\end{tikzpicture}$
};

\begin{scope}[double arrow scope]
    \draw (1) -- node[above, green] {$\theta$} (2);
    \draw (1) -- node[left, red] {$\beta^2$} (3);
    \draw (3) -- node[below] {$\theta$} (4);
    \draw (4) -- node[right] {$\theta$} (2);
\end{scope}
\end{tz}
\end{equation}
\begin{equation}
\label{balanced2}
\begin{tz}[xscale=1.6, yscale=2]
\node (1) at (0,0) {$
\begin{tikzpicture}
        \node[Cup, top] (C) at (0,0) {};
        \selectpart[green, inner sep=1pt]{(C-center)};
\end{tikzpicture}$};
\node (2) at (1,0) {$
\begin{tikzpicture}
        \node[Cup, top] (C) at (0,0) {};
        \selectpart[green, inner sep=1pt]{(C-center)};
\end{tikzpicture}$};
\begin{scope}[double arrow scope]
    \draw (1) --  node[above]{$\theta$} (2);
\end{scope}
\end{tz}
\quad = \quad
\id
\end{equation}
\end{itemize}
Note that the second hexagon axiom is redundant in the presence of a twist~\cite{js91-gtc}.
\end{defn}
Recall that a \textit{balanced braided monoidal category} is a braided monoidal category $C$ that is  equipped with a natural isomorphism $\theta \colon \id_C \doubleto \id_C$ called the \textit{twist}, satisfying the following equations:
\tikzset{morphism/.style={draw, fill=white}}
\normalbordisms
\begin{align}
\label{eq:balanced}
\begin{tz}[xscale=0.8]
\draw (0,0) to (0,3);
\draw (1,0) to (1,3);
\node [morphism, minimum width=0.8cm] at (0.5,1.5) {$\theta_{A \otimes B}$};
\end{tz}
\gap&=\gap
\begin{tz}
\draw (1,0) to [out=up, in=down] (0,1);
\draw [black strand] (0,0) to [out=up, in=down] (1,1);
\draw [black strand] (1,1) to [out=up, in=down] (0,2);
\draw [black strand] (0,2) to [out=up, in=down] (0,3);
\draw [black strand] (0,1) to [out=up, in=down] (1,2);
\draw [black strand] (1,2) to [out=up, in=down] (1,3);
\node [morphism, anchor=south] at (0,2) {$\theta_A$};
\node [morphism, anchor=south] at (1,2) {$\theta_B$};
\end{tz}
&
\begin{tz}
\draw [black strand, dotted] (0,-1) to (0,1);
\node [morphism] at (0,0) {$\theta _I$};
\end{tz}
\gap&=\gap
\begin{tz}
\draw [black strand, dotted] (0,-1) to (0,1);
\end{tz}
\end{align}
The second equation here says $\theta _I = \id_I$. We can classify representations of the balanced presentation $\B$ as follows.

\begin{lemma}
\label{lem:balancedrep}
A representation of the balanced presentation $\B$ in $\twovect$ is the same thing as a Cauchy-complete linear balanced braided monoidal category. A morphism of representations of $\B$ is the same thing as a braided monoidal functor that preserves the twist, and a 2-morphism is the same thing as a monoidal natural transformation.
\end{lemma}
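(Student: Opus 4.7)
The plan is to bootstrap off \autoref{lem:monoidrep}, which already handles the monoidal-category part. Since the balanced presentation $\B$ 2-extends the monoid presentation $\P$ in the sense of \autoref{def:2extend}, a representation $Z \colon \bicat F(\B) \to \twovect$ restricts to a representation of $\P$, and hence, by \autoref{lem:monoidrep}, equips the linear category $C := Z(\bigcirc)$ (which is Cauchy-complete because $\twovect$ consists of Cauchy-complete categories) with the structure of a $k$-linear monoidal category. The remaining data and axioms of $\B$ over $\P$ must then be shown to correspond precisely to a braiding and a compatible twist on $C$.

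First I would unpack the additional 2-morphism generators in \eqref{RC}. Using the geometric interpretation, $\beta$ is a 2-isomorphism between the pants and the pants pre-composed with a braid, which under the monoidal-category dictionary is exactly a natural isomorphism $\beta_{A,B}\colon A\otimes B\to B\otimes A$; naturality in $A,B$ is automatic since $\beta$ is a generating 2-morphism in a symmetric monoidal 2-category and hence satisfies the standard interchange/naturality conditions of \cite{b14-wd}. Similarly, $\theta$ on the cylinder generator yields a natural automorphism $\theta_A\colon A\to A$ of the identity functor. The invertibility clause in the presentation ensures that $\beta$ and $\theta$ are isomorphisms, as required.

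Next I would translate each balanced relation into its categorical counterpart. The hexagon relation \eqref{eq:hexagon} combines $\alpha$ and $\beta$ along a hexagonal face of composable 2-morphisms; after applying the wire-diagram calculus and the coherence theorem of \cite[Chapter 2]{CSPthesis}, this is exactly the standard hexagon axiom for a braiding with respect to the associator $\alpha$. The relation \eqref{balanced1}, which equates $\theta$ on the pants with $\beta^2$ composed with two local twists, is exactly the balanced identity $\theta_{A\otimes B} = (\theta_A \otimes \theta_B)\circ\beta_{B,A}\circ\beta_{A,B}$ displayed in \eqref{eq:balanced}. The relation \eqref{balanced2} says $\theta$ on the cup is trivial, which is $\theta_I = \id_I$, the second equation of \eqref{eq:balanced}. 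The second hexagon is redundant by \cite{js91-gtc} as noted in the definition, so no additional relation is needed. Collating these, a representation of $\B$ is precisely a Cauchy-complete $k$-linear balanced braided monoidal category.

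For the 1-morphism and 2-morphism levels I would argue analogously. A 1-morphism $\zeta$ of representations of $\B$ is, by \autoref{thm:equivpres}, a symmetric monoidal transformation between the underlying symmetric monoidal functors, which in particular restricts to a monoidal functor $F\colon C\to C'$ from \autoref{lem:monoidrep}. The compatibility 2-cells $\zeta_\beta$ and $\zeta_\theta$ required by the definition of $\Rep_{\twovect}(\B)$ become, after unpacking, exactly the commutativity conditions $F(\beta_{A,B})=\beta_{FA,FB}$ and $F(\theta_A)=\theta_{FA}$, i.e.\ $F$ is braided and twist-preserving. A 2-morphism $\omega\colon\zeta\Rightarrow\zeta'$ reduces, as in \autoref{lem:monoidrep}, to a monoidal natural transformation; the extra conditions coming from the generators $\beta$ and $\theta$ are automatic by naturality of $\beta$ and $\theta$. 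The main obstacle is mostly bookkeeping: carefully translating the wire-diagram equations into the familiar string-diagram identities in a braided monoidal category, keeping track of the interchangers and associators that appear when the presentation is read off geometrically. This translation is routine given the coherence framework of \cite[Chapter 2]{CSPthesis} and the graphical calculus of \cite{b14-wd}.
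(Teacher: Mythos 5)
Your unpacking is correct and is exactly the argument the authors intend: the paper states both \autoref{lem:monoidrep} and \autoref{lem:balancedrep} without proof, treating them as routine translations of the generators $\beta$, $\theta$ and the relations \eqref{eq:hexagon}, \eqref{balanced1}, \eqref{balanced2} into the braiding, the balancing identity \eqref{eq:balanced}, and $\theta_I=\id$. One cosmetic remark: naturality of $\beta_{A,B}$ and $\theta_A$ is immediate simply because 2-morphisms in $\twovect$ \emph{are} natural transformations, so there is no need to appeal to interchange conditions, and at the 2-morphism level there are no extra conditions to check at all, since those conditions are indexed by generating 1-morphisms and $\B$ has the same 1-generators as $\P$.
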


Because they play important roles, we give explicit descriptions of the right-adjoint balanced presentation $\B^\L$ and the right-adjoint monoid presentation~$\P^\L$.
\smallbordisms
 \begin{defn}
 \label{rightadjointmonoidpresentation}
 The \emph{right-adjoint balanced presentation} $\B^\L$ (resp. \emph{right-adjoint monoid presentation} $\P^\L$) is a 1\-extension of the balanced presentation $\B$ (resp. monoid presentation $\P$), with the following additional generators and relations:
\setlength\obscurewidth{0pt}
\begin{itemize}
\item Additional generating 1\-morphisms:
\begin{equation}
        \label{onecellgeneratorsmonoid}
        \begin{tz}
                \node[Copants, top, bot] (B) at (2,0) {};
                \node[Cap, bot] (D) at (6,-0.1) {};
        \end{tz}
\end{equation}
\item 
Additional generating 2\-morphisms:
 \begin{align} 
 \begin{tz} 
  \node[Cyl, tall, top, bot] (A) at (0,0) {};
  \node[Cyl, tall, top, bot] (B) at (2*\cobwidth, 0) {};
 \end{tz}
 &\longxdoubleto{\eta}
 \begin{tz} 
  \node[Pants, bot] (A) at (0,0) {};
  \node[Copants, top, bot, anchor=belt] at (A.belt) {};
 \end{tz}
&
 \begin{tz} 
  \node[Pants, top, bot] (A) at (0,0) {};
  \node[Copants, bot, anchor=leftleg] at (A.leftleg) {};
 \end{tz}
 &
 \longxdoubleto{\epsilon}
 \begin{tz} 
  \node[Cyl, top, bot, tall] (A) at (0,0) {};
 \end{tz} \label{A1monoid}
\\[5pt]
 \begin{tz}
  \draw[green] (0,0) rectangle (0.6, -0.6);  
 \end{tz}
 &\longxdoubleto{\nu}
 \begin{tz}
         \node[Cap, bot] (A) at (0,0) {};
         \node[Cup] at (0,0) {};
 \end{tz}
&
 \begin{tz}
         \node[Cup, top] (A) at (0,0) {};
         \node[Cap, bot] (B) at (0,-2*\cobheight) {};
 \end{tz}
&\longxdoubleto{\mu}
 \begin{tz}
         \node[Cyl, top, bot, tall] (A) at (0,0) {};
 \end{tz}
 \label{A2monoid}
 \end{align}
 \item (Adjunction) The following composites are the identity, which means that the data \eqref{A1monoid} witnesses $\tikztinypants \dashv \tikztinycopants$ and the data \eqref{A2monoid} witnesses $\tikztinycup \dashv \tikztinycap$:
\begin{calign}
\label{adj_nu_mu_monoid}
\begin{tz}
     \node (1) [Cup, top] at (0,0) {};
     \node (2) [Cup, invisible] at (0,-1.5\cobheight) {};
     \node (3) [Cap, invisible] at (0,-1.5\cobheight) {};
     \selectpart[green]{(3) (2)}
\end{tz}
\longxdoubleto{\nu}
\begin{tz}
     \node (1) [Cup, top] at (0,0) {};
     \node (2) [Cap] at (0,-1.5\cobheight) {};
     \node (3) [Cup] at (0,-1.5\cobheight) {};
     \selectpart[green]{(1-center) (2-center)}
\end{tz}
\longxdoubleto{\mu}
\begin{tz}
    \node[Cyl, top, height scale=1.5] (X) at (0,0) {};
    \node[Cup] at (X.bottom) {};
\end{tz}
&
\begin{tz}
     \node (1) [Cap] at (0,-1.5\cobheight) {};
     \node (2) [Cup, invisible] at (0,0) {};
     \node (3) [Cap, invisible] at (0,0) {};
     \selectpart[green]{(3) (2)}
\end{tz}
\longxdoubleto{\nu}
\begin{tz}
     \node (1) [Cap] at (0,-1.5\cobheight) {};
     \node (2) [Cap] at (0,0) {};
     \node (3) [Cup] at (0,0) {};
     \selectpart[green]{(1-center) (2-center)}
\end{tz}
\longxdoubleto{\mu}
\begin{tz}
    \node[Cyl, bot, height scale=1.5] (X) at (0,0) {};
    \node[Cap] at (X.top) {};
\end{tz}
\\[5pt]
\label{adj_eta_epsilon_monoid}
\begin{tz}
     \node [Pants, top] (A) at (0,0) {};
     \node[Cyl, anchor=top, height scale=2] (B) at (A.leftleg) {};
     \node[Cyl, anchor=top, height scale=2] (C) at (A.rightleg) {};
     \selectpart[green]{(A-leftleg) (A-rightleg) (B-bottom) (C-bottom)}
\end{tz}
\longxdoubleto{\eta}
\begin{tz}
     \node [Pants, bot, top] (A) at (0,0) {};
     \node [Copants, anchor=leftleg] (B) at (A.leftleg) {};
     \node [Pants, anchor=belt, bot] at (B.belt) {};
     \selectpart [green] {(A-leftleg) (A-belt) (A-rightleg) (B-belt)};
\end{tz}
\longxdoubleto{\epsilon}
\begin{tz}
     \node [Pants] (A) at (0,0) {};
     \node [Cyl, tall, bot, anchor=bot, top] at (A.belt) {};
\end{tz}
&
\begin{tz}
     \node [Copants] (A) at (0,0) {};
     \node[Cyl, anchor=bottom, height scale=2, top] (B) at (A.leftleg) {};
     \node[Cyl, anchor=bottom, height scale=2, top] (C) at (A.rightleg) {};
     \selectpart[green]{(C-bottom) (C-top) (B-bottom) (B-top)}
\end{tz}
\longxdoubleto{\eta}
\begin{tz}
     \node [Copants, bot] (A) at (0,0) {};
     \node [Pants, anchor=leftleg] (B) at (A.leftleg) {};
     \node [Copants, anchor=belt, top] (C) at (B.belt) {};
     \selectpart [green]{(B-leftleg) (C-belt) (B-rightleg) (A-belt)};
\end{tz}
\longxdoubleto{\epsilon}
\begin{tz}
     \node [Copants, top] (A) at (0,0) {};
     \node [Cyl, tall, bot, anchor=top] at (A.belt) {};
\end{tz}
\end{calign}
\end{itemize}
\end{defn}
We can classify the core of the 2-category of representations of $\B^\L$ in $\bimod$ as follows.
\begin{lemma} There is an equivalence of 2-categories
\[
 \overline{\Rep_{\bimod}(\B^\L)} \stackrel{\sim}{\rightarrow} \overline{\Rep_{\twovect}(\B)}.
\]
A similar equivalence holds for representations of $\P^\L$. 
\end{lemma}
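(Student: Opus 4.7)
The plan is to reduce the statement to a property of Cauchy completion. First, I apply \autoref{pro:addingrightadjointsforrepinprof} with $\bicat C = \bimod$ and $\mathcal G = \B$, which yields an equivalence
\[
\overline{\Rep_\bimod(\B^\L)} \simeq \overline{\Rep_{\bimod^\L}(\B)},
\]
where $\bimod^\L \subseteq \bimod$ denotes the sub-2-category on all objects and left-adjoint bimodules. It then suffices to produce an equivalence $\overline{\Rep_{\bimod^\L}(\B)} \simeq \overline{\Rep_\twovect(\B)}$. The 2-functor $(-)_* \colon \twovect \to \bimod^\L$, well-defined by \autoref{adj_lemma}, induces the comparison 2-functor $\Phi$ on representation 2-categories, and I intend to check that its restriction to cores is an equivalence by verifying essential surjectivity together with local equivalence.

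For essential surjectivity, starting from $Z \in \Rep_{\bimod^\L}(\B)$, I would first transport $Z$ across the Cauchy-completion inclusions $i_\ast \colon Z(X) \proarrow \widehat{Z(X)}$, which are equivalences in $\bimod$ by \autoref{pro:Cauchycompletionisequiv}. This step requires a variant of \autoref{uniqueconjugate} in which the transport data consists of equivalences at target objects rather than invertible 2-morphisms at 1-generators; the result is an equivalent representation $\hat Z$ whose underlying linear categories are all Cauchy-complete, and whose 1-generator images $\hat Z(p) \simeq i_\ast \circ Z(p) \circ i^\ast$ are composites of left adjoints and hence themselves left adjoints. Each $\hat Z(p)$ is therefore a left-adjoint bimodule between Cauchy-complete linear categories, and hence representable by \autoref{pro:CauchyComplete}(4); choosing representing functors together with the induced 2-morphisms for the 2-generators of $\B$ produces a representation in $\twovect$ whose image under $\Phi$ equals $\hat Z$, and so is equivalent to $Z$.

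For local equivalence, between two Cauchy-complete linear categories $C$ and $D$ the 2-functor $(-)_\ast$ restricts to an equivalence $\hom_\twovect(C,D) \to \hom_{\bimod^\L}(C,D)$: natural transformations $F \Rightarrow G$ correspond bijectively to natural transformations $F_\ast \Rightarrow G_\ast$, and any equivalence bimodule is representable by applying \autoref{pro:CauchyComplete}(4) to both directions of the adjoint equivalence, yielding a linear equivalence of functors. Combined with the Cauchy-completion construction above, this shows that $\Phi$ is locally essentially surjective and fully faithful on invertible 2-morphisms, hence an equivalence on cores. The same argument works verbatim for $\P$ and $\P^\L$, since nothing in it depends on the particular generators of $\B$.

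I expect the main obstacle to be the precise formulation of the object-level analogue of \autoref{uniqueconjugate} and the coherent simultaneous choice of representing functors for all 1-generators; since the choice of an adjoint (and thus of a representing functor for a left-adjoint bimodule into a Cauchy-complete category) is contractible, this bookkeeping should go through, but care will be required to ensure that all the 2-generator relations in $\B$, and likewise in $\P$, are preserved by the chosen representatives.
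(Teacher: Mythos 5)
Your proposal is correct and follows essentially the same route as the paper: the first step (reducing $\overline{\Rep_{\bimod}(\B^\L)}$ to $\overline{\Rep_{\bimod^\L}(\B)}$ via \autoref{pro:addingrightadjointsforrepinprof}) is identical, and your second step is just an unpacking, at the level of representation 2\-categories, of \autoref{pro:leftadjointbimodule}, which states that $(-)_*$ is an equivalence $\twovect \simeq \bimod^\L$ and whose proof rests on exactly the ingredients you use (\autoref{pro:Cauchycompletionisequiv} and \autoref{pro:CauchyComplete}(4)). Citing that proposition directly would spare you the object-level transport bookkeeping you flag as the main obstacle.
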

\begin{proof}
This is a direct application of \autoref{pro:addingrightadjointsforrepinprof} and \autoref{pro:leftadjointbimodule}.
\end{proof}

\subsection{The ribbon presentation}

Some categorical structures can be described as representations of a presentation in $\bimod$, but not in $\twovect$. This is the case with Cauchy-complete ribbon categories. 

\begin{defn}
\label{defn_ribbonpresentation}
The \emph{ribbon presentation} $\Rib$ is a 2\-extension of the right-adjoint balanced presentation $\B^\L$, with the following additional generators and relations:
\begin{itemize}  
\item (Inverse Frobeniusators) Additional generating 2\-morphisms:
\begin{calign}
\begin{tz}
                \node[Copants, top] (A) at (0,0) {};
                \node[Pants, anchor=belt] (B) at (A.belt) {};
\end{tz} 
\longxdoubleto{\phiN^\inv}
\begin{tz}
 \node[Pants, top, bot] (A) at (0,0) {};
 \node[Cyl, bot, anchor=top] (B) at (A.leftleg) {};
 \node[Copants, bot, anchor=leftleg] (C) at (A.rightleg) {};
 \node[Cyl, top, bot, anchor=bottom] (D) at (C.rightleg) {}; 
\end{tz}
&
\begin{tz}
                \node[Copants, top] (A) at (0,0) {};
                \node[Pants, anchor=belt] (B) at (A.belt) {};
\end{tz} 
\longxdoubleto{\phiM^\inv}
\begin{tz}
 \node[Pants, top, bot] (A) at (0,0) {};
 \node[Cyl, bot, anchor=top] (B) at (A.rightleg) {};
 \node[Copants, bot, anchor=rightleg] (C) at (A.leftleg) {};
 \node[Cyl, top, bot, anchor=bottom] (D) at (C.leftleg) {}; 
\end{tz}
\end{calign}
         \item (Rigidity) Write $\phiN$ for the following composite (`left Frobeniusator'):
         \begin{align} \label{defn_of_phileft}
         \phiN \quad&:=\quad
        \begin{tz}
         \node[Pants, top, bot] (A) at (0,0) {};
         \node[Cyl, bot, anchor=top] (B) at (A.leftleg) {};
         \node[Copants, bot, anchor=leftleg] (C) at (A.rightleg) {};
         \node[Cyl, top, bot, anchor=bottom] (D) at (C.rightleg) {}; 
         \selectpart[green, inner sep=1pt] {(A-belt) (D-top)};
        \end{tz}
        \longxdoubleto{\eta}
        \begin{tz}
         \node[Copants, top, wide, bot] (F) at (0,0) {};
         \node[Pants, bot, wide, anchor=belt] (G) at (F.belt) {};
         \node[Pants, bot, anchor=belt] (A) at (G.leftleg) {};
         \node[Cyl, bot, anchor=top] (B) at (A.leftleg) {};
         \node[Copants, bot, anchor=leftleg] (C) at (A.rightleg) {};
         \node[Cyl, bot, anchor=bottom] (X) at (C.rightleg) {}; 
         \selectpart[green] {(F-belt) (A-leftleg) (X-bottom)};
        \end{tz}
        \longxdoubleto{\alpha}
        \begin{tz}
         \node[Copants, top, wide, bot] (F) at (0,0) {};
         \node[Pants, bot, wide, anchor=belt] (G) at (F.belt) {};
         \node[Pants, bot, anchor=belt] (A) at (G.rightleg) {};
         \node[Cyl, tall, bot, anchor=top] (B) at (G.leftleg) {};
         \node[Copants, bot, anchor=leftleg] (C) at (A.leftleg) {};
         \selectpart[green] {(G-rightleg) (A-leftleg) (A-rightleg) (C-belt)};
        \end{tz}
        \longxdoubleto{\epsilon}
        \begin{tz}
                \node[Copants, top, bot] (A) at (0,0) {};
                \node[Pants, bot, anchor=belt] (B) at (A.belt) {};
        \end{tz}
        \intertext{Similarly, write $\phiM$ for $\phiN$ rotated about the $z$-axis (`right Frobeniusator'):}
         \label{defn_of_phiright}
         \phiM \quad&:=\quad
        \begin{tz}
         \node[Pants, top, bot] (A) at (0,0) {};
         \node[Cyl, bot, anchor=top] (B) at (A.rightleg) {};
         \node[Copants, bot, anchor=rightleg] (C) at (A.leftleg) {};
         \node[Cyl, top, bot, anchor=bottom] (D) at (C.leftleg) {}; 
         \selectpart[inner sep=1pt, green] {(D-top) (A-belt)};
         \end{tz}
        \longxdoubleto{\eta}
        \begin{tz}
         \node[Copants, top, wide, bot] (F) at (0,0) {};
         \node[Pants, bot, wide, anchor=belt] (G) at (F.belt) {};
         \node[Pants, bot, anchor=belt] (A) at (G.rightleg) {};
         \node[Cyl, bot, anchor=top] (B) at (A.rightleg) {};
         \node[Copants, bot, anchor=rightleg] (C) at (A.leftleg) {};
         \node[Cyl, bot, anchor=bottom] (X) at (C.leftleg) {}; 
         \selectpart[green] {(F-belt) (A-rightleg) (X-bottom)};
        \end{tz}
        \longxdoubleto{\alpha^\inv}
        \begin{tz}
         \node[Copants, top, wide, bot] (F) at (0,0) {};
         \node[Pants, bot, wide, anchor=belt] (G) at (F.belt) {};
         \node[Pants, bot, anchor=belt] (A) at (G.leftleg) {};
         \node[Cyl, tall, bot, anchor=top] (B) at (G.rightleg) {};
         \node[Copants, bot, anchor=rightleg] (C) at (A.rightleg) {};
         \selectpart[green] {(G-leftleg) (A-leftleg) (A-rightleg) (C-belt)};
        \end{tz}
        \longxdoubleto{\epsilon}
        \begin{tz}
                \node[Copants, top, bot] (A) at (0,0) {};
                \node[Pants, bot, anchor=belt] (B) at (A.belt) {};
        \end{tz}
        \end{align}
The additional relations say that $\phiN \phiN^\inv = \id$, $\phiN^\inv \phiN = \id$, $\phiM \phiM^\inv = \id$ and $\phiM^\inv \phiM = \id$.

        \item (Ribbon) The twist satisfies the following equation:
\begin{equation}
        \label{tortile}
        \begin{tz}[xscale=1.4, yscale=2]
        \node (1) at (0,0)
        {
        $\begin{tikzpicture}
                \node[Pants] (A) at (0,0) {};
                \node[Cap] at (A.belt) {};
                \selectpart[green, inner sep=1pt]{(A-leftleg)};
        \end{tikzpicture}$
        };
        \node (2) at (1,0)
        {
        $\begin{tikzpicture}
                \node[Pants] (A) at (0,0) {};
                \node[Cap] at (A.belt) {};
        \end{tikzpicture}$
        };
        \begin{scope}[double arrow scope]
            \draw (1) -- node[above] {$\theta$} (2);
        \end{scope}
        \end{tz}
        \quad = \quad
        \begin{tz}[xscale=1.4, yscale=2]
        \node (1) at (0,0)
        {
        $\begin{tikzpicture}
                \node[Pants] (A) at (0,0) {};
                \node[Cap] at (A.belt) {};
                \selectpart[green, inner sep=1pt]{(A-rightleg)};
        \end{tikzpicture}$
        };
        \node (2) at (1,0)
        {
        $\begin{tikzpicture}
                \node[Pants] (A) at (0,0) {};
                \node[Cap] at (A.belt) {};
        \end{tikzpicture}$
        };
        \begin{scope}[double arrow scope]
            \draw (1) -- node[above] {$\theta$} (2);
        \end{scope}
        \end{tz} 
        \end{equation}
\end{itemize}
\end{defn}

\begin{defn}
\label{def:ribboncategory}
A \textit{ribbon category} is a balanced braided  monoidal category which is rigid, and where the twist satisfies the ribbon condition:\footnote{By `rigid' we mean there exist duals for each object, not that there are chosen duals.  Any condition, such as \eqref{eq:ribboncondition}, that refers to particular duals, units, and/or counits, should be interpreted to mean that the condition holds for any (equivalently, for all) choices of duals.}
\tikzset{morphism/.style={draw, fill=white}}
\normalbordisms
\begin{align}
\label{eq:ribboncondition}
\begin{tz}
\draw (0,0) to (0,-1) to [out=down, in=down, looseness=2] (1,-1) to (1,0);
\node [morphism, anchor=south] at (0,-1) {$\theta_A$};
\end{tz}
\gap&=\gap
\begin{tz}
\draw (0,0) to (0,-1) to [out=down, in=down, looseness=2] (1,-1) to (1,0);
\node [morphism, anchor=south] at (1,-1) {$\theta_{A^*}$};
\end{tz}
\end{align}
\end{defn}

\noindent It is not obvious that representations of $\R$ in $\bimod$ correspond precisely to Cauchy-complete linear ribbon categories; in \autoref{thm:repofribisribbon}, we show at least that given a representations of \R in \bimod, the image of the circle is a Cauchy-complete linear ribbon category. 

\subsection{The modular presentation}
\label{sec:modularobjectdefinition}
 
\begin{defn} \label{defn_presentation_of_B}  The {\em modular presentation} $\M$ is a 2\-extension of the ribbon presentation $\R$, with the following additional generators and relations:
\begin{itemize}
\item Additional generating 2\-morphisms:
\begin{align}
\label{A1}
\begin{tz} 
 \node[Pants, bot] (A) at (0,0) {};
 \node[Copants, top, bot, anchor=belt] at (A.belt) {};
\end{tz}
&\longxdoubleto{\eta ^\dag}
\begin{tz} 
 \node[Cyl, tall, top, bot] (A) at (0,0) {};
 \node[Cyl, tall, top, bot] (B) at (2*\cobwidth, 0) {};
\end{tz}
&
\begin{tz} 
 \node[Cyl, top, bot, tall] (A) at (0,0) {};
\end{tz}
&\longxdoubleto{\epsilon ^\dag}
\begin{tz} 
 \node[Pants, top, bot] (A) at (0,0) {};
 \node[Copants, bot, anchor=leftleg] at (A.leftleg) {};
\end{tz}
\\[5pt]
\label{A2}
\begin{tz}
        \node[Cap, bot] (A) at (0,0) {};
        \node[Cup] at (0,0) {};
\end{tz}
&\longxdoubleto{\nu ^\dag}{}
&
\begin{tz}
        \node[Cyl, top, bot, tall] (A) at (0,0) {};
\end{tz}
&\longxdoubleto{\mu ^\dag}
\begin{tz}
        \node[Cup, top] (A) at (0,0) {};
        \node[Cap, bot] (B) at (0,-2*\cobheight) {};
\end{tz}
\end{align}
\item (Additional rigidity) The inverse of $\phiN$ has the following explicit formula:
\begin{align}
\label{explicit_phileft_inverse}
\phiN^\inv \quad&=\quad  \begin{tz}
                \node[Copants, top] (A) at (0,0) {};
                \node[Pants, anchor=belt] (B) at (A.belt) {};
                \selectpart[green, inner sep=1pt] {(B-rightleg)};
\end{tz} 
\longxdoubleto{\epsilon^\dagger}
\begin{tz}
 \node[Copants, top, wide, bot] (F) at (0,0) {};
 \node[Pants, bot, wide, anchor=belt] (G) at (F.belt) {};
 \node[Pants, bot, anchor=belt] (A) at (G.rightleg) {};
 \node[Cyl, tall, bot, anchor=top] (B) at (G.leftleg) {};
 \node[Copants, bot, anchor=leftleg] (C) at (A.leftleg) {};
 \selectpart[green]{(F-belt) (G-leftleg) (A-rightleg)};
\end{tz}
\longxdoubleto{\alpha^\inv}
\begin{tz}
 \node[Copants, top, wide, bot] (F) at (0,0) {};
 \node[Pants, bot, wide, anchor=belt] (G) at (F.belt) {};
 \node[Pants, bot, anchor=belt] (A) at (G.leftleg) {};
 \node[Cyl, bot, anchor=top] (B) at (A.leftleg) {};
 \node[Copants, bot, anchor=leftleg] (C) at (A.rightleg) {};
 \node[Cyl, bot, anchor=bottom] (X) at (C.rightleg) {}; 
 \selectpart[green]{(F-leftleg) (F-rightleg) (G-leftleg) (G-rightleg)};
\end{tz}
\longxdoubleto{\eta^\dagger}
\begin{tz}
 \node[Pants, top, bot] (A) at (0,0) {};
 \node[Cyl, bot, anchor=top] (B) at (A.leftleg) {};
 \node[Copants, bot, anchor=leftleg] (C) at (A.rightleg) {};
 \node[Cyl, top, bot, anchor=bottom] (D) at (C.rightleg) {}; 
\end{tz}
\intertext{Similarly $\phiM$ has the following explicit inverse:}
\label{explicit_phiright_inverse}
\phiM^\inv \quad&=\quad
\begin{tz}
                \node[Copants, top] (A) at (0,0) {};
                \node[Pants, anchor=belt] (B) at (A.belt) {};
                \selectpart[green, inner sep=1pt] {(B-leftleg)};
\end{tz} 
\longxdoubleto{\epsilon^\dagger}
\begin{tz}
 \node[Copants, top, wide, bot] (F) at (0,0) {};
 \node[Pants, bot, wide, anchor=belt] (G) at (F.belt) {};
 \node[Pants, bot, anchor=belt] (A) at (G.leftleg) {};
 \node[Cyl, tall, bot, anchor=top] (B) at (G.rightleg) {};
 \node[Copants, bot, anchor=rightleg] (C) at (A.rightleg) {};
 \selectpart[green]{(F-belt) (A-leftleg) (G-rightleg)};
\end{tz}
\longxdoubleto{\alpha}
\begin{tz}
 \node[Copants, top, wide, bot] (F) at (0,0) {};
 \node[Pants, bot, wide, anchor=belt] (G) at (F.belt) {};
 \node[Pants, bot, anchor=belt] (A) at (G.rightleg) {};
 \node[Cyl, bot, anchor=top] (B) at (A.rightleg) {};
 \node[Copants, bot, anchor=rightleg] (C) at (A.leftleg) {};
 \node[Cyl, top, bot, anchor=bottom] (X) at (C.leftleg) {}; 
 \selectpart[green]{(F-leftleg) (F-rightleg) (G-leftleg) (G-rightleg)};
\end{tz}
\longxdoubleto{\eta^\dagger}
\begin{tz}
 \node[Pants, top, bot] (A) at (0,0) {};
 \node[Cyl, bot, anchor=top] (B) at (A.rightleg) {};
 \node[Copants, bot, anchor=rightleg] (C) at (A.leftleg) {};
 \node[Cyl, top, bot, anchor=bottom] (D) at (C.leftleg) {}; 
\end{tz}
\end{align}

\item  (Additional adjunction) The following composites are the identity, which means that the data~\eqref{A1} expresses $\tikztinycopants \dashv \tikztinypants$ and the data~\eqref{A2} expresses $\tikztinycap \dashv \tikztinycup$:
\begin{calign}
\label{adj_nu_mu_monoid}
\begin{tz}
        \node[Cyl, top, height scale=1.5] (X) at (0,0) {};
        \node[Cup] at (X.bottom) {};
        \selectpart[green]{(X-top) (X-bottom)}
\end{tz}
\longxdoubleto{\mu^\dag}
\begin{tz}
        \node (1) [Cup, top] at (0,0) {};
        \node (2) [Cap] at (0,-1.5\cobheight) {};
        \node (3) [Cup] at (0,-1.5\cobheight) {};
        \selectpart[green]{(3) (2)}
\end{tz}
\longxdoubleto{\nu^\dag}
\begin{tz}
        \node (1) [Cup, top] at (0,0) {};
        \node (2) [Cup, invisible] at (0,-1.5\cobheight) {};
        \node (3) [Cap, invisible] at (0,-1.5\cobheight) {};
\end{tz}
&
\begin{tz}
        \node[Cyl, bot, height scale=1.5] (X) at (0,0) {};
        \node[Cap] at (X.top) {};
        \selectpart[green]{(X-top) (X-bottom)}
\end{tz}
\longxdoubleto{\mu^\dag}
\begin{tz}
        \node (1) [Cap] at (0,-1.5\cobheight) {};
        \node (2) [Cap] at (0,0) {};
        \node (3) [Cup] at (0,0) {};
        \selectpart[green]{(3) (2)}
\end{tz}
\longxdoubleto{\nu^\dag}
\begin{tz}
        \node (1) [Cap] at (0,-1.5\cobheight) {};
        \node (2) [Cup, invisible] at (0,0) {};
        \node (3) [Cap, invisible] at (0,0) {};
\end{tz}
\\[5pt]
\label{adj_eta_epsilon_monoid}
\begin{tz}
        \node [Pants] (A) at (0,0) {};
        \node (X) [Cyl, tall, bot, anchor=bot, top] at (A.belt) {};
        \selectpart[green]{(X-top) (X-bottom)}
\end{tz}
\longxdoubleto{\epsilon^\dag}
\begin{tz}
     \node [Pants, bot, top] (A) at (0,0) {};
     \node [Copants, anchor=leftleg] (B) at (A.leftleg) {};
     \node [Pants, anchor=belt, bot] (C) at (B.belt) {};
     \selectpart [green] {(A-leftleg) (A-rightleg) (C-leftleg)};
\end{tz}
\longxdoubleto{\eta^\dag}
\begin{tz}
        \node [Pants, top] (A) at (0,0) {};
        \node[Cyl, anchor=top, height scale=2] (B) at (A.leftleg) {};
        \node[Cyl, anchor=top, height scale=2] (C) at (A.rightleg) {};
\end{tz}
&
\begin{tz}
        \node [Copants, top] (A) at (0,0) {};
        \node (X) [Cyl, tall, bot, anchor=top] at (A.belt) {};
        \selectpart[green]{(A-belt) (X-bottom)}
\end{tz}
\longxdoubleto{\epsilon^\dag}
\begin{tz}
        \node [Copants, bot] (A) at (0,0) {};
        \node [Pants, anchor=leftleg] (B) at (A.leftleg) {};
        \node [Copants, anchor=belt, top] (C) at (B.belt) {};
        \selectpart [green]{(C-leftleg) (C-rightleg) (B-rightleg) (B-leftleg)};
\end{tz}
\longxdoubleto{\eta^\dag}
\begin{tz}
     \node [Copants] (A) at (0,0) {};
     \node[Cyl, anchor=bottom, height scale=2, top] (B) at (A.leftleg) {};
     \node[Cyl, anchor=bottom, height scale=2, top] (C) at (A.rightleg) {};
\end{tz}
\end{calign}

 \item (Pivotality) The following equation holds, together with its rotated form $\eqref{piv_on_sphere}{}^z$ (see~\cite[Appendix~B]{PaperII} for our rotation notation):
\begin{equation}
\label{piv_on_sphere}
\begin{tz}
 \node[Cap] (A) at (0,0) {};
 \node[Cup, bot=false] (B) at (0,0) {};
 \node[Cobordism Bottom End 3D] (AA) at (0,0) {};              
 \selectpart[green, inner sep=1pt] {(AA)};
\end{tz}
\longxdoubleto{\epsilon ^\dagger}
\begin{tz}
        \node [Pants] (A) at (0,0) {};
        \node [Cap] at (A.belt) {};
        \node [Copants, anchor=leftleg] (B) at (A.leftleg) {};
        \node [Cup] at (B.belt) {};
        \selectpart[green, inner sep=1pt] {(A-rightleg)};
\end{tz}
\longxdoubleto{\mu ^\dagger}
\begin{tz}
        \node [Pants] (A) at (0,0) {};
        \node [Cap] at (A.belt) {};
        \node [Cyl, height scale=1.5, anchor=top] (B) at (A.leftleg) {};
        \node [Cup] (C) at (A.rightleg) {};
        \node [Copants, anchor=leftleg] (D) at (B.bottom) {};
        \node [Cap, bot=false] (E) at (D.rightleg) {};
        \node [Cobordism Bottom End 3D] (B) at (D.rightleg) {};
        \node [Cup] at (D.belt) {};
        \selectpart[green] {(A-rightleg) (B)};
\end{tz}
\longxdoubleto{\mu}
\begin{tz}
    \node [Pants] (A) at (0,0) {};
        \node[Cap] (X) at (A.belt) {};
    \node[Copants, anchor=leftleg] (B) at (A.leftleg) {};
    \node[Cup] at (B.belt) {};
    \selectpart[green] {(X-center) (A-leftleg) (A-rightleg) (B-belt)};
\end{tz}
\longxdoubleto{\epsilon}
\begin{tz}
    \node [Cap] at (0,0) {};
    \node[Cup] at (0,0) {};
\end{tz}
\quad = \quad \id
\end{equation}

\item (Modularity) The following equation holds, together with $\eqref{MOD}{}^z$:
\begin{equation}
\label{MOD}
\begin{tz}[xscale=2, yscale=1.5]
\node (1) at (0,-0.5)
{$\begin{tikzpicture}
        \node[Cyl, top, tall] (A) at (0,0) {};
\end{tikzpicture}$};

\node (2) at (1,0)
{$\begin{tikzpicture}
        \node[Pants, top] (A) at (0,0) {};
        \node[Copants, anchor=leftleg] (B) at (A.leftleg) {};
        \selectpart[green, inner sep=1pt]{(A-leftleg)};
        \selectpart[red, inner sep=1pt] {(A-rightleg)};
\end{tikzpicture}$};

\node (3) at (2,0)
{$\begin{tikzpicture}
        \node[Pants, top] (A) at (0,0) {};
        \node[Copants, anchor=leftleg] (B) at (A.leftleg) {};
\end{tikzpicture}$};

\node (4) at (3,-0.5)
{$\begin{tikzpicture}
        \node[Cyl, top, tall] (A) at (0,0) {};
\end{tikzpicture}$};

\node(5) at (1.5, -1)
{$\begin{tikzpicture}
        \node[Cup, top] (A) at (0,0) {};
        \node[Cap] (B) at (0, -1.5*\cobheight) {};
\end{tikzpicture}$};
    
\begin{scope}[double arrow scope]
    \draw (1) -- node [above] {$\epsilon^\dagger$} (2);
    \draw[] ([xshift=0pt] 2.0) to node [above, inner sep=1pt] {${\color{green}\theta}, \color{red}{\theta^\inv}$} (3);
    \draw (3) -- node [above] {$\epsilon$} (4);
    \draw (1) -- node [below left] {$\mu^\dagger$} (5);
    \draw (5) -- node [below right] {$\mu$} (4);
\end{scope}
\end{tz}
\end{equation}
\end{itemize}
\end{defn}

\begin{theorem}[See \cite{PaperIII}]
\label{Miscsig}
There is a symmetric monoidal equivalence $\Bordcsig \simeq \bicat F (\M)$ between the componentwise signature 3-dimensional bordism 2\-category and the 2\-category generated by the modular presentation.
\end{theorem}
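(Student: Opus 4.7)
The plan is to invoke the machinery of Cerf theory in dimension three, building on the presentations obtained in Papers I--III of the series, and then to adjust it to the componentwise signature setting. I would begin by exhibiting, for each generator of $\M$, a canonical representative bordism in $\Bordcsig$. The generating object is sent to $S^1$; the 1\-morphism generators $\tikztinypants$, $\tikztinycopants$, $\tikztinycup$, $\tikztinycap$ are sent to the pair-of-pants and its time reverse, the disk, and the co-disk, each equipped with the trivial componentwise signature structure. The invertible 2\-morphism generators $\alpha, \lambda, \rho, \beta, \theta$ are sent to the well-known elementary 3\-dimensional bordisms implementing associativity, unitality, braiding, and Dehn twist, while the adjunction generators $\eta, \epsilon, \mu, \nu$ and their daggers are realised by the elementary saddle and birth/death bordisms, with daggered generators obtained by flipping the bordism upside down and updating the bounding manifold via the componentwise signature structure.

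Next, I would verify that the relations of $\M$ hold in $\Bordcsig$. The monoidal, balanced, and ribbon relations reduce to standard isotopies of ribbons in a 3\-ball; the adjunction triangles are the standard Morse-theoretic cancellation identities for saddles and cups; the Frobeniusator formulas are diffeomorphisms between handle-attachment decompositions of the same cobordism. The pivotality and modularity relations require the slightly more subtle calculations with the mapping cylinder of the $S$-move on the torus, checked against the $T^2 \times I$ bordism, but in both cases the diffeomorphisms exist at the underlying oriented level, and one then tracks that the chosen bounding 4\-manifolds on either side of the relation can be identified componentwise (which is automatic since the relations concern connected bordisms).

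For the reverse direction, which is the main obstacle, one must show that these generators and relations suffice. The strategy is to import the finite presentation of the oriented 2\-category $\Bord$ established in~\cite{PaperI} and simplified in~\cite{PaperII}, and then upgrade it to $\Bordcsig$. Concretely, a choice of generic Morse 2\-function on a 3\-dimensional bordism yields a decomposition into the elementary handle attachments listed in the generators, and a generic homotopy between two such decompositions passes through the codimension-1 Cerf moves, which are exactly the relations in $\M$ (modulo the simplifications of~\cite{PaperII}). The upgrade to the componentwise signature structure requires showing that any two choices of bounding 4\-manifold for a given bordism component are connected by a sequence of the moves recorded as the daggered generators---this is the content of the analysis in~\cite{PaperIII} and rests on the fact that oriented 4\-manifolds with the same boundary and signature are related by stabilisation, which is realised bordism-theoretically by the modularity generator.

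Finally, I would package this into the statement that the strict symmetric monoidal functor $\bicat F(\M) \to \Bordcsig$ induced by the generator assignment is essentially surjective, full, and faithful at each level, hence an equivalence; the full argument is carried out in~\cite{PaperIII}, from which we quote the result.
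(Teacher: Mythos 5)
The paper offers no proof of this statement at all: it is imported verbatim from the third paper in the series, exactly as your final sentence does. Your preceding sketch of the strategy (generator assignment, relation verification, and the Cerf-theoretic completeness argument from Papers I--II upgraded to the componentwise signature setting in Paper III) is a reasonable outline of the cited argument, so your proposal is consistent with the paper's treatment.
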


\subsection{The oriented, signature, and $p_1$ presentations}

\begin{defn}
\label{afpresentation}
The {\em anomaly-free modular presentation} $\O$ is a 3\-extension of the modular presentation $\M$, with the following extra relation:
\begin{itemize}
\item (Anomaly-freeness) The following equation holds:
\begin{equation} \label{AF}
\begin{tz}
        \node[Cap] (A) at (0,0) {};
        \node[Cup] (B) at (0,0) {};
        \node [Cobordism Bottom End 3D] (C) at (0,0) {};
        \selectpart[green, inner sep=1pt] {(C)};
\end{tz}
\longxdoubleto{\epsilon^\dagger}
\begin{tz}
        \node[Cap] (A) at (0,0) {};
        \node[Pants, anchor=belt] (B) at (A.center) {};
        \node[Copants, anchor=leftleg] (C) at (B.leftleg) {};
        \node[Cup] (D) at (C.belt) {};
        \selectpart[green, inner sep=1pt] {(B-leftleg)};
\end{tz}
\longxdoubleto{\theta}
\begin{tz}
        \node[Cap] (A) at (0,0) {};
        \node[Pants, anchor=belt] (B) at (A.center) {};
        \node[Copants, anchor=leftleg] (C) at (B.leftleg) {};
        \node[Cup] (D) at (C.belt) {};
        \selectpart[green] {(A-center) (B-leftleg) (B-rightleg) (C-belt)};
\end{tz}
\longxdoubleto{\epsilon}
\begin{tz}
        \node[Cap] (A) at (0,0) {};
        \node[Cup] (B) at (0,0) {};
\end{tz}
\quad=\quad
\id
\end{equation}
\end{itemize}
\end{defn}

\begin{theorem}[See \cite{PaperII}, Corollary~\ref{PII_mainthm}]
There is a symmetric monoidal equivalence $\Bord \simeq \bicat F(\O)$ between the oriented 3-dimensional bordism 2\-category and the 2\-category generated by the anomaly-free modular presentation.
\end{theorem}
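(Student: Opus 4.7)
The plan is to bootstrap from the componentwise-signature case already established in \autoref{Miscsig}, namely $\Bordcsig \simeq \bicat F(\M)$. Since the anomaly-free modular presentation $\O$ is a 3-extension of $\M$ by the single relation \eqref{AF}, it suffices to exhibit a symmetric monoidal equivalence $\Bord \simeq \Bordcsig / \langle \text{AF} \rangle$, where the right-hand side denotes the quotient of $\Bordcsig$ by that 2-morphism relation. In other words, I want to identify the forgetful 2-functor $F \colon \Bordcsig \to \Bord$, which discards the componentwise bounding manifolds, with the canonical quotient functor $\bicat F(\M) \to \bicat F(\O)$.

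The first key step is to identify \eqref{AF} geometrically. The source and target of the relation both represent the sphere (as a 2-morphism from the empty 1-manifold to itself), but with different componentwise-signature decorations: the composite on the left-hand side, built from $\epsilon^\dagger$, $\theta$, and $\epsilon$, produces a sphere whose bounding 4-manifold differs from the trivial one by a generator (essentially $\mathbb{CP}^2$) of the signature cobordism group. Thus \eqref{AF} asserts that changing the bounding manifold by this generator is invisible after forgetting to oriented bordisms, which is exactly what the forgetful functor $F$ does.

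The second and main step is to show that \eqref{AF} generates all such invisibilities. Concretely, I would show that the kernel of $F$---the congruence on 2-morphisms identifying diffeomorphic oriented bordisms with different signature decorations---is generated, as a congruence closed under the symmetric monoidal 2-category operations, by the single relation \eqref{AF}. The tool for this is a componentwise signature defect invariant: to each componentwise-signature structure on a fixed oriented 3-bordism one associates a tuple of integers (one per connected component), the kernel of $F$ is exactly the kernel of this invariant, and \eqref{AF} precisely witnesses the basic move that changes one component of the invariant by $1$. Since any tuple of integers can be built by composing such basic moves (tensored with identities to target the correct component), the normal closure of \eqref{AF} exhausts the kernel.

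The main obstacle is this second step, which is where essentially all the work lies: one must show both that \eqref{AF} alone suffices (an existence argument for enough applications of the relation to equalize arbitrary componentwise-signature decorations) and that it does not collapse anything else (consistency of the defect invariant). In Paper II this is accomplished by producing an explicit normal form for 2-morphisms of $\bicat F(\M)$ modulo \eqref{AF}, parametrized by oriented diffeomorphism classes; the equivalence $\bicat F(\O) \simeq \Bord$ then follows from \autoref{Miscsig} together with the observation that $F$ is essentially surjective and the quotient is faithful on this normal form.
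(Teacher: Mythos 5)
This is not the route the series takes, and as written it contains a genuine gap. On the structural point first: the equivalence $\Bord \simeq \bicat F(\O)$ is the content of Papers I and II, where the presentation is extracted directly from Morse/Cerf theory for oriented surfaces and $3$\-bordisms; the componentwise-signature equivalence $\Bordcsig \simeq \bicat F(\M)$ of \autoref{Miscsig} is obtained only afterwards, in Paper III, by modifying that presentation. Taking \autoref{Miscsig} as your starting point therefore inverts the logical order of the series, and unless you supply an independent (Cerf-theoretic) proof of the componentwise-signature case, your argument is circular.

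The concrete gap is in your second step. The relation \eqref{AF} lives on a $2$\-sphere occurring as its own closed connected component, and tensoring it with identity $2$\-morphisms only forms disjoint unions: this adds a separate defect-carrying component but does not alter the componentwise-signature decoration of any \emph{existing} component, precisely because the structure is componentwise. To change the decoration of a given connected component you must absorb the defect into that component, which requires the connected form of the anomaly (the composite of $\epsilon^\dagger$, $\theta$, $\epsilon$ performed on a cylinder rather than a sphere, as in the definition of the anomaly $x$) together with a locality argument showing that the sphere relation \eqref{AF} implies the cylinder relation and propagates across arbitrary surfaces. That propagation is exactly where the work lies --- it is the role of the locality lemmas invoked in \autoref{thm:epsilondagaction} and \autoref{thm:etadagaction} --- and your proposal does not supply it; ``tensored with identities to target the correct component'' is the wrong operation. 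Separately, the well-definedness of your per-component integer defect on $3$\-bordisms with corners is itself not free, since signatures of bounding $4$\-manifolds are not additive under gluing in the presence of corners (Wall non-additivity), so even the claim that the kernel of the forgetful functor is measured by a tuple of integers needs an argument.
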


\begin{defn}
\label{globalpresentation}
The {\em degree-$k$ global modular presentation $\N_k$}  is a 2\-extension of the modular presentation $\M$, with the following extra generators and relations:
\begin{itemize}
\item Additional invertible 2\-morphism generator:
\[
\begin{tz}
        \draw[green] (0,0) rectangle (\cobwidth, \cobwidth);
\end{tz}
\rightleftdoublearrow{\zeta}{\zeta ^\inv}
\begin{tz}
        \draw[green] (0,0) rectangle (\cobwidth, \cobwidth);
\end{tz}
\]
\item Invertibility relations: $\zeta \zeta^\inv = \id = \zeta^\inv \zeta$.
\item Anomaly factor relation: 
\begin{equation}
\label{eq:globalmodularaxiom}
\begin{tz}
    \node[Cap] (A) at (0,0) {};
    \node[Cup] (B) at (0,0) {};
    \draw[green] (1*\cobwidth, -\cupheight) rectangle +(\cobwidth, 2*\cupheight);
\end{tz}
\longxdoubleto{\zeta^k}
\begin{tz}
    \node[Cap] (A) at (0,0) {};
    \node[Cup] at (0,0) {};
    \draw[green] (1*\cobwidth, -\cupheight) rectangle +(\cobwidth, 2*\cupheight);
\end{tz}
\quad = \quad
\begin{tz}
        \node[Cap] (A) at (0,0) {};
        \node[Cup] (B) at (0,0) {};
        \selectpart[green, inner sep=1pt] {(A-center)};
\end{tz}
\longxdoubleto{\epsilon ^\dag}
\begin{tz}
        \node[Cap] (A) at (0,0) {};
        \node[Pants, anchor=belt] (B) at (A) {};
        \node[Copants, anchor=leftleg] (C) at (B.leftleg) {};
        \node[Cup] (D) at (C.belt) {};
        \selectpart[green, inner sep=1pt] {(B-rightleg)};
\end{tz}
\longxdoubleto{\theta}
\begin{tz}
        \node[Cap] (A) at (0,0) {};
        \node[Pants, anchor=belt] (B) at (A) {};
        \node[Copants, anchor=leftleg] (C) at (B.leftleg) {};
        \node[Cup] (D) at (C.belt) {};
        \selectpart[green] {(B-rightleg) (B-leftleg) (A-center) (C-belt)};
\end{tz}
\longxdoubleto{\epsilon}
\begin{tz}
        \node[Cap] (A) at (0,0) {};
        \node[Cup] (B) at (0,0) {};
\end{tz}
\end{equation}
\end{itemize}
\end{defn}

\begin{theorem}[See \cite{PaperIII}]
\label{thm:sigpresentation}
There is a symmetric monoidal equivalence $\Bordsig \simeq \F(\N_1)$ between the signature 3-dimensional bordism 2\-category and the 2\-category generated by the global modular presentation of degree~1.
\end{theorem}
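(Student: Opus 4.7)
The plan is to leverage Theorem~\ref{Miscsig}, which already presents the componentwise-signature bordism 2-category as $\Bordcsig \simeq \F(\M)$, together with a careful analysis of how the global signature structure differs from its componentwise refinement. The central observation is that there is a natural symmetric monoidal 2-functor $U \colon \Bordcsig \to \Bordsig$ which takes a disjoint-union bounding to its total-space bounding; this 2-functor is the identity on underlying 1-manifolds and surfaces and differs only in how the 4-dimensional bounding of a 3-bordism is recorded. The fibre of $U$ on the identity 2-endomorphism of the empty 1-manifold is a $\ZZ$-torsor generated by a single ``unit signature shift,'' implemented by $\CC P^2$-summing the bounding.

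First I would construct a symmetric monoidal 2-functor $F \colon \F(\N_1) \to \Bordsig$. On the generators of $\M \subset \N_1$, define $F$ by the composite $\F(\M) \xrightarrow{\sim} \Bordcsig \xrightarrow{U} \Bordsig$, which is well-defined because $U$ manifestly satisfies all the relations of $\M$. Send the additional invertible generator $\zeta$ to the unit signature shift on the empty 1-manifold. The key verification is that relation~\eqref{eq:globalmodularaxiom} with $k=1$ holds in $\Bordsig$: the right-hand side is the closed 3-manifold obtained by capping a torus against its time-reverse through a twist, and with its canonical bounding handlebody this carries signature exactly one, matching a single application of $\zeta$.

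Next I would construct an inverse 2-functor $G \colon \Bordsig \to \F(\N_1)$. Every 2-morphism in $\Bordsig$ admits a (non-canonical) lift to $\Bordcsig \simeq \F(\M) \hookrightarrow \F(\N_1)$, obtained by choosing a componentwise refinement of the bounding whose total signature equals the given global signature. Two lifts differ by an iterated unit signature shift on the empty 1-manifold, which via the generator $\zeta$ and relation~\eqref{eq:globalmodularaxiom} is identified in $\F(\N_1)$ with the twist-around-a-sphere composite. This resolves the ambiguity coherently, so $G$ is well-defined. Functoriality and symmetric monoidal compatibility are then inherited from the equivalence $\F(\M) \simeq \Bordcsig$, and one checks $F$ and $G$ are mutually inverse on generators, which suffices by the universal property of the presentation.

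The main obstacle is the well-definedness of $G$: one must verify that any two $\Bordcsig$-lifts of a single $\Bordsig$-2-morphism differ only by iterated unit signature shifts on the empty 1-manifold, and not by more subtle rearrangements of the componentwise bounding data. This reduces to a handle-decomposition argument showing that any two 4-boundings of the same 3-manifold, realizing the same total signature, are connected by a sequence of $\CC P^2$-summand moves and moves already accounted for by the relations of $\M$, all localizable on the empty 1-manifold. The fact that one unit of global signature corresponds to exactly one unit of $\zeta$ — rather than some other integer — is what pins down the degree $k=1$ in the presentation $\N_1$, via the signature computation on the torus-cap composite noted above.
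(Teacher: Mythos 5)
This paper does not actually prove \autoref{thm:sigpresentation} --- it is quoted from~\cite{PaperIII} --- so there is no internal argument to compare against, and I can only assess your proposal directly. Your overall strategy (push the known equivalence $\Bordcsig \simeq \F(\M)$ through the total-signature functor $U\colon \Bordcsig \to \Bordsig$, send $\zeta$ to the empty bordism of signature one, and check that relation~\eqref{eq:globalmodularaxiom} forces $k=1$) is sensible, and your computation pinning down $k=1$ is essentially right, although the right-hand side of~\eqref{eq:globalmodularaxiom} is a bordism from the $2$-sphere to itself (an $S^2\times[0,1]$ carrying one unit of signature), not a closed $3$-manifold. One sign of confusion: the fibre of $U$ over $\id_{\id_\emptyset}$ is not a $\ZZ$-torsor. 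The empty bordism has no components to decorate, so $\End_{\Bordcsig}(\id_\emptyset)$ is trivial, while $\End_{\Bordsig}(\id_\emptyset)\cong\ZZ$; the $\ZZ$ sits in the target of $U$, not in its fibres, and this is exactly why $\zeta$ must be adjoined as a genuinely new generator rather than arising as the image of anything in $\F(\M)$.

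The genuine gap is the one you flag yourself: the well-definedness of $G$. For $G$ to exist you must show that any two componentwise boundings of the same underlying bordism with the same total signature --- which are in general \emph{distinct} $2$-morphisms of $\Bordcsig\simeq\F(\M)$ --- become equal after passing to $\F(\N_1)$. Relation~\eqref{eq:globalmodularaxiom} only identifies one unit of signature sitting on the cylinder over $S^2$ with $\zeta\otimes\id$; to transfer a unit of signature between two arbitrary components, or between different positions within a single component's decomposition into generators, you must propagate that one identification through the relations of $\M$ to every connected surface and every location in a decomposition. This ``locality of the signature defect'' is the entire mathematical content of the theorem: it is precisely the assertion that no relations beyond~\eqref{eq:globalmodularaxiom} are needed to present $\Bordsig$ over $\M$. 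Your reduction of it to ``a handle-decomposition argument'' about $4$-dimensional boundings connected by $\CC P^2$-summand moves is a restatement of the problem rather than a solution, and nothing in the proposal establishes it. Until that step is carried out (together with the check that the resulting $G$ respects composition, which depends on the same independence-of-choices), the argument assumes exactly what the theorem claims.
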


\begin{defn}
\label{p1presentation}
The {\em degree-$k$ modular presentation $\M_k$} is defined as a 2\-extension of the modular presentation $\M$, with the following extra generators and relations:
\begin{itemize}
\item Invertible 2\-morphism generator:
\[
\begin{tz}
        \node[Cap] (A) at (0,0) {};
        \node[Cup] (B) at (0,0) {};
\end{tz}
\rightleftdoublearrow{y'}{y' {}^\inv}
\begin{tz}
        \node[Cap] (A) at (0,0) {};
        \node[Cup] (B) at (0,0) {};
\end{tz}
\]
\item Invertibility relations: $\yonsphere \yonsphere^\inv = \id = \yonsphere^\inv \yonsphere$
\item Additional relations: 
\begin{gather}
\label{eq:p1axiom}
\begin{tz}
        \node[Cap] (A) at (0,0) {};
        \node[Cup] (B) at (0,0) {};
\end{tz}
\xdoubleto{y' {}^k}
\begin{tz}
        \node[Cap] (A) at (0,0) {};
                \node[Cup] at (0,0) {};
\end{tz}
\quad = \quad
\begin{tz}
        \node[Cap] (A) at (0,0) {};
        \node[Cup] (B) at (0,0) {};
        \selectpart[green, inner sep=1pt] {(A-center)};
\end{tz}
\longxdoubleto{\epsilon ^\dag}
\begin{tz}
        \node[Cap] (A) at (0,0) {};
        \node[Pants, anchor=belt] (B) at (A) {};
        \node[Copants, anchor=leftleg] (C) at (B.leftleg) {};
        \node[Cup] (D) at (C.belt) {};
        \selectpart[green, inner sep=1pt] {(B-rightleg)};
\end{tz}
\longxdoubleto{\theta}
\begin{tz}
        \node[Cap] (A) at (0,0) {};
        \node[Pants, anchor=belt] (B) at (A) {};
        \node[Copants, anchor=leftleg] (C) at (B.leftleg) {};
        \node[Cup] (D) at (C.belt) {};
        \selectpart[green] {(B-rightleg) (B-leftleg) (A-center) (C-belt)};
\end{tz}
\longxdoubleto{\epsilon}
\begin{tz}
        \node[Cap] (A) at (0,0) {};
        \node[Cup] (B) at (0,0) {};
\end{tz}
\\
\label{eq:p1centrality}
\begin{tz}[xscale=1.5, yscale=1.5]
\node (1) at (0,0) {$\begin{tz}
    \node (A) [Cap] at (0,0) {};
    \node (B) [Cup] at (0,0) {};
    \node (C) [Cap] at (0,-1.5\cobheight) {};
    \node [Cup] at (0,-1.5\cobheight) {};
    \node (b1) [Cobordism Bottom End 3D] at (0,0) {};
    \node (b2) [Cobordism Bottom End 3D] at (0,-1.5\cobheight) {};
    \selectpart[green, inner sep=0pt, inner ysep=0.7pt]{(A) (B)};
    \selectpart[red, inner sep=0.7pt]{(B) (C) (b1) (b2)};
\end{tz}$};
\node (2) at (1,0) {$\begin{tz}
    \node [Cap] at (0,0) {};
    \node (B) [Cup] at (0,0) {};
    \node (C) [Cap] at (0,-1.5\cobheight) {};
    \node [Cup] at (0,-1.5\cobheight) {};
    \node (b1) [Cobordism Bottom End 3D, invisible] at (0,0) {};
    \node (b2) [Cobordism Bottom End 3D, invisible] at (0,-1.5\cobheight) {};
    \selectpart[green]{(B) (C) (b1) (b2)};
\end{tz}$};
\node (3) at (1,-1) {$\begin{tz}
    \node [Cap] at (0,0) {};
    \node [Cup] at (0,0) {};
\end{tz}$};
\node (4) at (0,-1) {$\begin{tz}
    \node [Cap] at (0,0) {};
    \node [Cup] at (0,0) {};
\end{tz}$};
\begin{scope}[double arrow scope]
\draw (1) to node [above, green] {$y'$} (2);
\draw (2) to node [right] {$\mu$} (3);
\draw (1) to node [left, red] {$\mu$} (4);
\draw (4) to node [below] {$y'$} (3);
\end{scope}
\end{tz}
\end{gather}
\end{itemize}
The case $k=3$ is called the {\em $p_1$-presentation}.
\end{defn}

\begin{theorem}[See \cite{PaperIII}]
There is a symmetric monoidal equivalence $\Bordp \simeq \bicat F (\M_3)$ between the componentwise signature 3-dimensional bordism 2\-category and the 2\-category generated by the $p_1$-presentation.
\end{theorem}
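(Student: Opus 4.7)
The plan is to leverage the equivalence $\Bordcsig \simeq \bicat F(\M)$ from \autoref{Miscsig} together with the Hirzebruch signature formula $\sigma = \tfrac{1}{3}\langle p_1, [W]\rangle$ for closed $4$-manifolds, which identifies one unit of the signature shift with three units of the $p_1$ shift. This suggests that $p_1$-structures on $3$-bordisms form a $\ZZ$-torsor over componentwise signature structures, assigned componentwise on sphere boundary components, with the generator $y'$ of $\M_3$ playing the role of a torsor generator on each sphere.

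First I would construct a symmetric monoidal $2$-functor $F \colon \bicat F(\M_3) \to \Bordp$. On each generator inherited from $\M$, I define $F$ by composing the equivalence of \autoref{Miscsig} with a chosen section $s$ of the forgetful $2$-functor $U \colon \Bordp \to \Bordcsig$. On the new generator $y'$, I send it to the $p_1$-structured mapping cylinder on the $2$-sphere whose underlying componentwise-signature mapping cylinder is $s(\id)$, but whose $p_1$-trivialization differs from that of $s(\id)$ by one unit of the $\ZZ$-torsor of $p_1$-structures on a sphere. This $y'$ is then automatically invertible.

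Next I would verify that $F$ respects the two additional relations of $\M_3$. Relation~\eqref{eq:p1axiom} with $k = 3$ asserts that $(y')^{3}$ equals the twist-on-sphere composite; geometrically this encodes $p_1 = 3\sigma$, using the identification of the twist-on-sphere composite with the signature shift established in the proof of \autoref{thm:sigpresentation}. The centrality relation~\eqref{eq:p1centrality} holds because $p_1$-shifts on disjoint spheres are assigned independently, so commute with the monoidal composition encoded by $\mu$.

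Finally, I would show that $F$ is an equivalence. Essential surjectivity on objects and $1$-morphisms is inherited from \autoref{Miscsig}, and essential surjectivity on $2$-morphisms follows from the $\ZZ$-torsor structure of $U$ once one knows $y'$ generates this torsor on each sphere. The main obstacle is faithfulness on $2$-morphisms: one must prove that~\eqref{eq:p1axiom} and~\eqref{eq:p1centrality}, together with the relations of $\M$, suffice to present every equation among $p_1$-structured $3$-bordisms. I expect this to require a handle-decomposition argument parallel to, but finer than, the one underlying \autoref{Miscsig}, showing that the additional identifications forced by $p_1$-data can all be reduced, via composition with suitable $(y')^{\pm 1}$, to identifications already present in the componentwise-signature setting.
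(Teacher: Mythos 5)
First, note that this paper does not actually prove the statement: it is imported from the third paper in the series (\cite{PaperIII}), so there is no in-text argument to compare yours against. Judged on its own terms, your sketch has the right overall shape---extend the comparison between $\M$ and $\Bordcsig$ by one invertible $2$-generator on the sphere and match it against the $\ZZ$'s worth of homotopy classes of $p_1$-trivializations on a closed connected piece---but two of its load-bearing steps are unsupported.

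First, the forgetful functor $U\colon \Bordp \to \Bordcsig$ and its section $s$ do not obviously exist: a trivialization of $p_1$ on a bordism does not determine a bounding manifold for each component, nor conversely, so there is no evident comparison in the direction you use. The comparison that comes for free goes the other way: since $\M_3$ is a $2$-extension of $\M$, the inclusion of presentations induces $\bicat F(\M) \to \bicat F(\M_3)$, i.e.\ (once both equivalences are known) a symmetric monoidal functor $\Bordcsig \to \Bordp$. Any construction of $F$ on the $\M$-part of $\M_3$ therefore has to produce genuine $p_1$-structured representatives of the generating bordisms directly, with coherent choices making the $\M$-relations hold $p_1$-structuredly, rather than transporting them along a section that has not been shown to exist. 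Second, and more seriously, the entire content of the theorem is the completeness claim you defer in your final paragraph: that \eqref{eq:p1axiom} and \eqref{eq:p1centrality}, added to the relations of $\M$, present \emph{all} identifications among $p_1$-structured bordisms. Establishing this requires analyzing the homotopy type of the space of $p_1$-structures on each generating cell (its $\pi_0$ contributing new $2$-generators such as $y'$, its $\pi_1$ contributing new relations) and feeding that analysis through the Cerf-theoretic machinery underlying the presentations of \cite{PaperI, PaperII}; asserting that ``a finer handle-decomposition argument'' exists is not a proof of it. The Hirzebruch-formula heuristic correctly predicts the exponent $3$ in \eqref{eq:p1axiom}, but by itself it establishes neither that relation in $\Bordp$ nor its sufficiency.
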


\section{Internal string diagrams}
\label{sec:internal}

\noindent
For any presentation that 2\-extends the right-adjoint monoid presentation $\P^\L$ (see Definition~\ref{rightadjointmonoidpresentation}), internal string diagrams give a notation for computing the invariants of a linear representation in terms of string diagrams drawn in the ``interior" of a surface, with respect to a canonical embedding into $\mathbb R ^3$. Note that the geometrical presentations defined in \autoref{sec:geometricalpresentations} all 2\-extend $\P^\L$, so internal string diagrams will be applicable for all of them.  The technique of internal string diagrams is closely related to previous work of many people, as discussed in the introduction, Section~\ref{sec:intro-isd}.

For this section, let \Q be a presentation that 2\-extends $\P^\L$, and let \mbox{$Z: \bicat F(\Q) \to \twovect$} be a linear representation.

\subsection{Motivation and definition} 

In the presentation $\P^\L$, and therefore in any presentation \Q that 2\-extends $\P^\L$, we have the adjunction $\tikztinypants \dashv \tikztinycopants$.  Given a linear representation \mbox{$Z: \bicat F(\Q) \to \twovect$}, we therefore have the adjunction $Z(\tikztinypants) \dashv Z(\tikztinycopants)$.  Because adjoints are unique, the value $Z(\tikztinycopants)$ is therefore already determined by $Z(\tikztinypants)$; moreover, by composing the representation \mbox{$Z: \bicat F(\Q) \to \twovect$} with the inclusion $(-)_* : \twovect \to \bimod$, we can easily identify and compute $Z(\tikztinycopants)$ as the adjoint $Z(\tikztinypants)^*$ in $\bimod$.  This approach is often much more convenient and computable as it provides an explicit algebraic method for computing $Z(\tikztinycopants)$ that makes no reference to the `geometric' adjunction $\tikztinypants \dashv \tikztinycopants$.  Internal string diagrams formalize this approach to algebraic computations of linear representations.

As above, given a functor \mbox{$Z: \bicat F(\Q) \to \twovect$}, by \autoref{adj_lemma} (that for any functor $F$, we have an adjunction $F_* \dashv F^*$ in $\bimod$), there is an adjunction $Z(\tikztinypants)_* \dashv Z(\tikztinypants) ^*$; by \autoref{rightadjointmonoidpresentation} (giving the adjunction $\tikztinypants \dashv \tikztinycopants$), there is an adjunction $Z(\tikztinypants)_* \dashv Z(\tikztinycopants)_*$.  There is therefore a canonical isomorphism
\begin{align}
\label{eq:zeta1}
\zeta (\tikztinycopants) &: Z(\tikztinycopants)_* \to Z(\tikztinypants) ^*
\intertext{Similarly, using the adjunction $\tikztinycup \dashv \tikztinycap$, there is a canonical isomorphism}
\zeta(\tikztinycap) &: Z(\tikztinycap)_* \to Z(\tikztinycup) ^*
\intertext{We can extend the family of isomorphisms $\zeta$ to all generating 1-morphisms of the presentation \Q by letting it be the identity on  $\tikztinypants$ and $\tikztinycup$:}
\zeta(\tikztinypants) &:  Z(\tikztinypants) _* \to Z(\tikztinypants)_*
\\
\label{eq:zeta4}
\zeta(\tikztinycup) &:  Z(\tikztinycup) _* \to Z(\tikztinycup)_*
\end{align}
The internal string diagram construction simply transports a representation $Z$ across these canonical isomorphisms $\zeta$, providing a new representation $\isd$.
\begin{defn}
\label{def:stringfunctor}
Given a linear representation $Z$ of a presentation $\Q$ that 2\-extends $\P^\L$, the \textit{internal string diagram construction} $\isd$  is defined as the unique transport (see \autoref{uniqueconjugate}) of $Z$ across the bimodule isomorphisms $\zeta$:
\begin{equation*}
\begin{aligned}
\begin{tikzpicture}
\node (1) at (0,1) {$\bicat F (\Q)$};
\node (2) at (3,1) {\twovect};
\node (3) at (3,-1) {\bimod};
\draw [->] (1) to node [auto] {$Z$} (2);
\draw [->] (2) to node [auto] {$(-)_*$} (3);
\draw [->] (1) to node [auto, swap, pos=0.5, inner sep=9pt] {$\isd$} (3);
\draw [double, ->] (2.5,0.5) to
 node [auto, inner sep=2pt] {$\zeta$}
 (2,0);
\end{tikzpicture}
\end{aligned}
\end{equation*}
\end{defn}

Note that $\isd$ is specified on the generating 1-morphisms by the images of the isomorphisms $\zeta$; that is
\begin{align}
\label{eq:isdbimodules}
\isd (\tikztinypants) &:= Z(\tikztinypants)_*
&
\isd (\tikztinycup) &:= Z(\tikztinycup) _*
&
\isd (\tikztinycopants) &:= Z(\tikztinypants)^*
&
\isd (\tikztinycap) &:= Z(\tikztinycup)^*
\end{align}

\noindent
Again, the internal string diagram construction is a representation \isd of \Q into categories and bimodules, isomorphic to the original representation $Z$ into categories and functors, such that the adjunctions $\isd(\tikztinypants) \dashv \isd (\tikztinycopants)$ and $\isd(\tikztinycup) \dashv \isd (\tikztinycap)$, which a priori would be witnessed geometrically by $\isd(\epsilon)$, $\isd(\eta)$, $\isd(\mu)$, and $\isd(\nu)$, are instead witnessed algebraically in the target bimodule category and are therefore easily and explicitly computable. The value of the internal string diagram construction functor on generating 2\-cells is given explicitly in Section~\ref{sec:actionof2morphisms}.

\subsection{Internal string notation}

Recall that a bimodule $M$ from the category $X$ to the category $Y$ is a functor $M: Y^\op \boxtimes X \to \vect$; we will use the notation $M^A_B$, where $B \in X$ and $A \in Y$, for the value of the bimodule on the object $(A,B)$. For each 1\-morphism generator $\Sigma$ of \Q, the bimodule $\isd(\Sigma)$ is given on objects as follows.  Abbreviating $S:= Z (\begin{tz}
         \node[Cyl, top, height scale=0]  at (0,0) {};
 \end{tz})$, we have the following, for all objects $A,B,C \in S$:
\begin{align}
\label{eq:string1}
\isd (\tikztinypants) ^A _{B,C} &= (Z(\tikztinypants)_*)^A _{B,C} = \Hom _\cat S (A, B \otimes C)
\\
\isd (\tikztinycopants) _C ^{A,B} &= (Z(\tikztinypants)^*) {}_C ^{A,B} =  \Hom _\cat S (A \otimes B, C)
\\
\isd (\tikztinycup) ^A &= (Z(\tikztinycup)_*) {}^A = \Hom _\cat S (A, I)
\\
\label{eq:string4}
\isd (\tikztinycap) _A &= (Z(\tikztinycup) ^*)_A = \Hom _\cat S (I,A)
\end{align}
On each line, the last equality uses the definition of the constructions $(-)_*$ and $(-)^*$, and uses the monoidal structure on $\cat S$ given by the image of the pants and cup under the functor $Z$.

Observe that in each of these four cases, the vector space of the bimodule is the space of string diagrams (drawn in the Joyal-Street graphical calculus for the associated monoidal category) internal to the volume of that particular surface embedded in $\RR^3$:
\normalbordisms
\begin{calign}
\label{eq:embeddingR3}
\begin{tz}
    \node[Pants, bot, top, height scale=1.0] (A) at (0,0) {};
    \begin{scope}[internal string scope]
        \node (i) at ([yshift=\toff] A.belt) [above] {$A$};
        \node (j) at ([yshift=-\boff] A.leftleg) [below] {$B$};
        \node (k) at ([yshift=-\boff] A.rightleg) [below] {$C$};
        \node [tiny label] (g) at (0,0.02\cobheight) {$f$};
        \draw (i.south)
            to (g.center)
            to [out=-140, in=up] (j.north);
        \draw (g.center)
            to [out=-40, in=up] (k.north);
    \end{scope}
\end{tz}
&
\begin{tz}
    \node[Copants, bot, top, height scale=1.0] (A) at (0,0) {};
    \begin{scope}[internal string scope]
        \node (i) at ([yshift=-\boff] A.belt) [below] {$C$};
        \node (j) at ([yshift=\toff] A.leftleg) [above] {$A$};
        \node (k) at ([yshift=\toff] A.rightleg) [above] {$B$};
        \node [tiny label] (g) at (0,-0.1\cobheight) {$g$};
        \draw (i.north)
            to (g.center)
            to [out=140, in=down] (j.south);
        \draw (g.center)
            to [out=40, in=down] (k.south);
    \end{scope}
\end{tz}
&
\begin{aligned}
\begin{tikzpicture}
\setlength\cupheight{1.5\cupheight}
\node (i) at (0,0) [Cup, top] {};
\node (j) at (0,-\cobheight) [Bot3D, invisible] {};
\node (g) [tiny label] at (0,-0.4\cobheight) {$h$};
\draw [internal string] (g.center) to ([yshift=\toff] i.center) node [above, red] {$A$};
\node at ([yshift=-\boff] j.center) [below, white] {$A$};
\end{tikzpicture}
\end{aligned}
&
\begin{aligned}
\begin{tikzpicture}
\setlength\cupheight{1.5\cupheight}
\node (i) at (0,0) [Cap, top] {};
\node (j) at (0,\cobheight) [Bot3D, invisible] {};
\node (g) [tiny label] at (0,0.4\cobheight) {$j$};
\draw [internal string] (g.center) to ([yshift=-\boff] i.center) node [below, red] {$A$};
\node at ([yshift=\toff] j.center) [above, white] {$A$};
\end{tikzpicture}
\end{aligned}
\\*
\nonumber
f \in \isd \big( \tikztinypants \big) {}^A _{B,C}
&
g \in \isd \big( \tikztinycopants \big) {}^{A,B} _{C}
&
h \in \isd \big( \tikztinycup \big) {}^A
&
j \in \isd \big( \tikztinycap \big) {}_A
\end{calign}
We do not need to, and do not, make this notion of an `interior morphism' geometrically precise; the equalities~(\ref{eq:string1}\_\ref{eq:string4}) provide the foundation and formalism for the internal string diagram calculations, and the pictures~\eqref{eq:embeddingR3} are merely a convenient graphical mnemonic notation.  Note that the interior morphisms are drawn with their source at the top and their target at the bottom, while the cobordisms are drawn in the reverse direction. This countervalence of conventions appears to be essential.

    \newcommand\cobpartup[2]{
        \begin{scope}[xshift=#1,yshift=#2, xscale=0.8]
        \node (b)[Bot3D, scale=0.8] at (0,0) {};
        \node (left1) at ([xshift=-0.5\cobwidth] b.center) [coordinate] {};
        \node (left2) at ([shift={(-0.2\cobwidth,0.6*\ymult*\cobwidth)}] left1) [coordinate] {};
        \node (left3) at ([shift={(-0.5\cobwidth,\ymult*\cobwidth)}] left1) [coordinate] {};
        \node (right1) at ([xshift=0.5\cobwidth] b.center) [coordinate] {};
        \node (right2) at ([shift={(0.2\cobwidth,0.6*\ymult*\cobwidth)}] right1) [coordinate] {};
        \node (right3) at ([shift={(0.5\cobwidth,1.0*\ymult*\cobwidth)}] right1) [coordinate] {};
        \begin{pgfonlayer}{top}
            \draw (left1) to [out=90, in =-55] (left2);
        \end{pgfonlayer}
        \draw [dash pattern=on 1pt off 1pt] (left2) to (left3);
        \begin{scope}[xscale=-1]
            \draw (right1) to [out=90, in =-55] (right2);
            \draw [dash pattern=on 1pt off 1pt] (right2) to (right3);
        \end{scope}
        \end{scope}
    }
    \newcommand\cobpartdown[2]{
        \begin{scope}[yscale=-1]
            \cobpartup{#1}{#2}
        \end{scope}
    }

We can extend this notation to describe the vector spaces determining $\isd(\Sigma)$ for an arbitrary 1\-morphism $\Sigma$ in $\bicat F(\Q)$: it is the vector space of string diagrams (up to embedded homotopy) that can be drawn in the interior of the entire surface $\Sigma$ (embedded componentwise in $\RR^3$ in the same manner as~\eqref{eq:embeddingR3}). To see this, suppose that $\Sigma$ is built from generating 1\-morphisms $\sigma_i$ of \Q, and suppose we label the upper and lower boundary circles of $\Sigma$ by $A_1, \ldots, A_m$ and $B_1, \ldots, B_n$ respectively.   Then by Definition~\ref{bimodulecomposition} of bimodule composition we have
\begin{equation}
\label{eq:isdcomposition}
\isd (\Sigma) {}^{A_1, \ldots, A_m} _{B_1, \ldots, B_n} := \bigoplus _{L} \bigotimes _{\sigma \in \Sigma} \isd ( \sigma ) _L \, /\sim,
\end{equation}
where the tensor product is taken over all the generating 1\-cell components $\sigma$ of $\Sigma$, and the direct sum is taken over all possible labelings $L$ of the internal boundary circles. Each generating bimodule $\isd (\sigma)$ is given the labeling induced by $L$ and the fixed labeling on the external boundary circles. We give the equivalence relation, from equation~\eqref{eq:bimodulecomposition}, the following graphical representation: 
\begin{equation}
\label{eq:pullthrough}
\newlength\ymult
\setlength\ymult{1pt}
\setlength{\cobwidth}{2.3\cobwidth}
\setlength{\ellipseheight}{1.5\ellipseheight}
\begin{tz}
    \cobpartup 0 0
    \cobpartdown 0 0
    \node (A) at (0cm,1.5cm) [coordinate] {};
    \node (B) at (0cm,-1.5cm) [coordinate] {};
    \begin{scope}[curvein]
        \draw [dash pattern=on 1pt off 1pt] (A) to (0,1.2cm);
        \draw [dash pattern=on 1pt off 1pt] (B) to (0,-1.2cm);
        \draw (0,1.2cm) to (0,-1.2cm);
        \node at (A) [above] {$C$};
        \node at (B) [below] {$D$};
        \node [small label] at (0,0.6cm) {$f$};
    \end{scope}
    \draw [->] (-2,-0.6\cobwidth) to (-2,-0.1);
    \draw [dash pattern = on 1pt off 1pt] (-2,-0.6\cobwidth) to (-2,-1.0\cobwidth);
    \node at (-2.4,-0.6) {$\sigma_{i}$};
    \draw (-2,0.6\cobwidth) to (-2,0.1);
    \draw [->,dash pattern = on 1pt off 1pt] (-2,0.6\cobwidth) to (-2,1.0\cobwidth);
    \node at (-2.4,0.6) {$\sigma _{j}$};
\end{tz}
\hspace{20pt}
\sim
\hspace{20pt}
\begin{tz}
    \cobpartup 0 0
    \cobpartdown 0 0
    \node (A) at (0cm,1.5cm) [coordinate] {};
    \node (B) at (0cm,-1.5cm) [coordinate] {};
    \begin{scope}[curvein]
        \draw [dash pattern=on 1pt off 1pt] (A) to (0,1.2cm);
        \draw [dash pattern=on 1pt off 1pt] (B) to (0,-1.2cm);
        \draw (0,1.2cm) to (0,-1.2cm);
        \node at (A) [above] {$C$};
        \node at (B) [below] {$D$};
        \node [small label] at (0,-0.6cm) {$f$};
    \end{scope}
    \draw [->] (-2,-0.6\cobwidth) to (-2,-0.1);
    \draw [dash pattern = on 1pt off 1pt] (-2,-0.6\cobwidth) to (-2,-1.0\cobwidth);
    \node at (-2.4,-0.6) {$\sigma_{i}$};
    \draw (-2,0.6\cobwidth) to (-2,0.1);
    \draw [->,dash pattern = on 1pt off 1pt] (-2,0.6\cobwidth) to (-2,1.0\cobwidth);
    \node at (-2.4,0.6) {$\sigma _{j}$};
\end{tz}
\end{equation}
Here the generating 1-cell $\sigma_i$ has been attached to the generating 1-cell $\sigma_j$ along an internal boundary circle to form $\sigma_j \circ \sigma_i$. Two elements of the composite bimodule vector space for $\Sigma$ will be related by the least equivalence relation generated by this congruence exactly when the internal string pictures for the elements can be transformed into one another by equations of string diagrams within the interior of generating 1\-morphisms, and movement of internal strings through boundary circles.

As an example, consider the following 1\-cell in $\bicat F(\P^\L)$:
$$\Sigma = \tikztinypants \circ \big( \tikztinypants \sqcup \tikztinycyl \big) \circ \big( \tikztinycyl \sqcup \tikztinycopants \big)$$
Then $\isd(\Sigma) {} ^{X} _{Y,Z}$ is spanned by diagrams of the following form:
\normalbordisms
\begin{equation}
\begin{tz}[every to/.style={out=down,in=up}]
    \node (A) [Pants, belt scale=1.0] at (0,0) {};
    \node (B) [Copants, anchor=leftleg] at (A.rightleg) {};
    \node (C) [Cyl, anchor=top] at (A.leftleg) {};
    \node (D) [Cyl, anchor=bottom] at (B.rightleg) {};
    \node (E) [Pants, bot, anchor=leftleg, wide, top, left leg scale=1.0] at (A.belt) {};
        \node (f) [tiny label] at ([yshift=0.1\cobwidth] A.center) {$f$};
        \node (g) [tiny label] at ([yshift=-0.2\cobwidth] B.center) {$g$};
        \node (h) [tiny label] at ([yshift=0.10\cobwidth] E.center) {$h$};
        \strand[red strand] ([yshift=\toff] E.belt) node[above strand label, red] {$X$} to (h.center);
\strand [red strand] (h.center)
  to [out=-20] (D.top)
  to
    (D.bottom)
  to[out=down, in=30, out looseness=1] (g.center);
        \strand[red strand] (f.center) to[out=-150, in=up] (A.leftleg)
                        to  ([yshift=-\boff] C.bottom) node[below strand label, red] {$Y$};
        \strand[red strand] (f.center) to[out=-30, in=up] (A.rightleg) to[out=down, in=135] (g.150);  
        \strand[red strand] (g.center) to[out=down, in=up] ([yshift=-\boff] B.belt) node[below strand label, red] {$Z$};
\strand [red strand] (h.center) to [out=-160]
  (E.leftleg) to (f.center);
\end{tz}
\end{equation}
We say that such an internal string diagram is in \textit{elementary form}, meaning that it is in the image (under the map $\bigotimes _{\sigma \in \Sigma} \isd ( \sigma ) \to \isd ( \Sigma )$) of an elementary tensor of a string diagram in each geometrical component.

For a surface of genus zero, every element of the internal string diagram vector space is equal to one of elementary form. We prove this for the following specific surface, which will play an important role later in the paper:
\mediumbordisms
\begin{equation}
\label{eq:snakesurface}
N :=
\begin{tz}[xscale=-1]
    \node[Pants, bot] (A) at (0,0) {};
    \node[Cyl, bot, height scale=1.7] [anchor=top] at (A.rightleg) (B) {};
    \node[Copants, bot, anchor=rightleg] at (A.leftleg) (D) {};
    \node[Cyl, bot, top, height scale=1.7, anchor=bottom] at (D.leftleg) (C) {};
    \node [Cap, bot] at (A.belt) {};
    \node [Cup] at (D.belt) {};
\end{tz}
\end{equation}
\begin{proposition}
\label{genericformsnake}
For objects $A,B \in S$, every element of the internal string diagram vector space $\isd(N)^A_B$, where $N$ is the surface \eqref{eq:snakesurface}, is represented by an internal string diagram in elementary form.
\end{proposition}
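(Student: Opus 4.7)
The plan is to proceed by induction on the number of elementary tensors required to express the element. By the definition of bimodule composition (Equation~\eqref{eq:isdcomposition}), $\isd(N)^A_B$ is the direct sum, indexed by choices of labels on the five internal circles of $N$ (the belt-circles of the cap and cup, the gluing of the pants left-leg to the copants right-leg, and the two cylinder gluings), of the tensor products of the six contributing bimodules, quotiented by the pull-through relation~\eqref{eq:pullthrough}. Every element is therefore a finite sum of elementary tensors; the case of one summand is already in elementary form, so the inductive step reduces to showing that any sum $\xi_1 + \xi_2$ of two elementary tensors is equivalent to a single elementary tensor.

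Let $\xi_1, \xi_2$ be elementary tensors in summands indexed by internal-label tuples $\vec L_1$ and $\vec L_2$ respectively. Since $\cat S$ is Cauchy-complete, by \autoref{pro:CauchyComplete} it admits finite direct sums, so the summand of $\isd(N)^A_B$ indexed by the componentwise direct sum $\vec L := \vec L_1 \oplus \vec L_2$ exists. In this summand, I will construct an elementary tensor $\xi$ by taking, at each of the six generating components of $N$, the diagonal morphism that combines the corresponding morphisms of $\xi_1$ and $\xi_2$ via the canonical direct-sum inclusions and projections on the adjacent internal circles (using the distributivity of the tensor product over direct sums in the Cauchy-complete linear monoidal category $\cat S$).

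To verify that $\xi \sim \xi_1 + \xi_2$, I expand $\xi$ by bilinearity into a sum of tensors indexed by tuples $(k_1, \ldots, k_5) \in \{1,2\}^5$ (one choice of summand per internal circle), and apply the pull-through relation~\eqref{eq:pullthrough} along each internal circle with the canonical inclusions $\iota_k$ and projections $\pi_k$ as the traversing morphisms. Since $\pi_k \iota_j = \delta_{jk}\id$, all tuples with mixed indices give zero contributions, and only the two uniform tuples $(1,\ldots,1)$ and $(2,\ldots,2)$ survive, giving exactly $\xi_1$ and $\xi_2$.

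The main obstacle arises at the pants and copants components, where the tensor product of direct-sum labels decomposes into four summands rather than two: at the pants, for example, the codomain $(X_1 \oplus X_2) \otimes (Y_1 \oplus Y_2)$ splits as $\bigoplus_{i,j} X_i \otimes Y_j$, and the diagonal morphism uses only the $(i,j) \in \{(1,1),(2,2)\}$ summands. Verifying that the pull-through manipulations correctly eliminate the off-diagonal contributions $X_1 \otimes Y_2$ and $X_2 \otimes Y_1$ requires systematic but mechanical bookkeeping with the canonical injections and projections.
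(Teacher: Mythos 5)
Your proposal is correct and uses the same essential mechanism as the paper's proof: Cauchy-completeness supplies direct sums of the internal labels, and the orthogonality $\pi_k \iota_j = \delta_{jk}\,\id$ of the resulting injections and projections, combined with the pull-through relation \eqref{eq:pullthrough}, converts a sum of elementary tensors into a single one. The paper packages this more economically --- it treats an arbitrary finite family in one step and forms the direct sum only at the single internal circle separating the capped pants from the cupped copants, having first absorbed the remaining data into the morphisms $f_i$ and $g_i$ --- but that is a presentational difference rather than a different argument.
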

\begin{proof}
Given a linear representation of a presentation that 2\-extends $\P^\L$, in the associated  Cauchy-complete linear monoidal category, let $X_i$ be a finite family of objects, and let $f_i: I \to X_i \otimes B$ and $g_i: A \otimes X_i \to I$ be a family of morphisms with the same index set $S$. Define $X = \bigoplus_i X_i$, and denote the projection maps $X \to X_i$ and injection maps $X_i \to X$ as $i$. Then we define $f: I \to X \otimes B$ and $g: A \otimes X \to I$ as follows:
\begin{align}
\begin{tz}[xscale=0.5, yscale=-0.7]
\node [tiny label] at (0,0) {$f$};
\draw [red strand] (0,0) to [out=155, in=down, in looseness=2] (-1,2) node [below] {$\red X$};
\draw [red strand] (0,0) to [out=25, in=down, in looseness=2] (1,2) node [below] {$\red B$};
\end{tz}
&=\,\,
{\red\sum_i}
\begin{tz}[xscale=0.5, yscale=-0.7]
\node [tiny label] at (0,0) {$f_i$};
\node [tiny label] at (-1,1) {$i$};
\draw [red strand] (0,0)
  to [out=155, in=down]
    node [left=-1pt] {$\red X_i$} (-1,1)
  to +(0,1)
    node [below] {$\red X$};
\draw [red strand] (0,0) to [out=25, in=down, in looseness=2] (1,2) node [below] {$\red B$};
\end{tz}
&
\begin{tz}[xscale=0.5, yscale=0.7]
\node [tiny label] at (0,0) {$g$};
\draw [red strand] (0,0) to [out=155, in=down, in looseness=2] (-1,2) node [above] {$\red A$};
\draw [red strand] (0,0) to [out=25, in=down, in looseness=2] (1,2) node [above] {$\red X$};
\end{tz}
&=\,\,
{\red\sum_i}
\begin{tz}[xscale=0.5, yscale=0.7]
\node [tiny label] at (0,0) {$g_i$};
\node [tiny label] at (1,1) {$i$};
\draw [red strand] (0,0) to [out=25, in=down] node [right] {\red$X_i$} (1,1) to +(0,1) node [above] {$\red X$};
\draw [red strand] (0,0) to [out=155, in=down, in looseness=2] (-1,2) node [above] {$\red A$};
\end{tz}
\end{align}
Observe:
\normalbordisms
\begin{equation}
\begin{tz}[xscale=-1]
    \node[Pants, bot] (A) at (0,0) {};
    \node[Cyl, bot, height scale=1.7] [anchor=top] at (A.rightleg) (B) {};
    \node[Copants, bot, anchor=rightleg] at (A.leftleg) (D) {};
    \node[Cyl, bot, top, height scale=1.7, anchor=bottom] at (D.leftleg) (C) {};
    \node (f) [tiny label] at ([yshift=2pt] A.center) {$f$};
    \node (g) [tiny label] at ([yshift=-2pt] D.center) {$g$};
    \begin{scope}[internal string scope]
        \node (i) at ([yshift=\toff] A.belt) [above] {};
        \node (j) at ([yshift=\toff] C.top) [above] {$A$};
        \node (k) at ([yshift=-\boff] B.bottom) [below] {$B$};
        \node (l) at ([yshift=-\boff] D.belt) [below] {};
        \draw [out=210, in=up] (f.center) to (B.top);
        \draw (f.center)
            to [out=-45, in=100] node [left] {} (D.rightleg)
            to [out=-80, in=135] (g.center);
        \draw [out=45, in=down] (g.center) to (C.bottom);
        \draw [out=up, in=down] (C.bottom) to (j.south);
        \draw [out=down, in=up] (A.rightleg) to (k.north);
    \end{scope}
    \node [Cap, bot] at (A.belt) {};
    \node [Cup] at (D.belt) {};
\end{tz}
\quad=\quad
{\red \sum_{i,j}}
\,\,
\begin{tz}[xscale=-1]
    \node[Pants, bot] (A) at (0,0) {};
    \node[Cyl, bot, height scale=1.7] [anchor=top] at (A.rightleg) (B) {};
    \node[Copants, bot, anchor=rightleg] at (A.leftleg) (D) {};
    \node[Cyl, bot, top, height scale=1.7, anchor=bottom] at (D.leftleg) (C) {};
    \node (f) [tiny label] at ([yshift=4pt] A.center) {$f_i$};
    \node (g) [tiny label] at ([yshift=-8pt] D.center) {$g_j$};
    \node (j) [tiny label] at ([yshift=-9pt, xshift=4pt] A.leftleg) {$j$};
    \node (i) [tiny label] at ([yshift=5pt, xshift=-2pt] A.leftleg) {$i$};
    \begin{scope}[internal string scope]
        \node (i) at ([yshift=\toff] A.belt) [above] {};
        \node (j) at ([yshift=\toff] C.top) [above] {$A$};
        \node (k) at ([yshift=-\boff] B.bottom) [below] {$B$};
        \node (l) at ([yshift=-\boff] D.belt) [below] {};
        \draw [out=210, in=up] (f.center) to (B.top);
        \draw (f.center)
            to [out=-45, in=100] node [left] {} (D.rightleg)
            to [out=-80, in=135] (g.center);
        \draw [out=45, in=down] (g.center) to (C.bottom);
        \draw [out=up, in=down] (C.bottom) to (j.south);
        \draw [out=down, in=up] (A.rightleg) to (k.north);
    \end{scope}
    \node [Cap, bot] at (A.belt) {};
    \node [Cup] at (D.belt) {};
\end{tz}
\quad=\quad
{\red\sum_i}\,\,
\begin{tz}[xscale=-1]
    \node[Pants, bot] (A) at (0,0) {};
    \node[Cyl, bot, height scale=1.7] [anchor=top] at (A.rightleg) (B) {};
    \node[Copants, bot, anchor=rightleg] at (A.leftleg) (D) {};
    \node[Cyl, bot, top, height scale=1.7, anchor=bottom] at (D.leftleg) (C) {};
    \node (f) [tiny label] at ([yshift=2pt] A.center) {$\tiny f_i$};
    \node (g) [tiny label] at ([yshift=-2pt] D.center) {$g_i$};
    \begin{scope}[internal string scope]
        \node (i) at ([yshift=\toff] A.belt) [above] {};
        \node (j) at ([yshift=\toff] C.top) [above] {$A$};
        \node (k) at ([yshift=-\boff] B.bottom) [below] {$B$};
        \node (l) at ([yshift=-\boff] D.belt) [below] {};
        \draw [out=210, in=up] (f.center) to (B.top);
        \draw (f.center)
            to [out=-45, in=100] node [left] {} (D.rightleg)
            to [out=-80, in=135] (g.center);
        \draw [out=45, in=down] (g.center) to (C.bottom);
        \draw [out=up, in=down] (C.bottom) to (j.south);
        \draw [out=down, in=up] (A.rightleg) to (k.north);
    \end{scope}
    \node [Cap, bot] at (A.belt) {};
    \node [Cup] at (D.belt) {};
\end{tz}
\end{equation}
The first equity uses linearity of the tensor product.  The second equality picks out the nonzero terms of the summation.  The last expression is an arbitrary element of the internal string diagram vector space, while the first is in elementary form as required.
\end{proof}

\subsection{Actions of the generating 2-morphisms}
\label{sec:actionof2morphisms}

Let $\kappa: \Sigma \doubleto \Tau$ be a generating 2\-morphism in $\bicat F(\Q)$, where as above $\Q$ is a presentation that 2\-extends $\P^\L$, and let $Z$ be a linear representation of \Q.  Then we can interpret $\isd(\kappa)$ as a linear map between the spaces of internal string diagrams $\isd (\Sigma)$ and $\isd (\Tau)$ associated to the source and target of $\kappa$. In this section, we analyze the linear maps associated to the generators $\alpha$, $\lambda$, $\rho$ and their inverses, along with $\eta$, $\epsilon$, $\mu$, and $\nu$, and also $\beta$ and $\theta$ in the case that \Q 2\-extends $\B^\L$. For these generators, we will see that the associated linear maps between spaces of internal string diagrams have intuitive geometrical interpretations, which furthermore are independent of the particular chosen representation $Z$.

These linear maps $\isd(\kappa):\isd(\Sigma)^A_B \to \isd(\Tau)^A_B$, which we informally call `actions', are determined in Definition~\ref{def:stringfunctor} by the following conjugation:
\begin{align}
\begin{aligned}
\begin{tikzpicture}[xscale=4,yscale=2]
\node (A) at (0,0) {$\isd (\Sigma )_B ^A$};
\node (B) at (1,0) {$(Z (\Sigma)_*) ^A _B$};
\node (C) at (0,-1) {$\isd (\Tau )_B ^A$};
\node (D) at (1,-1) {$(Z (\Tau) _*) ^A _B$};
\draw [->] (A) to node [above] {$\zeta _\Sigma$} node [below] {$\simeq$} (B);
\draw [->] (B) to node [right] {$(Z (\kappa) _*) _B ^A$} (D);
\draw [->] (A) to node [left] {$\isd(\kappa )^A _B$} (C);
\draw [->] (C) to node [below] {$\zeta _\Tau$} node [above] {$\simeq$} (D);
\end{tikzpicture}
\end{aligned}
\end{align}
We will often neglect the labelings $A,B$ when they can be easily inferred from context.

While $Z$ is a strict 2\-functor, $\isd$ and $(-)_*$ are \textit{not} strict. This introduces a subtlety in the definition of $\zeta_\Sigma$, in the case that $\Sigma$ is a composite 1\-morphism: in this case, the 2-morphism $\zeta_\Sigma$ is given by first decomposing (using the weak structure morphisms of the functor $\isd$) the 1-morphism $\isd(\Sigma)$ into a composite of morphisms $\isd(\sigma)$ for generators $\sigma$, then applying the 2-morphisms $\zeta(\sigma)$, then recomposing (using the weak structure morphisms of the functor $(-)_*$) the pieces $Z(\sigma)_*$ into the morphism $Z(\Sigma)_*$. For example, for $\Sigma = \smash{\raisebox{-8pt}{$\tikz{
  \scalecobordisms{0.23}
  \node [Pants, top, anchor=leftleg] at (0,0) (A) {};
  \node [Copants, anchor=leftleg] at (0,0) {};
}$}}$, we compute $\zeta_\Sigma$ as follows, writing `compose' for the weak composition structure of the 2-functors:
\begin{equation}
\isd \big( \smash{\raisebox{-8.5pt}{$\tikz{
  \scalecobordisms{0.23}
  \node [Pants, top, anchor=leftleg] at (0,0) (A) {};
  \node [Copants, anchor=leftleg] at (0,0) {};
}$}} \big)
\,\xrightarrow{\text{compose${}^\inv$}}
\isd(\tikztinypants) \circ \isd (\tikztinycopants)
\,\xrightarrow{\zeta(\tikzverytinypants), \zeta(\tikzverytinycopants)} \,
Z(\tikztinypants)_* \circ Z(\tikztinycopants) _* \xrightarrow{\text{compose}}
Z\big( \smash{\raisebox{-8.5pt}{$\tikz{
  \scalecobordisms{0.23}
  \node [Pants, top, anchor=leftleg] at (0,0) (A) {};
  \node [Copants, anchor=leftleg] at (0,0) {};
}$}} \big)_*
\end{equation}
We will use this decomposition and recomposition method in several of the proofs in this section.

\subsubsection{Actions of the generators $\alpha$, $\lambda$, and $\rho$}
\def\arrlen{55pt}
We begin by considering the actions on internal string diagrams of the generating 2\-morphisms $\alpha$, $\lambda$, and $\rho$. These generators are the simplest to consider because they come directly from the monoid presentation~$\P$. 
\begin{proposition}
\label{thm:alrinternalstring}
For a linear representation $Z$ of a presentation that 2\-extends $\P^\L$, the generators $\alpha$, $\lambda$, and $\rho$ act in the following ways on internal string diagrams:
\normalbordisms
\setlength\ellipseheight{0.8\ellipseheight}
\setlength\cupheight{1.2\cupheight}
\begin{equation}
\label{associator-image}
    \begin{tz}
    \node[Pants, bot, top] (B) at (0,0) {};
    \node[Pants, bot, anchor=belt] (A) at (B.leftleg) {};
    \node[SwishL, bot, anchor=top] (C) at (B.rightleg) {};
    \begin{scope}[internal string scope]
        \node (i) at (B.belt) [above=\toff] {};
        \node (j) at (A.leftleg) [below=\boff] {};
        \node (k) at (A.rightleg) [below=\boff] {};
        \node (l) at (C.bot) [below=\boff] {};
        \node [tiny label] (f) at (0,0.13) {$f$};
        \node [tiny label] (g) at (A.center) {$g$};
        \node [tiny label] (h) at (C.center) {$h$};
        \draw (f.center) to (i.south);
        \draw (j.north) to [out=up, in=-135] (g.center);
        \draw (k.north) to [out=up, in=-35] (g.center);
        \draw (g.center) to [out=90, in=-135] node [left=-4pt] {} (f.center);
        \draw (l.north)
            to [out=90, in=-80] (h.center) 
            to [out=up, in=down, out looseness=0.7]
                (B.rightleg)
            to [out=up, in=-45]
                node [right=-4pt, pos=0.11] {}
 (f.center);
    \end{scope}
    \end{tz}
    \xmapsto{\textstyle\isd(\alpha)}
    \begin{tz}
    \node[Pants, bot, top] (B) at (0,0) {};
    \node[Pants, bot, anchor=belt] (A) at (B.rightleg) {};
    \node[SwishR, bot, anchor=top] (C) at (B.leftleg) {};
    \begin{scope}[internal string scope]
        \node (i) at (B.belt) [above=\toff] {};
        \node (j) at (C.bot) [below=\boff] {};
        \node (k) at (A.leftleg) [below=\boff] {};
        \node (l) at (A.rightleg) [below=\boff] {};
        \node [tiny label] (f) at (0.05\cobwidth,0.25\cobheight) {$f$};
        \node [tiny label] (g)  at (-0.25\cobwidth,-0.1\cobheight) {$g$};
        \draw (j.north)
            to [out=up, in=-130] (g.center);
        \draw (k.north)
            to [out=up, in=down] (B-rightleg.in-leftthird)
            to [out=up, in=-40] (g.center);
        \draw (l.north) to [out=up, in=down] (B-rightleg.in-rightthird)
            to [out=up, in=-60] (f.center);
        \draw (f.center) to [out=90, in=-90, looseness=2] (i.south);
        \draw (f.center) to [in=90, out=-120] (g.center);
        \node [tiny label] (h) at (0.8\cobwidth,-0.25\cobheight) {$h$};
    \end{scope}
    \end{tz}
\end{equation}
\begin{equation}
\label{units-image}
\begin{tz}
    \node[Pants, top, bot] (A) at (0,0) {};
    \node[Cup] (B) at (A.leftleg) {};
    \node[Cyl, bot, anchor=top] (C) at (A.rightleg) {};
    \begin{scope}[internal string scope]
        \node [tiny label] (f) at (0,0) {$f$};
        \node [tiny label] (g) at ([yshift=-0.33\cobheight] B) {$g$};
        \node [tiny label] (h) at (C) {$h$};
        \node (i) at ([yshift=\toff] A.belt) [above] {};
        \node (i2) at ([yshift=-\boff] C.bottom) [below] {};
        \draw (f.center) to [out=-140, in=90] node [left=-3pt] {} (g.center);
        \draw (f.center) to (i);
        \draw (f.center)
            to [out=-40, in=90, looseness=0.9]
                node [right=-2pt, pos=0.4] {} (h.center);
        \draw (h.center) to (i2);
    \end{scope}
\end{tz}
\,\,\xmapsto{\textstyle\isd(\lambda)}\,
\begin{tz}
    \node[Cyl, tall, bot, top] (A) at (0,0) {};
    \begin{scope}[internal string scope]
        \node [tiny label] (f) at (0,0.4\cobheight) {$f$};
        \node [tiny label] (g) at (-0.20\cobwidth, -0.1\cobheight) {$g$};
        \node [tiny label] (h) at (0.20\cobwidth,-0.5\cobheight) {$h$};
        \node (i) at ([yshift=\toff] A.top) [above] {};
        \node (i2) at ([yshift=-\boff] A.bot) [below] {};
        \draw (f.center) to [out=-120, in=90] (g.center);
        \draw (f.center) to (i);
        \draw (f.center)
            to [out=-60, in=90, looseness=0.9] (h.center);
        \draw (h.center) to [out=-90, in=up, looseness=1.2] (i2.north);
    \end{scope}
\end{tz}
\qquad \qquad
\begin{tz}
    \node[Pants, top, bot] (A) at (0,0) {};
    \node[Cup] (B) at (A.rightleg) {};
    \node[Cyl, bot, anchor=top] (C) at (A.leftleg) {};
    \begin{scope}[internal string scope]
        \node [tiny label] (f) at (0,0) {$f$};
        \node [tiny label] (g) at ([yshift=-0.33\cobheight] B) {$g$};
        \node [tiny label] (h) at (C) {$h$};
        \node (i) at ([yshift=\toff] A.belt) [above] {};
        \node (i2) at ([yshift=-\boff] C.bottom) [below] {};
        \draw (f.center) to [out=-40, in=90] node [right=-1pt] {} (g.center);
        \draw (f.center) to (i);
        \draw (f.center)
            to [out=-140, in=90, looseness=0.9]
                (h.center);
        \draw (h.center) to (i2);
    \end{scope}
\end{tz}
\,\,\xmapsto{\textstyle\isd(\rho)}\,
\begin{tz}
    \node[Cyl, tall, bot, top] (A) at (0,0) {};
    \begin{scope}[internal string scope]
        \node [tiny label] (f) at (0,0.4\cobheight) {$f$};
        \node [tiny label] (g) at (0.2\cobwidth, -0.1\cobheight) {$g$};
        \node [tiny label] (h) at (-0.2\cobwidth,-0.5\cobheight) {$h$};
        \node (i) at ([yshift=\toff] A.top) [above] {};
        \node (i2) at ([yshift=-\boff] A.bot) [below] {};
        \draw (f.center) to [out=-60, in=90] (g.center);
        \draw (f.center) to (i);
        \draw (f.center)
            to [out=-120, in=90, looseness=0.9] (h.center);
        \draw (h.center) to [out=-90, in=90, looseness=1.2] (i2);
    \end{scope}
\end{tz}
\end{equation}
\end{proposition}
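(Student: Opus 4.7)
The proof strategy rests on the key observation that the generators $\alpha$, $\lambda$, $\rho$ come directly from the monoid presentation $\P$, so their source and target 1-morphisms involve only $\tikztinypants$, $\tikztinycyl$, and $\tikztinycup$ from among the generating 1-morphisms of $\P^\L$. By the definition of $\zeta$ in equations \eqref{eq:zeta1}--\eqref{eq:zeta4}, the component of $\zeta$ on each of these is the identity; only on $\tikztinycopants$ and $\tikztinycap$ does $\zeta$ carry nontrivial content. Consequently, the commuting square of Definition~\ref{def:stringfunctor} should collapse, up to the weak composition 2-isomorphisms of $\isd$ and $(-)_*$, so that $\isd(\alpha) = Z(\alpha)_*$ on the nose, and similarly for $\lambda$ and $\rho$.

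First I would invoke \autoref{lem:monoidrep} to identify $\cat{S} := Z(\text{circle})$ with a Cauchy-complete linear monoidal category, in which $Z(\tikztinypants)$, $Z(\alpha)$, $Z(\lambda)$, $Z(\rho)$ realize the tensor product, associator, and unitors respectively. By the description \eqref{eq:isdcomposition} and linearity, it suffices to verify the claimed picture on an elementary tensor. An elementary tensor in $\isd(\Sigma)^A_{B_1, \ldots, B_n}$ consists of a choice of morphism in $\cat S$ for each pants/cylinder/cup component of $\Sigma$, and its image in $Z(\Sigma)_*$, which is $\Hom_{\cat S}(A, Z(\Sigma)(B_1, \ldots, B_n))$, under $\zeta_\Sigma$ is the morphism assembled from these pieces via the monoidal structure of $\cat S$, in exactly the configuration depicted by the internal string diagram.

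Next I would apply $Z(\alpha)_*$, which is post-composition with the associator $Z(\alpha)$ in $\cat S$, and then apply $\zeta_\Tau^{-1}$ to recover the result in internal string diagram form. By the coherence theorem for monoidal categories, or equivalently by the Joyal-Street graphical calculus, composing a string diagram with the associator simply reassociates the tensor factors, producing exactly the deformation shown in \eqref{associator-image}. The arguments for $\lambda$ and $\rho$ are analogous: the unitor $Z(\lambda): I \otimes X \to X$ absorbs the $I$-labeled strand emerging from a cup into the adjacent $X$-labeled strand, giving the picture in the left half of \eqref{units-image}, and similarly for $\rho$ on the right.

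The main obstacle will be bookkeeping for the weak composition structures. Although $\zeta$ is the identity on every generator that appears in the source or target of $\alpha$, $\lambda$, $\rho$, the 2-functors $\isd$ and $(-)_*$ are only weakly functorial, so the composite $\zeta_\Sigma$ is built from nontrivial weak-composition coherence 2-isomorphisms that must be shown to cancel appropriately. This is in principle just an application of the wire-diagram calculus for symmetric monoidal 2-categories~\cite{b14-wd,CSPthesis} together with the coherence results employed throughout the paper; once organized in that notation, the required cancellation reduces to a routine (if verbose) verification.
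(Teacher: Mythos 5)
Your proposal is correct and follows essentially the same route as the paper: the paper's proof likewise reduces $\isd(\alpha)$ to the chain $\mathrm{compose}$, then post-composition with $Z(\alpha)$, then $\mathrm{compose}^{-1}$ (exactly your $\zeta_\Sigma$, $Z(\alpha)_*$, $\zeta_\Tau^{-1}$), and tracks an elementary tensor $\{A \xrightarrow{f} X\otimes Y\}\boxtimes\{X\boxtimes Y \xrightarrow{g\boxtimes h}(B\otimes C)\boxtimes D\}$ through it to read off the reassociated diagram; the case of $\lambda$ is computed the same way and $\rho$ is left as similar. The weak-composition bookkeeping you flag as the remaining obstacle is precisely what the paper's explicit computation carries out.
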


\begin{proof}
The action of $\isd(\alpha)$ on the space of string diagrams has the following form, where we label the upper boundary circle by $A$, the intermediate boundary circles by $P, Q$, and the lower boundary circles by $B, C, D$:
\begin{align}
\nonumber
& \hspace{-5pt} \textstyle \bigoplus _{P,Q} \Hom _\cat{S} \big( A, Z(\tikztinypants)(P \boxtimes Q) \big)
\\*
\nonumber
& \hspace{32pt} \boxtimes \Hom _{\cat{S} \boxtimes \cat{S}} \big( P \boxtimes Q, Z(\raisebox{-3pt}
{$\begin{tikzpicture}
\microbordisms
    \node [Pants, bot, top] at (0,0) {};
    \node [SwishL, bot, top] at (1.25\cobwidth+1.25\cobgap,0) {};
\end{tikzpicture}$})
(B \boxtimes C \boxtimes D) \big) \,/ \sim
\\[-0pt]
\nonumber
& \longxarrow{\mathrm{compose}}{\arrlen} \,
\Hom _\cat{S} \big( A,
 Z(\tikztinypants) Z(\raisebox{-3pt}
{$\begin{tikzpicture}
\microbordisms
    \setlength\obscurewidth{0pt}
    \node [Pants, bot, top] at (0,0) {};
    \node [SwishL, bot, top] at (1.25\cobwidth+1.25\cobgap,0) {};
\end{tikzpicture}$})
(B \boxtimes C \boxtimes D) \big)
\\[-4pt]
\nonumber
&
\longxarrow{Z(\alpha) _{B \boxtimes C \boxtimes D}}{\arrlen}
\,
\Hom _\cat{S} \big( A, Z(\tikztinypants) Z(\raisebox{-3pt}{$\begin{tikzpicture}
\microbordisms
\node [Pants, bot, top] at (0,0) {}; \node [SwishR, bot, top] at (-1.25\cobwidth-1.25\cobgap,0) {}; \end{tikzpicture}$})
(B \boxtimes C \boxtimes D) \big)
\\
\nonumber
& \longxarrow{\smash{\mathrm{compose} ^{\inv}}}{\arrlen} \,
\textstyle \bigoplus _{P,Q} \Hom _\cat{S} \big( A, Z(\tikztinypants)(P \boxtimes Q) \big)
\\
\label{eq:associator-isos}
& \hspace{100pt} \boxtimes \Hom _{\cat{S} \boxtimes \cat{S}} \big( P \boxtimes Q, Z(\raisebox{-3pt}
{$\begin{tikzpicture}
\microbordisms
    \node [Pants, bot, top] at (0,0) {};
    \node [SwishR, bot, top] at (-1.25\cobwidth-1.25\cobgap,0) {};
\end{tikzpicture}$})
(B \boxtimes C \boxtimes D) \big) \,/ \sim
\end{align}
Suppose we begin with the element of the direct sum defined by the string diagram on the left-hand side of~\eqref{associator-image}:
\begin{equation}
\nonumber
\big[ P = X, \, Q = Y\big] \,\, \Big\{ A \xrightarrow{f} X \otimes Y \Big\} \boxtimes \Big\{ X \boxtimes Y \xrightarrow{g \boxtimes h} (B \otimes C) \boxtimes D \Big\}
\end{equation}
The data in square brackets indicates the element of the coproduct $\bigoplus_{P,Q}$ in which our term is supported. This evolves in the following manner under the chain of isomorphisms described in~\eqref{eq:associator-isos}:
\begin{align}
\nonumber
& \longxmapsto{\comp}{\arrlen} \Big\{ A \xrightarrow{(g \otimes h) \circ f} (B \otimes C) \otimes D \Big\}
\\
\nonumber
& \longxmapsto{Z(\alpha)_{B \boxtimes C \boxtimes D}}{\arrlen}  \Big\{ A \xrightarrow{(g \otimes h) \circ f} (B \otimes C) \otimes D \xrightarrow{Z(\alpha) _{B \boxtimes C \boxtimes D}} B \otimes (C \otimes D) \Big\}
\\
\nonumber
& \longxmapsto{\comp ^{\inv}}{\arrlen} \big[ P=B,\, Q=C \otimes D\big] \\
\nonumber
& \hspace{90pt} \Big\{ A \xrightarrow{(g \otimes h) \circ f} (B \otimes C) \otimes D \xrightarrow{Z(\alpha) _{B \boxtimes C \boxtimes D}} B \otimes (C \otimes D) \Big\}
\\
& \hspace{90pt} {} \boxtimes \Big\{ B \boxtimes (C \otimes D) \xrightarrow{\id} B \boxtimes (C \otimes D) \Big\}
\end{align}
This final element has the graphical interpretation given by the right-hand side of~\eqref{associator-image}.

We now consider the action of $\isd(\lambda)$. Its action on the space of string diagrams is as follows:
\begin{align}
\nonumber
& \hspace{-20pt} \textstyle \bigoplus _{P,Q} \Hom \big( A, Z(\tikztinypants)(P \boxtimes Q) \big) \boxtimes \Hom (P \boxtimes Q, Z(\raisebox{-3pt}{$\begin{tikzpicture}
\microbordisms
    \node [Cup, bot, top] at (0,0) {};
    \node [Cyl, bot, top, anchor=top] at (\cobwidth+1\cobgap,0) {};
\end{tikzpicture}$})
(B) \big) \,/ \sim
\\[-0pt]
\nonumber
& \longxarrow{\comp}{\arrlen}
\Hom _\cat{S} \big( A, Z(\tikztinypants) Z(
\raisebox{-3pt}
{$\begin{tikzpicture}
\microbordisms
    \node [Cup, bot, top] at (0,0) {};
    \node [Cyl, bot, top, anchor=top] at (\cobwidth+1\cobgap,0) {};
\end{tikzpicture}$})
(B) \big)
\\[-2pt]
& \longxarrow{Z(\lambda) _B}{\arrlen}
\Hom _\cat{S} \big( A, Z(\raisebox{-3pt}
{$\begin{tikzpicture}
\microbordisms
    \node [Cyl, bot, top, anchor=top] at (\cobwidth+1\cobgap,0) {};
\end{tikzpicture}$})
(B) \big) = \Hom _\cat{S} \big( A, B \big)
\intertext{This has the following effect on our chosen string diagram:}
\nonumber
& \hspace{-20pt} \big[ P=X, \, Q=Y \big] \,\, \Big\{ A \xrightarrow{f} X \otimes Y \Big\} \boxtimes \Big\{ X \boxtimes Y \xrightarrow{ g \boxtimes h } I \boxtimes B\Big\}
\\
\nonumber
& \longxmapsto{\comp}{\arrlen} \Big\{ A \xrightarrow{(g \otimes h) \circ f} I \otimes B \Big\}
\\
& \longxmapsto{Z(\lambda) _B}{\arrlen} \Big\{ A \xrightarrow{(g \otimes h) \circ f} I \otimes B \xrightarrow{Z(\lambda) _B} B \Big\}
\end{align}
The action of $\isd(\rho)$ can be verified similarly.
\end{proof}

Note that the actions given in this proposition can be deduced from the following actions on a smaller class of string diagrams, involving no nontrivial internal morphisms:
\setlength\ellipseheight{0.8\ellipseheight}
\setlength\cupheight{1.2\cupheight}
\mediumbordisms
\begin{align}
\label{associator-simpleimage}
    \begin{tz}
    \node[Pants, bot, top, belt scale=1.5] (B) at (0,0) {};
    \node[Pants, bot, anchor=belt] (A) at (B.leftleg) {};
    \node[SwishL, bot, anchor=top] (C) at (B.rightleg) {};
    \begin{scope}[internal string scope]
        \node (i) at (B.belt) [above=\toff] {};
        \node (j) at (A.leftleg) [below=\boff] {};
        \node (k) at (A.rightleg) [below=\boff] {};
        \node (l) at (C.bot) [below=\boff] {};
        \draw (j.north)
            to (A.leftleg)
            to [out=up, in=-90] (B-leftleg.in-leftthird)
            to [out=up, in=-90] (B-belt.in-leftquarter)
            to [out=up, in=-90] ([yshift=\toff] B-belt.in-leftquarter);
        \draw (k.north)
            to (A.rightleg)
            to [out=up, in=-90] (B-leftleg.in-rightthird)
            to [out=up, in=-90] (B-belt)
            to [out=up, in=-90] ([yshift=\toff] B.belt);
        \draw (l.north)
            to (C.bot)
            to [out=up, in=-90] (B.rightleg)
            to [out=up, in=-90] (B-belt.in-rightquarter)
            to [out=up, in=-90] ([yshift=\toff] B-belt.in-rightquarter);
    \end{scope}
    \end{tz}
    &\xmapsto{\textstyle\isd(\alpha)}
    \begin{tz}
    \node[Pants, bot, top, belt scale=1.5] (B) at (0,0) {};
    \node[Pants, bot, anchor=belt] (A) at (B.rightleg) {};
    \node[SwishR, bot, anchor=top] (C) at (B.leftleg) {};
    \begin{scope}[internal string scope]
        \node (i) at (B.belt) [above=\toff] {};
        \node (j) at (C.bot) [below=\boff] {};
        \node (k) at (A.leftleg) [below=\boff] {};
        \node (l) at (A.rightleg) [below=\boff] {};
        \draw (j.north)
            to (C.bot)
            to [out=up, in=-90] (B-leftleg)
            to [out=up, in=-90] (B-belt.in-leftquarter)
            to [out=up, in=-90] ([yshift=\toff] B-belt.in-leftquarter);
        \draw (k.north)
            to (A.leftleg)
            to [out=up, in=-90] (B-rightleg.in-leftthird)
            to [out=up, in=-90] (B-belt)
            to [out=up, in=-90] ([yshift=\toff] B.belt);
        \draw (l.north)
            to (A.rightleg)
            to [out=up, in=-90] (B-rightleg.in-rightthird)
            to [out=up, in=-90] (B-belt.in-rightquarter)
            to [out=up, in=-90] ([yshift=\toff] B-belt.in-rightquarter);
    \end{scope}
    \end{tz}
&
\begin{tz}
    \node[Pants, top, bot] (A) at (0,0) {};
    \node[Cup] (B) at (A.leftleg) {};
    \node[Cyl, bot, anchor=top] (C) at (A.rightleg) {};
    \begin{scope}[internal string scope]
        \node (i) at ([yshift=\toff] A.belt) [above] {};
        \node (i2) at ([yshift=-\boff] C.bottom) [below] {};
        \draw (i2) to (C.top) to [out=up, in=-90] (A.belt) to (i);
    \end{scope}
\end{tz}
&\xmapsto{\textstyle\isd(\lambda)}
\begin{tz}
    \node[Cyl, tall, bot, top] (A) at (0,0) {};
    \begin{scope}[internal string scope]
        \node (i) at ([yshift=\toff] A.top) [above] {};
        \node (i2) at ([yshift=-\boff] A.bot) [below] {};
        \draw (i2) to (i);
    \end{scope}
\end{tz}
&
\begin{tz}
    \node[Pants, top, bot] (A) at (0,0) {};
    \node[Cup] (B) at (A.rightleg) {};
    \node[Cyl, bot, anchor=top] (C) at (A.leftleg) {};
    \begin{scope}[internal string scope]
        \node (i) at ([yshift=\toff] A.belt) [above] {};
        \node (i2) at ([yshift=-\boff] C.bottom) [below] {};
        \draw (i2) to (C.top) to [out=up, in=-90] (A.belt) to (i);
    \end{scope}
\end{tz}
&\xmapsto{\textstyle\isd(\rho)}
\begin{tz}
    \node[Cyl, tall, bot, top] (A) at (0,0) {};
    \begin{scope}[internal string scope]
        \node (i) at ([yshift=\toff] A.top) [above] {};
        \node (i2) at ([yshift=-\boff] A.bot) [below] {};
        \draw (i2) to (i);
    \end{scope}
\end{tz}
\end{align}
For example, the general action~\eqref{associator-image} of $\isd (\alpha)$ can be recovered as follows, by composing with an identity cylinder:
\normalbordisms
\begin{equation}
    \begin{tz}
    \node[Pants, bot, top] (B) at (0,0) {};
    \node[Pants, bot, anchor=belt] (A) at (B.leftleg) {};
    \node[SwishL, bot, anchor=top] (C) at (B.rightleg) {};
    \begin{scope}[internal string scope]
        \node (i) at (B.belt) [above=\toff] {};
        \node (j) at (A.leftleg) [below=\boff] {};
        \node (k) at (A.rightleg) [below=\boff] {};
        \node (l) at (C.bot) [below=\boff] {};
        \node [tiny label] (f) at (0,0.0) {$f$};
        \node [tiny label] (g) at (A.center) {$g$};
        \node [tiny label] (h) at (C.center) {$h$};
        \draw (f.center) to (i.south);
        \draw (j.north) to [out=up, in=-135] (g.center);
        \draw (k.north) to [out=up, in=-35] (g.center);
        \draw (g.center) to [out=90, in=-135] node [left=-4pt, pos=0.65] {} (f.center);
        \draw (l.north)
            to [out=90, in=-80] (h.center) 
            to [out=up, in=down, out looseness=0.7]
                (B.rightleg)
            to [out=up, in=-45]
                node [right=-4pt, pos=0.251] {}
 (f.center);
    \end{scope}
    \end{tz}
\ignore{=
\begin{tz}
    \node[Pants, bot] (B) at (0,0) {};
    \node[Pants, bot, anchor=belt] (A) at (B.leftleg) {};
    \node[SwishL, bot, anchor=top] (C) at (B.rightleg) {};
    \node (D) [Cyl, anchor=bot, bot, top] at (B.belt) {};
    \begin{scope}[internal string scope]
        \node (i) at (D.top) [above=\toff] {};
        \node (j) at (A.leftleg) [below=\boff] {};
        \node (k) at (A.rightleg) [below=\boff] {};
        \node (l) at (C.bot) [below=\boff] {};
        \begin{pgfonlayer}{label}
        \node [tiny label] (f) at (0,0.13) {$f$};
        \node [tiny label] (g) at (A.center) {$g$};
        \node [tiny label] (h) at (C.center) {$h$};
        \end{pgfonlayer}
        \draw (f) to (i.south);
        \draw (j.north) to [out=up, in=-140] (g);
        \draw (k.north) to [out=up, in=-40] (g);
        \draw (g) to [out=90, in=-120] node [left=-2pt] {} (f);
        \draw (l.north)
            to [out=90, in=-80] (h) 
            to [out=100, in=-50, looseness=0.8]
                node [right=-2pt, pos=0.6] {$Y$} (f);
    \end{scope}
\end{tz}}
=
\begin{tz}
    \node[Pants, bot, belt scale=1.5] (B) at (0,0) {};
    \node[Pants, bot, anchor=belt] (A) at (B.leftleg) {};
    \node[SwishL, bot, anchor=top] (C) at (B.rightleg) {};
    \node [Cyl, anchor=bot, bot, top, top scale=1.5, bottom scale=1.5] (D) at (B.belt) {};
    \begin{scope}[internal string scope]
        \node (i) at (D.top) [above=\toff] {};
        \node (j) at (A.leftleg) [below=\boff] {};
        \node (k) at (A.rightleg) [below=\boff] {};
        \node (l) at (C.bot) [below=\boff] {};
        \node [tiny label] (f) at (0\cobwidth,1.10\cobheight) {$f$};
        \node [tiny label] (g) at (-0.35\cobwidth,0.72\cobheight) {$g$};
        \node [tiny label] (h) at (0.35\cobwidth,0.72\cobheight) {$h$};
        \draw (j.north)
            to (A.leftleg)
            to [out=up, in=-90] (B-leftleg.in-leftthird)
            to [out=up, in=-90] ([xshift=-3pt] B-belt.in-leftquarter)
            to [out=90, in=-110] (g.center);
        \draw (k.north)
            to (A.rightleg)
            to [out=up, in=-90] (B-leftleg.in-rightthird)
            to [out=up, in=-90] (B-belt);
        \draw (l.north)
            to (C.bot)
            to [out=up, in=-90] (B.rightleg)
            to [out=up, in=-90] (h.center);
        \draw (B-belt) to [out=90, in=-70] (g.center);
        \draw (g.center) to [out=90, in=-110] (f.center);
        \draw (h.center) to [out=90, in=-50] (f.center);
        \draw (f.center) to (i.south);
    \end{scope}
    \end{tz}
\hspace{-5pt}\xmapsto{\textstyle\isd(\alpha)}\hspace{-5pt}
\begin{tz}
    \node[Pants, bot, belt scale=1.5] (B) at (0,0) {};
    \node[Pants, bot, anchor=belt] (A) at (B.rightleg) {};
    \node[SwishR, bot, anchor=top] (C) at (B.leftleg) {};
    \node [Cyl, anchor=bot, bot, top, top scale=1.5, bottom scale=1.5] (D) at (B.belt) {};
    \begin{scope}[internal string scope]
        \node (i) at (D.top) [above=\toff] {};
        \node (j) at (C.bot) [below=\boff] {};
        \node (k) at (A.leftleg) [below=\boff] {};
        \node (l) at (A.rightleg) [below=\boff] {};
        \node [tiny label] (f) at (0\cobwidth,1.1\cobheight) {$f$};
        \node [tiny label] (g) at (-0.35\cobwidth,0.72\cobheight) {$g$};
        \node [tiny label] (h) at (0.35\cobwidth,0.72\cobheight) {$h$};
        \draw (j.north)
            to (C.bot)
            to [out=up, in=-90] (B-leftleg)
            to [out=up, in=-90] ([xshift=-3pt] B-belt.in-leftquarter)
            to [out=up, in=-135] (g.center);
        \draw (k.north)
            to (A.leftleg)
            to [out=up, in=-90] (B-rightleg.in-leftthird)
            to [out=up, in=-90] (B-belt)
            to [out=up, in=-45] (g.center);
        \draw (l.north)
            to (A.rightleg)
            to [out=up, in=-90] (B-rightleg.in-rightthird)
            to [out=up, in=-90] ([xshift=2pt] B-belt.in-rightquarter)
            to (h.center);
        \draw (g.center) to [out=90, in=-110] (f.center);
        \draw (h.center) to [out=90, in=-50] (f.center);
        \draw (f.center) to (i.south);
    \end{scope}
    \end{tz}
=
    \begin{tz}
    \node[Pants, bot, top, height scale=1.2] (B) at (0,0) {};
    \node[Pants, bot, anchor=belt] (A) at (B.rightleg) {};
    \node[SwishR, bot, anchor=top] (C) at (B.leftleg) {};
    \begin{scope}[internal string scope]
        \node (i) at (B.belt) [above=\toff] {};
        \node (j) at (C.bot) [below=\boff] {};
        \node (k) at (A.leftleg) [below=\boff] {};
        \node (l) at (A.rightleg) [below=\boff] {};
        \node [tiny label] (f) at (0.0\cobwidth,0.20\cobheight) {$f$};
        \node [tiny label] (g)  at (-0.3\cobwidth,-0.15\cobheight) {$g$};
        \node [tiny label] (h) at (0.8\cobwidth,-0.25\cobheight) {$h$};
        \draw (j.north)
            to [out=up, in=-130] (g.center);
        \draw (k.north)
            to [out=up, in=down] (B-rightleg.in-leftthird)
            to [out=up, in=-40] (g.center);
        \draw (l.north) to [out=up, in=down] (B-rightleg.in-rightthird)
            to [out=up, in=-60] (f.center);
        \draw (f.center) to [out=90, in=-90, looseness=2] (i.south);
        \draw (f.center) to [in=90, out=-120] (g.center);
    \end{scope}
    \end{tz}
\end{equation}
The general actions~\eqref{units-image} of $\isd(\lambda)$ and $\isd(\rho)$ can be recovered in a similar way. We will exploit this feature for some of the other generators, and only give their actions on simple string diagrams which are sufficient to recover the full action.

\subsubsection{Actions of the merging generators $\epsilon$, $\eta$, $\mu$, and $\nu$}

\begin{proposition}
\label{thm:eemninternalstring}
For a linear representation $Z$ of a presentation that 2\-extends $\P^\L$, the generators $\epsilon$, $\eta$, $\mu$, and $\nu$ act in the following ways on internal string diagrams:
\normalbordisms
\begin{align}
\label{nu-and-mu-image}
\begin{tz}
\draw[dotted] (0,0) rectangle (2\cupheight, 2\cupheight);
\end{tz}
\,\,&\xmapsto{\textstyle\isd (\nu)}\,\,
\begin{tz}
    \node[Cup] (A) at (0,0) {};
    \node[Cap, bot] (B) at (0,0) {};
\end{tz}
&
\begin{tz}
    \setlength{\cupheight}{1.3\cupheight}
    \node[Cup, top] (A) at (0,0) {};
    \node[Cap, bot] (B) at (0,-1.5\cobheight) {};
    \node [tiny label, text=red] (f) at ([yshift=-0.56\cupheight] A) {$\shrunkenf$};
    \node [tiny label, text=red] (g) at ([yshift=0.56\cupheight] B) {$g$};
    \begin{scope}[internal string scope]
        \node (i) at (A) [above=\toff] {};
        \node (j) at (B) [below=\boff] {};
        \draw (f.center) to (i);
        \draw (g.center) to (j);
    \end{scope}
\end{tz}
\,\,&\xmapsto{\textstyle\isd (\mu)} \,\,
\begin{tz}
    \setlength\cupheight{1.3\cupheight}
    \node[Cyl, top, bot=false] (A) at (0,0) {};
    \node[Cyl, bot] (B) at (0,-0.5\cobheight) {};
    \node [tiny label, text=red] (f) at ([yshift=-0.56\cupheight] A.top) {$f$};
    \node [tiny label, text=red] (g) at ([yshift=0.56\cupheight] B.bot) {$g$};
    \begin{scope}[internal string scope]
        \node (i) at (A.top) [above=\toff] {};
        \node (j) at (B.bot) [below=\boff] {};
        \draw (f.center) to (i);
        \draw (g.center) to (j);
    \end{scope}
\end{tz}
\\
\label{eta-and-epsilon-image}
\begin{tz}
        \node[Cyl, tall, top, bot] (A) at (0,0) {};
        \node[Cyl, tall, top, bot] (B) at (\cobgap + \cobwidth, 0) {};
        \begin{scope}[internal string scope]
                \draw ([yshift=\toff] A.top) node[above]{} -- ([yshift=-\boff] A.bot) node[below]{};
                \draw ([yshift=\toff] B.top) node[above]{} -- ([yshift=-\boff] B.bot) node[below]{};
        \end{scope}
\end{tz}
\,\,&\xmapsto{\textstyle\isd (\eta)}
\begin{tz}
        \node[Pants, bot, belt scale=1.5] (A) at (0,0) {};
        \node[Copants, bot, anchor=belt, belt scale=1.5, top] (B) at (A.belt) {}; 
        \begin{scope}[internal string scope]
                \node (i) at ([yshift=\toff] B.leftleg) [above] {};
                \node (j) at ([yshift=\toff] B.rightleg) [above] {};
                \node (i2) at ([yshift=-\boff] A.leftleg) [below] {};
                \node (j2) at ([yshift=-\boff] A.rightleg) [below] {};
                \draw (i.south) to[out=down, in=up] (B-belt.in-leftthird) to[out=down, in=up] (i2.north);
                \draw (j.south) to[out=down, in=up] (B-belt.in-rightthird) to[out=down, in=up] (j2.north);
        \end{scope}
\end{tz}
&
\begin{tz}
        \node[Pants, bot, top] (A) at (0,0) {};
        \node[Copants, bot, anchor=leftleg] (B) at (A.leftleg) {};
        \begin{scope}[internal string scope]
                \node (i) at ([yshift=\toff] A.belt) [above] {};
                \node (i2) at ([yshift=-\boff] B.belt) [below] {};
                \node[tiny label] (p) at ([yshift=0.05\cobheight] A.center) {$f$};
                \node[tiny label] (q) at ([yshift=-0.1\cobheight] B.center) {$g$};
                \draw (i.south)
                    to (p.center)
                    to [out=-120, in=up]
                        (A.leftleg)
                        to[out=down, in=120] (q.center);
                \draw (p.center) to[out=-60, in=up]
                (A.rightleg) to[out=down, in=60] (q.center);
                \draw (q.center) -- (i2.north);
        \end{scope}
\end{tz}
&\,\,\xmapsto{\textstyle\isd (\epsilon)}\,\,
\begin{tz}
        \node[Cyl, top, bot=false] (A) at (0,0) {};
        \node[Cyl, bot] (A2) at (0,-\cobheight) {};
        \begin{scope}[internal string scope]
                \node (i) at ([yshift=\toff] A.top) [above] {};
                \node (i2) at ([yshift=-\boff] A2.bottom) [below] {};
                \node[tiny label] (p) at (A.center) {$f$};
                \node[tiny label] (q) at (A2.center) {$g$};
                \draw (i.south) -- (p.center) to[out=-120, in=120] node[left, xshift=0.1cm] {} (q.center);
                \draw (p.center) to[out=-60, in=60] node[right, xshift=-0.1cm] {} (q.center);
                \draw (q.center) -- (i2.north);
        \end{scope}
\end{tz}
\end{align}
\end{proposition}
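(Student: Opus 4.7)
\emph{Proof plan.} The approach is to identify each of $\isd(\eta)$, $\isd(\epsilon)$, $\isd(\nu)$, $\isd(\mu)$ with the canonical unit or counit of the corresponding $F_* \dashv F^*$ adjunction in $\bimod$ furnished by \autoref{adj_lemma}, and then read off explicit formulas.

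First I would unwind $\isd$ on a 2-morphism generator, following the same strategy used for $\alpha$ in \autoref{thm:alrinternalstring}. Decomposing $\isd(\tikztinycopants \circ \tikztinypants)$ using the weak composition structure of $\isd$ and $(-)_*$ yields
\[
\isd(\eta) \;=\; \bigl(\zeta(\tikztinycopants) \ast \zeta(\tikztinypants)\bigr) \circ Z(\eta)_* ,
\]
in which $\zeta(\tikztinypants)$ is the identity and $\zeta(\tikztinycopants) \colon Z(\tikztinycopants)_* \xrightarrow{\sim} Z(\tikztinypants)^*$ is, by construction, the unique bimodule isomorphism intertwining the adjunction $Z(\tikztinypants)_* \dashv Z(\tikztinycopants)_*$ with the canonical adjunction $Z(\tikztinypants)_* \dashv Z(\tikztinypants)^*$ of \autoref{adj_lemma}. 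Therefore $\isd(\eta)$ is precisely the canonical unit of the latter adjunction. The same reasoning shows that $\isd(\epsilon)$ is the canonical counit of $Z(\tikztinypants)_* \dashv Z(\tikztinypants)^*$, and that $\isd(\nu)$ and $\isd(\mu)$ are the canonical unit and counit of $Z(\tikztinycup)_* \dashv Z(\tikztinycup)^*$.

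Second, I would read off these canonical 2-morphisms via the formulas given in the proof of \autoref{adj_lemma}: the unit of $F_* \dashv F^*$ acts on hom-spaces as $F$, while the counit is induced by composition in the target category. For $F = Z(\tikztinypants) = \otimes$, the unit is the tensor product map $\Hom(A,A') \otimes \Hom(B,B') \to \Hom(A \otimes B,\, A' \otimes B')$, $(f,g) \mapsto f \otimes g$, and the counit is composition $\Hom(A,\, X_1 \otimes X_2) \otimes \Hom(X_1 \otimes X_2,\, B) \to \Hom(A,B)$, $(f,g) \mapsto g \circ f$. For $F = Z(\tikztinycup)$, sending $k \mapsto I$, the unit picks out $\id_I$ from $\Hom_\vect(k,k)$ and the counit composes in $\Hom(A,I) \otimes \Hom(I,B) \to \Hom(A,B)$. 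Translating these formulas into the internal-string-diagram calculus gives exactly the diagrammatic moves shown in \eqref{nu-and-mu-image} and \eqref{eta-and-epsilon-image}; as in \autoref{thm:alrinternalstring}, it suffices to check the actions on simple diagrams without nontrivial internal morphisms, with the general case following by pre- and post-composition with identity cylinders and naturality of the bimodule constructions.

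The main technical obstacle, just as for $\isd(\alpha)$, is the coherence bookkeeping when unwinding $\isd$ on a composite 1-morphism: the weak compositors of both $\isd$ and $(-)_*$ must be tracked carefully to confirm that the only nontrivial transport data are the $\zeta(\tikztinycopants)$ and $\zeta(\tikztinycap)$ insertions, while $\zeta(\tikztinypants)$ and $\zeta(\tikztinycup)$ collapse to identities. Once this coherence manipulation is completed, the identification with canonical adjunction data from \autoref{adj_lemma} is immediate, and the passage from the bimodule formulas to the pictures in the statement is mechanical.
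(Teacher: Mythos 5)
Your proposal is correct and follows essentially the same route as the paper: the paper's own proof simply observes that $\isd(\eta)$, $\isd(\epsilon)$, $\isd(\mu)$, $\isd(\nu)$ are the units and counits of the adjunctions $Z(\tikztinypants)_* \dashv Z(\tikztinypants)^*$ and $Z(\tikztinycup)_* \dashv Z(\tikztinycup)^*$, and then specializes the explicit unit/counit formulas from \autoref{adj_lemma}. Your additional justification that the transport isomorphisms $\zeta(\tikztinycopants)$ and $\zeta(\tikztinycap)$ intertwine the two adjunctions (so the transported unit is the canonical one) is exactly the implicit content of the paper's first two sentences, spelled out.
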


\begin{proof}
These generators are the units and counits  expressing the adjunctions $\tikztinypants \dashv \tikztinycopants$ and $\tikztinycup \dashv \tikztinycap$. Under the image of $\isd$, they are the units and counits expressing the adjunctions $Z(\tikztinypants) _* \dashv Z(\tikztinypants) ^*$ and $Z(\tikztinycup) _* \dashv Z(\tikztinycup) ^*$. \autoref{adj_lemma} provides an explicit formula for the unit and counit of the adjunction $F_* \dashv F^*$, which specializes to the above formulas. 
\end{proof}

\subsubsection{Actions of dual generators}

If we have a 2\-morphism $\sigma: A \doubleto B$ such that $A$ and $B$ have right duals, then we can construct the right dual 2\-morphism $\sigma^* : B ^* \doubleto A ^*$  in the following way:
\begin{equation}
B ^*
\xdoubleto{{\eta \circ B ^*}}
A^* \circ A \circ B ^*
\xdoubleto{A^* \circ \sigma \circ B ^*}
A^* \circ B \circ B^*
\xdoubleto{A^* \circ \epsilon'}
A^*
\end{equation}
Here $\eta : \id \doubleto A^* \circ A$ is the unit for the adjunction $A \dashv A^*$, and $\epsilon': B \circ B^* \doubleto \id$ is the counit for the adjunction $B \dashv B^*$
For a presentation that 2\-extends $\P^\L$, the generating 1\-morphisms $\tikztinypants$ and $\tikztinycup$ have right adjoints, and so we can employ this technique to construct duals to our standard generators, and calculate their actions on internal string diagrams.
\begin{proposition}
For a linear representation $Z$ of a presentation that 2\-extends $\P^\L$, the right duals $\rho ^*$, $\lambda ^*$, and $\alpha ^*$ act in the following way on internal string diagrams:
\mediumbordisms
\begin{align}
\begin{tz}
    \node[Cyl, tall, bot, top] (A) at (0,0) {};
    \node (i) at ([yshift=-\boff] A.bot) [below] {};
    \node (i2) at ([yshift=\toff] A.top) [above] {};
    \draw[internal string] (i2) -- (i);
\end{tz}
&\xmapsto{\textstyle\isd(\rho^*)}
\begin{tz}
    \node[Copants, bot] (A) at (0,0) {};
    \node[Cap, bot] (B) at (A.rightleg) {};
    \node[Cyl, bot, top, anchor=bottom] (C) at (A.leftleg) {};
    \node (i) at ([yshift=-\boff] A.belt) [below] {};
    \node (i2) at ([yshift=\toff] C.top) [above] {};
    \begin{scope}[internal string scope]
        \draw (i.north) to (A.belt) to [out=up, in=down] (C.bottom) to (i2.south);
    \end{scope}
\end{tz}
&
\begin{tz}
        \node[Cyl, tall, bot, top] (A) at (0,0) {};
        \node (i) at ([yshift=-\boff] A.bot) [below] {};
        \node (i2) at ([yshift=\toff] A.top) [above] {};
        \draw[internal string] (i2) -- (i);
\end{tz}
&\xmapsto{\textstyle\isd(\lambda^*)}
\begin{tz}
    \node[Copants, bot] (A) at (0,0) {};
    \node[Cap, bot] (B) at (A.leftleg) {};
    \node[Cyl, bot, top, anchor=bottom] (C) at (A.rightleg) {};
    \node (i) at ([yshift=-\boff] A.belt) [below] {};
    \node (i2) at ([yshift=\toff] C.top) [above] {};
    \begin{scope}[internal string scope]
        \draw (i.north) to (A.belt) to [out=up, in=down] (C.bottom) to (i2.south);
    \end{scope}
\end{tz}
&
\begin{tz}
    \node[Copants, bot, belt scale=1.5] (B) at (0,0) {};
    \node[Copants, bot, top, anchor=belt] (A) at (B.leftleg) {};
    \node[SwishR, bot, top, anchor=bot] (C) at (B.rightleg) {};
    \begin{scope}[internal string scope, out=down, in=up]
        \draw ([yshift=\toff] A.leftleg)
            to (A.leftleg)
            to (A-belt.in-leftthird)
            to (B-belt.in-leftquarter)
            to ([yshift=-\boff] B-belt.in-leftquarter);
        \draw ([yshift=\toff] A.rightleg)
            to (A.rightleg)
            to (A-belt.in-rightthird)
            to (B.belt)
            to ([yshift=-\boff] B.belt);
        \draw ([yshift=\toff] C.top)
            to (C.top)
            to (B.rightleg)
            to (B-belt.in-rightquarter)
            to ([yshift=-\boff] B-belt.in-rightquarter);
    \end{scope}
\end{tz}
&\xmapsto{\textstyle\isd(\alpha^*)}
\begin{tz}
    \node[Copants, bot, belt scale=1.5] (B) at (0,0) {};
    \node[Copants, bot, top, anchor=belt] (A) at (B.rightleg) {};
    \node[SwishL, bot, top, anchor=bot] (C) at (B.leftleg) {};
    \begin{scope}[internal string scope, out=down, in=up]
        \draw ([yshift=\toff] A.leftleg)
            to (A.leftleg)
            to (A-belt.in-leftthird)
            to (B.belt)
            to ([yshift=-\boff] B.belt);
        \draw ([yshift=\toff] A.rightleg)
            to (A.rightleg)
            to (A-belt.in-rightthird)
            to (B-belt.in-rightquarter)
            to ([yshift=-\boff] B-belt.in-rightquarter);
        \draw ([yshift=\toff] C.top)
            to (C.top)
            to (C.bot)
            to (B-belt.in-leftquarter)
            to ([yshift=-\boff] B-belt.in-leftquarter);
    \end{scope}
\end{tz}
\end{align}
\end{proposition}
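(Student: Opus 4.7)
The plan is to compute the action of each right dual by unpacking its explicit construction in terms of the original generator together with a unit and a counit, and then applying the actions already established in \autoref{thm:alrinternalstring} and \autoref{thm:eemninternalstring}. Explicitly, for a 2\-morphism $\sigma : A \Rightarrow B$ between 1\-morphisms admitting right adjoints $A \dashv A^*$ and $B \dashv B^*$ (with unit $\eta_A$ and counit $\epsilon_B$), the right dual is the composite
\[
\sigma^* \;=\; (A^* \circ \epsilon_B) \,\circ\, (A^* \circ \sigma \circ B^*) \,\circ\, (\eta_A \circ B^*).
\]
Since $\isd$ is a functor, $\isd(\sigma^*)$ is the corresponding composite of the images. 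Under the internal string diagram construction, $\isd$ sends $\tikztinypants$ and $\tikztinycup$ to their $\bimod$-adjoints $Z(\tikztinypants)^*$ and $Z(\tikztinycup)^*$ (see \eqref{eq:isdbimodules}), so the relevant units and counits are precisely the ones whose actions are computed in \eqref{nu-and-mu-image} and \eqref{eta-and-epsilon-image}.

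First, I would treat the simplest case $\rho^*$. Here $\rho:\tikztinypants \circ (\tikztinycyl \sqcup \tikztinycup) \Rightarrow \tikztinycyl$, and the right adjoint of the source is obtained by replacing $\tikztinypants$ with $\tikztinycopants$ and $\tikztinycup$ with $\tikztinycap$. Starting from an identity cylinder string diagram, I would apply $\isd(\eta_{\mathrm{cyl}})$ (which inserts a \tikztinypants\ stacked on a \tikztinycopants), then $\isd$ of $\rho$ tensored with an identity (which collapses the lower \tikztinycup/\tikztinycyl\ pair using the action \eqref{units-image} of $\rho$), and finally $\isd(\epsilon)$ for the \tikztinycup\ (which, by \eqref{nu-and-mu-image} extended from the cup case, implements the cap pairing). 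Tracking the internal string through each step yields the claimed picture involving $\tikztinycopants$, $\tikztinycap$, and the connecting strand.

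The case $\lambda^*$ is exactly symmetric, replacing left by right in the preceding computation; it requires no new ideas. For $\alpha^*$, the same recipe applies, but one must keep track of more boundary components: starting from three parallel strands through three stacked \tikztinycopants, one inserts units and then applies $\isd(\alpha)$ on the $\tikztinypants \circ \tikztinypants$ portion, using \eqref{associator-image} (or really its simplified form \eqref{associator-simpleimage}), before collapsing the extra \tikztinypants/\tikztinycopants\ pairs via $\isd(\epsilon)$. By \autoref{genericformsnake}-style reasoning (every diagram on such a genus-zero surface is determined by its elementary form), it suffices to verify the action on the identity/bare strand configurations shown; the extension to general internal string diagrams then follows by naturality of composition in $\bimod$.

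The main obstacle is purely bookkeeping: each of the three composites involves several weak-composition structure morphisms of the non-strict 2\-functors $\isd$ and $(-)_*$ that must be inserted and cancelled correctly when conjugating through $\zeta$ as in \autoref{def:stringfunctor}. I would handle this uniformly by decomposing a composite 1\-cell into its generators, applying the component-wise $\zeta(\sigma)$, and recomposing, as illustrated in the paragraph preceding \autoref{thm:alrinternalstring}. Once this decomposition is in place, the string diagram manipulation is essentially the zig-zag identity for the adjunctions $Z(\tikztinypants)_* \dashv Z(\tikztinypants)^*$ and $Z(\tikztinycup)_* \dashv Z(\tikztinycup)^*$, and the desired geometric pictures emerge directly.
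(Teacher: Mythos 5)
Your proposal follows exactly the paper's route: write $\sigma^*$ as the composite of the unit $\eta_A$, the whiskered $\sigma$, and the counit $\epsilon_B$, and then evaluate each piece using the actions of $\nu$, $\eta$, $\epsilon$, $\rho$, $\lambda$, $\alpha$ already computed in \autoref{thm:alrinternalstring} and \autoref{thm:eemninternalstring}. The only slip is in your step-by-step narration of $\rho^*$: since the target of $\rho$ is the identity cylinder there is no final counit step, and the unit $\eta_A$ for the composite source decomposes as $\nu$ followed by $\eta$ (so the cup/cap unit comes first, not a cup counit last) --- but this is forced by the general formula you correctly state, so the argument goes through as in the paper.
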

\begin{proof}
We apply the definition of the dual of a 2\-morphism as given above. For $\rho ^*$, we obtain the following:
\mediumbordisms
\begin{equation}
\begin{tz}
        \node[Cyl, tall, bot, top] (A) at (0,0) {};
        \node (i) at ([yshift=-\boff] A.bot) [below] {};
        \node (i2) at ([yshift=\toff] A.top) [above] {};
        \draw[internal string] (i2) -- (i);
\end{tz}
\, \xmapsto{\textstyle\isd(\nu)} \,
\begin{tz}
        \node[Cyl, tall, bot, top] (A) at (0,0) {};
        \node (i) at ([yshift=-\boff] A.bot) [below] {};
        \node (i2) at ([yshift=\toff] A.top) [above] {};
        \draw[internal string] (i2) to (i);
        \node[Cap, bot] (B) at (1.5\cobwidth,0) {};
        \node[Cup] (C) at (B.center) {};
\end{tz}
\xmapsto{\textstyle\isd(\eta)}
\begin{tz}
        \node (T) [Cyl, bot, top] (A) at (0,0) {};
        \node[Copants, bot, anchor=leftleg, belt scale=1.5] (B) at (A.bottom) {};
        \node[Cap, bot] (C) at (B.rightleg) {};
        \node[Pants, bot, anchor=belt, belt scale=1.5] (D) at (B.belt) {};
        \node[Cup] (E) at (D.rightleg) {};
        \node (B) [Cyl, bot, anchor=top] (F) at (D.leftleg) {};
        \begin{scope}[internal string scope]
        \draw ([yshift=\toff] A.top) -- (A.bottom) to[out=down, in=up] (B-belt.in-leftthird) to[out=down, in=up] (D.leftleg) to[out=down, in=up] ([yshift=-\boff] F.bottom);
        \end{scope}
\end{tz}
\, \xmapsto{\textstyle \isd(\rho)} \,
\begin{tz}
    \node[Copants, bot] (A) at (0,0) {};
    \node[Cap, bot] (B) at (A.rightleg) {};
    \node[Cyl, bot, top, anchor=bottom] (C) at (A.leftleg) {};
    \node (i) at ([yshift=-\boff] A.belt) [below] {};
    \node (i2) at ([yshift=\toff] C.top) [above] {};
    \begin{scope}[internal string scope]
        \draw (i.north) to (A.belt) to [out=up, in=down] (C.bottom) to (i2.south);
    \end{scope}
\end{tz}
\end{equation}
The actions of $\lambda ^*$ and $\alpha ^*$ can be obtained in a similar way.
\end{proof}

\subsection{Balanced braided monoidal categories}

A linear representation of the balanced presentation $\B$ is the same as a linear braided balanced monoidal category. The classical graphical string calculus for these categories includes the braiding and the twist. We can use this to describe the action on internal string diagrams of the 2\-morphism generators $\beta$ and $\theta$ of any presentation that 2\-extends $\B^\L$. 
\begin{proposition}
\label{thm:btinternalstring}
For a linear representation $Z$ of a presentation that 2\-extends $\B^\L$, the generators $\beta$ and $\theta$ act in the following way on internal string diagrams:
\begin{align}
\label{braiding-and-twist-image}
\begin{tz}
    \node[Pants, bot, top, height scale=1.3] (A) at (0,0) {};
    \begin{scope}[internal string scope]
        \draw ([yshift=\toff] A-belt.in-leftthird)
                to +(0,-0.3)
            to [out=down, in=up] (A.leftleg)
            to +(0,-0.3);
        \draw ([yshift=\toff] A-belt.in-rightthird)
                to +(0,-0.3)
            to [out=down, in=up] (A.rightleg)
            to +(0,-0.3);
    \end{scope}
\end{tz}
&\gap\xmapsto{\textstyle\isd(\beta)}\gap
\begin{tz}
    \node[Pants, bot, top, height scale=1.3] (A) at (0,0) {};
    \node[BraidA, bot, anchor=topleft] (B) at (A.leftleg) {};
    \draw[internal string] ([yshift=\toff] A-belt.in-rightthird)
        to (A-belt.in-rightthird)
        to [out=down, in=up, out looseness=1.7, in looseness=0.7] (B.topleft);
    \draw [internal string] (B.bottomright) to +(0,-0.3);
    \draw[internal string] ([yshift=\toff] A-belt.in-leftthird)
        to +(0,-0.3)
        to [out=down, in=up, out looseness=1.7, in looseness=0.7] (A.rightleg)
        to [out=down, in=up, looseness=0.6] (B.bottomleft)
        to +(0,-0.3);
    \obscureA{B}
    {
\draw [solid, red] (B.topleft) to [out=down, in=up, looseness=0.6] (B.bottomright);
    }
\end{tz}
&
\begin{tz}
    \node[Cyl, bot, top, height scale=1.3] (A) at (0,0) {};
    \begin{scope}[internal string scope]
        \node (i) at ([yshift=\toff] A.top) [above] {};
        \node (i2) at ([yshift=-\boff] A.bot) [below] {};
        \draw (i.south) -- (i2.north);
    \end{scope}
\end{tz}
&\gap\xmapsto{\textstyle\isd(\theta)}\gap
\begin{tz}
    \node[Cyl, bot, top, height scale=1.3] (A) at (0,0) {};
    \begin{scope}[internal string scope]
        \node (i) at ([yshift=\toff] A.top) [above] {};
        \node (i2) at ([yshift=-\boff] A.bot) [below] {};
        \draw (i.south) -- (i2.north);
        \node [tiny label] at (A.center) {$\theta$};
    \end{scope}
\end{tz}
\end{align}
\end{proposition}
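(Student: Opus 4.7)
The plan is to mimic the strategy used in the proof of \autoref{thm:alrinternalstring}: express $\isd(\beta)$ and $\isd(\theta)$ as conjugates of $Z(\beta)_*$ and $Z(\theta)_*$ via the transport isomorphisms $\zeta_{\Sigma}$ and the weak coherence data $\comp$ for the functor $(-)_*\colon \twovect \to \bimod$, then chase a generic element of the relevant bimodule vector space through the chain of isomorphisms.

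The key simplification is that the source and target 1\-morphisms of both $\beta$ and $\theta$ are built entirely out of the generators $\tikztinypants$, $\tikztinycup$, and $\tikztinycyl$ (together with the symmetric monoidal braidings coming from the ambient 2\-category structure), and on all of these generators $\zeta$ is the identity by equations \eqref{eq:zeta1}--\eqref{eq:zeta4}. Consequently, the conjugating maps $\zeta_{\Sigma}$ and $\zeta_{\Tau}$ reduce entirely to weak compose/decompose coherence morphisms of $(-)_*$, with no nontrivial $\zeta(\tikztinycopants)$ or $\zeta(\tikztinycap)$ contributions. By \autoref{lem:balancedrep}, the restriction of $Z$ to the sub-presentation $\B$ makes $Z(\tikztinycircle)$ into a Cauchy-complete $k$\-linear balanced braided monoidal category, with $Z(\beta)$ and $Z(\theta)$ the braiding and twist respectively.

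For $\theta$, both source and target are $\tikztinycyl$, so $\isd(\tikztinycyl)^A_B=\Hom_{\cat{S}}(A,B)$ and $\isd(\theta)$ is simply postcomposition with the twist $\theta_B\colon B\to B$. By Joyal--Street this is exactly a twist inserted on the string, as pictured on the right in \eqref{braiding-and-twist-image}. For $\beta$, one begins with the element of $\isd(\tikztinypants)^{A}_{B,C}=\Hom_{\cat{S}}(A,B\otimes C)$ given on the left of \eqref{braiding-and-twist-image} and pushes it through $\comp$, $Z(\beta)_{B\boxtimes C}$, and $\comp^{-1}$, exactly as in \eqref{eq:associator-isos}: postcomposition with the braiding $B\otimes C\to C\otimes B$ rebrackets the diagram into an element of the right-hand bimodule, whose graphical representative (using the standard graphical calculus for braided monoidal categories) is the braided pants on the right.

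The only genuinely delicate point is bookkeeping the symmetric interchanger that appears in the target of $\beta$, since the right-hand side of the displayed equation for $\beta$ in \eqref{RC} contains a symmetric monoidal braiding $\tikztinybraidB$ of 1\-morphisms; one has to verify that this interchanger is taken by $\isd$ to the geometric crossing of internal strands (as it is, essentially by definition of the graphical calculus for symmetric monoidal 2\-categories, cf.~\cite{b14-wd}). Once this is done the formulas \eqref{braiding-and-twist-image} follow immediately on the simple diagrams exhibited, and the general case follows by composition with identity cylinders, exactly as illustrated after the proof of \autoref{thm:alrinternalstring}.
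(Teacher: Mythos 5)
Your proposal is correct and follows essentially the same route as the paper: the paper's proof likewise just conjugates by the (trivial) transport maps and the compose/decompose coherence data of $(-)_*$, so that $\isd(\beta)$ becomes postcomposition with the braiding $Z(\beta)_{B\boxtimes C}$ followed by $\comp^{\inv}$, and $\isd(\theta)$ is handled the same way. Your extra remark about tracking the symmetric interchanger in the target of $\beta$ is a sensible piece of bookkeeping that the paper leaves implicit.
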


\noindent
Before proving the proposition, let us examine in detail what these pictures represent. Under the equivalence $\bicat F(\M) \simeq \Bordcsig$, $\beta$ corresponds to an invertible bordism whose source is the pants surface, and whose target is the pants surface composed with the crossing~\cite{PaperII}. Since the symmetric monoidal 2\-category $\bicat F(\M)$ is strictly symmetric, there is no distinction to be made between an over- or under-crossing at the level of the bordism. In red, we draw the internal string diagrams, which for the case of a presentation that 2\-extends $\B^\L$ is drawn in the language of a balanced braided monoidal category. In the target of $\isd(\beta)$ drawn above, the upper crossing of red strands represents the braiding morphism, but the lower crossing does not. In the target of $\isd(\theta)$, the red $\theta$ represents the balanced structure of the monoidal category.

\begin{proof}
\def\arrlen{48pt}
The action of $\isd(\beta)$ has the following form:
\begin{align}
\nonumber
& \hspace{-20pt} \Hom _\cat{S} \big( A, Z(\tikztinypants) (B \boxtimes C) \big)
\\*
\nonumber
& \longxarrow{Z(\beta)_{B \boxtimes C}}{\arrlen} \Hom _\cat{S} \big( A, Z(\tikztinypants) Z(\raisebox{-3pt}
{$\begin{tikzpicture}
\microbordisms
    \node [BraidA, bot, top] at (0,0) {};
\end{tikzpicture}$})
(B \boxtimes C) \big)
\\*
& \longxarrow{\comp ^{\inv}}{\arrlen} \Hom_\cat{S} \big( A, Z(\tikztinypants) (C \boxtimes B) \big)
\end{align}
This establishes the action of $\isd(\beta)$. The action of $\isd(\theta)$ can be shown similarly.
\end{proof}

\subsection{Ribbon categories}
\label{sec:rigidity}

We now consider a presentation that 2\-extends the ribbon presentation $\R$. This means we have available the Frobeniusator 2\-morphisms $\phiN$ and $\phiM$, which are invertible. We investigate their actions on internal string diagrams.
\begin{proposition}
For a linear representation $Z$ of a presentation that 2\-extends the ribbon presentation $\R$, the Frobeniusators $\phiN$ and $\phiM$ act on internal string diagrams in the following ways:
\mediumbordisms
\begin{align}
\label{eq:phiaction}
\begin{tz}
    \node[Pants, bot, top, height scale=1.2] (A) at (0,0) {};
    \node[Cyl, bot, height scale=1.2] [anchor=top] at (A.leftleg) (B) {};
    \node[Copants, bot, anchor=leftleg, height scale=1.2] at (A.rightleg) (D) {};
    \node[Cyl, bot, top, anchor=bottom, height scale=1.2] at (D.rightleg) (C) {};
    \node (f) [tiny label] at ([yshift=1pt] A.center) {$f$};
    \node (g) [tiny label] at ([yshift=-2pt] D.center) {$g$};
    \begin{scope}[internal string scope]
        \node (i) at ([yshift=\toff] A.belt) [above] {};
        \node (j) at ([yshift=\toff] C.top) [above] {};
        \node (k) at ([yshift=-\boff] B.bottom) [below] {};
        \node (l) at ([yshift=-\boff] D.belt) [below] {};
        \draw (i.south) -- (f.center);
        \draw [out=210, in=up] (f.center) to (B.top);
        \draw (f.center)
            to [out=-45, in=100] (D.leftleg)
            to [out=-80, in=135] (g.center);
        \draw [out=45, in=down] (g.center) to (C.bottom);
        \draw [out=up, in=down] (C.bottom) to (j.south);
        \draw [out=down, in=up] (g.center) to (l.north);
        \draw [out=down, in=up] (A.leftleg) to (k.north);
    \end{scope}
\end{tz}
&\,\,\xmapsto{\textstyle \isd(\phiN)}
\begin{tz}
    \node[Copants, bot, top, belt scale=2, height scale=1.2] (A) at (0,0) {};
    \node[Pants, bot, anchor=belt, belt scale=2, height scale=1.2] (B) at (A.belt) {};
    \begin{scope}[internal string scope]
        \node (i) at ([yshift=\toff] A.leftleg) [above] {};
        \node (j) at ([yshift=\toff] A.rightleg) [above] {};
        \node (k) at ([yshift=-\boff] B.leftleg) [below] {};
        \node (l) at ([yshift=-\boff] B.rightleg) [below] {};
        \node[tiny label] (p) at ([xshift=-0.35\cobwidth, yshift=-0.2\cobheight] A.center) {$f$};
        \node[tiny label] (q) at ([xshift=0.35\cobwidth, yshift=0.1\cobheight] B.center) {$g$};
        \draw (i.south)
            to[out=down, in=up, out looseness=1] (p.north)
            to (p.center)
            to[out=-125, in=up, looseness=0.8] (k.north);
        \draw (j.south)
            to [out=down, in=55, looseness=0.8] (q.center);
        \draw (q.center)
            to[out=down, in=up] (l.north);
        \draw (p.center)
            to[out=-45, in=135] (q.center);                
    \end{scope} 
\end{tz}
&
\begin{tz}[xscale=-1]
    \node[Pants, bot, top, height scale=1.2] (A) at (0,0) {};
    \node[Cyl, bot, height scale=1.2] [anchor=top] at (A.rightleg) (B) {};
    \node[Copants, bot, anchor=rightleg, height scale=1.2] at (A.leftleg) (D) {};
    \node[Cyl, bot, top, anchor=bottom, height scale=1.2] at (D.leftleg) (C) {};
    \node (f) [tiny label] at ([yshift=2pt] A.center) {$\tiny f$};
    \node (g) [tiny label] at ([yshift=-2pt] D.center) {$g$};
    \begin{scope}[internal string scope]
        \node (i) at ([yshift=\toff] A.belt) [above] {};
        \node (j) at ([yshift=\toff] C.top) [above] {};
        \node (k) at ([yshift=-\boff] B.bottom) [below] {};
        \node (l) at ([yshift=-\boff] D.belt) [below] {};
        \draw (i.south) -- (f.center);
        \draw [out=210, in=up] (f.center) to (B.top);
        \draw (f.center)
            to [out=-45, in=100] (D.rightleg)
            to [out=-80, in=135] (g.center);
        \draw [out=45, in=down] (g.center) to (C.bottom);
        \draw [out=up, in=down] (C.bottom) to (j.south);
        \draw [out=down, in=up] (g.center) to (l.north);
        \draw [out=down, in=up] (A.rightleg) to (k.north);
    \end{scope}
\end{tz}
&\,\,\xmapsto{\textstyle \isd(\phiM)}
\begin{tz}[xscale=-1]
    \node[Copants, bot, top, belt scale=2, height scale=1.2] (A) at (0,0) {};
    \node[Pants, bot, anchor=belt, belt scale=2, height scale=1.2] (B) at (A.belt) {};
    \begin{scope}[internal string scope]
        \node (i) at ([yshift=\toff] A.rightleg) [above] {};
        \node (j) at ([yshift=\toff] A.leftleg) [above] {};
        \node (k) at ([yshift=-\boff] B.rightleg) [below] {};
        \node (l) at ([yshift=-\boff] B.leftleg) [below] {};
        \node[tiny label] (p) at ([xshift=-0.35\cobwidth, yshift=-0.2\cobheight] A.center) {$f$};
        \node[tiny label] (q) at ([xshift=0.35\cobwidth, yshift=0.1\cobheight] B.center) {$g$};
        \draw (i.south)
            to[out=down, in=up, out looseness=1] (p.north)
            to (p.center)
            to[out=-125, in=up, looseness=0.8] (k.north);
        \draw (j.south)
            to [out=down, in=55, looseness=0.8] (q.center);
        \draw (q.center)
            to[out=down, in=up] (l.north);
        \draw (p.center)
            to[out=-45, in=135] (q.center);                
    \end{scope} 
\end{tz}
\end{align}
\end{proposition}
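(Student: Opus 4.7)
The plan is to compute the action of $\isd(\phiN)$ on internal string diagrams by unwinding the defining composite of $\phiN$ in equation~\eqref{defn_of_phileft} and applying the 2-functor $\isd$ termwise. Since $\phiN$ is expressed there as the three-step composite $\epsilon \circ \alpha \circ \eta$ (suitably whiskered with identities), the corresponding action on internal string diagrams factors as the successive actions $\isd(\eta)$, $\isd(\alpha)$, $\isd(\epsilon)$, each of which we have already explicitly computed: the first and third by \autoref{thm:eemninternalstring}, the second by \autoref{thm:alrinternalstring}.

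By linearity and the pull-through relation~\eqref{eq:pullthrough}, it suffices to verify the claim on internal string diagrams in elementary form. On the source surface of $\phiN$ these are parametrized by a morphism $f$ on the pants component and a morphism $g$ on the copants component, with identity strands on the two cylinder components. I then apply each action in turn: $\isd(\eta)$ introduces a copants-above-pants configuration whose two intermediate boundary circles are connected by identity strands, giving an elementary-form diagram on a more complicated intermediate surface; $\isd(\alpha)$ reassociates the underlying 1-cell, which at the level of strand diagrams is simply a re-routing prescribed by the new bracketing with no new morphisms introduced; and finally $\isd(\epsilon)$ collapses an interior pants-above-copants segment to a pair of cylinders, which on internal string diagrams merges two parallel strands into a single strand. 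After using~\eqref{eq:pullthrough} to absorb the remaining identity strands into $f$ and $g$, the result is exactly the elementary-form picture on the RHS of~\eqref{eq:phiaction}: the new strand directly connecting $f$ to $g$ is precisely the one created by $\eta$ and then contracted by $\epsilon$, while the four external strands route from the four boundary circles directly into $f$ and $g$ as indicated.

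For $\phiM$, the computation is formally identical. Comparing~\eqref{defn_of_phileft} with~\eqref{defn_of_phiright}, one sees that $\phiM$ is the reflection of $\phiN$ about the $z$-axis, with $\alpha^{\inv}$ replacing $\alpha$ to account for the reversed bracketing; since the actions of $\eta$, $\alpha^{\inv}$, and $\epsilon$ on internal string diagrams are equivariant under this reflection, the same argument produces the mirror-image picture on the RHS of the second half of~\eqref{eq:phiaction}.

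The main anticipated obstacle is purely diagrammatic bookkeeping: tracking through the $\eta$-$\alpha$-$\epsilon$ composite precisely which pair of newly-introduced identity strands get rerouted and then merged, and confirming that the merged strand is indeed the one connecting $f$ directly to $g$, rather than a strand that loops around through an unintended boundary circle. Once the routing is drawn out carefully at each stage using the explicit formulas of \autoref{thm:alrinternalstring} and \autoref{thm:eemninternalstring}, identification with the prescribed target picture is immediate.
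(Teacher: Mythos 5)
Your proposal matches the paper's proof: the paper likewise computes $\isd(\phiN)$ by unwinding the defining composite \eqref{defn_of_phileft} into $\eta$, $\alpha$, $\epsilon$ and applying the previously established actions of each (Propositions \ref{thm:eemninternalstring} and \ref{thm:alrinternalstring}) step by step on an elementary-form diagram, then disposes of $\phiM$ by noting the computation is the mirror image. The routing bookkeeping you flag as the main obstacle is exactly what the paper's displayed diagram chain carries out, and your description of which strand is created by $\eta$ and contracted by $\epsilon$ is correct.
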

\begin{proof}
We apply the definitions of the Frobeniusators in terms of $\eta$, $\alpha$, and $\epsilon$, whose actions on internal string diagrams have already been computed. For the case of $\phiN$, this gives the following result:
\mediumbordisms
\begin{gather}
\nonumber
\begin{tz}
    \node (A) [Pants, bot, top, height scale=1.2] at (0,0) {};
    \node (B) [Cyl, bot, height scale=1.2] [anchor=top] at (A.leftleg) {};
    \node (D) [Copants, bot, anchor=leftleg, height scale=1.2] at (A.rightleg) {};
    \node (C) [Cyl, bot, top, anchor=bottom, height scale=1.2] at (D.rightleg) {};
    \node (f) [tiny label] at ([yshift=2pt] A.center) {$\tiny f$};
    \node (g) [tiny label] at ([yshift=-2pt] D.center) {$g$};
    \begin{scope}[internal string scope]
        \node (i) at ([yshift=\toff] A.belt) [above] {};
        \node (j) at ([yshift=\toff] C.top) [above] {};
        \node (k) at ([yshift=-\boff] B.bottom) [below] {};
        \node (l) at ([yshift=-\boff] D.belt) [below] {};
        \draw (i.south) -- (f.center);
        \draw [out=210, in=up] (f.center) to (B.top);
        \draw (f.center)
            to [out=-45, in=100] (D.leftleg)
            to [out=-80, in=135] (g.center);
        \draw [out=45, in=down] (g.center) to (C.bottom);
        \draw [out=up, in=down] (C.bottom) to (j.south);
        \draw [out=down, in=up] (g.center) to (l.north);
        \draw [out=down, in=up] (A.leftleg) to (k.north);
    \end{scope}
    \selectpart[green, inner sep=1pt]{(A-belt) (C-top)};
\end{tz}
\xmapsto{\textstyle \isd(\eta)}
\begin{tz}
    \node[Pants, bot] (A) at (0,0) {};
    \node[Cyl, bot] [anchor=top] at (A.leftleg) (B) {};
    \node[Copants, bot, anchor=leftleg] at (A.rightleg) (D) {};
    \node[SwishL, bot, anchor=bottom] at (D.rightleg) (C) {};
    \node[Pants, bot, anchor=leftleg, belt scale=1.5] at (A.belt) (E) {};
    \node[Copants, bot, anchor=belt, top, belt scale=1.5] at (E.belt) (F) {};
    \node (f) [tiny label] at ([yshift=2pt] A.center) {$f$};
    \node (g) [tiny label] at ([yshift=-2pt] D.center) {$g$};
    \begin{scope}[internal string scope]
        \node (i) at ([yshift=\toff] F.leftleg) [above] {};
        \node (j) at ([yshift=\toff] F.rightleg) [above] {};
        \node (k) at ([yshift=-\boff] B.bottom) [below] {};
        \node (l) at ([yshift=-\boff] D.belt) [below] {};
        \draw [out=210, in=up] (f.center) to (B.top);
        \draw [out=-45, in=100] (f.center) to node[midway, right] {} (D.leftleg);
        \draw [out=-80, in=135] (A.rightleg) to (g.center);
        \draw [out=45, in=down] (g.center) to (C.bottom);
        \draw[out=down, in=up] (g.center) to (l.north);
        \draw[out=down, in=up] (A.leftleg) to (k.north);
        \draw (i.south) to[out=down, in=up]  (F-belt.in-leftthird) to[out=down, in=up] (A.belt) to[out=down, in=up] (f.center);
        \draw (j.south) to[out=down, in=up] (F-belt.in-rightthird) to[out=down, in=up] (C.top) to[out=down, in=up] (C.bottom);
    \end{scope}
    \selectpart[green]{(F-belt) (C-bottom) (A-leftleg)};
\end{tz}
\xmapsto{\textstyle \isd(\alpha)}
\begin{tz}
        \node[Copants, bot, top, belt scale=1.5] (A) at (0,0) {};
        \node[Pants, bot, anchor=belt, belt scale=1.5, wide] (B) at (A.belt) {};
        \node[Pants, bot, anchor=belt] (D) at (B.rightleg) {};
        \node[Cyl, bot, anchor=top, tall] (E) at (B.leftleg) {};
        \node[Copants, bot, anchor=leftleg] (F) at (D.leftleg) {};
        \node (f) [tiny label] at ([xshift=-0.2\cobwidth, yshift=0.05\cobheight] B.center) {$f$};
        \node (g) [tiny label] at ([yshift=-0.1\cobheight] F.center) {$g$};
        \begin{scope}[internal string scope]
                \node (i) at ([yshift=\toff] A.leftleg) [above] {};
                \node (j) at ([yshift=\toff] A.rightleg) [above] {};
                \node (k) at ([yshift=-\boff] E.bottom) [below] {};
                \node (l) at ([yshift=-\boff] F.belt) [below] {};
    \draw (i.south)
        to [out=down, in=up] (f.center)
        to [out=-135, in=up] (E.top)
        to [out=down, in=up] (E.bottom)
        to [out=down, in=up] (k.north);
    \draw (j.south)
        to [out=down, in=up] (A-belt.in-rightthird)
        to [out=down, in=up] (B-rightleg.in-rightthird)
        to [out=down, in=up] (D.rightleg)
        to [out=down, in=45] (g.center)
        to [out=down, in=up] (l.north);
    \draw (f.center)
        to [out=-45, in=up] (B-rightleg.in-leftthird)
        to [out=down, in=up]
            node[near start, xshift=-0.1cm] {}
            (D.leftleg)
        to [out=down, in=135] (g.center);
    \selectpart[green]{(D-rightleg) (D-leftleg) (B-rightleg) (F-belt)};
\end{scope}
\end{tz}
\xmapsto{\textstyle \isd(\epsilon)}
\begin{tz}
    \node[Copants, bot, top, belt scale=2, height scale=1.2] (A) at (0,0) {};
    \node[Pants, bot, anchor=belt, belt scale=2, height scale=1.2] (B) at (A.belt) {};
    \begin{scope}[internal string scope]
        \node (i) at ([yshift=\toff] A.leftleg) [above] {};
        \node (j) at ([yshift=\toff] A.rightleg) [above] {};
        \node (k) at ([yshift=-\boff] B.leftleg) [below] {};
        \node (l) at ([yshift=-\boff] B.rightleg) [below] {};
        \node[tiny label] (p) at ([xshift=-0.35\cobwidth, yshift=-0.1\cobheight] A.center) {$f$};
        \node[tiny label] (q) at ([xshift=0.35\cobwidth, yshift=0.1\cobheight] B.center) {$g$};
        \draw (i.south)
            to[out=down, in=up, out looseness=1] (p.north)
            to (p.center)
            to[out=-125, in=up, looseness=0.8] (k.north);
        \draw (j.south)
            to [out=down, in=55, looseness=0.8] (q.center);
        \draw (q.center)
            to[out=down, in=up] (l.north);
        \draw (p.center)
            to[out=-45, in=135] (q.center);                
    \end{scope} 
\end{tz}
\end{gather}
The computation for $\phiM$ is similar.
\end{proof}

We now show that in this case, the balanced braided monoidal category is a ribbon category. The first step is to demonstrate that the balanced monoidal category $\cat{S}$ is rigid. A similar result was stated without proof by L\'opez-Franco, Street, and Wood~\cite{lsw11-di}.
\begin{proposition}\label{prop:Frob_to_rigid}
For a linear representation $Z$ of a presentation that 2\-extends the ribbon presentation $\R$, the associated balanced braided monoidal category of \autoref{lem:balancedrep} is rigid.
\end{proposition}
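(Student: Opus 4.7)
The plan is to produce, for each object $X \in \cat{S}$, a right dual $X^*$ together with unit and counit morphisms satisfying the snake equations. The key ingredient is the invertibility of the Frobeniusators $\phiN$ and $\phiM$ afforded by the ribbon presentation. Geometrically, the action of $\isd(\phiN)$ displayed in \eqref{eq:phiaction} ``straightens out'' a string passing through a handle, so invertibility is precisely the type of structure that yields duality.

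First I would unpack $\isd(\phiN)$ as a natural isomorphism of vector spaces. Using the bimodule composition formula \eqref{eq:isdcomposition} and the explicit descriptions \eqref{eq:string1}--\eqref{eq:string4} of the generating bimodules, the source surface of $\phiN$ evaluates, on labels matching the four boundary circles, to a direct sum $\bigoplus_U \Hom_\cat{S}(A, B \otimes U) \otimes \Hom_\cat{S}(U \otimes C, D)$ (modulo the usual equivalence relation), while the target evaluates to something built directly from $\Hom_\cat{S}(A \otimes C, B \otimes D)$. Invertibility of $\phiN$ provides a natural isomorphism between these vector spaces for all labels.

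Next I would specialize labels to extract duality data. Taking $A = D = I$ and $B = C = X$ and tracing the identity $\id_X$ through the inverse isomorphism (and then putting the resulting internal string diagram into elementary form via \autoref{genericformsnake}), I obtain a finite collection of objects $\{U_i\}$ and morphisms $\eta_i : I \to X \otimes U_i$, $\epsilon_i : U_i \otimes X \to I$ whose composition along a ``snake'' recovers $\id_X$. Since $\cat{S}$ is Cauchy-complete, I may form $X^* := \bigoplus_i U_i$ and assemble $\eta := \sum_i \eta_i$, $\epsilon := \sum_i \epsilon_i$, producing by construction the first snake equation $(\epsilon \otimes \id_X) \circ (\id_X \otimes \eta) = \id_X$.

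The main obstacle is the second snake equation, which does not follow immediately from the one choice of Frobeniusator. I would obtain it via a symmetric argument using $\phiM$ in place of $\phiN$, together with the fact that the two Frobeniusators are mutually inverse composites via the ribbon structure, so they produce compatible one-sided duality data. The verification will reduce to a manipulation of internal string diagrams in the relevant composite surface, using the actions recorded in Propositions \autoref{thm:alrinternalstring} and \autoref{thm:eemninternalstring} together with the naturality of $\isd(\phiN^{\pm})$ and $\isd(\phiM^{\pm})$; the string-diagrammatic form makes this a routine (if lengthy) check, with all the content concentrated in the invertibility of the two Frobeniusators.
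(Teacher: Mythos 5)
Your first half follows the paper's own strategy: composing a Frobeniusator inverse with the Frobeniusator on the identity cylinder (suitably bracketed by unitors), and using \autoref{genericformsnake} to put the intermediate internal string diagram into elementary form, does yield an object $A'$ and morphisms $f\colon I\to A'\otimes A$, $g\colon A\otimes A'\to I$ satisfying one snake identity --- this is exactly the paper's equation \eqref{eq:gfidentityseparate}, and your $X^*=\bigoplus_i U_i$ with $\eta=\sum_i\eta_i$, $\epsilon=\sum_i\epsilon_i$ is that $A'$ with $f$ and $g$. Up to the choice of $\phiN$ versus $\phiM$, this part is correct and is the same argument.

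The gap is in your treatment of the second snake equation. A ``symmetric argument using $\phiM$'' produces, for the same object $X$, a possibly \emph{different} candidate object $A''$ together with data satisfying the \emph{other} snake identity; nothing forces the two candidates to coincide, and in particular nothing shows that your specific pair $(\eta,\epsilon)$ on your specific $X^*$ satisfies the second equation. This is precisely the weak-rigidity obstruction that the introduction attributes to Bakalov--Kirillov and Kerler--Lyubashenko: one-sided duality data on each side does not imply rigidity. The paper's resolution is different and is the essential point of the proof: the ``other'' snake composite on $A'$ (expression \eqref{eq:idempotentcomposite}) is an \emph{idempotent} as a consequence of \eqref{eq:gfidentityseparate}; Cauchy-completeness lets one split it via maps $u\colon A'\to A^*$ and $v\colon A^*\to A'$ with $u\circ v=\id_{A^*}$, and the true dual is the retract $A^*$, with unit $(u\otimes\id)\circ f$ and counit $g\circ(\id\otimes v)$. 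Both duality equations are then checked directly --- the first collapses to $u\circ v$, and the second follows by applying \eqref{eq:gfidentityseparate} twice. Without this idempotent splitting, the object you construct need not be a dual of $X$, so your argument as written does not close.
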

\begin{proof}
We consider the following composite involving the Frobeniusator isomorphism~\eqref{defn_of_phiright} and its inverse; in the third picture, we use \autoref{genericformsnake} to express the resulting string diagram in terms of some $f:I \to A' \otimes A$ and \mbox{$g:A \otimes A' \to I$}:
\mediumbordisms
\begin{gather}
\label{eq:fgidentity}
\begin{tz}
    \node (S) [Cyl, bot, top, anchor=top, height scale=3.4] at (0,0) {};
    \begin{scope}[internal string scope]
        \node (j) at ([yshift=\toff] S.top) [above] {};
        \node (k) at ([yshift=-\boff] S.bot) [below] {};
        \draw (j.south)
            to (k.north);
    \end{scope}
\end{tz}
\xmapsto{\begin{array}{@{}l@{}}\isd(\check\lambda {}^{\inv})\\\isd(\rho ^{\inv})\end{array}}
\begin{tz}[xscale=-1]
    \node [Copants, bot, belt scale=1.3] (A) at (0,0) {};
    \node [Pants, bot, anchor=belt, belt scale=1.3] (B) at (A.belt) {};
    \node [Cap, bot] at (A.rightleg) {};
    \node (C) [Cup] at (B.leftleg) {};
    \node (S) [Cyl, bot, anchor=top, height scale=0.7] at (B.rightleg) {};
    \node (T) [Cyl, bot, top, anchor=bot, height scale=0.7] at (A.leftleg) {};
    \begin{scope}[internal string scope]
        \node (j) at ([yshift=\toff] T.top) [above] {};
        \node (k) at ([yshift=-\boff] S.bot) [below] {};
        \draw (j.south)
            to (T.bot)
            to [out=down, in=up, looseness=0.8] (S.top) to (k.north);
    \end{scope}
    \selectpart[green]{(T-bottom) (B-rightleg)};
\end{tz}
\xmapsto{\displaystyle\isd(\phiM ^{\inv})}
\begin{tz}[xscale=-1]
    \node[Pants, bot] (A) at (0,0) {};
    \node[Cyl, bot, height scale=1.7] [anchor=top] at (A.rightleg) (B) {};
    \node[Copants, bot, anchor=rightleg] at (A.leftleg) (D) {};
    \node[Cyl, bot, top, height scale=1.7, anchor=bottom] at (D.leftleg) (C) {};
    \node (f) [tiny label] at ([yshift=2pt] A.center) {$\tiny f$};
    \node (g) [tiny label] at ([yshift=-2pt] D.center) {$g$};
    \begin{scope}[internal string scope]
        \node (i) at ([yshift=\toff] A.belt) [above] {};
        \node (j) at ([yshift=\toff] C.top) [above] {};
        \node (k) at ([yshift=-\boff] B.bottom) [below] {};
        \node (l) at ([yshift=-\boff] D.belt) [below] {};
        \draw [out=210, in=up] (f.center) to (B.top);
        \draw (f.center)
            to [out=-45, in=100] (D.rightleg)
            to [out=-80, in=135] (g.center);
        \draw [out=45, in=down] (g.center) to (C.bottom);
        \draw [out=up, in=down] (C.bottom) to (j.south);
        \draw [out=down, in=up] (A.rightleg) to (k.north);
    \end{scope}
    \node (CAP) [Cap, bot=false] at (A.belt) {};
    \node (CUP) [Cup] at (D.belt) {};
    \node (X) [Cobordism Bottom End 3D] at (A.belt) {};
    \selectpart[green]{(A-rightleg) (X) (C-bottom) (D-belt)};
\end{tz}
\xmapsto{\displaystyle\isd(\phiM)}
\begin{tz}[xscale=-1]
    \node[Copants, bot, belt scale=1.7] (A) at (0,0) {};
    \node[Pants, bot, anchor=belt, belt scale=1.7] (B) at (A.belt) {};
    \node [Cap, bot] at (A.rightleg) {};
    \node [Cup] at (B.leftleg) {};
    \node (S) [Cyl, bot, anchor=top, height scale=0.7] at (B.rightleg) {};
    \node (T) [Cyl, bot, top, anchor=bot, height scale=0.7] at (A.leftleg) {};
    \begin{scope}[internal string scope]
        \node (j) at ([yshift=\toff] T.top) [above] {};
        \node (k) at ([yshift=-\boff] S.bot) [below] {};
        \node[tiny label] (p) at ([xshift=-0.2\cobwidth, yshift=-0.1\cobheight] A.center) {$f$};
        \node[tiny label] (q) at ([xshift=0.2\cobwidth, yshift=0.1\cobheight] B.center) {$g$};
        \draw (p.center)
            to[out=-125, in=up, looseness=0.8] (k.north);
        \draw (j.south)
            to [out=down, in=55, looseness=0.8] (q.center);
        \draw (p.center)
            to[out=-45, in=135] (q.center);                
    \end{scope}
\end{tz}
\xmapsto{\displaystyle\begin{array}{@{}l@{}}\isd(\check\rho)\\\isd(\lambda)\end{array}}
\begin{tz}[xscale=-1]
    \node (S) [Cyl, top, anchor=top, height scale=1.7, bottom scale=1.5, bot=false] at (0,0) {};
    \node (T) [Cyl, bot, anchor=top, height scale=1.7, top scale=1.5] at (S.bot) {};
    \begin{scope}[internal string scope]
        \node (j) at ([yshift=\toff] S.top) [above] {};
        \node (k) at ([yshift=-\boff] T.bot) [below] {};
        \node[tiny label] (p) at ([xshift=-0.2\cobwidth, yshift=0.3\cobheight] S.bot) {$f$};
        \node[tiny label] (q) at ([xshift=0.2\cobwidth, yshift=-0.3\cobheight] S.bot) {$g$};
        \draw (p.center)
            to[out=-125, in=up, in looseness=1.5, out looseness=0.8](k.north);
        \draw (j.south)
            to [out=down, in=55, out looseness=1.5, in looseness=0.8] (q.center);
        \draw (p.center)
            to[out=-45, in=135] (q.center);                
    \end{scope}
\end{tz}
\end{gather}
\newcommand\sxto[1]{\mathop{\smash{\xto{#1}}}}%
Since this cobordism is diffeomorphic to the identity, the following equation must hold:
\begin{equation}
\label{eq:gfidentityseparate}
\begin{tz}[xscale=0.6]
\draw [internal string] (1,0.5) to [out=up, in=-45] (0,2) node [tiny label] {$f$} to [out=-135, in=45] node [red, below left, inner sep=1pt] {} (-1,1) node [tiny label] {$g$} to [out=135, in=down] (-2,2.5);
\end{tz}
\quad=\quad
\begin{tz}
\draw [internal string] (0,0) to (0,2);
\end{tz}
\end{equation}

Now consider the following composite:
\tikzset{every picture/.style={scale=0.8}}
\begin{equation}
\label{eq:idempotentcomposite}
\begin{tz}[xscale=-0.6]
\draw [internal string] (1,0.5) node [below, red] {$A'$} to [out=up, in=-45] (0,2) node [tiny label] {$f$} to [out=-135, in=45] node [red, below left, inner sep=1pt] {$A$} (-1,1) node [tiny label] {$g$} to [out=135, in=down] (-2,2.5) node [above, red] {$A'$};
\end{tz}
\end{equation}
By equation~\eqref{eq:gfidentityseparate}, this is an idempotent. Since we are taking representations in \bimod, our target category is Cauchy-complete, and we define the object $A^*$ to be the idempotent splitting of the composite~\eqref{eq:idempotentcomposite} via maps $A' \sxto u A^*$ and $A^* \sxto v A'$. We therefore have $u \circ v = \id _{A^*}$.

We will now show that the object $A^*$ is the right dual of the object $A$. We define the unit $\eta : I \to A^* \otimes A$ and counit $\epsilon : A \otimes A^* \to I$ as follows:
\begin{calign}
\begin{tz}[scale=0.6, yscale=-1]
\draw [red] (0,2.5) node [below] {$A^*$}
    to (0,1.8)
    to [out=down, in=135] (1,0) node [tiny label] {$\eta$} to [out=45, in=down] (2,1.8) to (2,2.5) node [below] {$A$};
\end{tz}
\quad:=\quad
\begin{tz}[scale=0.6, yscale=-1]
\draw [red] (0,2.5) node [below] {$A^*$} to (0,1.5) node [tiny label] {$u$} to [out=down, in=135] (1,0) node [tiny label] {$f$}  to [out=45, in=down] (2,1.8) to (2,2.5) node [below] {$A$};
\end{tz}
&
\begin{tz}[scale=0.6]
\draw [red] (0,2.5) node [above] {$A$}
    to (0,1.8)
    to [out=down, in=135] (1,0) node [tiny label] {$\epsilon$} to [out=45, in=down] (2,1.8) to (2,2.5) node [above] {$A^*$};
\end{tz}
\quad:=\quad
\begin{tz}[scale=0.6]
\draw [red] (0,2.5) node [above] {$A$} to (0,1.8) to [out=down, in=135] (1,0) node [tiny label] {$g$}  to [out=45, in=down] (2,1.5) node [tiny label] {$v$} to (2,2.5) node [above] {$A^*$};
\end{tz}
\end{calign}
We can now show directly that the duality equations are satisfied:
{\def\quad{\hspace{0.2cm}}
\begin{align}
\begin{tz}[scale=0.6, xscale=-1]
\draw [internal string] (1,-0.5) node [below, red] {$A^*$} to (1,0.2) to [out=up, in=-45] (0,2) node [tiny label] {$\eta$} to [out=-135, in=45] node [red, below left, inner sep=1pt] {$A$} (-1,0.5) node [tiny label] {$\epsilon$} to [out=135, in=down] (-2,2.3) to (-2,3) node [above, red] {$A^*$};
\end{tz}
\quad&=\quad
\begin{tz}[scale=0.6, xscale=-1]
\draw [internal string] (1,-0.5) node [below, red] {$A^*$} to (1,0.5) node [tiny label] {$u$} to [out=up, in=-45] (0,2) node [tiny label] {$f$} to [out=-135, in=45] node [red, below left, inner sep=1pt] {$A$} (-1,0.5) node [tiny label] {$g$} to [out=135, in=down] (-2,2.0) node [tiny label] {$v$} to (-2,3) node [above, red] {$A^*$};
\end{tz}
\quad=\quad
\begin{tz}[scale=0.6]
\draw [red] (0,-0.5) node [below] {$A^*$}
    to (0,0.2) node [tiny label] {$u$}
    to (0,0.9) node [tiny label] {$v$}
    to (0,1.6) node [tiny label] {$u$}
    to (0,2.3) node [tiny label] {$v$}
    to (0,3) node [above] {$A^*$};
\end{tz}
\quad=\quad
\begin{tz}[scale=0.6]
\draw [red] (0,-0.5) node [below] {$A^*$}
    to (0,3) node [above] {$A^*$};
\end{tz}
\\
\begin{tz}[scale=0.6, xscale=1]
\draw [internal string] (1,-0.5) node [below, red] {$A$} to (1,0.2) to [out=up, in=-45] (0,2) node [tiny label] {$\eta$} to [out=-135, in=45] node [red, below right, inner sep=0pt] {$A ^*$} (-1,0.5) node [tiny label] {$\epsilon$} to [out=135, in=down] (-2,2.3) to (-2,3) node [above, red] {$A$};
\end{tz}
\quad&=\quad
\begin{tz}[scale=0.6, xscale=1]
\draw [internal string] (1,-0.5) node [below, red] {$A$} to (1,0.5) to [out=up, in=-45] (0,2.3) node [tiny label] {$f$} to [out=-135, in=up] (-0.5,1.6) node [tiny label] {$u$} to (-0.5,0.9)  node [tiny label] {$v$} to [out=down, in=45] (-1,0.2) node [tiny label] {$g$} to [out=135, in=down] (-2,2.0) to (-2,3) node [above, red] {$A$};
\end{tz}
\quad=\quad
\begin{tz}[scale=0.6, xscale=-0.8]
\draw [red] (0,-0.5) node [below] {$A$}
    to [out=up, in=-135] (1,2) node [tiny label] {$f$}
    to (2,0.5) node [tiny label] {$g$}
    to (3,2) node [tiny label] {$f$}
    to (4,0.5) node [tiny label] {$g$}
    to [out=45, in=down] (5,3) node [above] {$A$};
\end{tz}
\quad=\quad
\begin{tz}[scale=0.6]
\draw [red] (0,-0.5) node [below] {$A$}
    to (0,3) node [above] {$A$};
\end{tz}
\end{align}}%
The final equality here uses the identity~\eqref{eq:gfidentityseparate}.
\end{proof}

Having established that $\cat{S}$ is rigid, we can now demonstrate that $\cat{S}$ is a ribbon category.
\begin{theorem} \label{thm:repofribisribbon}
        For a linear representation $Z$ of a presentation that 2\-extends the ribbon presentation $\R$, the twist is compatible, and hence the associated rigid balanced braided monoidal category of \autoref{prop:Frob_to_rigid} is a ribbon category.
\end{theorem}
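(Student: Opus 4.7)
The plan is to translate the ribbon axiom~\eqref{tortile} of the presentation $\R$ directly into the ribbon condition~\eqref{eq:ribboncondition} on the associated balanced braided rigid monoidal category $\cat{S}$, by passing through the internal string diagram calculus and using the rigidity already established in~\autoref{prop:Frob_to_rigid}.

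First, I would compute the bimodule that $\isd$ assigns to the pants-with-cap-on-belt surface appearing on both sides of~\eqref{tortile}. Using the bimodule composition formula~\eqref{eq:isdcomposition} together with the explicit descriptions~(\ref{eq:string1}--\ref{eq:string4}) of the generating bimodules, one sees that an element of this bimodule at $(B,C) \in \cat{S} \boxtimes \cat{S}$ is (a class represented by) a morphism $f \colon I \to B \otimes C$ in $\cat{S}$, matching the geometric picture of an internal string diagram drawn inside that surface.

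Next I would apply $\isd$ to both sides of~\eqref{tortile}. By~\autoref{thm:btinternalstring}, the action of $\theta$ on an internal string diagram amounts to inserting a twist label on the strand running through the cylinder. Consequently the left-hand side of~\eqref{tortile} acts on $f \colon I \to B \otimes C$ by $f \mapsto (\theta_B \otimes \id_C) \circ f$, while the right-hand side acts by $f \mapsto (\id_B \otimes \theta_C) \circ f$. The ribbon relation therefore asserts the identity
\begin{equation}\label{eq:plan-twist-identity}
(\theta_B \otimes \id_C) \circ f = (\id_B \otimes \theta_C) \circ f
\end{equation}
for every pair $B,C \in \cat{S}$ and every morphism $f \colon I \to B \otimes C$.

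Finally, by~\autoref{prop:Frob_to_rigid} the category $\cat{S}$ is rigid, so for each object $A$ we have a coevaluation $\eta_A \colon I \to A \otimes A^*$. Specializing~\eqref{eq:plan-twist-identity} to $B = A$, $C = A^*$, and $f = \eta_A$ gives $(\theta_A \otimes \id_{A^*}) \circ \eta_A = (\id_A \otimes \theta_{A^*}) \circ \eta_A$. A standard snake-equation manipulation---tensor with $\id_{A^*}$ on the left, compose with $\epsilon_A \otimes \id_{A^*}$, and apply naturality of $\theta$ together with the zig-zag identity---converts this into $\theta_{A^*} = (\theta_A)^*$, which is exactly the ribbon compatibility condition~\eqref{eq:ribboncondition}. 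The only real subtlety in carrying out the plan is bookkeeping of left- versus right-duality conventions when passing between the coevaluation and evaluation forms of the ribbon condition, but this is routine given that all the required ingredients (rigidity, naturality of $\theta$, and the snake equations) are already in hand.
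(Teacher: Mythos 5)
Your proposal is correct and follows essentially the same route as the paper: interpret the surface in relation~\eqref{tortile} via internal string diagrams, observe that $\isd(\theta)$ on either leg inserts a twist on the corresponding strand, and then evaluate the resulting identity on a duality morphism supplied by \autoref{prop:Frob_to_rigid}. The only (immaterial) difference is that you instantiate the identity at the coevaluation and recover $\theta_{A^*}=(\theta_A)^*$ by a snake move, whereas the paper works with the rotated surface and the counit $A\otimes A^*\to I$, reading off~\eqref{eq:ribboncondition} directly.
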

\begin{proof}
We need to verify the equation $\theta_{A} {}^* = \theta_{A^*}$ where $A \in \Ob(\cat{S})$ and $\theta_A {}^*$ refers to the dual of $\theta_A$ built using the unit and counit maps.  (We make an arbitrary choice of duals; the compatibility of the twist does not depend on the choice.) This is equivalent to verifying the equality of applying $\theta$ on the left leg or the right leg of the counit map $A \otimes A^* \xto{} 1$. Applying $\theta$ on the left leg has the following effect on string diagrams:
\mediumbordisms
\[
\begin{tz}
        \node[Cyl, top, bot] (A) at (0,0) {};
        \node[Copants, bot, anchor=leftleg] (B) at (A.bottom) {};
        \node[Cyl, top, bot, anchor=bottom] (C) at (B.rightleg) {};
        \node[Cup] (D) at (B.belt) {};
        \draw[red strand] ([yshift=\toff] A.top) node[above strand label, red] {$A$}
                to (A.top)
                to (A.bottom)
                to[out=down, in=down, looseness=2] (C.bottom)
                to
                (C.top)
                to ([yshift=\toff] C.top) node[above strand label, red] {$A^*$};
\node [red] at (A.center) {$\arrownode$};
\node [red, rotate=180] at (C.center) {$\arrownode$};
    \selectpart[green]{(A-top) (A-bottom)}
\end{tz}
\, \xmapsto{\isd(\theta)} \,
\begin{tztiny}
        \node[Cyl, top, bot] (A) at (0,0) {};
        \node[Copants, bot, anchor=leftleg] (B) at (A.bottom) {};
        \node[Cyl, top, bot, anchor=bottom] (C) at (B.rightleg) {};
        \node[Cup] (D) at (B.belt) {};
        \draw[red strand] ([yshift=\toff] A.top) node[above strand label, red] {$A$}
                to (A.top)
                to node[draw=red, text=red, tiny label] {$\theta$} (A.bottom)
                to[out=down, in=down, looseness=2] (C.bottom)
                to (C.top)
                to ([yshift=\toff] C.top) node[above strand label, red] {$A^*$};
\end{tztiny}
\, = \,
\begin{tztiny}
        \node[Cyl, top, bot] (A) at (0,0) {};
        \node[Copants, bot, anchor=leftleg] (B) at (A.bottom) {};
        \node[Cyl, top, bot, anchor=bottom] (C) at (B.rightleg) {};
        \node[Cup] (D) at (B.belt) {};
        \draw[red strand] ([yshift=\toff] A.top) node[above strand label, red] {$A$}
                to (A.top)
                to (A.bottom)
                to[out=down, in=down, looseness=2] 
                node[pos=0.2, draw=red, text=red, tiny label, rotate=30] {$\theta$}  (C.bottom)
                to (C.top)
                to ([yshift=\toff] C.top) node[above strand label, red] {$A^*$};
\end{tztiny}
\]
Applying $\theta$ on the right leg, we get the following result:
\[
\begin{tztiny}
        \node[Cyl, top, bot] (A) at (0,0) {};
        \node[Copants, bot, anchor=leftleg] (B) at (A.bottom) {};
        \node[Cyl, top, bot, anchor=bottom] (C) at (B.rightleg) {};
        \node[Cup] (D) at (B.belt) {};
        \draw[red strand] ([yshift=\toff] A.top) node[above strand label, red] {$A$}
                to (A.top)
                to (A.bottom)
                to[out=down, in=down, looseness=2] (C.bottom)
                to (C.top)
                to ([yshift=\toff] C.top) node[above strand label, red] {$A^*$};
    \selectpart[green]{(C-top) (C-bottom)}
\node [red] at (A.center) {$\arrownode$};
\node [red, rotate=180] at (C.center) {$\arrownode$};
\end{tztiny}
\, \xmapsto{\isd(\theta)} \,
\begin{tztiny}
        \node[Cyl, top, bot] (A) at (0,0) {};
        \node[Copants, bot, anchor=leftleg] (B) at (A.bottom) {};
        \node[Cyl, top, bot, anchor=bottom] (C) at (B.rightleg) {};
        \node[Cup] (D) at (B.belt) {};
        \draw[red strand] ([yshift=\toff] A.top) node[above strand label, red] {$A$}
                to (A.top)
                to (A.bottom)
                to[out=down, in=down, looseness=2] (C.bottom)
                to node[draw=red, text=red, tiny label] {$\theta$} (C.top)
                to ([yshift=\toff] C.top) node[above strand label, red] {$A^*$};
\end{tztiny}
\, = \,
\begin{tztiny}
        \node[Cyl, top, bot] (A) at (0,0) {};
        \node[Copants, bot, anchor=leftleg] (B) at (A.bottom) {};
        \node[Cyl, top, bot, anchor=bottom] (C) at (B.rightleg) {};
        \node[Cup] (D) at (B.belt) {};
        \draw[red strand] ([yshift=\toff] A.top) node[above strand label, red] {$A$}
                to (A.top)
                to (A.bottom)
                to[out=down, in=down, looseness=2] 
                node[pos=0.8, draw=red, text=red, tiny label, rotate=-20] {$\theta$}  (C.bottom)
                to (C.top)
                to ([yshift=\toff] C.top) node[above strand label, red] {$A^*$};
\end{tztiny}
\]
By relation~\eqref{tortile}, these two ways of applying $\theta$ are equal for any representation of the ribbon presentation, and hence the condition for a ribbon category, from \autoref{def:ribboncategory}, is satisfied.
\end{proof}

Recall that every ribbon category has a canonical (up to equivalence) pivotal structure (in fact, due to~\eqref{tortile}, this structure is spherical), built as a composite of a Reidemeister-I `loop' (using any choice of duals and unit and counit maps) with the inverse twist map.  We will make use of that structure in the next section on modularity.

\section{Representations of modular structures}
\label{sec:modcatmodobj}

We now investigate the linear representation theory of the modular presentation \M given in \autoref{defn_presentation_of_B}, and of the presentations given later in \autoref{sec:geometricalpresentations} which 2\-extend \M. We call such representations \textit{modular structures} in \twovect.  Since these presentations 2\-extend the ribbon presentation, all the results of \autoref{sec:internal} apply, and in particular a modular structure in \twovect has an associated  ribbon category.

By the results in the Appendix, a modular structure in \twovect necessarily takes values in the subcategory of semisimple 2\_vector spaces, and we  make heavy use of this fact throughout this section.

In \autoref{sec:simplesufficient} we show that when using the internal string diagram formalism, one can restrict labels of internal boundary circles to simple objects without loss of generality. We also show that every modular structure in \twovect factors as a product of simple ones, in which the tensor unit of the associated semisimple ribbon category has no proper subobjects. In \autoref{sec:actionsofdaggers} we determine, up to a constant $p$, the actions of the generators $\mu^\dag$, $\nu^\dag$, $\epsilon^\dag$, and $\eta^\dag$. We then make contact with the notion of modular tensor category, proving in \autoref{sec:mod} that for a modular structure, the associated ribbon category is modular. In \autoref{sec:anomalyvalue} we investigate the \textit{anomaly} $p^+/p^-$ of the modular category, where $p^+$ and $p^-$ are standard constants obtained from the modular tensor category, and show that $p^+/p^- = p^2$, fixing the value of the undetermined constant up to a sign. 

\subsection{Semisimplicity and simplicity}
\label{sec:simplesufficient}

By the analysis of the Appendix, in particular \autoref{cor:modissemisimple}, a linear representation (of one of the geometrical presentations extending the modular presentation) necessarily takes values in the subcategory of semisimple 2\_vector spaces. Without loss of generality, we can restrict the labelings of internal boundary circles to be simple objects, by means of the following equation:
\begin{equation}
\setlength\ymult{1pt}
\setlength{\cobwidth}{2.3\cobwidth}
\setlength{\ellipseheight}{1.5\ellipseheight}
\label{eq:simpleobjectsonly}
\begin{tz}
    \cobpartup 0 0
    \cobpartdown 0 0
    \node (A) at (0cm,2cm) [coordinate] {};
    \node (B) at (0cm,-2cm) [coordinate] {};
    \begin{scope}[curvein]
        \draw [dash pattern=on 1pt off 1pt] (A) to (0,1.7cm);
        \draw [dash pattern=on 1pt off 1pt] (B) to (0,-1.7cm);
        \draw (0,1.7cm) to (0,-1.7cm);
        \node at (A) [above] {$A$};
        \node at (B) [below] {$A$};
        \node at (0.5\cobwidth,0) [right] {$A$};
    \end{scope}
\end{tz}
\hspace{10pt}
=
\hspace{10pt}
\redscalar{\sum_{i,n}}
\begin{tz}
    \cobpartup 0 0
    \cobpartdown 0 0
    \node (A) at (0cm,2cm) [coordinate] {};
    \node (B) at (0cm,-2cm) [coordinate] {};
    \begin{scope}[curvein]
        \draw [dash pattern=on 1pt off 1pt] (A) to (0,1.7cm);
        \draw [dash pattern=on 1pt off 1pt] (B) to (0,-1.7cm);
        \draw (0,1.7cm) to (0,-1.7cm);
        \node at (A) [above] {$A$};
        \node at (B) [below] {$A$};
        \node [tiny label, minimum width=16pt] at (0,1.2cm) {$p_{i,n}$};
        \node [tiny label, minimum width=16pt] at (0,0.35cm) {$q_{i,n}$};
        \node at (0.5\cobwidth,0) [right] {$A$};
        \node at (0,0.76cm) [right=-1pt] {$\scriptstyle i$};
    \end{scope}
\end{tz}
\hspace{10pt}
\sim
\hspace{10pt}
\redscalar{\sum_{i,n}}
\begin{tz}
    \cobpartup 0 0
    \cobpartdown 0 0
    \node (A) at (0cm,2cm) [coordinate] {};
    \node (B) at (0cm,-2cm) [coordinate] {};
    \begin{scope}[curvein]
        \draw [dash pattern=on 1pt off 1pt] (A) to (0,1.7cm);
        \draw [dash pattern=on 1pt off 1pt] (B) to (0,-1.7cm);
        \draw (0,1.7cm) to (0,-1.7cm);
        \node at (A) [above] {$A$};
        \node at (B) [below] {$A$};
        \node [tiny label, minimum width=16pt] at (0,0.7cm) {$p_{i,n}$};
        \node [tiny label, minimum width=16pt] at (0,-0.7cm) {$q_{i,n}$};
        \node at (0.5\cobwidth,0) [right] {$S_i$};
        \node at (0,0.1cm) [right=-1pt] {$\scriptstyle i$};
    \end{scope}
\end{tz}
\end{equation}
We begin by rearranging our internal string diagram locally so that only the identity morphism of some object $A$, not necessarily simple, passes through our internal boundary circle. We then rewrite the identity on $A$ as a sum of projections $p_{i,n}$ onto and injections $q_{i,n}$ from simple objects $S_i$, using the semisimplicity of the category $\cat{S}$ associated to the circle. Finally, we use our equivalence relation to slide the injection morphisms $q_{i,n}$ into the lower cobordism.

We can use the internal string diagram formalism to show that every modular structure breaks up into a direct sum of simple modular structures.

\begin{defn}
A modular structure $Z$ in $\twovect$ is \emph{simple} if $Z(
\tikztinysphere) \simeq k$.
\end{defn}

\begin{lemma}
A modular structure in $\twovect$ is simple if and only if the unit object in the associated ribbon category has no proper subobjects.
\end{lemma}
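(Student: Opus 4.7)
The plan is to compute $Z(\tikztinysphere)$ directly via the internal string diagram formalism of \autoref{sec:internal} and identify it with $\End_\cat{S}(I)$, where $\cat{S} = Z(\bigcirc)$ is the ribbon category associated to the modular structure and $I$ is its unit object.

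First, I will decompose the sphere as $\tikztinysphere = \tikztinycup \circ \tikztinycap$. Using the canonical isomorphism $Z \cong \isd$ arising from \autoref{def:stringfunctor}, together with equations \eqref{eq:string1}--\eqref{eq:string4} and the bimodule composition formula \eqref{bimodulecomposition}, this yields
\begin{equation*}
Z(\tikztinysphere) \;\cong\; \isd(\tikztinysphere) \;=\; \bigoplus_{A \in \cat{S}} \Hom_\cat{S}(A, I) \otimes_k \Hom_\cat{S}(I, A) \big/ {\sim},
\end{equation*}
where the equivalence identifies $f h \otimes g$ with $f \otimes h g$ for $h \colon A' \to A$, $f \colon A \to I$, $g \colon I \to A'$. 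A direct computation shows this coend is isomorphic to $\End_\cat{S}(I)$ via $[f \otimes g] \mapsto f \circ g$, with two-sided inverse $h \mapsto [h \otimes \id_I]$ (taking $A = I$); the well-definedness follows immediately from the coend relation, and both composites with the given map are obviously the identity.

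Second, I will invoke \autoref{cor:modissemisimple} of the Appendix, which ensures that $\cat{S}$ is finite and semisimple, so that $I \cong \bigoplus_s s$ decomposes into a finite direct sum of pairwise non-isomorphic simple objects (these are precisely the \emph{factors} of $I$ in the sense of the introductory footnote; the multiplicity-one property is the standard fact that $\End(I)$ is a commutative algebra in any monoidal category and hence, in the semisimple case, a product of copies of $k$). Consequently $\End_\cat{S}(I) \cong k^n$, where $n$ is the number of simple summands of $I$.

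Combining the two steps, $Z(\tikztinysphere)$ is a vector space of dimension equal to the number of simple summands of $I$, so $Z(\tikztinysphere) \cong k$ if and only if $n = 1$, i.e.\ if and only if $I$ is simple, which in the semisimple setting is equivalent to $I$ having no proper subobjects. There is no genuine obstacle here: the only mild subtlety is verifying that the coend collapses to $\End(I)$, which is routine once the internal string diagram description has been set up in \autoref{sec:internal}.
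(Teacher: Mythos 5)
Your proof is correct, and it rests on the same underlying identification as the paper's, namely $Z(\tikztinysphere) \cong \End_{\cat S}(I)$, but reaches it by a more explicit route. The paper argues formally at the level of the 2\-category: it transposes across the adjunction $\tikztinycup \dashv \tikztinycap$ to obtain $\Hom_{\twovect}\big(Z(\tikztinycup), Z(\tikztinycup)\big) \simeq \Hom_{\twovect}\big(\id_{\vect}, Z(\tikztinysphere)\big)$, and then uses the fact that a linear functor from $\vect$ into a semisimple category picks out a simple object precisely when its endomorphism space is $k$. You instead unwind the bimodule composite $Z(\tikztinycup)^* \circ Z(\tikztinycup)_*$ as a coend and collapse it to $\End_{\cat S}(I)$ by co-Yoneda --- which is exactly the computation already recorded in the proof of \autoref{adj_lemma} --- and then invoke semisimplicity from \autoref{cor:modissemisimple} to finish. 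Both arguments hinge on the same adjunction; yours is longer but makes the vector space $Z(\tikztinysphere)$ concrete as $\End_{\cat S}(I)$ (the span of the projectors onto the factors of $I$, which is the description used later in \autoref{sec:classificationother}), while the paper's is shorter because it never opens up the coend. Two cosmetic remarks: with the paper's conventions the sphere is the composite $\tikztinycap \circ \tikztinycup$ rather than the reverse (your coend formula is nonetheless the correct one, since the sphere must be an endo-bimodule of the monoidal unit of $\twovect$); and the multiplicity-one observation about the decomposition of $I$ is not actually needed, since $\dim \End_{\cat S}(I) = 1$ is equivalent to simplicity of $I$ whatever the multiplicities in the decomposition happen to be.
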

\begin{proof}
If $C$ is a semisimple $k$-linear category, a linear functor $F: \vect \to \cat C$ picks out a simple object just when $\Hom _\twovect (F, F) \simeq k$. Given a modular structure $Z$, the linear functor $Z(\tikztinycup)$ picks out the unit object for the linear ribbon category associated to $Z$, and by the adjunction $\tikztinycup \dashv \tikztinycap$ we have $\Hom _\twovect \big( Z(\tikztinycup), Z(\tikztinycup) \big) \simeq \Hom _\twovect \big(\id_{\vect}, Z(\tikztinysphere) \big)$.  Hence if the unit object is simple, we must have $Z(\tikztinysphere) \simeq (-) \otimes k : \vect \to \vect$, and vice versa. 
\end{proof}

\begin{lemma}
\label{directsumofsimples}
Every modular structure $Z$ in $\twovect$ is equivalent to a direct sum of simple modular structures.
\end{lemma}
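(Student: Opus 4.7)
The plan is to decompose the associated ribbon category into summands indexed by the simple subobjects of the unit, and then lift this decomposition to the modular structure itself. Let $\cat{S} := Z(\bigcirc)$ be the Cauchy-complete linear ribbon category provided by \autoref{thm:repofribisribbon}, which is moreover semisimple by \autoref{cor:modissemisimple}. Choose a decomposition $I \cong \oplus_{i=1}^n e_i$ of the unit into simple subobjects. Each $e_i$ is a central idempotent, and $e_i \otimes e_j \cong 0$ for $i\neq j$ (since $e_i \otimes e_j$ is a subobject of $I\otimes I \cong I$ and $e_i, e_j$ are non-isomorphic simples). Setting $\cat{S}_i$ to be the full subcategory on objects $A$ with $e_i \otimes A \cong A$, one obtains an equivalence $\cat{S} \simeq \oplus_{i} \cat{S}_i$ of Cauchy-complete linear categories, and each $\cat{S}_i$ inherits from $\cat{S}$ the structure of a ribbon category with simple unit $e_i$.

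Next I would lift this splitting to the whole functor $Z$. Since the Deligne tensor product distributes over direct sums, $\cat{S}^{\boxtimes k} \simeq \oplus_{\vec i} \cat{S}_{i_1} \boxtimes \cdots \boxtimes \cat{S}_{i_k}$. Define a modular structure $Z_i$ by taking $Z_i(\bigcirc) := \cat{S}_i$ and letting each generating 1-morphism of $Z_i$ be the restriction of the corresponding 1-morphism of $Z$: for example, $Z_i(\tikztinypants)$ is the restriction of $Z(\tikztinypants)$ to $\cat{S}_i \boxtimes \cat{S}_i \to \cat{S}_i$ (it lands there since $e_i \otimes e_i \cong e_i$, and lands in $0$ on the off-diagonal blocks $\cat{S}_i \boxtimes \cat{S}_j$ for $i\neq j$), while $Z_i(\tikztinycup)$ picks out $e_i$ and the adjoints $Z_i(\tikztinycopants)$, $Z_i(\tikztinycap)$ are the restrictions of the corresponding right adjoints, well-defined by uniqueness of right adjoints. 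Since each generating 2-morphism of $\M$ is a natural transformation between composites of these structural functors, it restricts block-diagonally to a 2-morphism for each $Z_i$, and the relations of $\M$ descend because they hold in $Z$ and the block decomposition is compatible with composition. The canonical equivalence $\cat{S} \simeq \oplus_i \cat{S}_i$ then assembles into an equivalence $Z \simeq \oplus_i Z_i$ of modular structures, and each $Z_i$ is simple by construction, since $Z_i(\tikztinysphere) = \Hom_{\cat{S}_i}(e_i, e_i) \cong k$.

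The main technical obstacle is verifying that every generating 1- and 2-morphism is indeed block-diagonal with respect to the splitting $\cat{S}^{\boxtimes k} \simeq \oplus_{\vec i} \cat{S}_{\vec i}$. For the pants and cup this follows directly from $e_i \otimes e_j \cong \delta_{ij} e_i$; for copants and cap it follows by uniqueness of adjoints; and for the 2-morphism generators (associator, unitors, braiding, twist, Frobeniusators, and the daggered units and counits) it follows because each is constructed from the tensor product and duality of $\cat{S}$, which preserve the idempotent splitting. The modularity, pivotality, and adjunction axioms then automatically hold in each summand, since each equation decomposes component-wise over the direct sum.
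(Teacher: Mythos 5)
Your route is essentially the paper's: decompose $\cat{S}=Z(\perspectivecircle)$ into factors $\cat{S}_i$ indexed by the simple summands of the unit, check that everything is block-diagonal, and reassemble. The one step whose justification does not hold up is the block-diagonality of the \emph{new} 2\-morphism generators of the modular presentation. The daggered generators $\epsilon^\dag$, $\eta^\dag$, $\mu^\dag$, $\nu^\dag$ (and the inverse Frobeniusators) are \emph{not} ``constructed from the tensor product and duality of $\cat{S}$''; they are independent data supplied by $Z$, and nothing in the monoidal structure of $\cat{S}$ determines them a priori --- indeed, pinning down their values is the content of \autoref{sec:actionsofdaggers}, whose arguments already assume simplicity and hence presuppose this lemma. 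As written, that sentence is therefore circular (or at best unsupported) precisely for the generators where block-diagonality is least obvious.

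The conclusion is nevertheless correct, and there are two ways to close the gap. The paper's argument: every generating 2\-morphism has a connected surface as its domain or its codomain, and for a connected surface $\Sigma$ the vector space $\isd(\Sigma)^A_B$ vanishes whenever the simple boundary labels do not all lie in a single factor (the internal string diagram can be deformed so that two objects from different factors are tensored, giving the zero object); hence whatever linear map $Z$ assigns to that generator is forced to vanish on every off-diagonal block. Alternatively, your own earlier sentence already points at a purely categorical repair: the source and target of each generating 2\-morphism are composites of the generating 1\-morphisms, which you have shown preserve the block decomposition, and in a direct sum of linear categories all hom-spaces between objects of distinct summands vanish; so \emph{any} natural transformation between block-preserving functors automatically decomposes as a direct sum, with no appeal to how the transformation was constructed. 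Either repair suffices; without one of them the treatment of the daggered generators is a genuine gap.
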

\begin{proof}
By \autoref{cor:modissemisimple}, we know that the ribbon category $C$ associated to $Z$ is semisimple. A semisimple braided monoidal category always decomposes, as a braided monoidal category, into a direct sum of braided monoidal categories with simple unit. Indeed, write $I =  \bigoplus_{j \in J} I_j$ for a decomposition of the unit object $I$ into simple objects $I_j$. Then for each simple object $X$ we must have $X \otimes I_{j_X} \cong X$ for a unique $j_X \in J$, and $X \otimes I_j = 0$ for $j \neq j_X$. Using the braiding, this means $j_X$ is also the unique index such that $I_{j_X} \otimes X \cong X$. Hence, as a linear category, $C = \sum_{j \in J} C_j$ where $C_j$ is the full subcategory (the $j$'th `factor') generated by the simple objects $X$ with $1_j \otimes X \cong X$. Since $X \otimes Y \cong (X \otimes 1_{j_X}) \otimes Y \cong X \otimes (1_{j_X} \otimes Y)$ we see that the tensor product of simple objects is zero if $X$ and $Y$ are from different factors, and produces a direct sum of simple objects in the same factor otherwise.

Now let $\Sigma$ be a connected surface, and consider the vector space $\isd \big( \Sigma  \big)^A _B$ for some choices of labelings $A$ and $B$ of the input and output boundary circles by simple objects. If these simple objects are not all associated to the same factor $j \in J$ then we will have $\isd \big( \Sigma  \big)^A _B = 0$, as the string diagram could be topologically deformed to bring the two simple objects from different factors into proximity, and the argument from the previous paragraph then tells us that we are tensoring by the zero object.

For each generating 2\-morphism of a modular structure, at least one of its domain or codomain is a connected surface, and so any modular structure must assign to it the zero linear map when its boundaries are labeled by simple objects from different factors. It follows that the modular structure is a direct sum of simple modular structures.
\end{proof}

\noindent
Given this result, we can consider our modular structures to be simple without loss of generality, and we will make this assumption for the rest of this section.

\subsection{Actions of the generators $\mu^\dag$, $\nu^\dag$, $\epsilon^\dag$, and $\eta^\dag$}
\label{sec:actionsofdaggers}

In this Section we study the actions of the generators $\mu^\dag$, $\nu^\dag$, $\epsilon^\dag$, and $\eta^\dag$ of a modular structure on internal string diagrams. Our strategy is to propose arbitrary linear maps for the values of these generators, and then see what restrictions we can obtain on these maps by considering equations we know must hold. Initially, we will find that the values can be determined up  to a constant $p$. In \autoref{lem:psquared} we will see that $p^2=p^+ p^-$, where $p^+$ and $p^-$ are the anomaly constants in a modular tensor category with simple tensor unit.

\begin{proposition}
\label{thm:mndaginternalstring}
For a modular structure $Z$ in \twovect, the generators $\mu ^\dag$ and $\nu ^\dag$ act in the following way on internal string diagrams, where $p$ is a nonzero constant yet to be determined:
\mediumbordisms
\begin{align}
\begin{tz}
\node (A) [Cyl, top, bot=false] at (0,0) {};
\node (B) [Cyl, bot] at (0, -0.5\cobheight) {};
    \begin{scope}[curvein]
        \node (top) at (A.top) [above=0.1cm] {$I$};
        \node (bot) at (B.bot) [below=0.15cm] {$I$};
    \end{scope}
\end{tz}
\quad
&\xmapsto{\textstyle \isd(\mu ^\dagger)}
\quad
{\red p}
\,\,
\begin{tz}
\node (B) [Cap, bot] at (0,0) {};
\node (A) [Cup, top] at (0,1.5\cobheight) {};
    \begin{scope}[curvein]
        \node (top) at (A.center) [above=0.1cm] {$I$};
        \node (bot) at (B.center) [below=0.15cm] {$I$};
    \end{scope}
\end{tz}
&\begin{tz}
\node [Cup] at (0,0) {};
\node [Cap, bot] at (0,0) {};
\begin{scope}[curvein]
\end{scope}
\end{tz}
\quad
&\xmapsto{\textstyle \isd(\nu ^\dagger)}
\quad
{\red \frac{1}{p}}
\,
\begin{tz}
\begin{scope}[curvein]
\end{scope}
\end{tz}
\end{align}
For other labelings of the boundary circles of the cylinder by simple objects, $\isd (\mu ^\dagger)$ is zero.
\end{proposition}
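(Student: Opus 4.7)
The plan is to exploit the finite semisimplicity of the target (Corollary~\ref{cor:modissemisimple}) together with the simplicity hypothesis of the proposition, to pin down the relevant internal-string-diagram vector spaces and then read off $\isd(\mu^\dagger)$ and $\isd(\nu^\dagger)$ up to a scalar. By simplicity of the modular structure, the tensor unit $I$ of the associated ribbon category $\cat{S}$ (Theorem~\ref{thm:repofribisribbon}) has no proper subobjects; combined with semisimplicity of $\cat{S}$, this makes $I$ a simple object.

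Next I would compute the internal-string-diagram spaces attached to the source and target of $\mu^\dagger$ and $\nu^\dagger$ for simple labels $A, B$ of the boundary circles.  Using identifications (\ref{eq:string1}--\ref{eq:string4}), the pull-through relation~(\ref{eq:simpleobjectsonly}), and the simplicity of $I$, one obtains: the cylinder gives $\isd(\text{cyl})^A_B \cong \Hom_{\cat S}(A,B)$, which is $k$ when $A \cong B$ and $0$ otherwise; the cup-on-top-of-cap gives $\isd(\text{cup}\circ\text{cap})^A_B \cong \Hom_{\cat S}(A,I)\otimes \Hom_{\cat S}(I,B)$, which is $k$ when $A \cong I \cong B$ and $0$ otherwise; and the closed sphere gives $\isd(\text{sphere}) \cong \Hom_{\cat S}(I,I) = k$. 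It follows immediately that $\isd(\mu^\dagger)^A_B$ vanishes unless $A \cong I \cong B$, and in the remaining case it is a linear map $k \to k$, hence multiplication by some scalar $p \in k$. Similarly $\isd(\nu^\dagger)$ is a map $k \to k$, multiplication by some scalar $q \in k$. This already yields the statement of the proposition up to the relation between $p$ and $q$.

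The final step is to determine $q$ in terms of $p$ by evaluating the first additional-adjunction zigzag identity of~(\ref{adj_nu_mu_monoid}) in the modular presentation, which witnesses the adjunction $\text{cap} \dashv \text{cup}$. On internal string diagrams, start with a cylinder stacked above the original cup, external label $I$; its internal-string space is $\Hom_{\cat S}(I,I) = k$. Applying $\isd(\mu^\dagger)$ to the cylinder splits it into a cup on top and a cap on bottom, and this cap together with the original cup forms a sphere; this step multiplies by $p$. Applying $\isd(\nu^\dagger)$ to that sphere scales by $q$ and removes it, leaving the top cup with its identity string. The zigzag identity forces the composite to be the identity on $\isd(\text{cup})^I = k$, which forces $pq = 1$. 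In particular $p \neq 0$, and $q = 1/p$, completing the proof.

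The main subtlety is in the middle of the zigzag, where one must correctly identify the `sphere factor' produced by $\mu^\dagger$ as the scalar $1$ under the canonical identification $\isd(\text{sphere}) \cong \Hom_{\cat S}(I,I) = k$; this identification rests on the pull-through relation~(\ref{eq:pullthrough}) together with the simplicity of~$I$, so the semisimplicity conclusion of the Appendix is used in an essential way.
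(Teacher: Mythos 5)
Your proposal is correct and follows essentially the same route as the paper: first use semisimplicity to see that the relevant internal-string-diagram spaces vanish unless all boundary circles are labelled by $I$ (so the maps are forced to be scalars $p$ and $q$ on one-dimensional spaces), then evaluate one of the zigzag identities of~\eqref{adj_nu_mu_monoid} witnessing $\tikztinycap \dashv \tikztinycup$ to conclude $pq=1$, hence $q=1/p$ and $p\neq 0$. The only cosmetic difference is which of the two zigzag composites you evaluate, which is immaterial.
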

\begin{proof}
Suppose we label the boundary circles for the $\mu^\dagger$ action with simple objects not all isomorphic to $I$. Then by the internal string diagram formalism the target of $\isd(\mu^\dagger)$ is a zero-dimensional vector space. So the corresponding linear map is the zero map. The only nontrivial content lies in the case where boundary circles are all labeled by~$I$.

Given this, the following candidate actions are fully general:
\mediumbordisms
\begin{align}
\label{eq:muopaction}
\begin{tz}
\node (A) [Cyl, top, bot=false] at (0,0) {};
\node (B) [Cyl, bot] at (0, -0.5\cobheight) {};
    \begin{scope}[curvein]
        \node (top) at (A.top) [above=0.1cm] {$I$};
        \node (bot) at (B.bot) [below=0.15cm] {$I$};
    \end{scope}
\end{tz}
\quad
&\xmapsto{\textstyle Z(\mu ^\dag)}
\quad
{\red p}
\,\,
\begin{tz}
\node (B) [Cap, bot] at (0,0) {};
\node (A) [Cup, top] at (0,1.5\cobheight) {};
    \begin{scope}[curvein]
        \node (top) at (A.center) [above=0.1cm] {$I$};
        \node (bot) at (B.center) [below=0.15cm] {$I$};
    \end{scope}
\end{tz}
&
\begin{tz}
\node [Cup] at (0,0) {};
\node [Cap, bot] at (0,0) {};
\begin{scope}[curvein]
\end{scope}
\end{tz}
\quad
&\xmapsto{\textstyle Z(\nu ^\dag)}
\quad
{\red q}
\hspace{4.5pt}
\begin{tz}
\end{tz}
\end{align}
In the action~\eqref{eq:muopaction} the cobordisms are left blank to denote the identity string diagram in each connected component. Applying one of the equations witnessing the adjunction $\tikztinycap \dashv \tikztinycup$, we see that the following composite must be the identity:
\begin{equation}
\begin{tz}
    \node (A) [Cyl, bot, slightlytall] at (0,0) {};
    \node [Cap, bot] at (A.top) {};
    \begin{scope}[curvein]
        \node (bot) at (A.bot) [below=0.25cm] {$I$};
    \end{scope}
\end{tz}
\quad
\xmapsto{\textstyle \isd(\mu ^\dag)}
\quad
{\red p}\,
\begin{tz}
    \node (A) [Cap, bot] at (0,0) {};
    \node (B) [Cup] at (0,0) {};
    \node (C) [Cap, bot] at (0,-1.5\cobheight) {};
    \begin{scope}[curvein]
        \node (bot) at (C.center) [below=0.25cm] {$I$};
    \end{scope}
\end{tz}
\quad
\xmapsto{\textstyle \isd(\nu ^\dag)}
\quad
{\red p  q}\,
\begin{tz}
    \node (A) [Cap, invisible] at (0,0) {};
    \node (C) [Cap, bot] at (0,-1.5\cobheight) {};
    \begin{scope}[curvein]
        \node (bot) at (C.center) [below=0.25cm] {$I$};
    \end{scope}
\end{tz}
\end{equation}
As a result we obtain $q=1/p$.
\end{proof}

By a pivotal tensor category we mean one with chosen right duals for all objects and a chosen monoidal isomorphism from the identity to the right double dual functor.\footnote{Note that nothing does or can depend on the choice of duals, but the choice of pivotal structure is definitely nontrivial.  The fact that the notion of pivotal structure, as presented, depends on first making the irrelevant choice of duals is at root a sign that the standard definition of pivotality is not the most perspicacious one.}  Note that we always use the canonical pivotal structure associated to the ribbon category, described at the end of the previous section; indeed, the subsequent string diagrams would not even be well-defined with an alternate choice of pivotal structure.  

We recall the standard convention that in the graphical calculus for a semisimple ribbon tensor category, a closed unlabeled loop (possibly braided nontrivially with other strings, and in our internal string diagram formalism, possibly embedded nontrivially in the interior of the surface) denotes a sum over isomorphism classes of simple object, each weighted by the quantum dimension of that object~\cite{bk01-ltc}:
\begin{equation}
\begin{tz}
\draw (0,0) to [out=up, in=up, looseness=1.8] (1,0) to [out=down, in=down, looseness=1.8] (0,0);
\end{tz}
\quad=\quad
\sum_i d_i\,\,\,
\begin{tz}
\draw (0,0) to [out=up, in=up, looseness=1.8] (1,0) node [right] {$i$} to [out=down, in=down, looseness=1.8] (0,0);
\node at (1,0) {\arrownode[black]};
\end{tz}
\end{equation}
Note that here, and henceforth, cups and caps are the (arbitrarily) chosen units and counits for the dualities $j \dashv j^*$, and there is an implicit use of the canonical pivotal structure ${}^* i \cong i^{*}$ (which, note well, depends (only up to equivalence) on the same chosen units and counits).  The quantum dimension $d_i \in k$ is defined to be the value (as a string diagram, not as an internal string diagram) of a closed loop (not braided with any other strings) labelled by the simple object $S_i$:
\begin{equation}
d_i \quad:=\quad
\begin{tz}
\draw (0,0) to [out=up, in=up, looseness=1.8] (1,0) node [right] {$i$} to [out=down, in=down, looseness=1.8] (0,0);
\node at (1,0) {\arrownode[black]};
\end{tz}
\end{equation}

\begin{proposition}
\label{thm:epsilondagaction}
For a modular structure $Z$ in \twovect, the generator $\epsilon ^\dag$ acts in the following way on internal string diagrams:
\mediumbordisms
\begin{align}
\label{eq:epsilonopaction}
\begin{tz}
    \node (A) [Cyl, tall, bot, top] at (0,0) {};
    \begin{scope}[curvein]
        \draw ([yshift=\toff] A.top) to ([yshift=-\boff] A.bot);
    \end{scope}
\end{tz}
\quad
& \xmapsto{\displaystyle \isd (\epsilon ^\dag)}
\quad
{ \color{red} \frac{1}{p} }
\begin{tz}
    \node (A) [Pants, bot, top] at (0,0) {};
    \node (B) [Copants, bot, anchor=leftleg] at (A.leftleg) {};
    \begin{scope}[curvein]
        \draw ([yshift=\toff] A.belt)
            to [out=down, in=up, out looseness=1.5] (A-leftleg.in-leftthird)
            to [out=down, in=up, in looseness=1.7] ([yshift=-\boff] B.belt);
        \draw[red strand] (A-leftleg.in-rightthird)
            to [out=up,in=up,looseness=1.7]
                (A-rightleg.in-leftthird)
            to [out=down,in=down, looseness=1.7] (A-leftleg.in-rightthird);
    \end{scope}
\end{tz}
\end{align}
\end{proposition}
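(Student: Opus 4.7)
The plan is to use semisimplicity and naturality to fix the general form of $\isd(\epsilon^\dag)$ up to scalar coefficients, and then to pin down those coefficients by comparing with the already-known actions of $\epsilon$, $\mu^\dag$, and $\phi_N$.

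First I would restrict to the case where the top and bottom boundary circles are both labelled by a single simple object $A$: by \autoref{cor:modissemisimple} and linearity (together with the fact that $\isd(\text{cyl})^A_B$ is one-dimensional when $A \cong B$ and zero otherwise) this suffices. The target space $\isd(\text{pants-copants})^A_A$ is spanned, after decomposing on the two internal boundary circles via \eqref{eq:simpleobjectsonly} and applying the pull-through rule \eqref{eq:pullthrough}, by elementary diagrams in which the left leg carries $\id_A$ and the right leg carries a closed loop labelled by some simple object $S_i$. Thus we may write
\[
\isd(\epsilon^\dag)(\id_A) \,=\, \sum_i \lambda_i^A \cdot \bigl( \id_A \text{ on left leg, loop labelled } S_i \text{ on right leg} \bigr),
\]
and naturality of $\epsilon^\dag$ in the boundary label forces $\lambda_i^A$ to be independent of~$A$; write them $\lambda_i$. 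The stated identity is then equivalent to showing $\lambda_i = d_i / p$, since the unlabelled-loop convention reads $\sum_i d_i \cdot (\text{loop labelled } S_i)$.

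To nail down the $\lambda_i$ I would first apply the zig-zag identity \eqref{adj_eta_epsilon_monoid} for the adjunction $\tikztinycopants \dashv \tikztinypants$, which together with the known action of $\epsilon$ from \autoref{thm:eemninternalstring} and the standard evaluation of a closed simple loop at $d_i$ produces the single scalar constraint $\sum_i \lambda_i d_i = 1$. The remaining constraints come from the explicit expression \eqref{explicit_phileft_inverse} for $\phi_N^{-1}$ as a composite of $\epsilon^\dag$, $\alpha^{-1}$ and $\eta^\dag$: since $\isd(\phi_N^{-1})$ is already known as the inverse of the computed $\isd(\phi_N)$ from \eqref{eq:phiaction}, postulating a parallel simple-object ansatz for $\isd(\eta^\dag)$ with coefficients $\mu_i$, and then expanding the composite $\isd(\eta^\dag) \circ \isd(\alpha^{-1}) \circ \isd(\epsilon^\dag)$ separately on each simple internal label, gives enough equations to force $\lambda_i$ to be proportional to $d_i$, with the proportionality constant fixed to $1/p$ by compatibility with \autoref{thm:mndaginternalstring} via pivotality \eqref{piv_on_sphere}.

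The main obstacle is bridging the gap between the weak zig-zag constraint $\sum_i \lambda_i d_i = 1$ and the individual equalities $\lambda_i = d_i / p$: the single scalar equation leaves a huge family of solutions, and one must carefully exploit the Frobeniusator identity (or, equivalently, pivotality) to force all ratios $\lambda_i / d_i$ to be equal, and then further to coincide with the scalar $1/p$ already appearing in $\isd(\mu^\dag)$ rather than with some independent normalization. Keeping track of the pivotal conventions when translating between flat string diagrams and internal string diagrams in $\RR^3$ is the other main source of bookkeeping care.
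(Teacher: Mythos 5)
Your overall strategy---write down a linear ansatz for $\isd(\epsilon^\dag)$ and then pin down the coefficients from relations of the presentation---is the same as the paper's, but two of your steps have genuine gaps. The first is your claimed general form on the cylinder. The target of $\epsilon^\dag$ is the twice-punctured torus, whose internal string diagrams live in a solid torus with two boundary disks; besides diagrams consisting of ``$\id_A$ through the left leg plus a small loop contained in the right leg,'' this space contains, for instance, diagrams in which a loop runs around the core of the handlebody and links the $A$-strand, and these are not in the span of your proposed basis. Naturality in the boundary label does not remove them: the space of bimodule maps out of the identity bimodule is the end $\bigoplus_S \isd(\tikztinypants\circ\tikztinycopants)^S_S$, of dimension $\sum_{S,X,Y}\bigl(\dim\Hom(S,X\otimes Y)\bigr)^2$ (already $5$ for Fibonacci), not the number of simples. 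The paper closes exactly this gap with the locality lemma \eqref{eq:epsilonsimplification}, imported from \cite{PaperII}, which shows $\epsilon^\dag$ on the cylinder is determined by $\epsilon^\dag$ on the $2$-sphere; the vector space of the resulting \emph{closed} torus genuinely is spanned by simple loops, so the one-coefficient-per-simple ansatz becomes legitimate there. You need this reduction (or a substitute for it) before your ansatz is valid.

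Second, the relations you invoke to determine the coefficients do not close as stated. There is no zig-zag pairing $\epsilon^\dag$ with $\epsilon$: the adjunction equations witnessing $\tikztinycopants\dashv\tikztinypants$ pair $\epsilon^\dag$ with $\eta^\dag$, so your constraint ``$\sum_i\lambda_i d_i=1$'' would involve the unknown $\eta^\dag$ as well; and pivotality \eqref{piv_on_sphere}, evaluated on your ansatz, only fixes the coefficient of the unit object, since the $\mu^\dag$ step there annihilates every nontrivial simple label. Your remaining route through $\phiN^{-1}$ then introduces a further under-specified ansatz for $\isd(\eta^\dag)$ (whose space of candidates is likewise larger than one scalar per simple) without demonstrating that the resulting system determines everything. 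The paper instead uses the single derived identity $\lambda^*={}^*\lambda$ from \cite{PaperII}: the composite of $\epsilon^\dag$, then $\mu^\dag$ applied to the left leg, then $\lambda$ and $\check\lambda$, is the identity on the cylinder. Evaluated on a cylinder labelled by an arbitrary simple object $i$, the left-leg circle carries $i$ tensored with the loop label, $\mu^\dag$ projects onto its unit component with a $1/d_i$ normalisation, and one obtains $c_i p/d_i=1$---one equation per simple object, each determining its coefficient outright. Some relation of this kind, in which the loop links a strand carrying an arbitrary simple label, is indispensable; the constraints you list do not supply it.
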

\begin{proof} 
We begin by noting that $\epsilon ^\dag$ is completely determined by its action on a 2\-sphere, as the following commutative diagram demonstrates:
\smallbordisms
\begin{equation}
\label{eq:epsilonsimplification}
\begin{tz}[scale=0.6]
\node (1) at (0,0)
{
    $\begin{tz}
        \node [Cyl, tall, top, bot] at (0,0) {};
    \end{tz}$
};
\node (2) at (4,0)
{
    $\begin{tz}
        \node (P) [Pants, top, bot, anchor=belt] at (0,0) {};
        \node [Copants, bot, anchor=leftleg] at (P.leftleg) {};
    \end{tz}$
};
\node (3) at (-4,-2)
{
    $\begin{tz}
        \node [Cyl, tall, bot, top] at (0,0) {};
        \node [Cap, bot] at (\cobwidth+\cobgap,0) {};
        \node [Cup] at (\cobwidth+\cobgap,0) {};
    \end{tz}$
};
\node (4) at (0,-4)
{
    $\begin{tz}
        \node [Cyl, veryveryverytall, bot, top] at (0,0) {};
        \node (P) [Pants, bot, anchor=leftleg] at (\cobgap+\cobwidth, 0) {};
        \node [Cap, bot] at (P.belt) {};
        \node (Q) [Copants, bot, anchor=leftleg] at (P.leftleg) {};
        \node [Cup] at (Q.belt) {};
    \end{tz}$
};
\node (5) at (4,-4)
{
    $\begin{tz}
        \node (P) [Pants, bot, anchor=leftleg] at (\cobgap+\cobwidth, 0) {};
        \node [Cap, bot] at (P.belt) {};
        \node (Q) [Copants, bot, anchor=rightleg] at (P.leftleg) {};
        \node (P2) [Pants, bot, anchor=belt] at (Q.belt) {};
        \node (Q2) [Copants, bot, anchor=leftleg] at (P2.rightleg) {};
        \node [Cyl, bot, tall, anchor=top] at (P.rightleg) {};
        \node [Cup] at (Q2.belt) {};
        \node [Cyl, slightlytall, bot, anchor=top] at (P2.leftleg) {};
        \node [Cyl, slightlytall, bot, top, anchor=bot] at (Q.leftleg) {};
    \end{tz}$
};
\node (6) at (8,-2)
{
    $\begin{tz}
        \node (Q) [Copants, bot, anchor=rightleg] at (0,0) {};
        \node (P) [Pants, bot, anchor=belt] at (Q.belt) {};
        \node (Q2) [Copants, bot, anchor=leftleg] at (P.leftleg) {};
        \node (P2) [Pants, bot, anchor=belt] at (Q2.belt) {};
        \node [Cyl, bot, anchor=top] at (P2.leftleg) {};
        \node [Cyl, bot, top, anchor=bot] at (Q.leftleg) {};
        \node [Cap, bot] at (Q.rightleg) {};
        \node [Cup] at (P2.rightleg) {};
    \end{tz}$
};
\begin{scope}[double arrow scope]
\draw (1) to node [auto] {$\epsilon^\dag$} (2);
\draw (1) to node [auto,swap] {$\nu$} (3);
\draw (3) to node [auto,swap] {$\epsilon ^\dag$} (4);
\draw (4) to node [auto,swap] {$\eta$} (5);
\draw (5) to node [below=5pt] {$\phiN,\phiM$} (6);
\draw (6) to node [above=5pt,pos=0.35] {$\rho, \check{\rho}$} (2);
\end{scope}
\end{tz}
\end{equation}
That this equation follows from the relations of a modular structure was demonstrated in~\mbox{\cite[\autoref{PII_locality_epsilon_dagger}]{PaperII}.}

We already understand how the other components in this diagram act on internal string diagrams, so to understand the general action of $\epsilon ^\dag$ we need only calculate how it acts on the 2\-sphere. Using the semisimplicity property established in \autoref{cor:modissemisimple}, we postulate the following completely-general form for this action, where the $c_i$ are constants to be determined:
\mediumbordisms
\begin{equation}
\begin{tz}
    \node [Cup] at (0,0) {};
    \node [Cap, bot] at (0,0) {};
\end{tz}
\gap
\xmapsto{\textstyle \isd(\epsilon^\dag)}
\gap
{\color{red} \sum_i c_i}\,
\begin{tz}
    \node (A) [Cap, bot] at (0,0) {};
    \node (B) [Pants, bot, anchor=belt] at (A) {};
    \node (C) [Copants, bot, anchor=leftleg] at (B.leftleg) {};
    \node [Cup] at (C.belt) {};
    \begin{scope}[curvein]
        \draw [red strand] (B.leftleg)
            to [out=up,in=up,looseness=1.7]
                node [right=-2pt,pos=0.85] {$\scriptstyle i$}
                node [pos=0.9, rotate=20] {$\arrownode$}
                (B.rightleg)
            to [out=down,in=down, looseness=1.7] (B.leftleg);
    \end{scope}
\end{tz}
\end{equation}

\noindent
Equation~\eqref{eq:epsilonsimplification} then gives the following action for $\epsilon ^\dag$ on the cylinder:
\begin{equation}
\label{eq:generalepsilonop}
\begin{tz}
    \node (A) [Cyl, tall, bot, top] at (0,0) {};
    \begin{scope}[curvein]
        \node (top) at (A.top) [above=0.2cm] {};
        \node (bot) at (A.bot) [below=0.25cm] {};
        \draw (top.south) to (bot.north);
    \end{scope}
\end{tz}
\gap\xmapsto{\textstyle \isd(\epsilon ^\dag)}\gap
{\color{red} \sum_i c_i}\,
\begin{tz}
    \node (A) [Pants, bot, top] at (0,0) {};
    \node (B) [Copants, bot, anchor=leftleg] at (A.leftleg) {};
    \begin{scope}[curvein]
        \node (top) at (A.belt) [above=0.2cm] {};
        \node (bot) at (B.belt) [below=0.25cm] {};
        \draw (top.south)
            to [out=down, in=up, out looseness=1.5] (A-leftleg.in-leftthird)
            to [out=down, in=up, in looseness=1.5] (bot.north);
        \draw[red] [red] (A-leftleg.in-rightthird)
            to [out=up,in=up,looseness=1.7]
                node [right=-2pt,pos=0.85] {$\scriptstyle i$}
                node [pos=0.9, rotate=20] {$\arrownode$}
                (A-rightleg.in-leftthird)
            to [out=down,in=down, looseness=1.7] (A-leftleg.in-rightthird);
    \end{scope}
\end{tz}
\end{equation}
To fix the values of the coefficients $c_i$, we use the fact that the following composite is equal to the identity:
\smallbordisms
\begin{equation}
\begin{aligned}
\begin{tikzpicture}
\node [Cyl, top, bot, tall] at (0,0) {};
\end{tikzpicture}
\end{aligned}
\longxdoubleto{\textstyle \epsilon ^\dag}
\begin{aligned}
\begin{tikzpicture}
\node (P) [Pants, bot, top] at (0,0) {};
\node [Copants, bot, anchor=leftleg] at (P.leftleg) {};
\selectpart[green, inner sep=1pt] {(P-leftleg)};
\end{tikzpicture}
\end{aligned}
\longxdoubleto{\textstyle \mu ^\dag}
\setlength\cupheight{0.8\cupheight}
\begin{aligned}
\begin{tikzpicture}
\node (P) [Pants, bot, top] at (0,0) {};
\node [Cup] at (P.leftleg) {};
\node (C) [Cyl, bot, anchor=top] at (P.rightleg) {};
\node (CP) [Copants, bot, anchor=rightleg] at (C.bottom) {};
\node [Cap, bot] at (CP.leftleg) {};
\end{tikzpicture}
\end{aligned}
\longxdoubleto{\textstyle \lambda, \check \lambda}
\begin{aligned}
\begin{tikzpicture}
\node [Cyl, verytall, top, bot] at (0,0) {};
\end{tikzpicture}
\end{aligned}
\end{equation}
This is a consequence of the equation $\lambda^* = {}^* \! \lambda$, which is shown to hold from the axioms of a modular structure in~\cite[\autoref{PII_rhostarstarlem}]{PaperII}. Analyzing this composite via internal string diagrams gives the following:
\mediumbordisms
\begin{equation}
\begin{aligned}
\begin{tikzpicture}
\node (C) [Cyl, top, bot, tall] at (0,0) {};
    \begin{scope}[curvein]
        \node (top) at (C.top) [above=0.2cm] {};
        \node (bot) at (C.bot) [below=0.25cm] {$i$};
        \draw (top.south) to (bot.north);
    \end{scope}
\end{tikzpicture}
\end{aligned}
\gap\xmapsto{\textstyle \isd (\epsilon ^\dag)}\gap
{\color{red} \sum_j c_j}
\begin{aligned}
\begin{tikzpicture}
    \node (A) [Pants, bot, top] at (0,0) {};
    \node (B) [Copants, bot, anchor=leftleg] at (A.leftleg) {};
    \begin{scope}[curvein]
        \node (top) at (A.belt) [above=0.2cm] {};
        \node (bot) at (B.belt) [below=0.25cm] {$i$};
        \draw (top.south)
            to [out=down, in=up, out looseness=1.5] (A-leftleg.in-leftthird)
            to [out=down, in=up, in looseness=1.5] (bot.north);
        \draw[red] [arrow data={0.45}{>}, red] (A-leftleg.in-rightthird)
            to [out=up,in=up,looseness=1.7]
                node [right=-2pt,pos=0.85] {$\scriptstyle j$}
                (A-rightleg.in-leftthird)
            to [out=down,in=down, looseness=1.7] (A-leftleg.in-rightthird);
    \end{scope}
\selectpart[green, inner sep=1pt] {(A-leftleg)};
\end{tikzpicture}
\end{aligned}
\gap\xmapsto{\textstyle \isd (\mu ^\dag)}\gap
{\red \frac{c_i p}{d_i}}
\setlength\cupheight{0.8\cupheight}
\begin{aligned}
\begin{tikzpicture}
\node (P) [Pants, bot, top] at (0,0) {};
\node [Cup] at (P.leftleg) {};
\node (C) [Cyl, bot, anchor=top] at (P.rightleg) {};
\node (CP) [Copants, bot, anchor=rightleg] at (C.bottom) {};
\node (CAP) [Cap, bot] at (CP.leftleg) {};
\begin{scope}[curvein]
    \node (top) at (P.belt) [above=0.2cm] {};
    \node (bot) at (CP.belt) [below=0.25cm] {$i$};
    \draw (top.south)
        to [out=down, in=up, out looseness=1.5] (P-leftleg.in-leftthird)
        to [out=down, in=down, looseness=4] (P-leftleg.in-rightthird)
        to [out=up, in=up, looseness=2] (P-rightleg.in-leftthird)
        to (C-bottom.in-leftthird)
        to [out=down, in=down, looseness=2] (CAP-center.in-rightthird)
        to [out=up, in=up, looseness=3] (CAP-center.in-leftthird)
        to [out=down, in=up, in looseness=1.5] (bot.north);
\end{scope}
\end{tikzpicture}
\end{aligned}
\gap\xmapsto{\stackrel{\textstyle \isd (\lambda)}{\textstyle \isd (\check \lambda)}}\gap
{\red \frac{c_i p}{d_i}}
\begin{aligned}
\begin{tikzpicture}
\node (C) [Cyl, verytall, top, bot] at (0,0) {};
\begin{scope}[curvein]
    \node (top) at (C.top) [above=0.2cm] {};
    \node (bot) at (C.bottom) [below=0.25cm] {$i$};
    \draw (top.south) to (bot.north);
\end{scope}
\end{tikzpicture}
\end{aligned}
\end{equation}
We conclude that $c_i = d_i/p$.
\end{proof}

\begin{proposition}
\label{thm:etadagaction}
For a modular structure in \twovect, the generator $\eta ^\dag$ acts in the following way on internal string diagrams:
\mediumbordisms
$$\begin{tz}
        \node[Pants, bot, belt scale=1.5] (A) at (0,0) {};
        \node[Copants, bot, anchor=belt, belt scale=1.5, top] (B) at (A.belt) {}; 
        \node (f) [tiny label] at ([yshift=5pt] B.belt) {$f$};
        \begin{scope}[internal string scope]
                \node (i) at ([yshift=\toff] B.leftleg) [above] {$k$};
                \node (j) at ([yshift=\toff] B.rightleg) [above] {$l$};
                \node (i2) at ([yshift=-\boff] A.leftleg) [below] {$i$};
                \node (j2) at ([yshift=-\boff] A.rightleg) [below] {$j$};
                \draw (i.south)
                        to[out=down, in=135] (f.center)
                        to[out=-135, in=up] (i2.north);
                \draw (j.south)
                        to[out=down, in=45] (f.center)
                        to[out=-45, in=up] (j2.north);
        \end{scope}
\end{tz}
\gap
\longxmapsto{\textstyle \isd (\eta ^\dag)}{40pt}
\gap
\frac{p \delta _{i,k} \delta _{j,l}}{d_i d_j}
\begin{aligned}
\begin{tikzpicture}
\node (f) [tiny label] at ([yshift=5pt] B.belt) {$f$};
\draw [red strand] (f.center)
        to [out=55, in=up, out looseness=3] +(10pt,0pt)
        to [out=down, in=-55, in looseness=3] (f.center);
\draw [red strand] (f.center)
        to [out=125, in=up, out looseness=3] +(-10pt,0pt)
        to [out=down, in=-125, in looseness=3] (f.center);
\end{tikzpicture}
\end{aligned}
 \begin{tz}
        \node[Cyl, tall, top, bot] (A) at (0,0) {};
        \node[Cyl, tall, top, bot] (B) at (\cobgap + \cobwidth, 0) {};
        \begin{scope}[internal string scope]
                \draw ([yshift=\toff] A.top) node[above]{$k$} -- ([yshift=-\boff] A.bot) node[below]{$i$};
                \draw ([yshift=\toff] B.top) node[above]{$l$} -- ([yshift=-\boff] B.bot) node[below]{$j$};
        \end{scope}
\end{tz}
$$
\end{proposition}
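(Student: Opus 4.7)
The plan is to follow the three-step strategy used in the proof of \autoref{thm:epsilondagaction}: reduce to simple boundary labels using semisimplicity, write down a general ansatz for the action of $\isd(\eta^\dag)$ via the internal string diagram formalism, and then pin down the scalar coefficient via a zigzag identity for the adjunction $\tikztinycopants \dashv \tikztinypants$.

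For the first step, by equation~\eqref{eq:simpleobjectsonly} we may assume without loss of generality that the boundary labels $i,j,k,l$ are simple objects of the ribbon category $\cat S$ associated to the circle. Under this assumption the target space of $\isd(\eta^\dag)$ (the internal string space on two parallel cylinders with simple labels $k,l$ on top and $i,j$ on bottom) is canonically $\Hom_\cat{S}(k,i)\otimes\Hom_\cat{S}(l,j)$, which by Schur's lemma vanishes unless $i\cong k$ and $j\cong l$; this forces the factors $\delta_{i,k}\delta_{j,l}$. When $i=k$ and $j=l$, the target is one-dimensional, spanned by $\id_i\otimes\id_j$, while the source internal string space is parameterized (via \autoref{genericformsnake}) by a morphism $f\in\Hom_\cat{S}(i\otimes j,i\otimes j)$. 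Hence $\isd(\eta^\dag)$ is given by a scalar $c_{i,j}$ times a linear functional on $\Hom_\cat{S}(i\otimes j,i\otimes j)$, and by naturality in the $i$- and $j$-strands (a consequence of being a component of a bimodule morphism) this functional must be the pivotal categorical trace on both strands, which is precisely the loop diagram on the right-hand side of the proposition.

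To pin down $c_{i,j}$ one uses the zigzag identity for the adjunction $\tikztinycopants\dashv\tikztinypants$ built into the modular presentation via \eqref{adj_eta_epsilon_monoid}, which takes the form
\[
\id_{\tikztinypants} \;=\; (\id_{\tikztinypants} \ast \eta^\dag)\circ(\epsilon^\dag \ast \id_{\tikztinypants}).
\]
Applying $\isd$ and evaluating on an internal string diagram on a pants with simple boundary labels, the action of $\isd(\epsilon^\dag)$ from \autoref{thm:epsilondagaction} introduces a tube weighted by $\tfrac{1}{p}\sum_n d_n$ over the simple objects $n$ labeling the newly introduced middle circle; the subsequent action of $\isd(\eta^\dag)$ then collapses this tube via the partial trace formula, contributing a factor $c_{i,j}$ and forcing $n=i$ (or $n=j$, depending on orientation) via the Kronecker deltas. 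Matching the result to the identity gives $c_{i,j}\cdot\tfrac{d_i d_j}{p}=1$, and hence $c_{i,j}= p/(d_id_j)$.

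The main obstacle is the careful bookkeeping in this zigzag computation: one must decompose the composite bimodule $\isd(\tikztinypants\circ\tikztinycopants\circ\tikztinypants)$ via \eqref{eq:isdcomposition}, track all internal boundary labelings, slide morphisms across internal boundary circles using \eqref{eq:pullthrough}, and verify that the sum over simples contributed by $\isd(\epsilon^\dag)$ collapses under the Kronecker deltas of $\isd(\eta^\dag)$ to produce precisely $p/(d_id_j)$ rather than some other combination of quantum dimensions.
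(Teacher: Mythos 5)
Your reduction to simple labels and the Schur-lemma derivation of the factors $\delta_{i,k}\delta_{j,l}$ are fine, but the step ``by naturality \dots this functional must be the pivotal categorical trace on both strands'' is a genuine gap, and the rest of the argument leans on it. The map $\isd(\eta^\dag)$ is a 2\-morphism of bimodules, so the only constraint beyond its components at tuples of simples is naturality in the four boundary labels; in a semisimple category every morphism decomposes into scalar matrix entries between simples, so this naturality amounts to nothing more than additivity and places \emph{no} constraint on the component at a fixed tuple of simples. That component is an arbitrary linear functional on $\Hom_{\cat S}(i\otimes j,\, i\otimes j)$, i.e.\ of the form $f\mapsto \mathrm{tr}(g_{i,j}\,f)$ for an arbitrary $g_{i,j}\in\End(i\otimes j)$ --- and $\End(i\otimes j)$ is typically of dimension $\sum_m (N_{ij}^m)^2>1$, so the functional is not determined up to scalar. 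Your zigzag computation, as described, tests the unknown only against the one-parameter family of diagrams produced by $\isd(\epsilon^\dag)$ under the assumption that the answer is a multiple of the double trace; it therefore fixes only the coefficient of $\id_{i\otimes j}$ in $g_{i,j}$ and does not rule out the other components.

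The argument can be repaired: since $\eta^\dag$ is the counit of the adjunction $\tikztinycopants\dashv\tikztinypants$ whose unit $\epsilon^\dag$ has known action (\autoref{thm:epsilondagaction}), the counit is \emph{uniquely} determined by the zigzag relations \eqref{adj_eta_epsilon_monoid}, so it suffices to verify directly that the proposed formula satisfies both zigzags on arbitrary elementary diagrams (not just on those coming from $\isd(\epsilon^\dag)$ applied to identity strands); that verification is more involved than the bookkeeping you describe, because the boundary labels of the sub-cobordism to which $\eta^\dag$ is applied are non-simple and must be decomposed via \eqref{eq:simpleobjectsonly}. The paper takes a different and shorter route entirely: it uses a decomposition of $\eta^\dag$ (established in~\cite{PaperII}) as the composite of $\phiM$, then $\mu^\dag$, then the unitors $\check\rho$ and $\lambda$, and simply reads off the action from the already-computed actions of those generators --- the factor $p$ comes from $\mu^\dag$ (\autoref{thm:mndaginternalstring}) and the $1/(d_id_j)$ from projecting onto the unit component. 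No ansatz is needed there.
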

\begin{proof}
This follows from the action on internal string diagrams of the following decomposition of $\eta ^\dag$:
\smallbordisms
\begin{equation}
\begin{aligned}
\begin{tikzpicture}
\node [Pants] (P) at (0,0) {};
\node [Copants, anchor=belt, top] at (P.belt) {};
\end{tikzpicture}
\end{aligned}
\longxdoubleto{\textstyle\phiM}
\begin{aligned}
\begin{tikzpicture}
\node (C) [Copants] at (0,0) {};
\node (P) [Pants, anchor=leftleg, top] at (C.rightleg) {};
\node [Cyl, anchor=bottom, top] at (C.leftleg) {};
\node [Cyl, anchor=top] at (P.rightleg) {};
\selectpart[green, inner sep=1pt]{(P-leftleg)}
\end{tikzpicture}
\end{aligned}
\longxdoubleto{\textstyle \mu ^\dag}
\begin{aligned}
\begin{tikzpicture}
\node (C) [Copants] at (0,0) {};
\node (P) [Pants, anchor=leftleg, top] at ([yshift=1.5*\cobheight] C.rightleg) {};
\node (D) [Cyl, anchor=bottom, top, height scale=2.5] at (C.leftleg) {};
\node [Cyl, anchor=top, height scale=2.5] at (P.rightleg) {};
\node (A) [Cap] at (C.rightleg) {};
\node (B) [Cup] at (P.leftleg) {};
\selectpart[green]{(P-leftleg) (P-belt) (B) (P-rightleg)}
\selectpart[red]{(D-bottom) (C-belt) (A)}
\end{tikzpicture}
\end{aligned}
\longxdoubleto{{\color{red}\check\rho}, {\color{green}\lambda}}
\begin{aligned}
\begin{tikzpicture}
\node [Cyl, anchor=bottom, top, height scale=2.5] at (0,0) {};
\node [Cyl, anchor=bottom, top, height scale=2.5] at (\cobwidth+\cobgap,0) {};
\end{tikzpicture}
\end{aligned}
\end{equation}
This decomposition follows from~\cite[\autoref{PII_etadaglem}]{PaperII}.
\end{proof}

\subsection{Modularity}
\label{sec:mod}

In this section, we show that the ribbon category associated to a modular structure is a modular tensor category.  We begin by defining the following composite 2\-morphisms.
\smallbordisms
\begin{defn}[See~\cite{PaperII}, Definition~\ref{PII_defcomposites}]
\label{defn_II_III}
For a modular structure, we define the following composite 2\-morphisms:
\begin{align} \label{defn_of_II}
\II \quad&:=\quad
\begin{aligned}  \begin{tikzpicture}[xscale=2]
\node (1) at (0,0)
{
$\begin{tikzpicture}
        \node[Copants, top] (A) at (0,0) {};
        \node[Pants, anchor=belt] (B) at (A.belt) {};
\end{tikzpicture}$
};
\node (2) at (1,0)
{
$\begin{tikzpicture}
        \node[Pants, top] (A) at (0,0) {};
        \node[Cyl, anchor=top] (B) at (A.leftleg) {};
        \node[Copants, anchor=leftleg] (C) at (A.rightleg) {};
        \node[Cyl, anchor=bottom, top] (D) at (C.rightleg) {};
        \selectpart[green, inner sep=1pt] {(A-rightleg)};
\end{tikzpicture}$
};
\node (3) at (2,0)
{
$\begin{tikzpicture}
        \node[Pants, top] (A) at (0,0) {};
        \node[Cyl, anchor=top] (B) at (A.leftleg) {};
        \node[Copants, anchor=leftleg] (C) at (A.rightleg) {};
        \node[Cyl, anchor=bottom, top] (D) at (C.rightleg) {};
\end{tikzpicture}$
};
\node (4) at (3,0)
{
$\begin{tikzpicture}
        \node[Copants, top] (A) at (0,0) {};
        \node[Pants, anchor=belt] (B) at (A.belt) {};
\end{tikzpicture}$
};
\begin{scope}[double arrow scope]
    \draw (1) --  node[above]{$\phiN^\inv$} (2);
    \draw (2) --  node[above]{$\theta$} (3);
    \draw (3) --  node[above]{$\phiN$} (4);
\end{scope} \end{tikzpicture} \end{aligned}
\\
\label{defn_of_A}
A \quad&:=\quad
\begin{tz}[xscale=2, yscale=2]
\node (1) at (0,0)
{
$\begin{tikzpicture}
        \node[Pants, top] (A) at (0,0) {};
        \node[Copants, anchor=leftleg] (B) at (A.leftleg) {};
        \selectpart[green, inner sep=1pt] {(B-belt)};
\end{tikzpicture}$
};
\node [inner sep=0pt] (2) at (1,0)
{
$\begin{tikzpicture}
        \node[Pants, top] (A) at (0,0) {};
        \node[Copants, anchor=leftleg] (B) at (A.leftleg) {};
        \node[Pants, anchor=belt] (C) at (B.belt) {};
        \node[Copants, anchor=leftleg] (D) at (C.leftleg) {};
    \selectpart[green] {(A-leftleg) (A-rightleg) (C-leftleg) (C-rightleg)};
\end{tikzpicture}$
};
\node [inner sep=0pt] (3) at (2,0)
{
$\begin{tikzpicture}
        \node[Pants, top] (A) at (0,0) {};
        \node[Copants, anchor=leftleg] (B) at (A.leftleg) {};
        \node[Pants, anchor=belt] (C) at (B.belt) {};
        \node[Copants, anchor=leftleg] (D) at (C.leftleg) {};
        \selectpart[green] {(A-belt) (A-leftleg) (A-rightleg) (B-belt)};
\end{tikzpicture}$
};
\node (4) at (3,0)
{
$\begin{tikzpicture}
        \node[Pants, top] (A) at (0,0) {};
        \node[Copants, anchor=leftleg] (B) at (A.leftleg) {};
\end{tikzpicture}$
};
\begin{scope}[double arrow scope]
    \draw (1) -- node[above] {$\epsilon^\dagger$} (2);
    \draw (2) -- node[above] {$\II^\inv$} (3);
    \draw (3) -- node[above] {$\epsilon$} (4);
\end{scope}
\end{tz}
\\
\label{defn_of_III}
\III \quad&:=\quad
\begin{tz}[xscale=2, yscale=2]
\node (1) at (0,0)
{
$\begin{tikzpicture}
        \node[Pants, top] (A) at (0,0) {};
        \node[Copants, anchor=leftleg] (B) at (A.leftleg) {};
        \selectpart[green, inner sep=1pt] {(A-leftleg)};
\end{tikzpicture}$
};
\node (2) at (1,0)
{
$\begin{tikzpicture}
        \node[Pants, top] (A) at (0,0) {};
        \node[Copants, anchor=leftleg] (B) at (A.leftleg) {};
\end{tikzpicture}$
};
\node (3) at (2,0)
{
$\begin{tikzpicture}
        \node[Pants, top] (A) at (0,0) {};
        \node[Copants, anchor=leftleg] (B) at (A.leftleg) {};
        \selectpart[green, inner sep=1pt] {(A-leftleg)};
\end{tikzpicture}$
};
\node (4) at (3,0)
{
$\begin{tikzpicture}
        \node[Pants, top] (A) at (0,0) {};
        \node[Copants, anchor=leftleg] (B) at (A.leftleg) {};
\end{tikzpicture}$
};
\begin{scope}[double arrow scope]
    \draw (1) -- node[above] {$\theta$} (2);
    \draw (2) -- node[above] {$A$} (3);
    \draw (3) -- node[above] {$\theta$} (4);
\end{scope}
\end{tz}
\end{align}
\end{defn}

\begin{proposition}[Proposition \ref{PII_different_mods} in~\cite{PaperII}]
For a modular structure, the composites $A$ and $\III$ are invertible.
\qed
\end{proposition}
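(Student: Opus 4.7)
The proof strategy splits the task into three parts according to the three composites named.

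First, $\II$ is invertible for trivial reasons: it is a composition $\phiN \circ \theta \circ \phiN^{\inv}$ of three manifestly invertible 2-morphisms (the Frobeniusator $\phiN$ is invertible by the ribbon-presentation axioms, and $\theta$ is invertible because it comes with a generator $\theta^\inv$ and invertibility relations in the balanced presentation). An explicit inverse is $\II^\inv = \phiN \circ \theta^\inv \circ \phiN^\inv$.

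Second, $\III$ is invertible as soon as $A$ is, since $\III = \theta \circ A \circ \theta$ is a composition of $A$ with two invertible twists; the inverse is $\theta^\inv \circ A^\inv \circ \theta^\inv$. So the only real content of the proposition is the invertibility of the composite $A$, whose outer layers $\epsilon^\dag$ and $\epsilon$ are adjunction data rather than invertible 2-morphisms.

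To attack $A$, my plan is to pass to the internal string diagram calculus of \autoref{sec:internal}, unpack the action of each piece of $A$ using the formulas from \autoref{thm:alrinternalstring}, \autoref{thm:eemninternalstring}, \autoref{thm:btinternalstring}, \autoref{thm:epsilondagaction}, and \eqref{eq:phiaction}. Applied to an arbitrary element of $\isd(\tikztinypants \circ \tikztinycopants)^A_B$ (which by \autoref{genericformsnake}-type arguments may be assumed to be in elementary form with boundary circles labeled by simple objects $i,j$), the composite $A$ should act by an explicit linear combination of string diagrams summing over a simple index, with coefficients built from quantum dimensions and the twist eigenvalues $\theta_i$. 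Modulo bookkeeping, this action is precisely the one that expresses passage through a handle, i.e.\ a genus-one operation closely related to the $S$-matrix.

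The main obstacle is then producing a candidate inverse and checking it algebraically. The natural candidate is $A' := \epsilon \circ \II \circ \epsilon^\dag$, obtained by swapping $\II^\inv$ for $\II$; the verification that $A \circ A' = \id = A' \circ A$ requires the modularity axiom \eqref{MOD}, which is exactly the relation that equates the $(\theta, \theta^\inv)$-twisted $\epsilon^\dag$-$\epsilon$ sandwich to the manifestly invertible (up to the scalar $p$ of \autoref{thm:mndaginternalstring}) composite $\mu \circ \mu^\dag$, together with the pivotality axiom \eqref{piv_on_sphere} and the adjunction relations. Since this argument is substantially algebraic and parallels the combinatorial derivation in the presentation-only setting, one may alternatively appeal directly to \cite[Proposition~\ref{PII_different_mods}]{PaperII}, which establishes the analogous statement at the level of $\bicat F(\M)$ itself; any linear representation $Z$ then inherits invertibility of $Z(A)$ and $Z(\III)$ by functoriality. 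This reference-based route avoids re-deriving the string-diagram calculation and is likely the cleanest way to present the proof in the current paper.
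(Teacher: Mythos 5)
Your proposal is correct, and your recommended final route is exactly what the paper does: the proposition carries no proof here beyond the attribution to Proposition \ref{PII_different_mods} of \cite{PaperII}, where invertibility of $A$ and $\III$ is derived from the relations of the modular presentation inside $\bicat F(\M)$ itself, so every modular structure inherits it by functoriality. Your preliminary reductions (that $\II$ is trivially invertible as a composite of invertibles, and that $\III = \theta \circ A \circ \theta$ reduces everything to $A$) and the sketched string-diagram verification of the candidate inverse $\epsilon \circ \II \circ \epsilon^\dag$ are sound but are not needed for the present paper.
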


\setlength\obscurewidth{3pt}
\tikzset{knot/clip radius=\obscurewidth}
\tikzset{knot/clip width=0.1*\obscurewidth, knot/end tolerance=2pt}

We now investigate the actions of $\II$, $A$, and $\III$ on internal string diagrams. We write $\overline \theta$ as a shorthand for $\theta ^\inv$.
\begin{proposition}
Given a modular structure in \twovect, the composite $\II$ and its inverse have the following actions on internal string diagrams:
\normalbordisms
\begin{align}
\begin{tz}
\node (A) [Pants, anchor=belt, belt scale=1.5] at (0,0) {};
\node (B) [Copants, anchor=belt, top, belt scale=1.5] at (0,0) {};
\node (f) [tiny label] at (0,0.1) {$f$};
\strand [red strand] (f.center) to [out=-125, in=up] (A.leftleg) to +(0,-\boff);
\strand [red strand] (f.center) to [out=-55, in=up] (A.rightleg) to +(0,-\boff);
\strand [red strand] (f.center) to [out=125, in=down] (B.leftleg) to +(0,\toff);
\strand [red strand] (f.center) to [out=55, in=down] (B.rightleg) to +(0,\toff);
\end{tz}
\, &\xmapsto{\textstyle \isd(\II)} \,
\begin{tz}
\node (A) [Pants, anchor=belt, belt scale=1.7] at (0,0) {};
\node (B) [Copants, anchor=belt, top, belt scale=1.7] at (0,0) {};
\node (f) [tiny label] at (-0.45\cobwidth,0.15) {$f$};
\begin{knot}
\strand [red strand] (f.center) to [out=-125, in=up] (A.leftleg) to +(0,-\boff);
\strand [red strand] (f.center) to [out=125, in=down] (B.leftleg) to +(0,\toff);
\def\IIshift{0.3cm}
\strand [red strand] (f.center)
  to [out=55, in=left] (-0.1,0.3) node (p) {}
  to [out=right, in=left] (0.2,0) node (q) {}
  to [out=right, in=down, out looseness=0.4]
    node (t2) [pos=0.7] {}
    (B.rightleg)
  to +(0,\toff);
\strand [red strand] (f.center)
  to [out=-55, in=left] ([yshift=-\IIshift] p.center) {}
  to [out=right, in=left] ([yshift=\IIshift] q.center) {}
  to [out=right, in=up, out looseness=0.4]
    node (t1) [pos=0.8] {}
    (A.rightleg)
  to +(0,-\boff);
\flipcrossings{1}
\end{knot}
\node [tiny label, draw=red, text=red, rotate=5] at (t1.center) {$\theta$};
\node [tiny label, draw=red, text=red, rotate=-5] at (t2.center) {$\theta$};
\end{tz}
\,=\,
\begin{tz}
\node (A) [Pants, anchor=belt, belt scale=1.7] at (0,0) {};
\node (B) [Copants, anchor=belt, top, belt scale=1.7] at (0,0) {};
\node (f) [tiny label] at (0.45\cobwidth,0.15) {$f$};
\begin{knot}
\strand [red strand] (f.center) to [out=-55, in=up] (A.rightleg) to +(0,-\boff);
\strand [red strand] (f.center) to [out=55, in=down] (B.rightleg) to +(0,\toff);
\def\IIshift{0.3cm}
\strand [red strand] (f.center)
  to [out=135, in=right] (0.1,0.3) node (p) {}
  to [out=left, in=right] (-0.2,0) node (q) {}
  to [out=left, in=down, out looseness=0.4]
    node (t2) [pos=0.7] {}
    (B.leftleg)
  to +(0,\toff);
\strand [red strand] (f.center)
  to [out=-135, in=right] ([yshift=-\IIshift] p.center) {}
  to [out=left, in=right] ([yshift=\IIshift] q.center) {}
  to [out=left, in=up, out looseness=0.4]
    node (t1) [pos=0.8] {}
    (A.leftleg)
  to +(0,-\boff);
\flipcrossings{2}
\end{knot}
\node [tiny label, draw=red, text=red, rotate=5] at (t1.center) {$\theta$};
\node [tiny label, draw=red, text=red, rotate=-5] at (t2.center) {$\theta$};
\end{tz}
\\
\begin{tz}
\node (A) [Pants, anchor=belt, belt scale=1.5] at (0,0) {};
\node (B) [Copants, anchor=belt, top, belt scale=1.5] at (0,0) {};
\node (f) [tiny label] at (0,0.1) {$f$};
\strand [red strand] (f.center) to [out=-125, in=up] (A.leftleg) to +(0,-\boff);
\strand [red strand] (f.center) to [out=-55, in=up] (A.rightleg) to +(0,-\boff);
\strand [red strand] (f.center) to [out=125, in=down] (B.leftleg) to +(0,\toff);
\strand [red strand] (f.center) to [out=55, in=down] (B.rightleg) to +(0,\toff);
\end{tz}
\, &\xmapsto{\textstyle \isd(\II ^\inv)} \,
\begin{tz}
\node (A) [Pants, anchor=belt, belt scale=1.7] at (0,0) {};
\node (B) [Copants, anchor=belt, top, belt scale=1.7] at (0,0) {};
\node (f) [tiny label] at (-0.45\cobwidth,0.15) {$f$};
\begin{knot}
\strand [red strand] (f.center) to [out=-125, in=up] (A.leftleg) to +(0,-\boff);
\strand [red strand] (f.center) to [out=125, in=down] (B.leftleg) to +(0,\toff);
\def\IIshift{0.3cm}
\strand [red strand] (f.center)
  to [out=55, in=left] (-0.1,0.3) node (p) {}
  to [out=right, in=left] (0.2,0) node (q) {}
  to [out=right, in=down, out looseness=0.4]
    node (t2) [pos=0.7] {}
    (B.rightleg)
  to +(0,\toff);
\strand [red strand] (f.center)
  to [out=-55, in=left] ([yshift=-\IIshift] p.center) {}
  to [out=right, in=left] ([yshift=\IIshift] q.center) {}
  to [out=right, in=up, out looseness=0.4]
    node (t1) [pos=0.8] {}
    (A.rightleg)
  to +(0,-\boff);
\flipcrossings{2}
\end{knot}
\node [tiny label, draw=red, text=red, rotate=5] at (t1.center) {$\overline\theta$};
\node [tiny label, draw=red, text=red, rotate=-5] at (t2.center) {$\overline\theta$};
\end{tz}
\,=\,
\begin{tz}
\node (A) [Pants, anchor=belt, belt scale=1.7] at (0,0) {};
\node (B) [Copants, anchor=belt, top, belt scale=1.7] at (0,0) {};
\node (f) [tiny label] at (0.45\cobwidth,0.15) {$f$};
\begin{knot}
\strand [red strand] (f.center) to [out=-55, in=up] (A.rightleg) to +(0,-\boff);
\strand [red strand] (f.center) to [out=55, in=down] (B.rightleg) to +(0,\toff);
\def\IIshift{0.3cm}
\strand [red strand] (f.center)
  to [out=135, in=right] (0.1,0.3) node (p) {}
  to [out=left, in=right] (-0.2,0) node (q) {}
  to [out=left, in=down, out looseness=0.4]
    node (t2) [pos=0.7] {}
    (B.leftleg)
  to +(0,\toff);
\strand [red strand] (f.center)
  to [out=-135, in=right] ([yshift=-\IIshift] p.center) {}
  to [out=left, in=right] ([yshift=\IIshift] q.center) {}
  to [out=left, in=up, out looseness=0.4]
    node (t1) [pos=0.8] {}
    (A.leftleg)
  to +(0,-\boff);
\flipcrossings{1}
\end{knot}
\node [tiny label, draw=red, text=red, rotate=-5] at (t1.center) {$\overline\theta$};
\node [tiny label, draw=red, text=red, rotate=5] at (t2.center) {$\overline\theta$};
\end{tz}
\end{align}
\end{proposition}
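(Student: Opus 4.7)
The strategy is to compute $\isd(\II) = \isd(\phiN) \circ \isd(\theta) \circ \isd(\phiN^{-1})$ directly, by chaining the three known actions on internal string diagrams: the action of $\phiN^{-1}$ and $\phiN$ from equation~\eqref{eq:phiaction}, and the action of $\theta$ from equation~\eqref{braiding-and-twist-image}. We start with the displayed elementary-form diagram on the copants-on-pants surface, apply $\phiN^{-1}$ to pass to the $\tikztinypants \circ (\tikztinycyl \sqcup \tikztinycopants) \circ (\tikztinycyl \sqcup \tikztinypants)$ form with the vertex $f$ sitting in the upper pants and the two right-leg strands running through the connecting cylinder, apply the twist generator to the cylinder carrying the right leg's outgoing strand, and then apply $\phiN$ to collapse back to copants-on-pants. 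The inverse case is identical after replacing $\theta$ by $\theta^{-1}$, using $\II^{-1} = \phiN \circ \theta^{-1} \circ \phiN^{-1}$.

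The key geometric point is that the right-leg cylinder through which the twist is inserted is exactly the cylinder along which $\phiN$ produces its characteristic ``wrap'': after applying $\phiN$, the internal strand that originally ran through that cylinder is forced to cross over (or under) the other right-side strand, producing the double-braiding pattern visible in the target. Thus the twist label, introduced on a single strand in the middle of the composite, ends up attached to one of the two crossing strands in the final picture. Tracking the handedness of the two $\phiN$ wrappings shows that both crossings are of the same type and the twist is situated at the position indicated.

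The equality between the left-hand form (twists on the right pair of strands) and the right-hand form (twists on the left pair) is a consequence of the ribbon-category identity
\begin{equation*}
(\theta_A \otimes \theta_B) \circ \beta_{B,A} \circ \beta_{A,B} \;=\; \beta_{B,A} \circ \beta_{A,B} \circ (\theta_A \otimes \theta_B),
\end{equation*}
which is a restatement of naturality of the twist combined with $\theta_{A\otimes B} = (\theta_A \otimes \theta_B) \circ \beta_{B,A} \circ \beta_{A,B}$ (equation~\eqref{eq:balanced}). This lets the pair of twist labels be slid from one side of the double braiding to the other through the internal string diagram.

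The main obstacle is bookkeeping: $\phiN$ and $\phiN^{-1}$ each produce a geometrically intricate picture, and one has to show carefully that, after simplification using the equivalence relation~\eqref{eq:pullthrough} and the ribbon moves available in the associated ribbon category (\autoref{thm:repofribisribbon}), the strand that acquires the $\theta$-label in the intermediate step is exactly the one that participates in the double braiding in the target. Once the handedness of the two crossings introduced by $\phiN$ is checked against the direction of the twist (and against the inverse twist in the $\II^{-1}$ calculation), the result is immediate from the string-diagram calculus.
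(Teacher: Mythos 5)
Your proposal is correct and is essentially the paper's own proof, which consists precisely of the observation that by \autoref{defn_II_III} we have $\II = \phiN \circ \theta \circ \phiN^\inv$, so that $\isd(\II)$ is the composite of the three actions already computed (and likewise for $\II^\inv$ with $\theta^\inv$ in place of $\theta$). One small caution on the final equality of the two target forms: the commutation identity you display, $(\theta_A \otimes \theta_B)\circ\beta_{B,A}\circ\beta_{A,B} = \beta_{B,A}\circ\beta_{A,B}\circ(\theta_A \otimes \theta_B)$, keeps the twisted double braiding on the \emph{same} pair of strands and so does not by itself move it from the right-hand pair to the left-hand pair; what does the job is recognizing both sides as an insertion of $\theta_{A\otimes B}$ across the cut separating the left holes from the right holes, sliding it through the vertex $f$ by naturality of the twist, and then re-expanding on the other side via \eqref{eq:balanced} --- which is presumably what you intend by ``combined with naturality,'' so the ingredients are all present.
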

\begin{proof}
This follows from \autoref{defn_II_III} and the actions of $\phiN$, $\theta$, and $\phiN ^\inv$ already established.
\end{proof}

\begin{proposition}
\label{lem:A_action}
Given a modular structure in \twovect, the composite $A$ acts in the following way on internal string diagrams:
\normalbordisms
\begin{equation}
\label{Aaction}
\def\quad{\hspace{5pt}}
\begin{tz}
\node (A) [Pants, top, belt scale=1.5] at (0,0) {};
\node (B) [Copants, anchor=leftleg, belt scale=1.5] at (A.leftleg) {};
\begin{scope}[internal string scope]
\draw ([yshift=\toff] A-belt.in-leftthird)
    to +(0,-\boff)
    to [out=down, in=up] (A.leftleg)
    to [out=down, in=up] (B-belt.in-leftthird)
    to +(0,-\boff);
\draw ([yshift=\toff] A-belt.in-rightthird)
    to +(0,-\boff)
    to [out=down, in=up] (A.rightleg)
    to [out=down, in=up] (B-belt.in-rightthird)
    to +(0,-\boff);
\end{scope}
\end{tz}
\quad\xmapsto{\isd(A)}\quad
{\color{red} \frac 1 p}
\begin{tz}
\node (A) [Pants, top, belt scale=1.5] at (0,0) {};
\node (B) [Copants, anchor=leftleg, belt scale=1.5] at (A.leftleg) {};
\begin{knot}
\strand [red strand] ([yshift=\toff] A-belt.in-leftthird)
    to +(0,-\toff)
    to [out=down, in=up] (A-rightleg.in-leftquarter)
    to [out=down, in=up] node [pos=0.7] (t1) {} (B-belt.in-leftthird)
    to +(0,-\boff);
\strand [red strand] ([yshift=\toff] A-belt.in-rightthird)
    to +(0,-\toff)
    to [out=down, in=up] (A-rightleg.in-rightquarter)
    to [out=down, in=up] (B-belt.in-rightthird)
    to +(0,-\boff);
\strand [red strand] (A.leftleg) to [out=down, in=down, looseness=1.5] (A.rightleg) to [out=up, in=up, looseness=1.5] node (t2) [pos=0.9] {} (A.leftleg);
\flipcrossings{2}
\end{knot}
\node [tiny label, draw=red, text=red, rotate=-20] at (t1.center) {$\overline\theta$};
\node [tiny label, draw=red, text=red, rotate=-10] at (t2.center) {$\overline\theta$};
\end{tz}
\quad=\quad
{\color{red} \frac 1 p}
\begin{tz}
\node (A) [Pants, top, belt scale=1.5] at (0,0) {};
\node (B) [Copants, anchor=leftleg, belt scale=1.5] at (A.leftleg) {};
\strand [red strand] ([yshift=\toff] A-belt.in-leftthird)
    to +(0,-\toff)
    to [out=down, in=up] (A-leftleg.in-leftthird)
    to [out=down, in=up] node [pos=0.7] (t1) {} (B-belt.in-leftthird)
    to +(0,-\boff);
\strand [red strand] ([yshift=\toff] A-belt.in-rightthird)
    to +(0,-\toff)
    to [out=down, in=up] (A-rightleg.in-rightthird)
    to [out=down, in=up] (B-belt.in-rightthird)
    to +(0,-\boff);
\strand [red strand] (A-leftleg.in-rightthird) to [out=down, in=down, looseness=1.8] (A-rightleg.in-leftthird) to [out=up, in=up, looseness=1.8] node (t2) [pos=0.9] {} (A-leftleg.in-rightthird);
\node [tiny label, draw=red, text=red, rotate=-10] at (t2.center) {$\overline\theta$};
\end{tz}
\end{equation}
\end{proposition}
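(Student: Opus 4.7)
The plan is to unfold the definition
\[
A = \epsilon \circ \II^{\inv} \circ \epsilon^\dag
\]
from \eqref{defn_of_A} and compute $\isd(A)$ as the composite of the three already-understood actions of its constituent 2\-morphisms. By semisimplicity (\autoref{cor:modissemisimple}), the reduction to simple internal labels in \eqref{eq:simpleobjectsonly}, and linearity of $\isd$, it suffices to verify the equation on an internal string diagram of the shape shown on the left of \eqref{Aaction}, where both strands carry simple labels and pass straight through the surface without braiding.

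First, I would apply $\isd(\epsilon^\dag)$ using \autoref{thm:epsilondagaction}. This produces the scalar factor $\tfrac{1}{p}$, inserts a pants-copants region (the new handle) at the appropriate place of the pants-copants source surface, and introduces an unlabeled closed red loop encircling that handle — recalling our convention that the unlabeled loop denotes $\sum_i d_i$ times the same picture with the loop carrying label $i$. The two original simple-labeled strands are carried through this expansion unchanged. Next, I would apply $\isd(\II^{\inv})$ using the explicit $\II^{\inv}$ formula established immediately before \autoref{lem:A_action}: this inserts two $\overline\theta$ decorations at the two crossing points created by the $\II^{\inv}$ move, on strands passing through the new pants-copants region. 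Finally, I would apply $\isd(\epsilon)$ using \autoref{thm:eemninternalstring}, which collapses the added pants-copants region back to a cylinder, fusing the closed loop into the interior of the original pants-copants surface; the overall scalar $\tfrac{1}{p}$ is retained.

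The remaining step is to use isotopy of internal string diagrams inside the surface together with the ribbon identity (\autoref{def:ribboncategory}), and the fact that an unlabeled closed loop encodes the simple-object sum, to rewrite the result in the form of the first picture on the right of \eqref{Aaction}. The second equality in \eqref{Aaction} will then follow by a further isotopy that drags the decorated loop through the legs of the pants, exchanging the apparent routing of the two strands for an equivalent one in which the strands go straight through and the loop carries a single $\overline\theta$.

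The main obstacle will be the bookkeeping in this final step: tracking how the two $\overline\theta$ decorations deposited by $\II^{\inv}$ combine with the closed loop — one acts as a twist on a closed strand, which can be absorbed into the strand, while the other remains as the decoration on the loop in the final picture — and carefully drawing the topological isotopies inside the genus-zero surface that identify the two presentations on the right of \eqref{Aaction}. This essentially amounts to an application of the tortile/pull-through identity \eqref{eq:balanced} and the naturality of the braiding, but the geometric justification that these moves are realized by isotopies of internal string diagrams within the surface (rather than merely by string-diagram equalities in the ambient category) requires care.
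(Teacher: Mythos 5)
Your decomposition and the first three steps coincide with the paper's proof: unfold $A = \epsilon \circ \II^{\inv} \circ \epsilon^\dagger$ via \eqref{defn_of_A}, apply \autoref{thm:epsilondagaction} (producing the factor $\tfrac{1}{p}$ and the unlabeled closed loop around the inserted handle), then the computed action of $\II^{\inv}$, then $\isd(\epsilon)$ from \autoref{thm:eemninternalstring}. Carried out carefully, this yields exactly the first right-hand form of \eqref{Aaction}, with both strands routed through one leg and two $\overline\theta$ decorations.

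The gap is in your final step, which is needed for the second equality in \eqref{Aaction}. Moving the open strand from the right leg to the left leg requires dragging it \emph{through} the closed loop encircling the handle. This is not an isotopy of internal string diagrams, and it does not follow from the balancing axiom \eqref{eq:balanced}, the ribbon condition \eqref{eq:ribboncondition}, or naturality of the braiding: for a loop labeled by a fixed object the two routings are linked differently and are genuinely unequal. The identity holds only because the unlabeled loop denotes the sum over simple objects weighted by quantum dimensions, and the precise statement needed is the cloaking property, \autoref{lem:cloaking}, which the paper invokes here (with a forward reference) and proves separately by decomposing $S_i \otimes S_j$ into simples and rebalancing the coefficients by $d_j/d_k$. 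You gesture at the simple-object sum, but without isolating and proving (or citing) that lemma, the passage to the second form --- and in particular the disappearance of the $\overline\theta$ sitting on the \emph{open} strand, which your bookkeeping misattributes to a closed strand --- is unjustified.
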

\begin{proof}
We perform the following calculation:
\normalbordisms
\begin{equation}
\label{A_action_proof}
\def\quad{\hspace{5pt}}
\begin{tz}
\node (A) [Pants, top, belt scale=1.5] at (0,0) {};
\node (B) [Copants, anchor=leftleg, belt scale=1.5] at (A.leftleg) {};
\begin{scope}[internal string scope]
\draw ([yshift=\toff] A-belt.in-leftthird)
    to +(0,-\toff)
    to [out=down, in=up] (A.leftleg)
    to [out=down, in=up] (B-belt.in-leftthird)
    to +(0,-\boff);
\draw ([yshift=\toff] A-belt.in-rightthird)
    to +(0,-\toff)
    to [out=down, in=up] (A.rightleg)
    to [out=down, in=up] (B-belt.in-rightthird)
    to +(0,-\boff);
\end{scope}
\selectpart[green, inner sep=1pt] {(B-belt)};
\end{tz}
\quad\xmapsto{\isd(\epsilon ^\dag)}\quad
{\color{red} \frac 1 p}\,
\begin{tz}
\node (A) [Pants, top, belt scale=1.5] at (0,0) {};
\node (B) [Copants, anchor=leftleg, belt scale=1.5] at (A.leftleg) {};
\node (C) [Pants, top, anchor=belt, right leg scale=1.5, left leg scale=0.8, belt scale=1.5] at (B.belt) {};
\node (D) [Copants, anchor=leftleg, belt scale=1.5, right leg scale=1.5, left leg scale=0.8] at (C.leftleg) {};
\begin{scope}[internal string scope]
\draw ([yshift=\toff] A-belt.in-leftthird)
    to +(0,-\toff)
    to [out=down, in=up] (A.leftleg)
    to [out=down, in=up, in looseness=0.8, out looseness=1] (C.rightleg)
    to [out=down, in=up] (D-belt.in-leftthird)
    to +(0,-\boff);
\draw ([yshift=\toff] A-belt.in-rightthird)
    to +(0,-\toff)
    to [out=down, in=up] (A.rightleg)
    to [out=down, in=up] ([xshift=1pt] B-belt.in-rightquarter)
    to [out=down, in=up] (C-rightleg.in-rightquarter)
    to [out=down, in=up] (D-belt.in-rightthird)
    to +(0,-\boff);
\draw (C.leftleg) to [out=up, in=up, looseness=1.7] (C-rightleg.in-leftquarter) to [out=down, in=down, looseness=1.7] (C.leftleg);
\end{scope}
\selectpart[green] {(A-leftleg) (A-rightleg) (C-leftleg) (C-rightleg)};
\end{tz}
\quad\xmapsto{\isd(\II ^\inv)}\quad
{\color{red} \frac 1 p}\,
\begin{tz}
\node (B) [Copants, anchor=belt, belt scale=1.7, right leg scale=1.5] at (0,0) {};
\node (A) [Pants, top, right leg scale=1.5, left leg scale=1, anchor=leftleg, belt scale=1.5] at (B.leftleg) {};
\node (C) [Pants, anchor=belt, belt scale=1.7, right leg scale=1.5, left leg scale=1] at (B.belt) {};
\node (D) [Copants, anchor=leftleg, belt scale=1.7, right leg scale=1.5] at (C.leftleg) {};
\begin{knot}
\def\IIshift{0.3cm}
\strand [red strand] (C.leftleg) to [out=down, in=down, looseness=1.5] (C-rightleg.in-leftquarter)
  to [out=up, in=right, in looseness=1] ([xshift=-1pt, yshift=5pt] C.center)
  to [out=left, in=up, out looseness=0.5]
    node (t2) [pos=0.7] {}
    (C.leftleg);
\strand [red strand] ([yshift=-\boff] D-belt.in-leftthird)
    to +(0,\boff)
    to [out=up, in=down] (C.rightleg)
    to [out=up, in=-75] (-0.2,-0.3) {}
    to [out=85, in=down, out looseness=1.5]
        node (t1) [pos=0.6] {}
        (A.leftleg) to [out=up, in=down] (A-belt.in-leftthird)
    to +(0,\toff);
\strand [red strand] ([yshift=\toff] A-belt.in-rightthird)
    to +(0,-\toff)
    to [out=down, in=up] (A.rightleg)
    to [out=down, in=up] ([xshift=1pt] B-belt.in-rightquarter)
    to [out=down, in=up] (C-rightleg.in-rightquarter)
    to [out=down, in=up] (D-belt.in-rightthird)
    to +(0,-\boff);
\flipcrossings{2}
\end{knot}
\node [tiny label, draw=red, text=red, rotate=5] at (t1.center) {$\overline\theta$};
\node [tiny label, draw=red, text=red, rotate=-5] at (t2.center) {$\overline\theta$};
\selectpart[green] {(A-belt) (A-rightleg) (A-leftleg) (B-belt)};
\end{tz}
\quad\xmapsto{\isd(\epsilon)}\quad
{\color{red} \frac 1 p}
\begin{tz}
\node (A) [Pants, top, right leg scale=1.5, belt scale=1.5] at (0,0) {};
\node (B) [Copants, anchor=leftleg, right leg scale=1.5, belt scale=1.5] at (A.leftleg) {};
\begin{knot}
\strand [red strand] ([yshift=\toff] A-belt.in-leftthird)
    to +(0,-\toff)
    to [out=down, in=up] (A-rightleg.in-leftquarter)
    to [out=down, in=up] node [pos=0.8] (t1) {} (B-belt.in-leftthird)
    to +(0,-\boff);
\strand [red strand] ([yshift=\toff] A-belt.in-rightthird)
    to +(0,-\toff)
    to [out=down, in=up] (A-rightleg.in-rightquarter)
    to [out=down, in=up] (B-belt.in-rightthird)
    to +(0,-\boff);
\strand [red strand] (A.leftleg) to [out=down, in=down, looseness=1.3] (A.rightleg) to [out=up, in=up, looseness=1.3] node (t2) [pos=0.85] {} (A.leftleg);
\flipcrossings{2}
\end{knot}
\node [tiny label, draw=red, text=red, rotate=-10] at (t1.center) {$\overline\theta$};
\node [tiny label, draw=red, text=red, rotate=-25] at (t2.center) {$\overline\theta$};
\end{tz}
\end{equation}
By rearranging the internal strings in the second diagram here using \autoref{lem:cloaking}, established below, the alternative form on the right of expression~\eqref{Aaction} can be obtained.
\end{proof}

\begin{corollary}
\label{lem:III_action}
For a modular structure $Z$ in \twovect, the composite $\III$ has the following action on internal string diagrams:
\normalbordisms
\[
\begin{tz}[every to/.style={out=down,in=up}]
        \node[Pants, top, bot, belt scale=1.5] (A) at (0,0) {};
        \node[Copants,  bot, anchor=leftleg, belt scale=1.5] (B) at (A.leftleg) {};
        \strand[red strand] ([yshift=\toff] A-belt.in-leftthird)
                to (A-belt.in-leftthird)
                to (A.leftleg)
                to (B-belt.in-leftthird)
                to ([yshift=-\boff] B-belt.in-leftthird);
        \strand[red strand] ([yshift=\toff] A-belt.in-rightthird)
                to (A-belt.in-rightthird)
                to (A.rightleg)
                to (B-belt.in-rightthird)
                to ([yshift=-\boff] B-belt.in-rightthird);
\end{tz}
\quad \xmapsto{\textstyle \isd(\III)} \quad \frac{1}{p}\,
\begin{tz}[every to/.style={out=down,in=up}]
        \node[Pants, top, bot, right leg scale=1.5, belt scale=1.5] (A) at (0,0) {};
        \node[Copants,  bot, anchor=leftleg, right leg scale=1.5, belt scale=1.5] (B) at (A.leftleg) {};
        \strand[red strand] ([yshift=\toff] A-belt.in-leftthird)
                to (A-belt.in-leftthird)
                to (A-rightleg.in-leftquarter);
        \strand[red strand] ([yshift=\toff] A-belt.in-rightthird)
                to (A-belt.in-rightthird)
                to (A-rightleg.in-rightquarter)
                to (B-belt.in-rightthird)
                to ([yshift=-\boff] B-belt.in-rightthird);
        \strand[red strand] (A.leftleg) 
                to[out=down, in=down, looseness=1.5] (A.rightleg)
                to[out=up, in=up, looseness=1.5] (A.leftleg);
        \strand[red strand] (A-rightleg.in-leftquarter)
                to (B-belt.in-leftthird)
                to ([yshift=-\boff] B-belt.in-leftthird);
\end{tz}
\]
\end{corollary}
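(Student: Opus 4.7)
The plan is to expand the definition $\III = \theta \circ A \circ \theta$ from \eqref{defn_of_III} -- with both copies of $\theta$ acting on the left leg -- and then apply the already-computed actions of $\theta$ (Proposition \ref{thm:btinternalstring}) and $A$ (Proposition \ref{lem:A_action}) in sequence.

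First I would apply the leftmost $\theta$ to the initial two-strand diagram, which by Proposition \ref{thm:btinternalstring} deposits a $\theta$ label on the left strand at the position of the left leg. Next I would apply $A$. By Proposition \ref{lem:A_action}, in the middle form of \eqref{Aaction}, the two parallel strands are rerouted so that both pass through the right leg -- the former left strand through the in-leftquarter, the other through the in-rightquarter -- and a new loop encircling both legs is produced, all scaled by $1/p$. Two $\overline{\theta}$ labels are introduced by $A$: one (at $t_1$) on the rerouted left strand below the right leg, and one (at $t_2$) on the arc of the loop near its upper incidence with the left leg. Crucially, the $\theta$ placed on the original left strand in the previous step rides along with that strand to its new position in the right leg, by the naturality of the internal string diagram construction applied to the 2-morphism $A$. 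Finally, the outermost $\theta$ on the left leg adds a fresh $\theta$ label on the arc of the new loop currently occupying the left leg.

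What remains is a cancellation argument. On the rerouted left strand, the $\theta$ (near the top, from the first step) and the $\overline{\theta}$ (at $t_1$, near the bottom) are both endomorphisms of the same identity strand; by naturality of the twist they may be slid together and combined to $\theta \cdot \theta^{-1} = \id$. On the loop, the $\theta$ (from the last step) and the $\overline{\theta}$ (at $t_2$) are likewise twists on the same identity strand, and can be slid into one another and cancelled. After both cancellations no decorations remain, and the $1/p$ prefactor from $A$ survives to give precisely the target diagram of the corollary.

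The only delicate point in the plan is justifying that, in the middle step, $A$ carries the $\theta$ label along with the rerouted strand. This follows from the functoriality of the internal string diagram construction $\isd$: the action of $\isd(A)$ is a natural transformation of bimodules, so inserting a $\theta$ endomorphism on a strand before applying $A$ is the same as inserting the corresponding $\theta$ on the image of that strand after applying $A$ -- formally, by decomposing the input diagram as a cylinder (carrying the $\theta$) composed with the parallel-strand input to which Proposition \ref{lem:A_action} directly applies. Once this naturality point is accepted, the rest is a routine diagrammatic cancellation.
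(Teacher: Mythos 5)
Your proposal is correct and follows the same route as the paper, which simply expands $\III = \theta \circ A \circ \theta$ from \eqref{defn_of_III} and composes the already-established actions of $\theta$ and $A$. The naturality point you flag---that $\isd(A)$, being a map of bimodules, carries the inserted twist along with the rerouted strand---is exactly the justification needed, and the two $\theta$/$\overline\theta$ cancellations (on the rerouted strand and on the loop through the left leg) then yield the stated diagram with the single $1/p$ prefactor.
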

\begin{proof}
This follows from \autoref{defn_II_III} and \autoref{lem:A_action}.
\end{proof}
\begin{defn}
Given a modular structure $Z$ in \twovect, we define the {\em $s$-matrix} as the following collection of numbers, expressed as string diagrams in the associated semisimple ribbon category:
\[
s_{ij} \gap= \gap \frac 1 p
\begin{tz}[xscale=0.5]
        \draw[black strand] (0,0) to[out=up, in=up] (2,0) node[right, blue] {$j$} to[out=down, in=down] (0,0);
        \draw[black strand] (1,0) to[out=down, in=down] (-1,0) node[left] {$i$} to[out=up, in=up] (1,0);
        \draw[black strand] (0,0) to[out=up, in=up] (2,0);
        \node at (-1,0) {\arrownode[black]};
        \node [rotate=180] at (2,0) {\arrownode[black]};
\end{tz}
\]
In \autoref{lem:psquared} we will see that $p$ is a square root of the global dimension, so this definition agrees with~\mbox{\cite[Equation~(3.1.16)]{bk01-ltc}}.
\end{defn}

This is closely related to the $S$-matrix which forms part of the definition of a modular tensor category.
\begin{defn}
A \textit{modular tensor category} is a semisimple ribbon category over an algebraically-closed field $k$\footnote{A more general definition can be given in the non algebraically-closed case, but we do not need this extra generality.}, such that the S-matrix $S_{ij}$, with entries defined as follows, is invertible:
\[
S_{ij}
\gap = \gap
\begin{tz}[xscale=0.5]
        \draw[black strand] (0,0) to[out=up, in=up] (2,0) node[right, blue] {$j$} to[out=down, in=down] (0,0);
        \draw[black strand] (1,0) to[out=down, in=down] (-1,0) node[left] {$i$} to[out=up, in=up] (1,0);
        \draw[black strand] (0,0) to[out=up, in=up] (2,0);
        \node at (-1,0) {\arrownode[black]};
        \node [rotate=180] at (2,0) {\arrownode[black]};
\end{tz}
\]
\end{defn}
\noindent
Note that for a modular structure $Z$ in \twovect, its $s$-matrix will be invertible just when the  $S$-matrix of its associated semisimple ribbon category is invertible, because $p$ is an invertible constant. Recall also the following identity in a modular tensor category~\cite[Lemma 3.1.4]{bk01-ltc}:
\begin{equation}
\label{Smatrixidentity}
\begin{tz}[xscale=0.6]
\draw [black strand] (-1,1) to [out=up, in=up, looseness=0.9] (1,1);
\draw [black strand] (0,0) node [below] {$i$} to (0,2);
\draw [black strand] (-1,1) node [left] {$j$} to [out=down, in=down, looseness=0.9] (1,1);
\node at (-1,1) {\arrownode[black]};
\node at (0,1) {\arrownode[black]};
\end{tz}
\gap=\gap
\frac{S_{ji}}{d_i}
\begin{tz}
\draw (0,0) node [below] {$i$} to (0,2);
\node at (0,1) {\arrownode[black]};
\end{tz}
\end{equation}

\begin{proposition}
Given a modular structure $Z$ in \twovect, the composite $\III$ acts as the  $s$-matrix on the torus:
\mediumbordisms
\[
\begin{tz}
        \node[Cap, bot] (A) at (0,0) {};
        \node[Pants, bot, anchor=belt] (B) at (A.center) {};
        \node[Copants, bot, anchor=leftleg] (C) at (B.leftleg) {};
        \node[Cup] (D) at (C.belt) {};
        \draw[red strand] (B.rightleg) to[looseness=1.7, out=down, in=down] (B.leftleg) 
                to[looseness=1.7, out=up, in=up]
                node [left=-2pt, pos=0.25, red, font=\tiny] {$i$}
                node [pos=0.1, rotate=-20] {\arrownode}
                (B.rightleg);
\end{tz}
\, \xmapsto{\textstyle \isd(\III)}\, \sum_j s_{ij} \,
\begin{tz}
        \node[Cap, bot] (A) at (0,0) {};
        \node[Pants, bot, anchor=belt] (B) at (A.center) {};
        \node[Copants, bot, anchor=leftleg] (C) at (B.leftleg) {};
        \node[Cup] (D) at (C.belt) {};
        \draw[red strand] (B.rightleg) to[looseness=1.7, out=down, in=down] (B.leftleg) 
                to[looseness=1.7, out=up, in=up]
                node [left=-2pt, pos=0.25, red, font=\tiny] {$j$}
                node [pos=0.1, rotate=-20] {\arrownode}
                (B.rightleg);
\end{tz}
\]
\end{proposition}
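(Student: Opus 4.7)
The plan is to apply $\isd(\III)$ using the explicit formula from \autoref{lem:III_action} and then simplify the resulting internal string diagram using semisimplicity and the ribbon-categorical Hopf-link identity \eqref{Smatrixidentity}. I would start by isotoping the $i$-loop on the torus so that on the pants-copants portion of the surface it appears as two parallel $i$-strands passing through the leg region, joined into a single closed loop via the cap and cup at the top and bottom of the torus. This is exactly the input configuration addressed by \autoref{lem:III_action}.

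Applying \autoref{lem:III_action} then produces $\tfrac{1}{p}$ times a new configuration on the torus: the original $i$-loop, now with its two strands rerouted through the right leg of the pants, together with an extra unlabeled closed loop introduced by the $\III$-action. The crucial topological observation is that, once the rerouted strands are closed up through the cap and cup, the resulting $i$-loop and the new unlabeled loop are Hopf-linked inside the solid torus filling of the surface.

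Next, I would apply the semisimple decomposition \eqref{eq:simpleobjectsonly} to expand the unlabeled loop as $\sum_j d_j$ times the same loop labeled by the simple object $S_j$. For each term in this sum, the torus now carries a Hopf-linked pair consisting of the $i$-loop and a $j$-loop. To reduce this to a basis vector, I would apply \eqref{Smatrixidentity} locally: isotope the Hopf link so that the $j$-loop appears as a locally straight strand encircled by the $i$-loop, and then invoke \eqref{Smatrixidentity} (with the roles of $i$ and $j$ exchanged from the form displayed in that identity) to replace the linked configuration by $\tfrac{S_{ij}}{d_j}$ times a single $j$-loop on the torus. Collecting all of the scalar factors yields
\[
\isd(\III)(\text{torus with $i$-loop}) \;=\; \frac{1}{p}\sum_j d_j \cdot \frac{S_{ij}}{d_j}\cdot(\text{torus with $j$-loop}) \;=\; \sum_j \frac{S_{ij}}{p}\cdot(\text{torus with $j$-loop}),
\]
which equals $\sum_j s_{ij}\cdot(\text{torus with $j$-loop})$ by the definition of the $s$-matrix.

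The main obstacle is the topological identification: namely, verifying that the closed loop produced by the $\III$-action is genuinely Hopf-linked with the original $i$-loop after both have been closed up through the cap and cup of the torus. This requires careful tracking of the rerouting described in \autoref{lem:III_action} and of how the additional loop is embedded relative to the $i$-loop inside the solid torus. Once this topological fact is established, the algebraic manipulations using semisimplicity and \eqref{Smatrixidentity} proceed routinely, and the factor $\tfrac{1}{p}$ coming from the $\III$-action combines with the $S$-matrix entry to yield exactly $s_{ij}$.
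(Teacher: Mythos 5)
Your proposal is correct and follows essentially the same route as the paper: apply \autoref{lem:III_action} to the two strands of the $i$-loop passing through the pants--copants, expand the resulting unlabeled loop as $\sum_j d_j$ over simples, and collapse the linked pair via \eqref{Smatrixidentity} (with indices exchanged) to get $\tfrac{1}{p}\sum_j d_j\cdot\tfrac{S_{ij}}{d_j}=\sum_j s_{ij}$. The topological identification you flag as the main obstacle is exactly what the paper handles by drawing the intermediate picture, so nothing is missing.
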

\begin{proof}
We apply the action of $\III$ on internal string diagrams as given in \autoref{lem:III_action}, as well as the identity~\eqref{Smatrixidentity}:
\mediumbordisms
\[
\begin{tz}
        \node[Cap, bot] (A) at (0,0) {};
        \node[Pants, bot, anchor=belt] (B) at (A.center) {};
        \node[Copants, bot, anchor=leftleg] (C) at (B.leftleg) {};
        \node[Cup] (D) at (C.belt) {};
        \draw[red strand] (B.rightleg) to[looseness=1.7, out=down, in=down] (B.leftleg) 
                to[looseness=1.7, out=up, in=up]
                node [left=-2pt, pos=0.25, red, font=\tiny] {$i$}
                node [pos=0.1, rotate=-20] {\arrownode}
                (B.rightleg);
\end{tz}
\gap \xmapsto{\textstyle \isd(\III)} \gap
\frac{1}{p} \sum_j d_j \,
\begin{tz}
        \node[Cap, bot] (A) at (0,0) {};
        \node[Pants, bot, anchor=belt, right leg scale=2, height scale=1.5] (B) at (A.center) {};
        \node[Copants, bot, anchor=leftleg, right leg scale=2, height scale=1.5] (C) at (B.leftleg) {};
        \node[Cup] (D) at (C.belt) {};
        \draw[red strand] (B.rightleg) to[looseness=1.7, out=down, in=down]  (B.leftleg) node [left=-5pt, font=\tiny, yshift=8pt, red] {$j$} to[looseness=1.7, out=up, in=up] (B.rightleg);
        \draw[red strand] (B-rightleg.in-leftquarter) to[out=up, in=up, looseness=2] node[pos=0.2, right=7pt, red, font=\tiny] {$i$} node[pos=0.8, rotate=-150] {\arrownode} (B-rightleg.in-rightquarter)
                to[out=down, in=down, looseness=2.5] (B-rightleg.in-leftquarter);
        \draw[red strand] (B.rightleg) to[looseness=1.7, out=up, in=up] node[pos=0.9, rotate=-20] {\arrownode} (B.leftleg); 
\end{tz}
\gap = \gap \sum_j \frac 
{S _{ij}}{p}
\,
\begin{tz}
        \node[Cap, bot] (A) at (0,0) {};
        \node[Pants, bot, anchor=belt] (B) at (A.center) {};
        \node[Copants, bot, anchor=leftleg] (C) at (B.leftleg) {};
        \node[Cup] (D) at (C.belt) {};
        \draw[red strand] (B.rightleg) to[looseness=1.7, out=down, in=down] (B.leftleg) 
                to[looseness=1.7, out=up, in=up]
                node [left=-2pt, pos=0.25, red, font=\tiny] {$j$}
                node [pos=0.1, rotate=-20] {\arrownode}
                (B.rightleg);
\end{tz}
\]
\end{proof}

\noindent
Since $\III$ is invertible, it follows that the $s$-matrix is invertible, and hence that the associated ribbon category is modular.
\begin{corollary}
\label{ribbonismodular}
Given a modular structure in \twovect, the  associated semisimple ribbon category is modular.
\qed
\end{corollary}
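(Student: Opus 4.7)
The plan is to deduce modularity essentially immediately from the preceding proposition together with the invertibility of $\III$ recorded earlier. First, observe that everything hard has already been done: by \autoref{thm:repofribisribbon} the associated category $\cat{S}$ is a ribbon category, by \autoref{cor:modissemisimple} (in the Appendix) it is semisimple, and by the definition of a modular tensor category it only remains to check invertibility of the $S$-matrix.

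The key intermediate step is to identify the vector space that $\isd$ assigns to the torus together with a convenient basis. I would realise the torus as the closed surface obtained by capping the ``snake'' surface $N$ of \eqref{eq:snakesurface}, so that by \autoref{genericformsnake} every element of the torus vector space is represented by an internal string diagram in elementary form. Using the simple-object reduction \eqref{eq:simpleobjectsonly} and the fact that the unit object is simple (\autoref{directsumofsimples}, which we may assume without loss of generality), one sees that each such elementary diagram is, up to scalars, a loop labeled by a simple object threading the central circle, and by Schur's lemma these $\{v_i\}_{i \in \mathrm{Irr}(\cat{S})}$ are linearly independent. Hence they form a basis of the torus vector space.

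Next I would apply the preceding proposition, which computes that $\isd(\III)(v_i) = \sum_j s_{ij}\, v_j$; that is, the linear endomorphism $\isd(\III)$ of the torus vector space is represented in the basis $\{v_i\}$ by the matrix $(s_{ij})$. Since $\III$ is an invertible 2\-morphism in $\bicat F(\M)$ (by the proposition preceding \autoref{defn_II_III}, quoted from \cite{PaperII}), and symmetric monoidal functors preserve invertibility, $\isd(\III)$ is an invertible linear map; therefore the matrix $(s_{ij})$ is invertible.

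Finally, since $s_{ij} = S_{ij}/p$ for the nonzero scalar $p$, invertibility of $(s_{ij})$ is equivalent to invertibility of the $S$-matrix $(S_{ij})$. This is precisely the modularity condition, completing the proof. There is no genuine obstacle here: the only substantive point to verify carefully is the basis claim for the torus vector space, and this follows cleanly by combining semisimplicity, \autoref{genericformsnake}, and Schur's lemma applied to the simple-object summands of the identity in $\cat S$.
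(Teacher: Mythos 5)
Your proposal is correct and is essentially the paper's own argument: the corollary is stated with no separate proof precisely because it follows immediately from the preceding proposition (that $\III$ acts on the torus vector space by the matrix $(s_{ij})$ in the basis of simple-object loops) together with the invertibility of $\III$ and the relation $s_{ij} = S_{ij}/p$. The only step you spell out that the paper leaves implicit is the verification that the simple-object loops form a basis of the torus vector space, and your justification of that point is sound.
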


\subsection{The anomaly}
\label{sec:anomalyvalue}

Here we investigate the value of the nonzero constant $p$ that arises from a modular structure in \twovect, and we show that $p^2 = p^+ p^-$, where $p^+$ and $p^-$ are constants obtained from the modular tensor category.

We begin by defining the anomaly as a composite.
\smallbordisms
\begin{defn}[Definition \ref{PII_anomalydef} from~\cite{PaperII}] For a modular structure, the \textit{anomaly} $x$ is defined as the following composite:
\[
x\quad:=\quad
\begin{tz}
    \node [Cyl, top, tall] (A) at (0,0) {};
\end{tz}
\longxdoubleto{\epsilon ^\dag}
\begin{tz}
        \node[Pants, top] (A) at (0,0) {};
        \node[Copants, anchor=leftleg] (B) at (A.leftleg) {};
        \selectpart[inner sep=1pt, green] {(A-leftleg)};
\end{tz}
\longxdoubleto{\theta}
\begin{tz}
        \node[Pants, top] (A) at (0,0) {};
        \node[Copants, anchor=leftleg] (B) at (A.leftleg) {};
\end{tz}
\longxdoubleto{\epsilon}
\begin{tz}
    \node [Cyl, top, tall] (A) at (0,0) {};
\end{tz}
\]
\end{defn}
\begin{lemma}[\autoref{PII_invertibility_of_x} from~\cite{PaperII}]
\label{lem:anomalyinvertible}
For a modular structure, the anomaly is invertible, with $x ^{\inv} = x ^{\dag}$.
\qed
\end{lemma}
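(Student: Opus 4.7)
The plan is to verify $x^{-1} = x^\dag$ by computing both $\isd(x)$ and $\isd(x^\dag)$ explicitly on internal string diagrams and showing that they are mutually inverse linear maps. By \autoref{cor:modissemisimple} and the reduction \eqref{eq:simpleobjectsonly}, the endomorphism space of $\isd(\tikztinycyl)$ is spanned by identity diagrams on cylinders labeled by simple objects $i$, so it suffices to verify that $\isd(x)$ acts diagonally by a nonzero scalar $x_i$ on each such basis element, and that $\isd(x^\dag)$ acts by the reciprocal scalar $x_i^{-1}$.

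First I would compute $\isd(x)$ on a cylinder with strand labeled by a simple object $i$ by composing the three actions. Applying $\isd(\epsilon^\dag)$ from \autoref{thm:epsilondagaction} produces $\tfrac{1}{p}$ times a sum over simple objects $j$ (with weight $d_j$) of a pants-copants diagram carrying the $i$-strand running from belt to belt through the left leg, together with a free $j$-circle that threads both legs. Applying $\isd(\theta)$ on the left leg then inserts a twist on whatever passes through it; by the balanced axiom \eqref{eq:balanced} and naturality, this factors into a twist on the $i$-strand and a twist on the portion of the $j$-circle threading that leg, together with a braiding correction between them. Finally $\isd(\epsilon)$ contracts the handle, closing the $j$-loop as a categorical trace. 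Collecting scalars yields $x_i = \theta_i \cdot c$ for some Gauss-sum-like coefficient $c$ independent of $i$, which is nonzero because the associated modular tensor category has invertible $S$-matrix.

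Second, repeating the analysis for $x^\dag = \epsilon \circ \theta^\inv \circ \epsilon^\dag$ (using that dagger replaces $\theta$ with $\theta^\inv$ and leaves $\epsilon,\epsilon^\dag$ interchanged via their adjunction), I would obtain the scalar $x_i^\dag = \theta_i^\inv \cdot c'$ with $c'$ the analogous Gauss-sum expression in $\theta^\inv$. A direct simplification, using the closed-loop conventions of the graphical calculus for a semisimple ribbon category and the standard fact that the Gauss-sum products collapse to the global dimension (foreshadowed by \autoref{lem:psquared}), gives $c \cdot c' = 1$ and hence $x_i \cdot x_i^\dag = 1$ for every simple $i$. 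The same calculation run in the opposite order gives $x_i^\dag \cdot x_i = 1$, establishing both left- and right-invertibility.

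The main obstacle is the careful bookkeeping when the twist on the left leg acts simultaneously on the $i$-strand and the $j$-circle that also threads that leg: one must unpack the balanced condition $\theta_{A\otimes B} = (\theta_A \otimes \theta_B) \circ \beta_{B,A}\circ\beta_{A,B}$ and then use the ribbon condition from \autoref{thm:repofribisribbon} to absorb the braiding correction into a loop identity. Once this is done the scalars $x_i$ and $x_i^\dag$ are explicit, their product equals $1$ by a Gauss-sum identity in a modular tensor category, and the claim $x^\inv = x^\dag$ follows.
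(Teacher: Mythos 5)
Your overall strategy---reduce to simple labels, compute $\isd(x)$ and $\isd(x^\dag)$ as scalars, and multiply---is not the paper's route at all: the lemma is simply imported from the companion paper, where $x^{\inv}=x^\dag$ is derived formally from the relations of the modular presentation and therefore holds for modular structures in any target, not just linear ones. More importantly, your argument as written is circular at its decisive step. The scalars in question are $\isd(x)=p^+/p$ and $\isd(x^\dag)=p^-/p$ (cf.\ \autoref{lem:anomalyaction}; the $\theta_i$ you retain is cancelled by the $i$-dependence of the encircling identity \eqref{Smatrixidentity}, so the scalar is genuinely independent of $i$, not of the form $\theta_i\cdot c$ with $c$ constant). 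Their product is $p^+p^-/p^2$, and the ``Gauss-sum identity'' you invoke---that $p^+p^-$ equals the global dimension $\sum_i d_i^2$---says nothing about the a priori undetermined constant $p$ of the representation. The missing statement $p^2=p^+p^-$ is precisely \autoref{lem:psquared}, which the paper \emph{deduces from} the invertibility of $x$; appealing to it (even as something ``foreshadowed'') here is circular.

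The gap can be closed, but it needs a different input: apply the modularity relation \eqref{MOD} to a strand labelled by a simple object $i$. Its top path evaluates to $\tfrac{\theta_i}{p}$ times the killing loop around $i$ and its bottom path to $\delta_{i,0}\,p$, so taking $i$ to be the unit gives $\sum_j d_j^2=p^2$; combined with the standard identity $p^+p^-=\sum_j d_j^2$, valid once the associated ribbon category is known to be modular (\autoref{ribbonismodular}, which rests on the invertibility of $\III$ imported from the companion paper rather than on the present lemma), this yields $p^2=p^+p^-$ with no reference to $x$. With that supplied, your computation does give $\isd(x)\circ\isd(x^\dag)=\id=\isd(x^\dag)\circ\isd(x)$, though only for (semisimple, linear) modular structures, whereas the cited result is a purely presentational fact.
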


Recall that in a modular tensor category with simple unit, there exist $p^+, p^- \in k$ satisfying the following equations~\cite[Lemma 3.1.5]{bk01-ltc}:
\mediumbordisms
\begin{align}
\label{eq:defppluspminus}
p^+
\begin{tz}
    \draw[black strand] (0,1) to (0,-1);
    \node [tiny label, text=black, draw=black] at (0,0) {$\overline\theta$};
\end{tz}
&\quad= \quad
\begin{tz}
    \draw[black strand] (0.5,-1) to (0.5,0);
    \draw[black strand] (0,0) to[out=up, in=up, looseness=1.8] (1,0) node[tiny label, text=black, draw=black] {$\theta$} to[out=down, in=down, looseness=1.8] (0,0);
    \draw[black strand] (0.5,1) to (0.5,0);
\end{tz}
&
p^-
\begin{tz}
    \draw[black strand] (0,1) to (0,-1);
    \node [tiny label, text=black, draw=black] at (0,0) {$\theta$};
\end{tz}
&\quad= \quad
\begin{tz}
    \draw[black strand] (0.5,-1) to (0.5,0);
    \draw[black strand] (0,0) to[out=up, in=up, looseness=1.8] (1,0) node[tiny label, text=black, draw=black] {$\overline \theta$} to[out=down, in=down, looseness=1.8] (0,0);
    \draw[black strand] (0.5,1) to (0.5,0);
\end{tz}
\end{align}
Explicit formulas for $p^+$ and $p^-$ can be obtained by choosing the open black strand to be the identity on the tensor unit object. The \emph{anomaly} of the modular tensor category is defined as the ratio $p^+/p^-$. If $p^+/p^- = 1$, we say that the modular tensor category is {\em anomaly-free}. 

We now consider the action of the anomaly morphism $x$ on internal string diagrams.
\begin{lemma}
\label{lem:anomalyaction}
For a simple modular structure $Z$ in \twovect, the anomaly and its dagger have the following actions on internal string diagrams:
\mediumbordisms
\begin{align*}
\begin{tz}
\node (A) [Cyl, top, tall] at (0,0) {};
\draw [red strand] ([yshift=-0.25cm] A.bot) to ([yshift=0.2cm] A.top);
\end{tz}
&
\longxmapsto{\displaystyle\isd (x)}{40pt}
{\color{red}\frac{p^+}{p}}
\begin{tz}
\node (A) [Cyl, top, tall] at (0,0) {};
\draw [red strand] ([yshift=-0.25cm] A.bot) to ([yshift=0.2cm] A.top);
\end{tz}
&
\begin{tz}
\node (A) [Cyl, top, tall] at (0,0) {};
\draw [red strand] ([yshift=-0.25cm] A.bot) to ([yshift=0.2cm] A.top);
\end{tz}
&
\longxmapsto{\displaystyle\isd (x^\dag)}{40pt}
{\color{red}\frac{p^-}{p}}
\begin{tz}
\node (A) [Cyl, top, tall] at (0,0) {};
\draw [red strand] ([yshift=-0.25cm] A.bot) to ([yshift=0.2cm] A.top);
\end{tz}
\end{align*}
\end{lemma}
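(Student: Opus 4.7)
The plan is to unfold the defining composite $x = \epsilon \circ \theta \circ \epsilon^\dag$ (with $\theta$ inserted on the left leg) and evaluate each factor using the actions on internal string diagrams established in \autoref{thm:epsilondagaction}, \autoref{thm:btinternalstring}, and \autoref{thm:eemninternalstring}. Starting with an identity strand on a cylinder, applying $\isd(\epsilon^\dag)$ will produce a factor of $1/p$ times a pants--copants surface carrying the original strand threaded through the left leg, together with an unlabeled internal closed loop encircling the handle; by the standard graphical convention this loop expands as $\sum_i d_i$ times an $i$-labeled loop.

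Next I will apply $\isd(\theta)$, which inserts a twist marker on the strands passing through the left leg. Both the original strand and the loop strand pass through this cylindrical region, so one must separate these contributions. Using \autoref{lem:cloaking} and the naturality of $\theta$, the twist can be pulled off the original strand (its interactions with the loop cancel via the balanced structure) and placed solely on the $i$-loop. Then $\isd(\epsilon)$ will collapse the pants--copants back to an identity cylinder; the resulting closed $i$-loop with a twist marker will evaluate in the underlying ribbon category to the scalar $d_i \theta_i$.

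Combining these factors yields
\[
\isd(x) \;=\; \frac{1}{p}\sum_i d_i \cdot d_i\,\theta_i \cdot \id \;=\; \frac{1}{p}\Bigl(\textstyle\sum_i d_i^2\,\theta_i\Bigr)\id \;=\; \frac{p^+}{p}\id,
\]
using the defining formula $p^+ = \sum_i d_i^2\,\theta_i$ obtained by specializing \eqref{eq:defppluspminus} to the open strand $X=I$. For $\isd(x^\dag)$ the argument will be entirely analogous, except that $\theta$ is replaced by $\theta^\inv$ throughout (since the dagger reverses the direction of the Dehn twist), so the loop evaluates to $d_i\theta_i^\inv$ and the final scalar is $\frac{1}{p}\sum_i d_i^2\theta_i^\inv = p^-/p$.

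The main difficulty will be the step that separates the twist on the left leg into a trivial action on the original strand and a pure $\theta$ on the loop. Geometrically this corresponds to a Dehn twist of the handle, which naturally twists everything passing through it, but making the separation precise inside the internal string diagram formalism requires \autoref{lem:cloaking} to pull the original strand freely through the loop, together with the balanced axioms to decompose $\theta_{S\otimes i}$ and cancel the braiding contributions arising from the interaction of the two strands.
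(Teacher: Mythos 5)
Your proposal is correct and follows the same route as the paper, whose proof is a one-line citation of exactly the ingredients you list: the established actions of $\epsilon^\dag$, $\theta$, and $\epsilon$ on internal string diagrams, the expansion of the unlabeled loop as $\sum_i d_i$ times an $i$-labeled loop, and \autoref{lem:cloaking}. One execution note on the step you rightly flag as the main difficulty: the cleanest order is to apply the cloaking move \emph{before} inserting the twist, so that the open strand is moved out of the left leg and $\isd(\theta)$ acts on the killing loop alone; this avoids decomposing $\theta_{A\otimes i}$ and tracking the residual $\theta_A$ and double braiding, which do not cancel by the balanced axioms alone but only via a subsequent cloaking move.
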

\begin{proof}
This follows from the actions of $\epsilon^\dag$, $\theta$, and $\epsilon$ on internal string diagrams as already established, using \autoref{lem:cloaking}.
\end{proof}

\begin{lemma}
\label{lem:psquared}
For a simple modular structure $Z$ in \twovect, we have $p^2 = p^+ p^-$.
\end{lemma}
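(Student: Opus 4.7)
The plan is to combine two facts established just above: the invertibility result $x^{-1} = x^\dagger$ from \autoref{lem:anomalyinvertible}, and the explicit scalar actions of $x$ and $x^\dagger$ on internal string diagrams from \autoref{lem:anomalyaction}.

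First I would note that because $Z$ is a simple modular structure and the associated ribbon category is semisimple (by \autoref{cor:modissemisimple} and \autoref{ribbonismodular}), the internal string diagram space $\isd(\tikztinycyl)^i_i$ for any simple object $i$ is nonzero (indeed, it is spanned by the identity strand). Hence an endomorphism of $Z(\tikztinycyl)$ whose action on internal string diagrams is multiplication by a scalar $\lambda$ is the identity if and only if $\lambda = 1$.

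Next I would compute the action of the composite $x \circ x^\dagger$ on internal string diagrams. Since $\isd$ is a functor, it respects composition, so by \autoref{lem:anomalyaction} this composite acts as multiplication by $(p^+/p)(p^-/p) = p^+ p^- / p^2$. On the other hand, by \autoref{lem:anomalyinvertible} we have $x \circ x^\dagger = x \circ x^{-1} = \id$, so the composite must act as the identity, hence as multiplication by $1$. Equating these two descriptions of the action yields $p^+ p^- / p^2 = 1$, i.e.\ $p^2 = p^+ p^-$, which is the desired equality.

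There is no serious obstacle here; the only minor subtlety is making sure that the scalar extracted from the action on internal string diagrams really does determine the endomorphism, which is handled by the semisimplicity / simplicity observation in the first paragraph. All the hard work has already been done in \autoref{lem:anomalyinvertible} and \autoref{lem:anomalyaction}; the present lemma is essentially a one-line consequence of functoriality of $\isd$ applied to the identity $x \circ x^\dagger = \id$.
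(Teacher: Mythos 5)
Your proof is correct and follows the same route as the paper: combine $x^{-1}=x^\dagger$ from \autoref{lem:anomalyinvertible} with the scalar actions in \autoref{lem:anomalyaction} to conclude $p^+p^-/p^2=1$. The extra remark about the nonvanishing of $\isd(\tikztinycyl)^i_i$ is a harmless (and reasonable) elaboration of a step the paper leaves implicit.
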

\begin{proof}
By \autoref{lem:anomalyinvertible} we have $\isd(x) \circ \isd(x ^\dag) = \id$, and hence by \autoref{lem:anomalyaction} we have $p^+ p^-/p^2 = 1$.
\end{proof}

\begin{remark}
\label{remark:roots}
Note the simple algebraic fact that there is a canonical bijection between square roots of $p^+ p^-$, the \textit{global dimension},  and square roots of $p^+/p^-$, the anomaly, by multiplying or dividing by $p^-$. So to choose a square root of the global dimension is equivalent to choosing a square root of the anomaly.
\end{remark}

\section{Field theories and modular categories}
\label{sec:classification}

In this section we classify the representations of the presentations of the bordism categories given in \autoref{sec:geometricalpresentations}. In \autoref{sec:modcatmodobj}, we showed that for a linear modular structure (that is a modular structure in $\twovect$), the associated monoidal category is a modular tensor category. In \autoref{sec:buildmodularstructure}, we show how a linear modular structure can be constructed from the data of a modular tensor category equipped in each factor with a square root of the global dimension. We analyze the space of equivalence classes of modular tensor categories and of linear modular structures, and prove the classification result for componentwise signature TQFTs. In \autoref{sec:classificationother}, we prove the classification results for TQFTs with oriented, signature, and $p_1$-structure.

\subsection{Componentwise-signature field theories}
\label{sec:buildmodularstructure}

We now construct a modular structure in $\bimod$ from a modular tensor category equipped with a square root of the anomaly in each factor. Note that by \autoref{cor:modissemisimple}, modular structures in $\bimod$ and $\twovect$ are equivalent.

\def\ZC{\ensuremath{Z_ \cat C ^p}}
\begin{proposition}
\label{constructtqft}
Given a simple modular tensor category \cat C, equipped with a constant $p$ satisfying $p^2 = p^+/p^-$, the following data defines a modular structure $\ZC$ in $\bimod$:
\begin{itemize}
\item On the generating object, $\ZC$ gives the category \cat C.
\item On generating 1\-morphisms, $\ZC$ acts as follows (the functors are specified by their structure vector spaces):
\begin{align}
\ZC (\tikztinypants) ^W _{X,Y} &:= \Hom _\cat C (W, X \otimes Y)
\\
\ZC (\tikztinycopants) _Y ^{W,X} &:= \Hom _\cat C (W \otimes X, Y)
\\
\ZC (\tikztinycup) ^W  &:= \Hom _\cat C (W, I)
\\
\ZC (\tikztinycap) _W &:= \Hom _\cat C (I, W)
\end{align}

\item On generating 2\-morphisms, $\ZC$ acts as follows (using the internal string diagram graphical notation):
\end{itemize}
\def\arrlen{1.5cm}
\def\arrgap{\hspace{0.2cm}}
\mediumbordisms
\begin{equation}
    \begin{tz}
    \node[Pants, bot, top, belt scale=1.5] (B) at (0,0) {};
    \node[Pants, bot, anchor=belt] (A) at (B.leftleg) {};
    \node[SwishL, bot, anchor=top] (C) at (B.rightleg) {};
    \begin{scope}[internal string scope]
        \node (j) at (A.leftleg) [below=\boff] {};
        \node (k) at (A.rightleg) [below=\boff] {};
        \node (l) at (C.bot) [below=\boff] {};
        \draw (j.north)
            to (A.leftleg)
            to [out=up, in=-90] (B-leftleg.in-leftthird)
            to [out=up, in=-90] (B-belt.in-leftquarter)
            to [out=up, in=-90] ([yshift=\toff] B-belt.in-leftquarter);
        \draw (k.north)
            to (A.rightleg)
            to [out=up, in=-90] (B-leftleg.in-rightthird)
            to [out=up, in=-90] (B-belt)
            to [out=up, in=-90] ([yshift=\toff] B.belt);
        \draw (l.north)
            to (C.bot)
            to [out=up, in=-90] (B.rightleg)
            to [out=up, in=-90] (B-belt.in-rightquarter)
            to [out=up, in=-90] ([yshift=\toff] B-belt.in-rightquarter);
    \end{scope}
    \end{tz}
\arrgap
\begin{tz}
\draw [|->] (0,0) to node [above] {$\ZC (\alpha)$} (1.7,0);
\draw [|->] (1.7,-0.5) to node [below] {$\ZC(\alpha ^{\inv})$} (0,-0.5);
\end{tz}
\arrgap
    \begin{tz}
    \node[Pants, bot, top, belt scale=1.5] (B) at (0,0) {};
    \node[Pants, bot, anchor=belt] (A) at (B.rightleg) {};
    \node[SwishR, bot, anchor=top] (C) at (B.leftleg) {};
    \begin{scope}[internal string scope]
        \node (i) at (B.belt) [above=\toff] {};
        \node (j) at (C.bot) [below=\boff] {};
        \node (k) at (A.leftleg) [below=\boff] {};
        \node (l) at (A.rightleg) [below=\boff] {};
        \draw (j.north)
            to (C.bot)
            to [out=up, in=-90] (B-leftleg)
            to [out=up, in=-90] (B-belt.in-leftquarter)
            to [out=up, in=-90] ([yshift=\toff] B-belt.in-leftquarter);
        \draw (k.north)
            to (A.leftleg)
            to [out=up, in=-90] (B-rightleg.in-leftthird)
            to [out=up, in=-90] (B-belt)
            to [out=up, in=-90] ([yshift=\toff] B.belt);
        \draw (l.north)
            to (A.rightleg)
            to [out=up, in=-90] (B-rightleg.in-rightthird)
            to [out=up, in=-90] (B-belt.in-rightquarter)
            to [out=up, in=-90] ([yshift=\toff] B-belt.in-rightquarter);
    \end{scope}
    \end{tz}
\end{equation}
\begin{equation}
\begin{tz}
    \node[Pants, top, bot] (A) at (0,0) {};
    \node[Cup] (B) at (A.leftleg) {};
    \node[Cyl, bot, anchor=top] (C) at (A.rightleg) {};
    \begin{scope}[internal string scope]
        \node (i) at ([yshift=\toff] A.belt) [above] {};
        \node (i2) at ([yshift=-\boff] C.bottom) [below] {};
        \draw (i2) to (C.top) to [out=up, in=-90] (A.belt) to (i);
    \end{scope}
\end{tz}
\arrgap
\begin{tz}
\draw [|->] (0,0) to node [above] {$\ZC(\lambda)$} (1.7,0);
\draw [|->] (1.7,-0.5) to node [below] {$\ZC(\lambda ^{\inv})$} (0,-0.5);
\end{tz}
\arrgap
\begin{tz}
    \node[Cyl, tall, bot, top] (A) at (0,0) {};
    \begin{scope}[internal string scope]
        \node (i) at ([yshift=\toff] A.top) [above] {};
        \node (i2) at ([yshift=-\boff] A.bot) [below] {};
        \draw (i2) to (i);
    \end{scope}
\end{tz}
\arrgap
\begin{tz}
\draw [<-|] (0,0) to node [above] {$\ZC(\rho)$} (1.7,0);
\draw [<-|] (1.7,-0.5) to node [below] {$\ZC(\rho ^{\inv})$} (0,-0.5);
\end{tz}
\arrgap
\begin{tz}
    \node[Pants, top, bot] (A) at (0,0) {};
    \node[Cup] (B) at (A.rightleg) {};
    \node[Cyl, bot, anchor=top] (C) at (A.leftleg) {};
    \begin{scope}[internal string scope]
        \node (i) at ([yshift=\toff] A.belt) [above] {};
        \node (i2) at ([yshift=-\boff] C.bottom) [below] {};
        \draw (i2) to (C.top) to [out=up, in=-90] (A.belt) to (i);
    \end{scope}
\end{tz}
\end{equation}
\begin{align}
\begin{tz}
    \node[Pants, bot, top, height scale=1.1] (A) at (0,0) {};
    \begin{scope}[internal string scope]
        \node (i) at ([yshift=\toff] A.belt) [above] {};
        \node (j) at ([yshift=-\boff] A.leftleg) [below] {};
        \node (k) at ([yshift=-\boff] A.rightleg) [below] {};
        \draw ([yshift=-\boff] A.leftleg) to (A.leftleg)
            to [out=up, in=down] (A-belt.in-leftthird) to +(0,0.3);
        \draw ([yshift=-\boff] A.rightleg) to (A.rightleg)
            to [out=up, in=down] (A-belt.in-rightthird) to +(0,0.3);
    \end{scope}
\end{tz}
\arrgap&
\begin{tz}
\path [use as bounding box] (0,0) rectangle (\arrlen,-0.5);
\draw [|->] (0,0) to node [above] {$\ZC(\beta)$} (\arrlen,0);
\draw [|->] (\arrlen,-0.5) to node [below] {$\ZC(\beta ^{\inv})$} (0,-0.5);
\end{tz}
\arrgap
\begin{tz}
    \node[Pants, bot, top, height scale=1.3] (A) at (0,0) {};
    \node[BraidA, bot, anchor=topleft] (B) at (A.leftleg) {};
    \draw[internal string] ([yshift=\toff] A-belt.in-rightthird)
        to (A-belt.in-rightthird)
        to [out=down, in=up, out looseness=1.7, in looseness=0.7] (B.topleft);
    \obscureA{B}
    {
\draw [internal string] (B.topleft) to [out=down, in=up, looseness=0.6] (B.bottomright);
    }
    \draw [internal string] (B.bottomright) to +(0,-0.3);
    \draw[internal string] ([yshift=\toff] A-belt.in-leftthird)
        to +(0,-0.3)
        to [out=down, in=up, out looseness=1.7, in looseness=0.7] (A.rightleg)
        to [out=down, in=up, looseness=0.6] (B.bottomleft)
        to +(0,-0.3);
\end{tz}
&
\begin{tz}
    \node[Cyl, bot, top] (A) at (0,0) {};
    \begin{scope}[internal string scope]
        \node (i) at ([yshift=\toff] A.top) [above] {};
        \node (i2) at ([yshift=-\boff] A.bot) [below] {};
        \draw (i.south) -- (i2.north);
    \end{scope}
\end{tz}
\arrgap&
\begin{tz}
\draw [|->] (0,0) to node [above] {$\ZC(\theta)$} (1.7,0);
\draw [|->] (1.7,-0.5) to node [below] {$\ZC(\theta ^{\inv})$} (0,-0.5);
\end{tz}
\arrgap
\begin{tz}
    \node[Cyl, bot, top] (A) at (0,0) {};
    \begin{scope}[internal string scope]
        \node (i) at ([yshift=\toff] A.top) [above] {};
        \node (i2) at ([yshift=-\boff] A.bot) [below] {};
        \draw (i.south) -- (i2.north);
        \node [tiny label] at (A.center) {$\theta$};
    \end{scope}
\end{tz}
\end{align}
\begin{align}
\begin{tz}
        \node[Cyl, tall, top, bot] (A) at (0,0) {};
        \node[Cyl, tall, top, bot] (B) at (\cobgap + \cobwidth, 0) {};
        \begin{scope}[internal string scope]
                \draw ([yshift=\toff] A.top) node[above]{} -- ([yshift=-\boff] A.bot) node[below]{};
                \draw ([yshift=\toff] B.top) node[above]{} -- ([yshift=-\boff] B.bot) node[below]{};
        \end{scope}
\end{tz}
\arrgap
&\longxmapsto{\textstyle \ZC (\eta)}{\arrlen}
\arrgap
\begin{tz}
        \node[Pants, bot, belt scale=1.5] (A) at (0,0) {};
        \node[Copants, bot, anchor=belt, belt scale=1.5, top] (B) at (A.belt) {}; 
        \begin{scope}[internal string scope]
                \node (i) at ([yshift=\toff] B.leftleg) [above] {};
                \node (j) at ([yshift=\toff] B.rightleg) [above] {};
                \node (i2) at ([yshift=-\boff] A.leftleg) [below] {};
                \node (j2) at ([yshift=-\boff] A.rightleg) [below] {};
                \draw (i.south) to[out=down, in=up] (B-belt.in-leftthird) to[out=down, in=up] (i2.north);
                \draw (j.south) to[out=down, in=up] (B-belt.in-rightthird) to[out=down, in=up] (j2.north);
        \end{scope}
\end{tz}
\\
\begin{tz}
        \node[Pants, bot, belt scale=1.5] (A) at (0,0) {};
        \node[Copants, bot, anchor=belt, belt scale=1.5, top] (B) at (A.belt) {}; 
        \node (f) [tiny label] at ([yshift=5pt] B.belt) {$f$};
        \begin{scope}[internal string scope]
                \node (i) at ([yshift=\toff] B.leftleg) [above] {$k$};
                \node (j) at ([yshift=\toff] B.rightleg) [above] {$l$};
                \node (i2) at ([yshift=-\boff] A.leftleg) [below] {$i$};
                \node (j2) at ([yshift=-\boff] A.rightleg) [below] {$j$};
                \draw (i.south)
                        to[out=down, in=135] (f.center)
                        to[out=-135, in=up] (i2.north);
                \draw (j.south)
                        to[out=down, in=45] (f.center)
                        to[out=-45, in=up] (j2.north);
        \end{scope}
\end{tz}
\arrgap
&\longxmapsto{\textstyle \ZC (\eta ^\dag)}{\arrlen}
\arrgap
{\color{red} \frac{p \, \delta _{i,k} \delta _{j,l}}{d_i d_j}}
\begin{aligned}
\begin{tikzpicture}
\node (f) [tiny label] at ([yshift=5pt] B.belt) {$f$};
\draw [red strand] (f.center)
        to [out=55, in=up, out looseness=3] +(10pt,0pt)
        to [out=down, in=-55, in looseness=3] (f.center);
\draw [red strand] (f.center)
        to [out=125, in=up, out looseness=3] +(-10pt,0pt)
        to [out=down, in=-125, in looseness=3] (f.center);
\end{tikzpicture}
\end{aligned}
 \begin{tz}
        \node[Cyl, tall, top, bot] (A) at (0,0) {};
        \node[Cyl, tall, top, bot] (B) at (\cobgap + \cobwidth, 0) {};
        \begin{scope}[internal string scope]
                \draw ([yshift=\toff] A.top) node[above]{$k$} -- ([yshift=-\boff] A.bot) node[below]{$i$};
                \draw ([yshift=\toff] B.top) node[above]{$l$} -- ([yshift=-\boff] B.bot) node[below]{$j$};
        \end{scope}
\end{tz}
\end{align}
\begin{align}
\begin{tz}
        \node[Pants, bot, top, belt scale=1.5] (A) at (0,0) {};
        \node[Copants, bot, anchor=leftleg, belt scale=1.5] (B) at (A.leftleg) {};
        \begin{scope}[internal string scope]
                \node (i) at ([yshift=\toff] A.belt) [above] {};
                \node (i2) at ([yshift=-\boff] B.belt) [below] {};
                \draw ([yshift=\toff] A-belt.in-leftthird)
                  to (A-belt.in-leftthird)
                  to [out=down, in=up] (A.leftleg)
                  to [out=down, in=up] (B-belt.in-leftthird)
                  to +(0,-0.3);
                \draw ([yshift=\toff] A-belt.in-rightthird)
                  to (A-belt.in-rightthird)
                  to [out=down, in=up] (A.rightleg)
                  to [out=down, in=up] (B-belt.in-rightthird)
                  to +(0,-0.3);
        \end{scope}
\end{tz}
\arrgap&\longxmapsto{\textstyle \ZC (\epsilon)}{\arrlen} \arrgap
\begin{tz}
        \node[Cyl, top, bot=false] (A) at (0,0) {};
        \node[Cyl, bot] (A2) at (0,-\cobheight) {};
        \begin{scope}[internal string scope]
        \draw ([yshift=\toff] A-top.in-leftthird) to ([yshift=-\boff] A2-bottom.in-leftthird);
        \draw ([yshift=\toff] A-top.in-rightthird) to ([yshift=-\boff] A2-bottom.in-rightthird);
        \end{scope}
\end{tz}
&
\begin{tz}
    \node (A) [Cyl, tall, bot, top] at (0,0) {};
    \begin{scope}[curvein]
        \node (top) at (A.top) [above=\toff] {};
        \node (bot) at (A.bot) [below=\boff] {};
        \draw (top.south) to (bot.north);
    \end{scope}
\end{tz}
\arrgap
& \longxmapsto{\displaystyle \ZC (\epsilon ^\dag)}{\arrlen}
\arrgap
{ \color{red} \frac{1}{p} }
\begin{tz}
    \node (A) [Pants, bot, top] at (0,0) {};
    \node (B) [Copants, bot, anchor=leftleg] at (A.leftleg) {};
    \begin{scope}[curvein]
        \node (top) at (A.belt) [above=0.2cm] {};
        \node (bot) at (B.belt) [below=0.25cm] {};
        \draw (top.south)
            to [out=down, in=up, out looseness=1.5] (A-leftleg.in-leftthird)
            to [out=down, in=up, in looseness=1.5] (bot.north);
        \draw[red strand] (A-leftleg.in-rightthird)
            to [out=up,in=up,looseness=1.7]
                (A-rightleg.in-leftthird)
            to [out=down,in=down, looseness=1.7] (A-leftleg.in-rightthird);
    \end{scope}
\end{tz}
\\
\begin{tz}
    \setlength\cupheight{1.3\cupheight}
    \node[Cup, top] (A) at (0,0) {};
    \node[Cap, bot] (B) at (0,-1.5\cobheight) {};
    \node [tiny label, text=red] (f) at ([yshift=-0.56\cupheight] A) {$\shrunkenf$};
    \node [tiny label, text=red] (g) at ([yshift=0.56\cupheight] B) {$g$};
    \begin{scope}[internal string scope]
        \node (i) at (A) [above=\toff] {};
        \node (j) at (B) [below=\boff] {};
        \draw (f.center) to (i);
        \draw (g.center) to (j);
    \end{scope}
\end{tz}
\arrgap&\longxmapsto{\textstyle \ZC (\mu)}{\arrlen} \arrgap
\begin{tz}
    \setlength\cupheight{1.3\cupheight}
    \node[Cyl, top, bot=false] (A) at (0,0) {};
    \node[Cyl, bot] (B) at (0,-0.5\cobheight) {};
    \node [tiny label, text=red] (f) at ([yshift=-0.66\cupheight] A.top) {$f$};
    \node [tiny label, text=red] (g) at ([yshift=0.56\cupheight] B.bot) {$g$};
    \begin{scope}[internal string scope]
        \node (i) at (A.top) [above=\toff] {};
        \node (j) at (B.bot) [below=\boff] {};
        \draw (f.center) to (i);
        \draw (g.center) to (j);
    \end{scope}
\end{tz}
&
\begin{tz}
\node (A) [Cyl, top, bot=false] at (0,0) {};
\node (B) [Cyl, bot] at (0, -0.5\cobheight) {};
    \draw [red strand] ([below=\boff] B.bot) node [below, text=red] {$i$} to ([above=\toff] A.top) node [above] [red] {$i$};
\end{tz}
\arrgap
&\longxmapsto{\textstyle \ZC(\mu ^\dagger)}{\arrlen}
\arrgap
{\red p \, \delta_{i,0}}
\,\,
\begin{tz}
\node (B) [Cap, bot] at (0,0) {};
\node (A) [Cup, top] at (0,1.5\cobheight) {};
    \begin{scope}[curvein]
        \node (top) at (A.center) [above=0.1cm] {$i$};
        \node (bot) at (B.center) [below=0.15cm] {$i$};
    \end{scope}
\end{tz}
\\
\begin{tz}
\end{tz}
\arrgap&\longxmapsto{\textstyle \ZC (\nu)}{\arrlen}\arrgap
\begin{tz}
    \node[Cup] (A) at (0,0) {};
    \node[Cap, bot] (B) at (0,0) {};
\end{tz}
&
\begin{tz}
\node [Cup] at (0,0) {};
\node [Cap, bot] at (0,0) {};
\begin{scope}[curvein]
\end{scope}
\end{tz}
\arrgap
&\longxmapsto{\textstyle \ZC (\nu ^\dagger)}{\arrlen}
\arrgap
{\red \frac{1}{p}}
\,
\begin{tz}
\begin{scope}[curvein]
\end{scope}
\end{tz}
\end{align}
Here and below, the index 0 refers to the simple unit object.
\end{proposition}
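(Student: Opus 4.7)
The plan is to verify that the specified data $\ZC$ satisfies every relation in the modular presentation $\M$ of \autoref{defn_presentation_of_B}. Since $\M$ is obtained by iteratively extending the monoid, balanced, right-adjoint, and ribbon presentations, I would proceed in stages matched to this structure, exploiting at each stage that the generators of $\ZC$ are built directly from the ribbon structure and duality data of $\cat{C}$. As a preliminary step, one checks that the assignments on 1-morphisms are well-defined bimodules (this is immediate from the bifunctoriality of $\Hom_\cat{C}$) and that the assignments on 2-morphisms are natural transformations between the specified bimodules.

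First, the assignments on objects, on the generating 1-morphisms (pants, copants, cup, cap), and on $\alpha^{\pm 1}, \lambda^{\pm 1}, \rho^{\pm 1}, \beta^{\pm 1}, \theta^{\pm 1}$ make $\cat{C}$ into a balanced braided monoid object in $\bimod$; the pentagon, triangle, hexagon, inverse, and balance relations of $\B$ then reduce directly to the corresponding coherence axioms of the balanced braided monoidal category $\cat{C}$. The adjunctions $\tikztinypants \dashv \tikztinycopants$ and $\tikztinycup \dashv \tikztinycap$ witnessed by $\eta, \epsilon, \mu, \nu$ are instances of the general adjunction $F_* \dashv F^*$ in $\bimod$ from \autoref{adj_lemma}, applied to the tensor-product and unit functors of $\cat{C}$; the zig-zag relations of $\B^\L$ therefore hold automatically.

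Next, I would verify the ribbon-presentation relations: that the composites $\phiN, \phiM$ (defined by the geometric composites \eqref{defn_of_phileft}, \eqref{defn_of_phiright}) have the prescribed inverses. Their actions on internal string diagrams are given by \eqref{eq:phiaction}, and I would explicitly invert these actions using the evaluation and coevaluation morphisms supplied by the rigidity of $\cat{C}$, then check that the composites with the specified inverse formulas yield the identity bimodule map by direct string-diagram manipulation. The ribbon (twist-compatibility) relation \eqref{tortile} follows from \autoref{def:ribboncategory} applied to $\cat{C}$. Similarly the additional adjunctions witnessed by the daggered units and counits $\eta^\dag, \epsilon^\dag, \mu^\dag, \nu^\dag$ unfold into computable composites on internal string diagrams using the actions in the proposition's statement, and the constant $p$ in those actions is calibrated precisely so that the requisite composites yield the identity.

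The main obstacle lies in the pivotality and modularity axioms, which genuinely use that $\cat{C}$ is a modular tensor category (and not merely a ribbon category) and which fix the role of the constant $p$. Pivotality \eqref{piv_on_sphere} unwinds, using the internal string diagram actions of $\epsilon, \epsilon^\dag, \mu, \mu^\dag$, into the spherical-structure identity for $\cat{C}$ with its canonical pivotal structure, which holds for any ribbon category. The modularity axiom \eqref{MOD} is the hardest: expanding both sides on internal string diagrams produces two scalar multiples of the same canonical cup--cap picture, where one side involves the Gauss sums $p^\pm$ via the defining relations \eqref{eq:defppluspminus} (linking $\theta$ on one strand and $\theta^{-1}$ on the other to the identity scaled by $p^\pm$), and the other side picks up the normalizing factors $1/p$ and $p$ from $\epsilon^\dag$ and $\mu^\dag$. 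The hypothesis $p^2 = p^+/p^-$ is exactly the identity needed to make these normalizing scalars cancel, which is why the extra datum of a square root of the anomaly is necessary and sufficient. Once all relations are verified, $\ZC$ is a well-defined modular structure in $\bimod$, and by the semisimplicity results of the Appendix it factors through $\twovect$.
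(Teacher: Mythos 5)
Your plan follows the same route as the paper's proof: one checks each relation of $\M$ in turn, observing that the monoid, balanced and ribbon relations are coherence axioms of $\cat C$ itself, that the adjunction relations witnessed by $\eta,\epsilon,\mu,\nu$ are instances of $F_*\dashv F^*$ from \autoref{adj_lemma}, that invertibility of $\phiN,\phiM$ follows from rigidity via their internal-string actions, and that only pivotality and modularity require genuine computation. The one place where your account is imprecise is at the crux, the modularity axiom \eqref{MOD}. Expanding the two paths does not invoke the Gauss-sum relations \eqref{eq:defppluspminus} directly: the top path yields $\tfrac1p$ times a diagram in which a loop encircles the strand labelled $i$, the bottom path yields $\delta_{i,0}\,p$ times the identity, and the axiom therefore reduces to the ``killing ring'' identity \eqref{cutting_eqn}, that such an encirclement equals $\delta_{i,0}\,p^+p^-$. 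This is Corollary~3.1.11 of Bakalov--Kirillov and is \emph{equivalent} to invertibility of the $S$-matrix; it is precisely here that modularity of $\cat C$, rather than mere ribbonness, is used, and you should cite it (or reprove it) rather than \eqref{eq:defppluspminus}. Moreover the resulting scalar matching forces $p^2=p^+p^-$ (a square root of the \emph{global dimension}), consistently with \autoref{lem:psquared} and with $p=p^+=p^-$ in the anomaly-free case; the datum of such a $p$ is interchangeable with a square root of the anomaly by \autoref{remark:roots}, but your assertion that the relation $p^2=p^+/p^-$ ``is exactly the identity needed to make these normalizing scalars cancel'' is not correct as written, since the constant $p$ appearing in the formulas for $\ZC(\epsilon^\dag)$, $\ZC(\mu^\dag)$ and the $s$-matrix must square to $p^+p^-$.
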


\begin{proof}
We must check that each axiom of a modular structure is satisfied. The majority of the axioms are immediate, as they are also modular tensor category axioms: this covers the pentagon, triangle, hexagon, twist, and ribbon relations. Using internal string diagrams, invertibility of  $\phiN$ and $\phiM$ is straightforward to verify, using their actions on internal string diagrams given in~\eqref{eq:phiaction} and rigidity of the modular tensor category. The 8 adjunction equations involving $\epsilon$, $\eta$, $\mu$, and $\nu$ and their opposites can also be straightforwardly verified by direct calculation.

The left side of the pivotality relation~\eqref{piv_on_sphere} has the following effect on internal string diagrams:
\mediumbordisms
\[
\begin{tz}
    \node[Cap, bot] (A) at (0,0) {};
    \node[Cup] (B) at (0,0) {};
    \selectpart[green, inner sep=1pt] {(A-center)};
\end{tz}
\gap\xmapsto{\ZC (\epsilon ^\dag)}\gap
{\red \frac 1 {p}}
\begin{tz}
    \node [Pants] (A) at (0,0) {};
        \node[Cap] at (A.belt) {};
    \node[Copants, anchor=leftleg] (B) at (A.leftleg) {};
    \node[Cup] at (B.belt) {};
    \selectpart[green, inner sep=1pt] {(A-rightleg)};
    \draw [red strand] (B.rightleg) 
        to [looseness=1.7, out=down, in=down] (A.leftleg)
        to [out=up, in=up, looseness=1.7] (B.rightleg);
\end{tz}
\gap\xmapsto{\ZC (\mu ^\dag)}\gap
{\red \frac {p} {p}}
\begin{tz}
    \node [Pants] (A) at (0,0) {};
        \node[Cap] at (A.belt) {};
    \node[Cyl, height scale=1.5, anchor=top] (B) at (A.leftleg) {};
    \node[Cup] (C) at (A.rightleg) {};
    \node[Copants, anchor=leftleg] (D) at (B.bottom) {};
    \node[Cap] (E) at (D.rightleg) {};
    \node[Cup] at (D.belt) {};
    \selectpart[green] {(A-rightleg) (D.rightleg south)};
\end{tz}
\gap\xmapsto{\ZC (\mu)}\gap
\begin{tz}
    \node [Pants] (A) at (0,0) {};
        \node[Cap] at (A.belt) {};
    \node[Copants, anchor=leftleg] (B) at (A.leftleg) {};
    \node[Cup] at (B.belt) {};
\end{tz}
\gap\xmapsto{\ZC (\epsilon)}\gap
\begin{tz}
    \node [Cap] at (0,0) {};
    \node[Cup] at (0,0) {};
\end{tz}
\]
Overall this gives the identity, and so the pivotality axiom is satisfied.

The top path of the modularity axiom~\eqref{MOD} has the following effect on internal string diagrams:
\normalbordisms
\begin{equation}
\nonumber
\begin{tz}
    \node[Cyl, top, tall] (A) at (0,0) {};
    \draw [red strand] ([yshift=-\boff] A.bot) node [below, red] {$i$} to ([yshift=\toff] A.top);
\end{tz}
\gap\xmapsto{\ZC(\epsilon ^\dag)}\gap
{\color{red} \frac{1}{p}}
\begin{tz}
        \node[Pants, top] (A) at (0,0) {};
        \node[Copants, anchor=leftleg] (B) at (A.leftleg) {};
        \draw[red strand] (B.rightleg) 
                to[looseness=1.7, out=down, in=down]        
          (A-leftleg.in-rightthird)
                to[out=up, in=up, looseness=1.7] (B.rightleg);
\draw [red strand] ([yshift=-\boff] B.belt) node [below, red] {$i$} to (B.belt) to [out=up, in=down] (A-leftleg.in-leftthird) to [out=up, in=down] (A.belt) to ([yshift=\toff] A.belt);
        \selectpart[green, inner sep=0.6pt]{(A-leftleg)};
        \selectpart[color=blue, inner sep=0.6pt] {(A-rightleg)};
\end{tz}
\gap\xmapsto{\ZC({\color{green} \theta}, {\color{blue} \theta ^{\inv}})}\gap
{\red \frac 1 {p}}
\begin{tz}
    \node [Pants, top, bot, left leg scale=1.5] (A) at (0,0) {};
    \node [Copants, anchor=leftleg, bot, left leg scale=1.5] (B) at (A.leftleg) {};
        \begin{knot}[draft mode=off]
    \strand [red strand] (B.rightleg) 
        to [looseness=1.5, out=down, in=down]         
          (A-leftleg.in-leftthird)
        to [out=up, in=up, looseness=1.5]
        (B.rightleg);
\strand [red strand] ([yshift=-\boff] B.belt) node [below, red] {$i$}
  to [out=up, in=down] (A-leftleg.in-rightquarter)
  to [out=up, in=down] ([yshift=\toff] A.belt);
  \flipcrossings{2}
\end{knot}
\draw [invisible strand, on layer=foreground] (B.rightleg)
    to [looseness=1.5, out=up, in=up]         
       node[draw=red, pos=0.1, text=red, tiny label] {$\overline\theta$}
       node[draw=red, pos=0.9, text=red, tiny label] {$\theta$}
    (A-leftleg.in-leftthird);
\draw [invisible strand] 
  (A-leftleg.in-rightquarter)
  to [out=up, in=down] 
    node[draw=red, pos=0.14, text=red, tiny label] {$\theta$}
  ([yshift=\toff] A.belt);
\end{tz}
\gap\xmapsto{\ZC(\epsilon)}\gap
{\red \frac 1 {p}}
\begin{tz}
    \node[Cyl, top, bottom scale=1.8, bot=false, anchor=bot] (X) at (0,0) {};
    \node [Cyl, bot, anchor=top, top scale=1.8] (Y) at (X.bot) {};
    \begin{knot}[draft mode=off]
    \strand [red strand] (0.3,0) 
      to [looseness=1.6, out=down, in=down]         
          (-0.3,0)
      to [out=up, in=up, looseness=1.6] (0.3,0);
    \strand [red strand] ([yshift=-\boff] Y.bot) node [below, red] {$i$}
      to [out=up, in=down] (X.bot)
      to [out=up, in=down] ([yshift=\toff] X.top);
  \flipcrossings{2}
\end{knot}
\node[draw=red, text=red, tiny label] at (0,-1.1\cobwidth) {$\theta$};
\end{tz}
\end{equation}
The bottom path has the following effect:
\begin{equation}
\begin{tz}
    \node[Cyl, top, tall] (A) at (0,0) {};
    \draw [red strand] ([yshift=-\boff] A.bot) node [below, red] {$i$} to ([yshift=\toff] A.top) node [above, red] {};
\end{tz}
\,\xmapsto{\ZC(\mu ^\dag)}\,
{\color{red} \delta _{i,0} p}
\begin{tz}
\node [Cap] at (0,0) {};
\node (A) [Cup, top] at (0,2*\cobheight) {};
\node at (0,-0.25cm) [below, red, white] {$i$};
\node at ([yshift=0.2cm] A.center) [above, red] {};
\end{tz}
\,\xmapsto{\ZC(\mu)}\,
{\color{red} \delta _{i,0} p}\,
\begin{tz}
    \node[Cyl, top, tall] (A) at (0,0) {};
\node at ([yshift=-0.25cm] A.bot) [below, red, white] {$i$};
\node at ([yshift=0.2cm] A.top) [above, red] {};
\end{tz}
\end{equation}
Thus the final diagrams in each sequence are equal if and only if the following identity holds in \cat{S}:
\begin{equation}
\label{cutting_eqn}
\begin{tz}[yscale=0.7]
    \begin{knot}[draft mode=off]
    \strand [black strand] (0.3,0) 
      to [looseness=2.3, out=down, in=down]         
          (-0.3,0) 
      to [out=up, in=up, looseness=2.3] (0.3,0);
    \strand [black strand] (0, -0.9) node[below, black] {$i$} to (0, 0.9);
  \flipcrossings{2}
\end{knot}
\end{tz}
\gap = \gap \delta_{i,0 \,} p ^2 \gap=\gap \delta_{i, 0} p ^+ p ^-
\end{equation}
This is a standard identity for modular tensor categories over an algebraically-closed field~\cite[Corollary 3.1.11]{bk01-ltc}, which is equivalent to the $S$-matrix being invertible~\cite{ker99-blk, PaperII}.
\end{proof}

\begin{proposition}
\label{equivalentMstructures}
Given a braided monoidal equivalence $C \simeq C'$ of simple modular tensor categories that preserves the twist, and a square root $p$ of their global dimensions (which must be equal), there is an equivalence of modular structures $Z_C ^p \simeq Z_{C'} ^{p}$ in $\bimod$.
\end{proposition}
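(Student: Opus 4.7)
The plan is to build an invertible 1-morphism $\phi: Z_C^p \to Z_{C'}^p$ in $\Rep_\bimod(\M)$. Since $F: C \to C'$ is a linear equivalence, the bimodule $F_*: C \proarrow C'$ is an equivalence in $\bimod$ (cf.\ \autoref{pro:Cauchycompletionisequiv}), so we take the component of $\phi$ at the generating circle object to be $\phi_{S^1} := F_*$. For each generating 1\-morphism $\sigma \in \{\tikztinypants, \tikztinycopants, \tikztinycup, \tikztinycap\}$, we build an invertible 2\-morphism $\phi_\sigma$ from the monoidal coherence isomorphisms $F(X \otimes Y) \xrightarrow{\sim} F(X) \otimes F(Y)$ and $F(I) \xrightarrow{\sim} I'$: unpacking bimodule composition using the formulas (\ref{eq:string1}\_\ref{eq:string4}), both sides of the required 2\-isomorphism are naturally identified with Hom-spaces between the same pair of objects, and $\phi_\sigma$ is the isomorphism coming from $F$'s structure maps.

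Next I would verify one equation per generating 2\-morphism of $\M$. The equations for $\alpha, \lambda, \rho$ are exactly the monoidal coherence conditions satisfied by $F$; the equation for $\beta$ is the braided condition; and the equation for $\theta$ is the twist-compatibility. For $\eta, \epsilon, \mu, \nu$, rather than checking the adjunction equations by hand, I would invoke \autoref{uniqueconjugate}: the invertible 2\-morphisms $\phi_{\tikztinypants}, \phi_{\tikztinycopants}, \phi_{\tikztinycup}, \phi_{\tikztinycap}$ uniquely determine a transported $\B^\L$\-structure on the target, and this transported structure must coincide with the one built into $Z_{C'}^p$ because both are built from the canonical adjunctions $F_*^{(-)} \dashv F^{*(-)}$ in $\bimod$ (alternatively this follows from \autoref{pro:addingrightadjointsforrepinprof} applied to the core). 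Consequently the equations for $\phiN^{\pm1}, \phiM^{\pm1}$ hold as well, since the Frobeniusators are defined as composites of $\eta, \alpha, \epsilon$ (see~\eqref{defn_of_phileft} and~\eqref{defn_of_phiright}).

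This reduces the problem to the equations for the remaining generators of $\M$: the daggers $\eta^\dag, \epsilon^\dag, \mu^\dag, \nu^\dag$, the pivotality relation~\eqref{piv_on_sphere}, the additional rigidity relations, the additional adjunction relations, and the modularity relation~\eqref{MOD}. Each of these is prescribed in \autoref{constructtqft} by an explicit internal-string-diagram formula involving the scalar $p$, quantum dimensions $d_i$, and delta-functions on simple objects. Since a braided monoidal equivalence preserving the twist sends simple objects to simple objects and preserves their quantum dimensions (quantum dimensions being expressible purely in terms of duals, units, counits, and the braided/pivotal structure that $F$ preserves up to coherent iso), and since both sides use the same scalar $p$, the formulas on the two sides are identified by $\phi$, so each equation holds.

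Finally, since $\phi_{S^1} = F_*$ is an equivalence in $\bimod$ and each $\phi_\sigma$ is invertible, the resulting 1\-morphism $\phi$ is an equivalence of modular structures. The main obstacle is bookkeeping for the last batch of axioms: the internal-string-diagram formulas for the dagger generators and the modularity/pivotality relations mix coefficients ($p$, $d_i$, $\delta_{ij}$) with ribbon-graph data, and one must carefully track that each coefficient and each string-diagram piece transports correctly along the equivalence $F$. Since $F$ preserves the entire ribbon structure and the scalar $p$ is shared, this tracking is routine but not vacuous.
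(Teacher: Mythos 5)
Your proposal is correct and follows essentially the same route as the paper: reduce to an equivalence of $\B^\L$-structures via \autoref{pro:addingrightadjointsforrepinprof}, then verify the compatibility squares for the remaining dagger generators by observing that a twist-preserving braided equivalence preserves the canonical pivotal structure and hence quantum dimensions, so the explicit internal-string-diagram formulas (with the shared scalar $p$) match on both sides. One minor point: the pivotality, modularity, and additional adjunction/rigidity \emph{relations} need no separate check for the morphism $\phi$ (they hold in each representation by \autoref{constructtqft}); only the generating 2\-morphisms $\eta^\dag,\epsilon^\dag,\mu^\dag,\nu^\dag,\phiN^\inv,\phiM^\inv$ require compatibility squares, which is exactly what the paper verifies.
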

\begin{proof}
A braided monoidal equivalence that preserves the twist is exactly an equivalence of $\B$-structures in \twovect. By \autoref{pro:addingrightadjointsforrepinprof}, this gives rise to an equivalence of $\B^\L$-structures in \bimod. The modular presentation $\M$ is a 2\-extension of $\B^\L$, with the following additional generating 2\-morphisms:
\smallbordisms
\begin{align*}
\begin{tz} 
 \node[Pants, bot] (A) at (0,0) {};
 \node[Copants, top, bot, anchor=belt] at (A.belt) {};
\end{tz}
&\longxdoubleto{\eta ^\dag}
\begin{tz} 
 \node[Cyl, tall, top, bot] (A) at (0,0) {};
 \node[Cyl, tall, top, bot] (B) at (2*\cobwidth, 0) {};
\end{tz}
&
\begin{tz} 
 \node[Cyl, top, bot, tall] (A) at (0,0) {};
\end{tz}
&\longxdoubleto{\epsilon ^\dag}
\begin{tz} 
 \node[Pants, top, bot] (A) at (0,0) {};
 \node[Copants, bot, anchor=leftleg] at (A.leftleg) {};
\end{tz}
\\[5pt]
\begin{tz}
        \node[Cap, bot] (A) at (0,0) {};
        \node[Cup] at (0,0) {};
\end{tz}
&\longxdoubleto{\nu ^\dag}{}
&
\begin{tz}
        \node[Cyl, top, bot, tall] (A) at (0,0) {};
\end{tz}
&\longxdoubleto{\mu ^\dag}
\begin{tz}
        \node[Cup, top] (A) at (0,0) {};
        \node[Cap, bot] (B) at (0,-2*\cobheight) {};
\end{tz}
\\[5pt]
        \begin{tz}
                \node[Copants, top, bot] (A) at (0,0) {};
                \node[Pants, bot, anchor=belt] (B) at (A.belt) {};
        \end{tz}
        &\longxdoubleto{\phiN^\inv}
        \begin{tz}
         \node[Pants, top, bot] (A) at (0,0) {};
         \node[Cyl, bot, anchor=top] (B) at (A.leftleg) {};
         \node[Copants, bot, anchor=leftleg] (C) at (A.rightleg) {};
         \node[Cyl, top, bot, anchor=bottom] (D) at (C.rightleg) {}; 
        \end{tz}
&
        \begin{tz}
                \node[Copants, top, bot] (A) at (0,0) {};
                \node[Pants, bot, anchor=belt] (B) at (A.belt) {};
        \end{tz}
         &
         \longxdoubleto{\phiM^\inv}
        \begin{tz}
         \node[Pants, top, bot] (A) at (0,0) {};
         \node[Cyl, bot, anchor=top] (B) at (A.rightleg) {};
         \node[Copants, bot, anchor=rightleg] (C) at (A.leftleg) {};
         \node[Cyl, top, bot, anchor=bottom] (D) at (C.leftleg) {}; 
         \end{tz}        
\end{align*}
To lift an equivalence of $\B^\L$\-structures $C \simeq C'$ to an equivalence of $\M$\-structures $Z_C ^p \simeq Z _{C'} ^p$, there is no extra data to be specified. One must verify that for each additional 2\-generator listed above, a compatibility equation is satisfied.

We can verify these equations using internal string diagrams. An equivalence $F : C \to C'$ gives rise to a bimodule $F_* : C \proarrow C'$, which is represented using internal string diagrams as follows: 
\normalbordisms
\begin{equation}
\begin{tz}
    \node (A) [Cyl, blue, top, whiteskirt] at (0,0) {};
    \node (B) [Cyl, anchor=top] at (A.bottom) {};
    \draw [red strand] ([yshift=-\boff] B.bottom) node [below] {$A$} to (A-bottom.south);
    \draw [red strand blue back] (A-bottom.south) to (A.top);
    \draw [red strand no back] (A.top) to +(0,0.3) node [above] {$F(A)$};
    \node [right=6pt] at (A.bottom) {$F$};
\end{tz}
\end{equation}
The white and blue parts of the cobordism contain internal strings valued in the ribbon categories $C$ and $C'$ respectively. The bimodule $F_* : C \proarrow C'$ is the interface between the blue and white parts. If a string passing through corresponds below the interface to the object $A$ of $C$, then above the interface it corresponds to $F(A)$ in $C'$.

Using this notation, the generators of the equivalence act as follows, where we are making implicit use of the canonical isomorphisms $F(A) \otimes_{C'} F(B) \simeq F(A \otimes_C B)$ and $F(I_C) \simeq I_{C'}$:
\mediumbordisms
\begin{align}
\label{psi_pants_and_psi_copants}
\begin{tz}
        \node at (0,0) {};
       \node (A) [Cyl, blue, top, whiteskirt, height scale=0.7] at (0,0) {};
       \node (A2) [Cyl, anchor=top, height scale=0.7] at (A.bottom) {};
       \node (B) [Pants, anchor=belt] at (A2.bottom) {};
       \draw [red strand] ([yshift=-\boff] B.leftleg) node [below] {$A$} to +(0,0.3) to [out=up, in=down] ([yshift=-1pt] A2-bottom.in-leftthird) to (A-bottom.in-leftthird);
        \draw [red strand blue back] ([yshift=-1pt] A-bottom.in-leftthird) to (A-top.in-leftthird);
        \draw [red strand no back] (A-top.in-leftthird) to +(0,0.3);
        \draw [red strand] ([yshift=-\boff] B.rightleg) node [below] {$B$} to +(0,0.3) to [out=up, in=down] ([yshift=-1pt] A2-bottom.in-rightthird) to (A-bottom.in-rightthird);
        \draw [red strand blue back] ([yshift=-1pt] A-bottom.in-rightthird) to (A-top.in-rightthird);
        \draw [red strand no back] (A-top.in-rightthird) to +(0,0.3);
        \node at ([yshift=\toff] A.top) [above] {\vphantom{$F($}};
        \fixboundingbox
        \node at ([yshift=\toff] A.top) [above] {$F(A \otimes B)$};
\end{tz}
&\longxmapsto{\psi_{\tikzverytinypants}}{1cm}
\begin{tz}
    \node (B) [Pants, blue, top] at (0,0) {};
    \node (C) [Cyl, anchor=top, blue, whiteskirt, height scale=0.7] at (B.leftleg) {};
    \node (D) [Cyl, anchor=top, blue, whiteskirt, height scale=0.7] at (B.rightleg) {};
    \node (C2) [Cyl, anchor=top, height scale=0.7] at (C.bottom) {};
    \node (D2) [Cyl, anchor=top, height scale=0.7] at (D.bottom) {};
    \draw [red strand] ([yshift=-\boff] C2.bottom) node [below] {$A$} to ([yshift=-1pt] C.bottom);
    \draw [red strand no back] (C.bottom) to  (C.top);
    \draw [red strand no back] (D.bottom) to  (D.top);
    \draw [red strand blue back] ([yshift=-1pt] B.leftleg) to [out=up, in=down] (B-belt.in-leftthird);
    \draw [red strand no back] (B-belt.in-leftthird) to +(0,0.3);
    \draw [red strand] ([yshift=-\boff] D2.bottom) node [below] {$B$} to ([yshift=-1pt] D.bottom);
    \draw [red strand blue back] ([yshift=-1pt] B.rightleg) to [out=up, in=down] (B-belt.in-rightthird);
    \draw [red strand no back] (B-belt.in-rightthird) to +(0,0.3);
    \node at ([yshift=\toff] B.belt) [above] {\vphantom{$F($}};
    \fixboundingbox
    \node at ([yshift=\toff] B.belt) [above] {$F(A \otimes B)$};
\end{tz}
&
\begin{tz}
\node (A) [Copants] at (0,0) {};
\node (B2) [Cyl, anchor=bottom, height scale=0.7] at (A.leftleg) {};
\node (C2) [Cyl, anchor=bottom, height scale=0.7] at (A.rightleg) {};
\node (B) [Cyl, blue, whiteskirt, top, anchor=bottom, height scale=0.7] at (B2.top) {};
\node (C) [Cyl, blue, whiteskirt, top, anchor=bottom, height scale=0.7] at (C2.top) {};
\draw [red strand] ([yshift=-\boff] A-belt.in-leftthird) to +(0,0.3) to [out=up, in=down] ([yshift=-1pt] B2.bottom) to (B2.top);
\draw [red strand blue back] ([yshift=-1pt] B.bottom) to (B.top);
\draw [red strand] ([yshift=-\boff] A-belt.in-rightthird) to +(0,0.3) to [out=up, in=down] ([yshift=-1pt] C2.bottom) to (C2.top);
\draw [red strand blue back] ([yshift=-1pt] C.bottom) to (C.top);
\node at ([yshift=-\boff] A.belt) [below] {$A \otimes B$};
\node [above] at ([yshift=\toff] B.top) {$\vphantom{(F}$};
\fixboundingbox
\draw [red strand no back] (B.top) to +(0,0.3) node [above] {$F(A)\,\,\,\,$};
\draw [red strand no back] (C.top) to +(0,0.3) node [above] {$\,\,\,\,F(B)$};
\end{tz}
&\longxmapsto{\psi_{\tikzverytinycopants}}{1cm}
\begin{tz}
\node (A) [Copants, blue, top] at (0,0) {};
\node (B2) [Cyl, anchor=top, height scale=0.7, blue, whiteskirt] at (A.belt) {};
\node (B) [Cyl, anchor=top, height scale=0.7] at (B2.bottom) {};
\draw [red strand] ([yshift=-\boff] B-bottom.in-leftthird) to ([yshift=-1pt] B2-bottom.in-leftthird);
\draw [red strand blue back] (B2-bottom.in-leftthird) to ([yshift=-1pt] A-belt.in-leftthird) to [out=up, in=down] (A.leftleg);
\draw [red strand] ([yshift=-\boff] B-bottom.in-rightthird) to ([yshift=-1pt] B2-bottom.in-rightthird);
\draw [red strand blue back] (B2-bottom.in-rightthird) to ([yshift=-1pt] A-belt.in-rightthird) to [out=up, in=down] (A.rightleg);
\node at ([yshift=-\boff] B.bottom) [below] {$A \otimes B$};
\node at ([yshift=\toff] A.leftleg) [above] {$\vphantom($};
\fixboundingbox
\draw [red strand no back] (A.leftleg) to +(0,0.3) node [above] {$F(A)\,\,\,\,$};
\draw [red strand no back] (A.rightleg) to +(0,0.3) node [above] {$\,\,\,\,F(B)$};
\end{tz}
\\
\begin{tz}
\node (A) [Cup, height scale=1] at (0,0) {};
\node (C) [Cyl, anchor=bottom, top, height scale=0.7] at (0,0) {};
\node (B) [Cyl, blue, whiteskirt, anchor=bottom, top, height scale=0.7] at (C.top) {};
\end{tz}
&\longxmapsto{\psi_{\tikzverytinycup}}{1cm}
\begin{tz}
\node (A) [Cup, blue, height scale=1, top] at (0,0) {};
\node [Cup, invisible] at (0,-1.4\cobheight) {};
\end{tz}
&
\begin{tz}
\node (A) [Cap, height scale=1] at (0,0) {};
\node (A) [Cap, invisible] at (0,1.4\cobheight) {};
\end{tz}
&\longxmapsto{\psi_{\tikzverytinycap}}{1cm}
\begin{tz}
\node (A) [Cap, blue, height scale=1] at (0,0) {};
\node (B) [Cyl, anchor=top, height scale=0.7, blue, whiteskirt] at (0,0) {};
\node (C) [Cyl, anchor=top, height scale=0.7] at (B.bottom) {};
\end{tz}
\end{align}
Using this technique, the required equations can be shown to hold. For example, for $\epsilon^\dagger$, we must verify the following equation: 
\mediumbordisms
\begin{equation} 
\begin{tz}[scale=3.5, yscale=0.95]
\node (1) at (0,1) {$\begin{tz}
        \node (A) [Cyl, blue, top, height scale=0.7, whiteskirt] at (0,0) {};
        \node (B) [Cyl, anchor=top, height scale=0.7] at (A.bottom) {};
        \selectpart[green]{(A)};
        \selectpart[red]{(B)};
\end{tz}$};
\node (2) at (0,0) {$
\begin{tz}
     \node (A) [Copants] at (0,0) {};
     \node (B) [Pants, anchor=leftleg] at (A.leftleg) {};
     \node (CA) [Cyl, anchor=bottom, height scale=0.7] at (B.belt) {};
     \node (C) [Cyl, anchor=bottom, blue, top, whiteskirt, height scale=0.7] at (CA.top) {};
     \selectpart[green]{(B-leftleg) (B-rightleg) (C-top)};
\end{tz}$};
\node (3) at (1,0) {$
\begin{tz}
     \node (A) [Copants] at (0,0) {};
     \node (AA) [Cyl, anchor=bottom, height scale=0.7] at (A.leftleg) {};
     \node (BB) [Cyl, anchor=bottom, height scale=0.7] at (A.rightleg) {};
     \node (AB) [Cyl, anchor=bottom, blue, whiteskirt, height scale=0.7] at (AA.top) {};
     \node (BB) [Cyl, anchor=bottom, blue, whiteskirt, height scale=0.7] at (BB.top) {};      
     \node (B) [Pants, anchor=leftleg, blue, top] at (AB.top) {};
     \selectpart[green]{(B-leftleg) (B-rightleg) (A-belt)};
\end{tz}$};
\node (4) at (1,1) {$
        \begin{tz}
     \node (A) [Copants, blue] at (0,0) {};
     \node (B) [Pants, anchor=leftleg, blue, top] at (A.leftleg) {};
     \node (C) [Cyl, anchor=top, blue, whiteskirt, height scale=0.7] at (A.belt) {};
     \node (D) [Cyl, anchor=top, height scale=0.7] at (C.bottom) {};

\end{tz}$};

\begin{scope}[double arrow scope]
\draw  (1) to node [auto, swap] {$\color{red} Z ^p _{C} (\epsilon ^\dag)$} (2);
\draw (2) to node [auto, swap] {$\psi _{\tikzverytinypants}$} (3);
\draw  (3) to node [auto, swap] {$\psi _{\tikzverytinycopants}$} (4);
\draw (1) to node [above] {$\color{green} Z ^p _{C'}(\epsilon ^\dag)$} (4);
\end{scope}
\end{tz}
\end{equation}
In terms of internal string diagrams, if we insert the formulae \eqref{psi_pants_and_psi_copants} for $\psi_{\tikzverytinypants}$ and $\psi_{\tikzverytinycopants}$, we need to verify the following:
\begin{equation}
\scalecobordisms{1.7}
\label{need_to_verify}
\frac{1}{p} \sum_{i \in I'} \dim_{C'}(S'_i) \, \,
\begin{tz}[scale=3.5]
     \node (A) [Copants, blue, whiteskirt, belt scale=1.5, right leg scale=0.7] at (0,0) {};
     \node (B) [Pants, blue, anchor=leftleg, top, belt scale=1.5, right leg scale=0.7] at (A.leftleg) {};
     \draw [red strand no back] ([yshift=-3pt] A-belt.in-leftthird) to[out=up, in=down] (B-leftleg.in-leftthird) to[out=up, in=down] ([yshift=3pt] B-belt.in-leftthird);
    \node (1) [tiny label, rectangle, inner sep=2pt, minimum width=20pt, minimum height=11pt] at ([yshift=7pt] $(B-leftleg.west)!0.5!(B-rightleg.east)$) {$\eta'_i$};
    \node (2) [tiny label, rectangle, inner sep=2pt, minimum width=20pt, minimum height=11pt] at ([yshift=-7pt] $(B-leftleg.west)!0.5!(B-rightleg.east)$) {$\epsilon'_i$};
\draw [red strand blue back] (B-leftleg.in-rightthird) to [out=up, in=down] (1.-140);
\draw [red strand blue back] (B.rightleg) to [out=up, in=down] node [font=\tiny, red, right] {$S' _i$} (1.-40);
\draw [red strand blue back] (B-leftleg.in-rightthird) to [out=down, in=up] (2.140);
\draw [red strand blue back] (B.rightleg) to [out=down, in=up] (2.40);
\node [rotate=180] at (B.rightleg) {\arrownode};
\node [rotate=0] at (B-leftleg.in-rightthird) {\arrownode};
\end{tz}
      \quad\stackrel{}{=} \quad 
\frac{1}{p} \sum_{i \in I} \dim_C (S_i) \, \,
\begin{tz}[scale=3.5]
     \node (A) [Copants, blue, whiteskirt, belt scale=1.5, right leg scale=0.7] at (0,0) {};
     \node (B) [Pants, blue, anchor=leftleg, top, belt scale=1.5, right leg scale=0.7] at (A.leftleg) {};
     \draw [red strand no back] ([yshift=-3pt] A-belt.in-leftthird) to[out=up, in=down] (B-leftleg.in-leftthird) to[out=up, in=down] ([yshift=3pt] B-belt.in-leftthird);
    \node (1) [tiny label, rectangle, inner sep=2pt, minimum width=20pt, minimum height=11pt] at ([yshift=7pt] $(B-leftleg.west)!0.5!(B-rightleg.east)$) {$F(\eta_i)$};
    \node (2) [tiny label, rectangle, inner sep=2pt, minimum width=20pt, minimum height=11pt] at ([yshift=-7pt] $(B-leftleg.west)!0.5!(B-rightleg.east)$) {$F(\epsilon_i)$};
\draw [red strand blue back] (B-leftleg.in-rightthird) to [out=up, in=down] (1.-140);
\draw [red strand blue back] (B.rightleg) to [out=up, in=down] node [font=\tiny, red, right] {$F(S_i)$} (1.-40);
\draw [red strand blue back] (B-leftleg.in-rightthird) to [out=down, in=up] (2.140);
\draw [red strand blue back] (B.rightleg) to [out=down, in=up] (2.40);
\node [rotate=180] at (B.rightleg) {\arrownode};
\node [rotate=0] at (B-leftleg.in-rightthird) {\arrownode};
\end{tz}
\end{equation}
The left hand side of \eqref{need_to_verify} involves a sum over loops labeled by the representative simple objects $S_i'$ of $C'$, with $\epsilon'_i : I' \rightarrow S'_i{}^* \otimes S'_i$ and $\eta'_i : S'_i{}^* \otimes S'_i \rightarrow I'$ the cup and cap maps specified by combining the chosen dual and pivotal structures on $C'$. The right hand side involves a sum over loops labeled by $F(S_i)$, where the $S_i$ are the representative simple objects of $C$, and $\eta_i : I \rightarrow S_i^* \otimes S_i$ and $\epsilon_i : S_i^* \otimes S_i \rightarrow I$ the cup and cap maps specified by combining the chosen dual and pivotal structures on $C$.
 Since $F$ is an equivalence, we can take the representative simple objects $S_i'$ to be $F(S_i)$. Also, since the pivotal structure is determined by the braided monoidal and twist structure, a braided monoidal equivalence which preserves the twist will preserve the pivotal structure. Hence $\dim_{C'} (F(X_i)) = \dim_C (X_i)$, so \eqref{need_to_verify} holds. The other five equations can be verified in a similar way. 
\end{proof}

We now prove our main classification theorem. We now do not assume that a TQFT $Z$ satisfies the simplicity condition $Z(\tikztinysphere) \simeq k$. We allow the unit object of an MTC to have simple proper subobjects $s \hookrightarrow I$, choosing  one representative from each of the isomorphism classes,  which we call the \textit{factors}. We say an object $T$ is \textit{preserved} by a factor $s$ if  $s \otimes t \simeq t$, and we write $[s]$ for a maximal set of non-isomorphic simple objects preserved by $s$. For each factor $s$ the \textit{Gauss sums} $p_s^+,p_s^- \in k$ are defined by $p_s^+ = \sum_{i \in [S]} \theta _i ^{} d_i ^2$ and $\smash{p_s^- = \sum _{i \in [S]} \theta _i ^\inv d_i ^2}$, where $d_i$ is the quantum dimension of the simple object $i \in [s]$.  The ratio $\smash{p _s^+/p_s ^-}$ is the \emph{anomaly} of the factor, and the product $p _s^+ p _s^-$ is the \emph{global dimension} of the factor.

\begin{customthm}{\ref{thm:introcsig}}
Linear representations of $\Bordcsig$ are classified by modular tensor categories equipped with a square root of the anomaly in each factor.
\end{customthm}

\begin{proof}
We  establish a bijection between:
\begin{itemize}
\item the collection of modular tensor categories equipped in each factor with a square root of the anomaly, up to braided balanced monoidal equivalence preserving the extra data; and
\item the collection of modular structures in \twovect, up to equivalence of modular structures.
\end{itemize}
By \autoref{Miscsig}, this will establish the desired result. To demonstrate this bijection, we construct functions between the two collections, show that these functions are well-defined on equivalence classes, and show that if we compose these functions in either direction, what is obtained is equivalent to the identity.

By \autoref{directsumofsimples}, a TQFT factors as a direct sum of simple TQFTs. The same is true for MTCs. \autoref{constructtqft} extends to the non-simple case by taking direct sums, as does \autoref{equivalentMstructures}.

For a modular structure $Z$ in \twovect, the linear category $Z(\perspectivecircle)$ is a modular tensor category, as established by \autoref{ribbonismodular}. Conversely, given a modular tensor category $C$ equipped with a root of the anomaly in each factor, we can construct a modular structure in $\bimod$ by \autoref{constructtqft}; under the symmetric monoidal equivalence between \twovect and \bimod when restricted to the fully dualizable objects (see \autoref{thm:equivalenttargets}), this yields a modular structure in \twovect.

Next, we show that these functions between the collections are well-defined on equivalence classes. Given a braided monoidal equivalence $C \simeq C'$ of modular tensor categories, \autoref{equivalentMstructures} constructs an equivalence $Z_C ^p \simeq Z_{C'} ^p$ of linear modular structures, for any consistent extra data $p$. Conversely, if $Z \simeq Z'$ as linear modular structures, then certainly they are also equivalent as $\B^\L$\-structures, since $\M$ 2\-extends $\B^\L$.

Finally, we verify that these functions are inverse bijections on equivalence classes. Let $C$ be a modular tensor category, and let $p$ be a choice of root for the anomaly in each factor. Then by construction, the modular tensor category associated to $Z_C ^p(\perspectivecircle)$ is identical to $C$. Conversely, suppose $Z$ is a linear modular structure; write $C$ for its associated modular tensor category, and $p$ for its associated square root data. Then $Z_C ^p = \isd \simeq Z_*$ by \autoref{def:stringfunctor}, and by the results of \Autoref{sec:internal,sec:modcatmodobj} showing how $\isd$ acts on the generating 2\-morphisms of \M. Since $(-)_*$ is an equivalence on the full subcategory of semisimple categories, this establishes the result.

\end{proof}

\noindent
Also recall \autoref{remark:roots}, according to which it is equivalent to specify a square root of the product $p_s^+ p_s^-$ in each factor, rather than a square root of the anomaly $p_s^+/p_s^-$.

\subsection{Oriented, signature, and $p_1$ field theories}
\label{sec:classificationother}

With the classification of modular structures completed, we can use the results of~\cite{PaperII, PaperIII} to also give classifications of oriented, signature, and $p_1$-structure TQFTs.   
\begin{customthm}{\ref{thm:introoriented}}
Linear representations of $\Bord$ are classified by MTCs for which the anomaly is the identity in each factor.
\end{customthm}
\begin{proof}
The presentation of \Bord given in \autoref{afpresentation} is a 2\-extension of the modular structure presentation by a single relation, which states that the following composite must equal the identity:
\begin{equation}
\nonumber
\begin{tz}
        \node[Cap] (A) at (0,0) {};
        \node[Cup] (B) at (0,0) {};
        \node [Cobordism Bottom End 3D] (C) at (0,0) {};
        \selectpart[green, inner sep=1pt] {(C)};
\end{tz}
\longxdoubleto{\epsilon^\dagger}
\begin{tz}
        \node[Cap] (A) at (0,0) {};
        \node[Pants, anchor=belt] (B) at (A.center) {};
        \node[Copants, anchor=leftleg] (C) at (B.leftleg) {};
        \node[Cup] (D) at (C.belt) {};
        \selectpart[green, inner sep=1pt] {(B-leftleg)};
\end{tz}
\longxdoubleto{\theta}
\begin{tz}
        \node[Cap] (A) at (0,0) {};
        \node[Pants, anchor=belt] (B) at (A.center) {};
        \node[Copants, anchor=leftleg] (C) at (B.leftleg) {};
        \node[Cup] (D) at (C.belt) {};
        \selectpart[green] {(A-center) (B-leftleg) (B-rightleg) (C-belt)};
\end{tz}
\longxdoubleto{\epsilon}
\begin{tz}
        \node[Cap] (A) at (0,0) {};
        \node[Cup] (B) at (0,0) {};
\end{tz}
\end{equation}
We consider the effect of this composite under the internal string diagram construction $\isd$, under the assumption that the TQFT is simple:
\begin{equation}
\label{eq:afisd}
\mediumbordisms
\begin{tz}
        \node[Cap] (A) at (0,0) {};
        \node[Cup] (B) at (0,0) {};
        \node [Cobordism Bottom End 3D] (C) at (0,0) {};
        \selectpart[green, inner sep=1pt] {(C)};
\end{tz}
\longxmapsto{\isd(\epsilon^\dagger)}{30pt}
{\red \frac 1 {p}}
\begin{tz}
    \node [Pants] (A) at (0,0) {};
        \node[Cap] at (A.belt) {};
    \node[Copants, anchor=leftleg] (B) at (A.leftleg) {};
    \node[Cup] at (B.belt) {};
    \selectpart[green, inner sep=1pt] {(A-leftleg)};
    \draw [red strand] (B.rightleg) 
        to [looseness=1.7, out=down, in=down] (A.leftleg)
        to [out=up, in=up, looseness=1.7] (B.rightleg);
\end{tz}
\longxmapsto{\isd(\theta)}{20pt}
{\red \frac 1 {p}}
\begin{tz}
    \node [Pants] (A) at (0,0) {};
    \node[Cap] (C) at (A.belt) {};
    \node[Copants, anchor=leftleg] (B) at (A.leftleg) {};
    \node[Cup] (D) at (B.belt) {};
    \draw [red strand] (B.rightleg) 
        to [looseness=1.7, out=down, in=down] (A.leftleg)
        to [out=up, in=up, looseness=1.7] node[tiny label, text=red, draw=red, pos=0.2, rotate=-10] {$\theta$} (B.rightleg);
    \selectpart[green] {(C-center) (A-leftleg) (A-rightleg) (B-belt)};
\end{tz}
\longxmapsto{\isd(\epsilon)}{20pt}
{\red \frac {p^+} {p}}
\begin{tz}
        \node[Cap] (A) at (0,0) {};
        \node[Cup] (B) at (0,0) {};
\end{tz}
\end{equation}
It is clear that the composite \eqref{eq:afisd} is the identity if and only if $p = p^+$; by \autoref{lem:psquared}, this is equivalent to $p = p^- = p^+$. So the effect of the anomaly-freeness axiom is to enforce that the anomaly $p^+/p^-$ takes the value 1 in each factor, and to eliminate the extra square-root degree of freedom.
\end{proof}

\begin{customthm}{\ref{thm:introsig}}
Linear representations of $\Bordsig$ are classified by MTCs for which the anomaly is the same in each factor, equipped with a single choice of square root of this anomaly.
\end{customthm}

\begin{proof}
\autoref{thm:sigpresentation} tells us that classifying linear representations of \Bordsig is the same as classifying linear representations of the global modular presentation of degree~1. This presentation extends the modular presentation by a single automorphism $\zeta$ of the empty surface, which must take some nonzero value $a\in k$ in any linear representation. This endomorphism satisfies the following relation:
\begin{equation}
\nonumber
\smallbordisms
\begin{tz}
    \node[Cap] (A) at (0,0) {};
    \node[Cup] (B) at (0,0) {};
    \draw[green] (1*\cobwidth, -\cupheight) rectangle +(\cobwidth, 2*\cupheight);
\end{tz}
\longxdoubleto{\zeta}
\begin{tz}
    \node[Cap] (A) at (0,0) {};
    \node[Cup] at (0,0) {};
\end{tz}
\quad = \quad
\begin{tz}
        \node[Cap] (A) at (0,0) {};
        \node[Cup] (B) at (0,0) {};
        \selectpart[green, inner sep=1pt] {(A-center)};
\end{tz}
\longxdoubleto{\epsilon ^\dag}
\begin{tz}
        \node[Cap] (A) at (0,0) {};
        \node[Pants, anchor=belt] (B) at (A) {};
        \node[Copants, anchor=leftleg] (C) at (B.leftleg) {};
        \node[Cup] (D) at (C.belt) {};
        \selectpart[green, inner sep=1pt] {(B-rightleg)};
\end{tz}
\longxdoubleto{\theta}
\begin{tz}
        \node[Cap] (A) at (0,0) {};
        \node[Pants, anchor=belt] (B) at (A) {};
        \node[Copants, anchor=leftleg] (C) at (B.leftleg) {};
        \node[Cup] (D) at (C.belt) {};
        \selectpart[green] {(B-rightleg) (B-leftleg) (A-center) (C-belt)};
\end{tz}
\longxdoubleto{\epsilon}
\begin{tz}
        \node[Cap] (A) at (0,0) {};
        \node[Cup] (B) at (0,0) {};
\end{tz}
\end{equation}
From calculation~\eqref{eq:afisd} above, we see that $a = p_s^+ / p_s$. Note that the value of $a$ does not depend on the factor. Together with the condition $p_s ^+ p_s ^- = p_s ^2$ inherited from the modular structure classification, we obtain the system of equations $\{p^+ _s / p_s = a, p_s ^+ p_s ^- = p_s ^2\}$ in the unknowns $a$ and $p_s$. This system is equivalent to the system $\{ a^2 = p^+_s /p^-_s, p_s = p _s^+/a\}$; solutions occur exactly when every factor has the same anomaly, and there is a single chosen square root $a$ of this anomaly.
\end{proof}

\begin{customthm}{\ref{thm:introp1}}
Linear representations of $\Bordp$ are classified by MTCs equipped with a sixth root of the anomaly in each factor.
\end{customthm}

\begin{proof}
The $p_1$-presentation given in \autoref{p1presentation} extends the modular presentation by an endomorphism $y'$ of the 2\-sphere, whose cube is the anomaly on the 2\-sphere. Since $Z(\tikztinycup)$ is the linear functor picking out the monoidal unit object, it follows that $Z(\tikztinysphere) \simeq \isd( \tikztinysphere) = \Hom_{Z(\raisebox{-1pt}{$\perspectivecircle$})}(I,I)$ is a vector space with basis given by projectors $\pi_s : I \to I$ onto each subobject $s$ of the unit object. These projectors annihilate one another: $\pi_s \circ \pi_{s'} = \delta_{s,s'} \pi_s$. Let us write $Z(y')$ as a matrix $M$ in this projector basis. We now consider one of the equations that~$y'$ is required to satisfy:
\smallbordisms
\begin{gather*}
\begin{tz}[xscale=1.5, yscale=1.5]
\node (1) at (0,0) {$\begin{tz}
    \node (A) [Cap] at (0,0) {};
    \node (B) [Cup] at (0,0) {};
    \node (C) [Cap] at (0,-1.5\cobheight) {};
    \node [Cup] at (0,-1.5\cobheight) {};
    \node (b1) [Cobordism Bottom End 3D] at (0,0) {};
    \node (b2) [Cobordism Bottom End 3D] at (0,-1.5\cobheight) {};
    \selectpart[green, inner sep=0pt, inner ysep=0.7pt]{(A) (B)};
    \selectpart[red, inner sep=0.7pt]{(B) (C) (b1) (b2)};
\end{tz}$};
\node (2) at (1,0) {$\begin{tz}
    \node [Cap] at (0,0) {};
    \node (B) [Cup] at (0,0) {};
    \node (C) [Cap] at (0,-1.5\cobheight) {};
    \node [Cup] at (0,-1.5\cobheight) {};
    \node (b1) [Cobordism Bottom End 3D, invisible] at (0,0) {};
    \node (b2) [Cobordism Bottom End 3D, invisible] at (0,-1.5\cobheight) {};
    \selectpart[green]{(B) (C) (b1) (b2)};
\end{tz}$};
\node (3) at (1,-1) {$\begin{tz}
    \node [Cap] at (0,0) {};
    \node [Cup] at (0,0) {};
\end{tz}$};
\node (4) at (0,-1) {$\begin{tz}
    \node [Cap] at (0,0) {};
    \node [Cup] at (0,0) {};
\end{tz}$};
\begin{scope}[double arrow scope]
\draw (1) to node [above, green] {$y'$} (2);
\draw (2) to node [right] {$\mu$} (3);
\draw (1) to node [left, red] {$\mu$} (4);
\draw (4) to node [below] {$y'$} (3);
\end{scope}
\end{tz}
\end{gather*}
Given a pair of elements $(\pi_s, \pi_{s'})$ in $\isd (\tikztinysphere)$, the linear map $\mu$ acts as $(\pi_s, \pi_{s'}) \mapsto \pi_s \circ \pi_{s'}$. Suppose we begin in the top-left diagram with the element $(\pi_s, \pi_{s'})$: going clockwise we obtain $M(\pi_s) \circ \pi_{s'}$  in the bottom-right diagram; going anticlockwise, we obtain $M(\pi_s \circ \pi_{s'}) = \delta _{s,s'} M(\pi_s)$. Equating these values, we conclude that $M$ is diagonal in our basis of projectors, so we write $M(\pi_s) = a_s \cdot \pi_s$.

We now consider the other defining equation for $y'$:
\begin{gather*}
\begin{tz}
        \node[Cap] (A) at (0,0) {};
        \node[Cup] (B) at (0,0) {};
\end{tz}
\xdoubleto{y' {}^3}
\begin{tz}
        \node[Cap] (A) at (0,0) {};
                \node[Cup] at (0,0) {};
\end{tz}
\quad = \quad
\begin{tz}
        \node[Cap] (A) at (0,0) {};
        \node[Cup] (B) at (0,0) {};
        \selectpart[green, inner sep=1pt] {(A-center)};
\end{tz}
\longxdoubleto{\epsilon ^\dag}
\begin{tz}
        \node[Cap] (A) at (0,0) {};
        \node[Pants, anchor=belt] (B) at (A) {};
        \node[Copants, anchor=leftleg] (C) at (B.leftleg) {};
        \node[Cup] (D) at (C.belt) {};
        \selectpart[green, inner sep=1pt] {(B-rightleg)};
\end{tz}
\longxdoubleto{\theta}
\begin{tz}
        \node[Cap] (A) at (0,0) {};
        \node[Pants, anchor=belt] (B) at (A) {};
        \node[Copants, anchor=leftleg] (C) at (B.leftleg) {};
        \node[Cup] (D) at (C.belt) {};
        \selectpart[green] {(B-rightleg) (B-leftleg) (A-center) (C-belt)};
\end{tz}
\longxdoubleto{\epsilon}
\begin{tz}
        \node[Cap] (A) at (0,0) {};
        \node[Cup] (B) at (0,0) {};
\end{tz}
\end{gather*}
The left-hand side yields the map $\pi_s \mapsto a ^3 _s \cdot \pi_s$; the right-hand side yields $\pi_s \mapsto (p_s ^- / p_s) \cdot \pi_s$, as derived in calculation~\eqref{eq:afisd}. Together with the condition on $p_s$ inherited from the modular structure classification, we therefore obtain the constraints $\{a_s ^3 = p_s ^- / p ^{}_s, \, p_s ^2 = p_s ^+ p_s ^-$\} for the unknowns $a_s$, $p_s$ in each factor. This is algebraically equivalent to the system $\{a_s ^6 = p_s ^+/p_s^-, \, p_s = p_s ^+ / a_s ^3 \}$, and so the extra structure to be specified is a sixth root of the anomaly $p_s^+/p_s^-$ in each factor; $p_s$ is then determined.
\end{proof}

\section{Dehn twist calculations}
\label{sec:examples}

In this section we work through some examples that illustrate calculations with internal string diagrams, in the case of oriented manifolds, so in particular $p=p^+=p^-$. In \autoref{sec:cloaking} we investigate a `cloaking' property of string diagrams in genus-one surfaces, in \autoref{sec:dehn} we show how Dehn twists about arbitrary curves act on internal string diagrams, in \autoref{mappingactions} we verify a braid relation for Dehn twists on a genus-one surface, in \autoref{sec:lens} we calculate the field theory invariant for a lens space, and in \autoref{sec:torusbundle} we calculate the invariant for torus bundles.

\tikzset{bot=true}
\def\off{5pt}

\subsection{Cloaking}
\label{sec:cloaking}

Note that for a linear representation $Z$ of a modular structure, the following are generally unequal elements of the vector space $\isd(\Sigma) _A ^A$, where $\Sigma = \tikztinypants \circ \tikztinycopants$ and \mbox{$A \in \Ob(\cat{S})$}:
\normalbordisms
\[
\begin{tz}
        \node [Pants, bot, top] (A) at (0,0) {};
    \node [Copants, bot, anchor=leftleg] (B) at (A.leftleg) {};
    \draw[red strand] ([yshift=\toff] A.belt) node[above strand label, red] {$A$}
        to[out=down, in=up] (A.belt)
        to [out=down, in=up, out looseness=1.2] (A.leftleg)
        to [out=down, in=up, in looseness=1.2] (B.belt)
        to[out=down, in=up] ([yshift=-\boff] B.belt) node[below strand label, red] {$A$};
\end{tz}
\quad\neq\quad
\begin{tz}
        \node [Pants, bot, top] (A) at (0,0) {};
    \node [Copants, bot, anchor=leftleg] (B) at (A.leftleg) {};
    \draw[red strand] ([yshift=\toff] A.belt) node[above strand label, red] {$A$}
        to[out=down, in=up] (A.belt)
        to [out=down, in=up, out looseness=1.2] (A.rightleg)
        to [out=down, in=up, in looseness=1.2] (B.belt)
        to[out=down, in=up] ([yshift=-\boff] B.belt) node[below strand label, red] {$A$};
\end{tz}
\]
However,  we have the following `cloaking' phenomenon. Recall that a closed unlabeled strand refers to a sum over the simple objects each weighted by their quantum dimension. This cloaking phenomenon is well-recognized in the literature~\cite{Kirby, walker-notes}.
\begin{lemma} \label{lem:cloaking} For a linear representation $Z$ of the modular presentation, the following are equal elements of $\isd(\Sigma) _A ^A$:
\[
\begin{tz}
        \node [Pants, bot, top] (A) at (0,0) {};
    \node [Copants, bot, anchor=leftleg] (B) at (A.leftleg) {};
    \draw[red strand] ([yshift=\toff] A.belt) node[above strand label, red] {$A$}
        to[out=down, in=up] (A.belt)
        to [out=down, in=up, looseness=0.9] (A-leftleg.in-leftthird)
        to [out=down, in=up, looseness=0.9] (B.belt)
        to[out=down, in=up] ([yshift=-\boff] B.belt) node[below strand label, red] {$A$};
    \draw[green strand] (A-leftleg.in-rightthird)
            to [out=up,in=up,looseness=1.9]
                (A-rightleg.in-leftthird)
            to [out=down,in=down, looseness=1.9] (A-leftleg.in-rightthird);
\end{tz}
\quad = \quad
\begin{tz}
        \node [Pants, bot, top] (A) at (0,0) {};
    \node [Copants, bot, anchor=leftleg] (B) at (A.leftleg) {};
    \draw[red strand] ([yshift=\toff] A.belt) node[above strand label, red] {$A$}
        to[out=down, in=up] (A.belt)
        to [out=down, in=up, looseness=0.9] (A-rightleg.in-rightthird)
        to [out=down, in=up, looseness=0.9] (B.belt)
        to[out=down, in=up] ([yshift=-\boff] B.belt) node[below strand label, red] {$A$};
    \draw[green strand] (A-leftleg.in-rightthird)
            to [out=up,in=up,looseness=1.9]
                (A-rightleg.in-leftthird)
            to [out=down,in=down, looseness=1.9] (A-leftleg.in-rightthird);
\end{tz}
\]
\end{lemma}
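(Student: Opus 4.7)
My strategy is to identify both sides of the claimed equality with a common expression arising from the action of $\epsilon^\dag$ on internal string diagrams, established in \autoref{thm:epsilondagaction}. Recall that by that proposition, applying $\isd(\epsilon^\dag)$ to the identity cylinder carrying an $A$-strand produces
\[
\tfrac{1}{p}\sum_{i} d_i \cdot \bigl(\text{pants-copants with }A\text{-strand through the left leg and an }S_i\text{-loop encircling the inner region}\bigr),
\]
and with our convention that an unlabeled loop denotes $\sum_i d_i$ copies of the simple loops, this is precisely $\tfrac{1}{p}$ times the left-hand side of the lemma. Thus $\mathrm{LHS} = p\cdot \isd(\epsilon^\dag)(\mathrm{cyl}_A)$.

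The plan for the right-hand side is to obtain the analogous identification $\mathrm{RHS} = p\cdot \isd(\epsilon^\dag)(\mathrm{cyl}_A)$ by exploiting the $z$-axis rotational symmetry of the modular presentation. The relations defining $\M$ come in pairs $(-)$ and $(-)^z$ (cf.\ \cite[Appendix~B]{PaperII}), and the formula for $\isd(\epsilon^\dag)$ was derived from the rotationally symmetric decomposition \eqref{eq:epsilonsimplification} together with the value of $\isd(\epsilon^\dag)$ on the sphere. Applying the mirror of the derivation in \autoref{thm:epsilondagaction}—using the right-leg version of \eqref{eq:epsilonsimplification} obtained by $z$-axis rotation and the right Frobeniusator $\phiM$ in place of $\phiN$—produces the same linear map $\isd(\epsilon^\dag)$, but expressed through the alternative elementary representative in which the $A$-strand emerges from the rightthird of the right leg. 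This is precisely the configuration on the right-hand side, so $\mathrm{RHS} = p\cdot \isd(\epsilon^\dag)(\mathrm{cyl}_A) = \mathrm{LHS}$.

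Carrying out this plan requires two pieces of bookkeeping. First, I need to verify that the rotated derivation of $\isd(\epsilon^\dag)$ lands in the same map, which reduces to checking that its value on the 2-sphere (computed to be $c_i = d_i/p$) is rotationally symmetric; this follows because both $\theta$ and the duality pairings on the sphere are preserved by $z$-rotation, and $p$ is defined in a symmetric fashion via $\mu^\dag$. Second, I need to confirm that the two elementary representatives of $\isd(\epsilon^\dag)(\mathrm{cyl}_A)$ obtained from the two derivations do in fact correspond to the pictures on the LHS and RHS after passing through the bimodule equivalence relation \eqref{eq:pullthrough} that governs internal string diagrams.

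The main obstacle is that the rotational symmetry at the level of the presentation must be translated into a symmetry of the explicit formula in \autoref{thm:epsilondagaction}, which involves a choice of decomposition into generating 1-cells. To do this rigorously one reruns the proof of \autoref{thm:epsilondagaction} with $\phiM$ in place of $\phiN$ and the mirror of the snake identity \eqref{eq:epsilonsimplification}: the intermediate formulas change, but the final value on a generic string diagram is the same, precisely because the scalar $c_i=d_i/p$ and the Kirby-color interpretation of the resulting unlabeled loop do not depend on the choice of handle-decomposition of the pants-copants surface.
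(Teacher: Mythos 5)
Your strategy is genuinely different from the paper's and is sound in outline. The paper proves cloaking by a direct computation inside $\isd(\Sigma)_A^A$: restrict to a simple label $S_i$, expand the unlabeled loop as $\sum_j d_j$ over simple loops, resolve $S_i\otimes S_j$ into simples via projections $p$, slide the resolution around the handle using the bimodule relation \eqref{eq:pullthrough}, and re-express everything through the dual decomposition of $S_k^*\otimes S_i$, whose normalization $d_j/d_k$ converts $\sum_j d_j$ into $\sum_k d_k$. That argument uses only semisimplicity and rigidity, makes no reference to $\epsilon^\dag$, and is local to a neighbourhood of the handle (which is how it gets reused in the Dehn twist section). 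Your argument instead exhibits both sides as the representative of $p\cdot\isd(\epsilon^\dag)(\mathrm{cyl}_A)$ produced by the left- and right-handed versions of the locality identity \eqref{eq:epsilonsimplification}; this buys a conceptual explanation of the phenomenon (the loop is the image of a sphere under $\epsilon^\dag$, and the locality of $\epsilon^\dag$ cannot see which side of the handle the strand passes) at the cost of invoking the full modular structure where the paper's proof needs none of it.

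Two points must be nailed down for your route to be a complete proof. First, you need $\eqref{eq:epsilonsimplification}{}^z$, the $z$-rotation of the locality lemma imported from Paper II; this does hold, because the modular presentation is carried to itself by the rotation involution (swapping $\phiN\leftrightarrow\phiM$, $\lambda\leftrightarrow\rho$, and so on), so every derivable relation has a derivable rotation --- but this should be stated as the reason, not gestured at as ``rotational symmetry of the formula.'' Second, and more substantively, the entire content of the lemma sits inside your assertion that the mirrored chain routes the $A$-strand to the \emph{right} of the loop: you must actually run the mirrored composite ($\nu$, $\epsilon^\dag$ on the sphere, $\eta$, the Frobeniusators, the unitors) on internal string diagrams, exactly as the paper does for the unmirrored chain in arriving at \eqref{eq:generalepsilonop}. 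Each step is the mirror image of one whose action is already computed in Section 4, so this is mechanical and will succeed, but as written your proposal asserts the output of that computation rather than deriving it. (The coefficient matching you worry about is the easy part: $\epsilon^\dag$ evaluated on the sphere has an unambiguous loop representative in the closed genus-one vector space, so both derivations necessarily carry the same $c_i=d_i/p$.)
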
 

\begin{proof}
It suffices to check this equation when the open strand is a simple object $S_i$. Suppose a product of simple objects $S_i \otimes S_j$ is expressed as a direct sum by families of projections $p: S_i \otimes S_j \to S_k$ and injections $p: S_k \to S_i \otimes S_j$, where for each $i,j,k$ there are $\dim(\Hom(S_i \otimes S_j, S_k))$ elements in the family. Then $S_k ^* \otimes S_i ^{}$ has a direct sum decomposition given by maps $q: S_k ^* \otimes S_i ^{} \to S_j$ and $q: S_j \to S_k^* \otimes S_i ^{}$ as follows:
\begin{align}
\begin{tz}[yscale=0.6, xscale=0.4]
    \node [tiny label, draw=\myblack, text=black] (p) at (0,0) {$q$};
    \draw [black strand] (-1,1.5) node [above] {$k^*$} to (-1,1) node [rotate=0] {\arrownode[black]} to [out=down, in=155] (p.center);
    \draw [black strand] ([yshift=0.5cm] 1,1) node [above] {$i$} to (1,1) node {\arrownode[black, rotate=180]} to [out=down, in=25] (p.center);
    \draw (p.center) to (0,-1) node {\arrownode[black, rotate=0]} to (0,-1.5) node [below] {$j^*$};
\end{tz}
&:=
\begin{tz}[yscale=0.6, xscale=0.4]
    \node [tiny label, draw=\myblack, text=black] (p) at (0,0) {$p$};
    \draw [black strand] (p.center) to [out=45, in=up, out looseness=2.5, in looseness=0.4] (1.5,0) to (1.5,-1) node [rotate=0] {\arrownode[black]} to (1.5,-1.5) node [below] {$j^*$};
    \draw (-1,1.5) node [above] {$i$} to (-1,1) node {\arrownode[black, rotate=180]} to [out=down, in=155] (0,0);
    \draw (0,0) to [out=down, in=down] (-2,0) to (-2,1) node [rotate=0] {\arrownode[black]} to (-2,1.5) node [above] {$k^*$};
\end{tz}
&
\begin{tz}[yscale=0.6, xscale=0.4, yscale=-1]
    \node [tiny label, draw=\myblack, text=black] (p) at (0,0) {$q$};
    \draw[black strand] ([yshift=0.5cm] -1,1) node [below] {$k^*$} to (-1,1) node [rotate=0] {\arrownode[black]} to [out=down, in=155] (p.center);
    \draw[black strand] ([yshift=0.5cm] 1,1) node [below] {$i$} to (1,1) node {\arrownode[black, rotate=180]} to [out=down, in=25] (p.center);
    \draw (p.center) to (0,-1) node {\arrownode[black]} to (0,-1.5) node [above] {$j^*$};
\end{tz}
&:=
\frac{d_j}{d_k}
\begin{tz}[yscale=0.6, xscale=0.4, yscale=-1]
    \node [tiny label, draw=\myblack, text=black] (p) at (0,0) {$p$};
    \draw [black strand] (p.center) to [out=45, in=up, out looseness=2.5, in looseness=0.4] (1.5,0) to (1.5,-1) node [rotate=0] {\arrownode[black]} to (1.5,-1.5) node [above] {$j^*$};
    \draw (-1,1.5) node [below] {$i$} to (-1,1) node {\arrownode[black, rotate=180]} to [out=down, in=155] (0,0);
    \draw (0,0) to [out=down, in=down] (-2,0) to (-2,1) node [rotate=0] {\arrownode[black]} to (-2,1.5) node [below] {$k^*$};
\end{tz}
\end{align}
We now perform the following calculation in $\isd (\Sigma) _A ^A$:
\begin{multline*}
\sum_j d_j \,
\begin{tz}
    \node [Pants, bot, top, belt scale=1.5] (A) at (0,0) {};
    \node [Copants, bot, anchor=leftleg, belt scale=1.5] (B) at (A.leftleg) {};
    \draw [red strand] ([yshift=\toff] A.belt) node[above strand label, red] {$i$}
        to [out=down, in=up] (A.belt)
        to [out=down, in=up, looseness=0.9] 
            (A-leftleg.in-leftthird)
        to [out=down, in=up, looseness=0.9] (B.belt)
        to [out=down, in=up] ([yshift=-\boff] B.belt)
            node[below strand label, red] {$i$};
    \draw[green strand] (A-leftleg.in-rightthird)
        to [out=up,in=up,looseness=1.7]
        (A-rightleg.in-leftthird)
        to [out=down,in=down, looseness=1.7] 
        node[pos=0.25, arrow style, rotate=180, \mygreen] {\arrownode[green, rotate=180]}
        node[below right, font=\tiny, green, yshift=0.2\cobwidth] {$j$}
        (A-leftleg.in-rightthird);
\end{tz}
\gap=\gap
\sum_{k,p,j} d_j \,
\begin{tz}
    \node [Pants, bot, top, belt scale=1.5] (A) at (0,0) {};
    \node [Copants, bot, anchor=leftleg, belt scale=1.5] (B) at (A.leftleg) {};
    \node[tiny label, draw=black, text=black] (p1) at ([xshift=0.2\cobwidth, yshift=0.6\cobwidth] A.leftleg) {$p$};
    \node[tiny label, draw=black, text=black] (p2) at ([xshift=0.2\cobwidth, yshift=-0.6\cobwidth] A.leftleg) {$p$};
    \draw[red strand] ([yshift=\toff] A.belt) node[above strand label, red] {$i$}
        to (A.belt)
        to [out=down, in=145, in looseness=2, out looseness=1] (p1.center);
    \draw[blue strand] (p1.center) to
        node [pos=0.4, arrow style, \myblue, rotate=180] {\arrownode[color=blue, rotate=180]}
        node [yshift=3pt, left, text=blue, font=\tiny] {$k$}
        (p2.center);
    \draw[red strand] (p2.center)
        to [out=-145, in=up, out looseness=2] (B.belt)
        to [out=down, in=up] ([yshift=-\boff] B.belt) node[below strand label, red] {$i$};
    \draw[green strand] (p1.center) to[out=60, in=up, out looseness=2] (A-rightleg.in-leftthird)
        to[out=down, in=-60, in looseness=2] 
        node[pos=0.25, arrow style, rotate=180, \mygreen] {\arrownode[green, rotate=180]}
        node[right, font=\tiny, green, yshift=0pt] {$j$}
        (p2.center);
\end{tz}
\gap = \gap
\sum_{k,p,j} d_j \,
\begin{tz}
    \node [Pants, bot, top, belt scale=1.5] (A) at (0,0) {};
    \node [Copants, bot, anchor=leftleg, belt scale=1.5] (B) at (A.leftleg) {};
    \node[tiny label, draw=\myblack, text=black] (p1) at ([xshift=-0.2\cobwidth, yshift=0.6\cobwidth] A.rightleg) {$p$};
    \node[tiny label, draw=\myblack, text=black] (p2) at ([xshift=-0.2\cobwidth, yshift=-0.6\cobwidth] A.rightleg) {$p$};
    \draw[red strand] ([yshift=\toff] A.belt) node[above strand label, red] {$i$}
        to [out=down, in=up] (A.belt)
        to [out=down, in=130, in looseness=1.2] (p1.center);
    \draw [blue strand] (p1.center)
        to [out=down, in=right, out looseness=2, in looseness=1]
            ([xshift=-5pt] A.center)
        to [out=left, in=up]
        node [pos=0.75, arrow style, rotate=0] {\arrownode[color=blue, rotate=180]}
            node [pos=0.75, left=-2pt, text=blue, font=\tiny] {$k$}
        (A.leftleg)
        to [out=down, in=left] ([xshift=-5pt] B.center)
        to [out=right, in=up, out looseness=1, in looseness=2] (p2.center);
    \draw [red strand] (p2.center)
        to [out=-135, in=up, looseness=0.9] (B.belt)
        to [out=down, in=up] ([yshift=-\boff] B.belt) node[below strand label, red] {$i$};
    \draw[green strand] (p1.center) to[out=60, in=up, out looseness=3]
        node[pos=0.9, arrow style, rotate=180] {\arrownode[color=green, rotate=180]}
        node[pos=0.8, right, font=\tiny, green, xshift=-0.1\cobwidth, yshift=0.1\cobwidth] {$j$}
        (A-rightleg.in-rightthird)
        to[out=down, in=-60, in looseness=3] (p2.center);
\end{tz}
\\ 
= \gap \sum_{k,p,j} d_k \,
\begin{tz}
    \node [Pants, bot, top, belt scale=1.5] (A) at (0,0) {};
    \node [Copants, bot, anchor=leftleg, belt scale=1.5] (B) at (A.leftleg) {};
    \node[tiny label, draw=black, text=black] (p1) at ([xshift=-0.2\cobwidth, yshift=0.6\cobwidth] A.rightleg) {$q$};
    \node[tiny label, draw=black, text=black] (p2) at ([xshift=-0.2\cobwidth, yshift=-0.6\cobwidth] A.rightleg) {$q$};
    \draw[red strand] ([yshift=\toff] A.belt) node[above strand label, red] {$i$}
        to (A.belt)
        to [out=down, in=35, in looseness=2, out looseness=1] (p1.center);
    \draw[green strand] (p1.center) to
        node [pos=0.4, arrow style, \myblue, rotate=180] {\arrownode[color=green]}  
        node [yshift=3pt, right, text=green, font=\tiny] {$j$}
        (p2.center);
    \draw[red strand] (p2.center)
        to [out=-35, in=up, out looseness=2] (B.belt)
        to [out=down, in=up] ([yshift=-\boff] B.belt) node[below strand label, red] {$i$};
    \draw[blue strand] (p1.center) to[out=120, in=up, out looseness=2, in looseness=1.5] (A-leftleg.in-rightthird)
        to[out=down, in=-120, in looseness=2, out looseness=1.5] 
        node[pos=0.25, arrow style, rotate=0] {\arrownode[color=blue]}
        (p2.center);
    \node[font=\tiny, blue, yshift=2pt, text=blue] at (A.leftleg) {$k$};
\end{tz}
\gap = \gap
\sum_k d_k \,
\begin{tz}
        \node [Pants, bot, top, belt scale=1.5] (A) at (0,0) {};
    \node [Copants, bot, anchor=leftleg, belt scale=1.5] (B) at (A.leftleg) {};
    \draw[red strand] ([yshift=\toff] A.belt) node[above strand label, red] {$i$}
        to[out=down, in=up] (A.belt)
        to [out=down, in=up, looseness=0.9] 
                (A-rightleg.in-rightthird)
        to [out=down, in=up, looseness=0.9] (B.belt)
        to[out=down, in=up] ([yshift=-\boff] B.belt) node[below strand label, red] {$i$};
    \draw[blue strand] (A-leftleg.in-rightthird)
            to [out=up,in=up,looseness=1.7]
            (A-rightleg.in-leftthird)
            to [out=down,in=down, looseness=1.7] 
            node[pos=0.75, arrow style, \mybrown, rotate=0] {\arrownode[color=blue]}
            node[left=2pt, font=\tiny, text=blue, yshift=-0.1\cobwidth] {$k$}
            (A-leftleg.in-rightthird);
\end{tz}
\end{multline*}
This completes the proof.
\end{proof}

\subsection{Dehn twists on oriented surfaces}
\label{sec:dehn}

In this section we show that for an oriented TQFT, the action on internal string diagrams of a Dehn twist of degree $n$ about any curve is given by allowing that curve to `descend' below the skin of the cobordism, decorated by a twist of degree $-n$, and multiplied by $1/p$. For example, a right-handed Dehn twist about the curve $c$ indicated below acts as follows on internal string diagrams: 
\begin{calign}
\begin{tz}
\node (P) [Pants, anchor=belt, belt scale=1.8] at (0,0) {};
\node (C) [Copants, anchor=belt, top, belt scale=1.8] at (0,0) {};
\draw [blue, densely dotted] (C-belt.west) to (C.seam middle);
\draw [blue, densely dotted] (P.seam middle) to (C-belt.east);
\begin{pgfonlayer}{selectionbox}
\draw [blue] (P.seam middle) to (C-belt.west);
\draw [blue] (C-belt.east) to node [below right=-3pt] {$c$} (C.seam middle);
\end{pgfonlayer}
\end{tz}
&
\begin{tz}
\node (A) [Pants, anchor=belt, belt scale=1.8] at (0,0) {};
\node (B) [Copants, anchor=belt, top, belt scale=1.8] at (0,0) {};
\node (f) [tiny label] at (0,0.1) {$f$};
\strand [red strand] (f.center) to [out=-125, in=up] (A.leftleg) to +(0,-\off);
\strand [red strand] (f.center) to [out=-55, in=up] (A.rightleg) to +(0,-\off);
\strand [red strand] (f.center) to [out=125, in=down] (B.leftleg) to +(0,\off);
\strand [red strand] (f.center) to [out=55, in=down] (B.rightleg) to +(0,\off);
\end{tz}
\xmapsto{\textstyle \isd(\theta _c)}
{\color{red}\frac 1 {p}}
\begin{tz}
\node (A) [Pants, anchor=belt, belt scale=1.8] at (0,0) {};
\node (B) [Copants, anchor=belt, top, belt scale=1.8] at (0,0) {};
\node (f) [tiny label] at (0,0.1) {$f$};
\begin{knot}
\strand [red strand] (f.center) to [out=-125, in=up] (A.leftleg) to +(0,-\off);
\strand [red strand] (f.center) to [out=-55, in=up] (A.rightleg) to +(0,-\off);
\strand [red strand] (f.center) to [out=125, in=down] (B.leftleg) to +(0,\off);
\strand [red strand] (f.center) to [out=55, in=down] (B.rightleg) to +(0,\off);
\strand [red strand] ([xshift=3pt] B-belt.east) to [out=up, in=left, looseness=0.5] ([yshift=-3pt] B.seam middle) to [out=right, in=up, looseness=0.5] node (t1) [pos=0.7] {} ([xshift=-3pt] B-belt.west) to [out=down, in=right, looseness=0.5] ([yshift=3pt] A.seam middle) to [out=left, in=down, looseness=0.5] ([xshift=3pt] B-belt.east);
\flipcrossings{1,4}
\node [tiny label, draw=red, text=red, rotate=10] at (t1.center) {$\overline\theta$};
\end{knot}
\end{tz}
\end{calign}
The proof method is by first establishing this for a generating family of Dehn twists, and then showing that this property is closed under composition.
\begin{proposition}
\label{basicdehntwists} For a linear representation $Z$ of \Bord, Dehn twists about curves $a$, $b$, and $c$ given below act in the following ways on internal string diagrams:
\normalbordisms
\scalecobordisms{1.2}
\begin{align}
\begin{tz}
\path (-\cobwidth-0.5*\cobgap,0) rectangle +(2*\cobwidth+\cobgap,0);
\node (C) [Cyl, top] at (0,0) {};
\draw [blue, densely dotted] (C.west) to [out=30, in=150] (C.east);
\begin{pgfonlayer}{selectionbox}
\draw [blue] (C.east) to [out=-150, in=-30] node [below, pos=0.5] {$a$} (C.west);
\end{pgfonlayer}
\end{tz}
&&
\begin{tz}
\node (C) [Cyl, top, top scale=1.2, bottom scale=1.2] at (0,0) {};
\begin{knot}
\strand[red strand] ([yshift=\toff] C.top) to ([yshift=-\boff] C.bot);
\end{knot}
\end{tz}
\,&\xmapsto{\textstyle \isd (\theta _a)}\,
{\color{red}\frac 1 {p}}
\begin{tz}
\node (C) [Cyl, top, top scale=1.2, bottom scale=1.2] at (0,0) {};
\begin{knot}
\strand[red strand] ([yshift=\toff] C.top) to ([yshift=-\boff] C.bot);
\strand[red strand] ([xshift=3pt] C.west) to [out=down, in=down, looseness=1.5] ([xshift=-3pt] C.east) to [out=up, in=up, looseness=1.5] node [pos=0] (t1) {} ([xshift=3pt] C.west);
\flipcrossings{2}
\end{knot}
\node [tiny label, draw=red, text=red, rotate=0] at (t1.center) {$\overline\theta$};
\end{tz}
\\
\begin{tz}
\node (P) [Pants, belt scale=1.5] at (0,0) {};
\node (C) [Copants, anchor=belt, top, belt scale=1.5] at (P.belt) {};
\draw [blue, densely dotted] (P.seam middle) to [out=110, in=-110] (C.seam middle);
\begin{pgfonlayer}{selectionbox}
\draw [blue] (P.seam middle) to [out=70, in=-70] node [right, pos=0.7] {$b$} (C.seam middle);
\end{pgfonlayer}
\end{tz}
&&
\begin{tz}
\node (A) [Pants, anchor=belt, belt scale=1.8] at (0,0) {};
\node (B) [Copants, anchor=belt, top, belt scale=1.8] at (0,0) {};
\node (f) [tiny label] at (0,0.1) {$f$};
\strand [red strand] (f.center) to [out=-125, in=up] (A.leftleg) to +(0,-\boff);
\strand [red strand] (f.center) to [out=-55, in=up] (A.rightleg) to +(0,-\boff);
\strand [red strand] (f.center) to [out=125, in=down] (B.leftleg) to +(0,\toff);
\strand [red strand] (f.center) to [out=55, in=down] (B.rightleg) to +(0,\toff);
\end{tz}
&\xmapsto{\textstyle \isd(\theta_b)}
{\color{red}\frac 1 {p}}
\begin{tz}
\node (A) [Pants, anchor=belt, belt scale=1.8] at (0,0) {};
\node (B) [Copants, anchor=belt, top, belt scale=1.8] at (0,0) {};
\node (f) [tiny label] at (0,0.1) {$f$};
\begin{knot}
\strand [red strand] (f.center) to [out=-125, in=up] (A.leftleg) to +(0,-\boff);
\strand [red strand] (f.center) to [out=-55, in=up] (A.rightleg) to +(0,-\boff);
\strand [red strand] (f.center) to [out=125, in=down] (B.leftleg) to +(0,\toff);
\strand [red strand] (f.center) to [out=55, in=down] (B.rightleg) to +(0,\toff);
\strand [red strand] ([xshift=0pt] B-belt.in-leftquarter) to [out=up, in=left, in looseness=0.5] ([yshift=-3pt] B.seam middle) to [out=right, in=up, out looseness=0.5] node (t1) [pos=0.84] {} ([xshift=0pt] B-belt.in-rightquarter) to [out=down, in=right, in looseness=0.5] ([yshift=3pt] A.seam middle) to [out=left, in=down, out looseness=0.5] ([xshift=0pt] B-belt.in-leftquarter);
\flipcrossings{1,3}
\node [tiny label, draw=red, text=red, rotate=10] at (t1.center) {$\overline\theta$};
\end{knot}
\end{tz}
\\
\label{Atwist}
\begin{tz}
\node (P) [Pants, top] at (0,0) {};
\node (C) [Copants, anchor=leftleg] at (P.leftleg) {};
\begin{pgfonlayer}{selectionbox}
\draw [blue] (P.leftleg) to [out=up, in=up, looseness=1.5] (P.rightleg) to [out=down, in=down, looseness=1.5] node [below] {$c$} (P.leftleg);
\end{pgfonlayer}
\end{tz}
&&
\begin{tz}
\node (P) [Pants, top] at (0,0) {};
\node (C) [Copants, anchor=leftleg] at (P.leftleg) {};
\draw [red strand] ([yshift=\toff] P-belt.in-leftthird) to [out=down, in=up] +(0,-\toff) [out=down, in=up] to (P.leftleg) to [out=down, in=up] (C-belt.in-leftthird) to +(0,-\boff);
\draw [red strand] ([yshift=\toff] P-belt.in-rightthird) to [out=down, in=up] +(0,-\toff) [out=down, in=up] to (P.rightleg) to [out=down, in=up] (C-belt.in-rightthird) to +(0,-\boff);
\end{tz}
\,&\xmapsto{\displaystyle \isd (\theta_c)}\,
{\color{red}\frac 1 {p}}
\begin{tz}
\node (P) [Pants, top] at (0,0) {};
\node (C) [Copants, anchor=leftleg] at (P.leftleg) {};
\draw [red strand] (P-leftleg.in-rightthird) to [out=up, in=up, looseness=1.8] node [pos=0.92] (t1) {} (P-rightleg.in-leftthird) to [out=down, in=down, looseness=1.8] (P-leftleg.in-rightthird);
\draw [red strand] ([yshift=\toff] P-belt.in-leftthird) to [out=down, in=up] +(0,-\toff) [out=down, in=up] to (P-leftleg.in-leftthird) to [out=down, in=up] (C-belt.in-leftthird) to +(0,-\boff);
\draw [red strand] ([yshift=\toff] P-belt.in-rightthird) to [out=down, in=up] +(0,-\toff) [out=down, in=up] to (P-rightleg.in-rightthird) to [out=down, in=up] (C-belt.in-rightthird) to +(0,-\boff);
\node [tiny label, draw=red, text=red, rotate=10] at (t1.center) {$\overline\theta$};
\end{tz}
\end{align}
\end{proposition}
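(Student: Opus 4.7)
The plan is to prove each case by expressing the Dehn twist as a composite of modular structure generators whose actions on internal string diagrams have already been computed in Sections \ref{sec:internal} and \ref{sec:modcatmodobj}. Since $\Bord \simeq \bicat F(\O)$, every invertible 3-bordism (such as the mapping cylinder of a Dehn twist) can be written as some composite in the presentation, and the task reduces to identifying the right composite.

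For the curve $c$ that encircles both legs of the pants in \eqref{Atwist}, I would observe that the action claimed in the statement is exactly the action computed for the composite $A$ from Definition \ref{defn_II_III}, as established in Proposition \ref{lem:A_action}: both produce the factor ${\color{red}1/p}$, reintroduce the two through-strands along the right leg, and add a single unlabeled loop decorated by $\overline{\theta}$ around the pants. Thus the crux is to check at the geometric level that the Dehn twist $\theta_c$ coincides with $A$ as a 2-morphism in $\bicat F(\O)$. One way is to note that $A = \epsilon \circ \II^{\inv} \circ \epsilon^\dagger$ (by its definition) and that geometrically this inserts a small handle, twists it by $\theta^{\inv}$, and closes it back up; this is precisely the effect of a Dehn twist along $c$, which can also be described as cutting the pants open along $c$, twisting, and regluing. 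For $\theta_b$ about the curve through the waist, an analogous identification with $\II^{\inv}$ (whose action on internal string diagrams is computed in Section \ref{sec:mod}) gives the result, noting that the two crossings in the formula for $\isd(\II^{\inv})$ unlink after an isotopy to produce the claimed descending loop with a $\overline{\theta}$ twist. For $\theta_a$ about the meridian of the cylinder, I would first rewrite the identity cylinder as $\epsilon \circ \epsilon^\dagger$ (using $\mu \circ \mu^\dagger = \id$ and adjunction zig-zags), insert the generator $\theta$, and check geometrically that the resulting composite $\epsilon \circ \theta \circ \epsilon^\dagger$ coincides with the Dehn twist along the meridian; the action then follows from combining the actions of $\epsilon^\dagger$ (which contributes the factor $1/p$ and an encircling loop), $\theta$ (which decorates the loop with $\overline{\theta}$, after noting the loop becomes the one with the twist since $\theta$ acts on the resulting genus-one region), and $\epsilon$ (which simply contracts the surface back).

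The main obstacle is this geometric identification step: verifying that the chosen algebraic composite really equals the given Dehn twist in $\bicat F(\O)$. This requires care with orientations and sign conventions (right-handed versus left-handed Dehn twists produce $\overline{\theta}$ versus $\theta$ on the descending loop), and in each case one should either invoke an explicit topological argument using a Heegaard-splitting description of the mapping cylinder, or refer to the presentation results of \cite{PaperII, PaperIII} where such identifications are made. Once the identification is in place, the formal manipulation on internal string diagrams is routine.

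Finally, some postprocessing is needed: the raw output of the composite actions may place the encircling loop on a specific leg or in a specific position, and one uses the cloaking property \autoref{lem:cloaking} (together with pivotality and rigidity of the associated ribbon category from \autoref{thm:repofribisribbon}) to slide the loop into the form displayed in the statement. In the case of $\theta_a$ one additionally verifies that the scalar is correct in the oriented setting, where $p^+ = p^- = p$, so that the ``naked'' scalar contributions from the anomaly collapse to the simple prefactor~$1/p$.
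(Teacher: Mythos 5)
Your treatment of $\theta_c$ is the paper's argument verbatim: identify the mapping cylinder with the composite $A$ and quote \autoref{lem:A_action}. For $\theta_b$, the paper identifies the twist with $\II$, not $\II^{\inv}$: since the identity $p^-\,\theta_{(-)} = (\text{encircling loop decorated by }\overline\theta)$ from \eqref{eq:defppluspminus} converts the displayed $\overline\theta$\-decorated descending loop into a $\theta$ on the object passing through the waist tube (using $p=p^-$), the target of $\isd(\theta_b)$ matches $\isd(\II)$, whose through-strands carry $\theta$'s, rather than $\isd(\II^{\inv})$, whose strands carry $\overline\theta$'s. You flag the handedness issue, but as written your identification is off by an inverse, and ``unlinking after an isotopy'' is not what bridges the two forms --- the bridge is exactly the ribbon identity \eqref{eq:defppluspminus}.

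The genuine gap is the case of $\theta_a$. The composite $\epsilon\circ\theta\circ\epsilon^\dagger$ you propose is not the meridian Dehn twist of the cylinder: it is the anomaly $x$ of \autoref{sec:anomalyvalue}, and in $\bicat F(\O)$ the anomaly-freeness relation \eqref{AF} forces it to equal the identity 2\-morphism, so it cannot reproduce the nontrivial action claimed in the statement (which, after \eqref{eq:defppluspminus}, amounts to inserting $\theta$ on the through-strand). The preliminary step is also false: $\epsilon\circ\epsilon^\dagger$ is not the identity of the cylinder and $\mu\circ\mu^\dagger\neq\id$; only the zig-zag composites of \autoref{defn_presentation_of_B} in which the unit and counit sit on opposite sides are identities. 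The correct and much shorter argument, which is the paper's, is that the mapping cylinder of the Dehn twist about the meridian $a$ \emph{is} the generator $\theta$ itself, so $\isd(\theta_a)$ inserts $\theta$ on the strand by \autoref{thm:btinternalstring}, and the standard semisimple ribbon identity \eqref{eq:defppluspminus} (equivalently \cite[equation~(3.1.6)]{bk01-ltc}) together with $p=p^-$ in the oriented case rewrites this as $\tfrac1p$ times the encircling loop decorated by $\overline\theta$. No detour through $\epsilon^\dagger$ is needed or correct here.
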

\begin{proof}
The action of $\theta_a$ follows from a standard identity in a semisimple ribbon category~\cite[equation (3.1.6)]{bk01-ltc}. The Dehn twist $\theta_b$ is homotopic to $\II$, and its action follows from the action of $\theta_a$ and the definition of $\II$ given in expression~\eqref{defn_of_II}. The mapping cylinder of Dehn twist $\theta _c$ is diffeomorphic to the composite $A$ given in expression~\eqref{defn_of_A}, and so we use the action of $A$ as computed in \autoref{lem:A_action}. Recall that for an oriented theory, $p = p^+ = p^-$.
\end{proof}

\begin{theorem}
For a linear representation $Z$ of \Bord, for an arbitrary curve embedded on a surface with boundary, a Dehn twist about that curve acts on internal string diagrams by embedding the curve below the surface, decorated with an inverse Dehn twist and a scalar factor of $1/p$.
\end{theorem}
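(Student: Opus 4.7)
The plan is to reduce the general statement to the three cases $\theta_a$, $\theta_b$, $\theta_c$ established in \autoref{basicdehntwists}, using naturality of Dehn twists under diffeomorphisms together with the Lickorish-style generation of the mapping class group. Given a simple closed curve $\gamma \subset \Sigma$, by the classification of surfaces with boundary (applied to a regular neighbourhood of $\gamma$ and its complement) there is an orientation-preserving diffeomorphism $\phi \colon \Sigma \to \Sigma$, isotopic to a composition of Dehn twists about curves of types $a$, $b$, $c$, that sends one of the standard curves $c_0 \in \{a,b,c\}$ to $\gamma$. Then $\theta_\gamma = \phi \circ \theta_{c_0} \circ \phi^{-1}$ in the mapping class groupoid, so $\isd(\theta_\gamma) = \isd(\phi) \circ \isd(\theta_{c_0}) \circ \isd(\phi^{-1})$.

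First I would verify that the claimed property---``Dehn twist acts by pushing the curve into the interior, decorated by $\overline\theta$, times $1/p$''---is preserved under conjugation by \emph{any} diffeomorphism $\phi$ represented as a product of Dehn twists. Concretely, writing $\phi = \theta_{\gamma_n}^{\pm 1}\cdots\theta_{\gamma_1}^{\pm 1}$ with each $\gamma_i$ of basic type, I would induct on $n$. Applying $\isd(\phi^{-1})$ to the standard generator, then $\isd(\theta_{c_0})$, then $\isd(\phi)$ produces a sum of interior string diagrams containing the curve $c_0$ (decorated by $\overline\theta$), together with a collection of auxiliary curves arising from the intermediate Dehn twists, each also decorated by $\overline\theta$. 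The scalar prefactor is $(1/p)^{2n+1}$.

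The main step---and the hard part---will be showing that all the auxiliary curves can be removed, leaving exactly the image curve $\gamma = \phi(c_0)$ decorated by $\overline\theta$, with net prefactor $1/p$. I would handle this in two stages. The first is geometric: because each Dehn twist in the composition $\phi$ is matched with its inverse in $\phi^{-1}$, the auxiliary loops appear in cancelling pairs situated symmetrically about the central $c_0$ loop. The second stage is algebraic: using \autoref{lem:cloaking} (cloaking), an unlabelled loop can be slid freely through any bundle of internal strands in a handlebody; and the identity $\theta\,\overline\theta = \id$ together with the ribbon axiom allows a pair of cancelling twists to be collapsed. Each such cancellation of a paired auxiliary loop eats two closed unlabelled loops at the cost of a factor, and by the defining relation $\sum_i d_i^2 = p^+ p^- = p^2$ (equation~\eqref{cutting_eqn}), each paired collapse contributes exactly a factor of $p^2$. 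These combine with $(1/p)^{2n+1}$ to leave the single overall factor $1/p$, as required.

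Finally, I would observe that the resulting single remaining loop traces out precisely the image $\phi(c_0) = \gamma$ inside the handlebody, decorated by $\overline\theta$: this is because each conjugating Dehn twist $\theta_{\gamma_i}$ on the outer layers carries the interior $c_0$-curve along with it via the action of \autoref{basicdehntwists}, exactly reproducing the isotopy by which $\phi$ was built up to send $c_0$ to $\gamma$. Combining these ingredients yields the stated action on an internal string diagram labelled by arbitrary morphisms, since the argument is uniform in the morphism data passing through the surface.
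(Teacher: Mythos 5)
Your overall strategy---reduce to the basic twists of \autoref{basicdehntwists} via the conjugation relation $\theta_{\phi(c_0)} = \phi\,\theta_{c_0}\,\phi^{-1}$ and then invoke cloaking---is the right one and is close in spirit to the paper's argument, but the execution has a genuine gap at the step you yourself flag as ``the hard part,'' and the two mechanisms you offer for it do not work as stated. First, the transport of the central loop from $c_0$ to $\gamma=\phi(c_0)$ cannot come from ``each conjugating Dehn twist carrying the interior $c_0$-curve along with it via the action of \autoref{basicdehntwists}'': that action inserts a new loop in a collar of the boundary and leaves all pre-existing interior strings untouched. After expanding $\phi\,\theta_{c_0}\,\phi^{-1}$ you therefore have $2n+1$ radially nested loops whose middle one is still literally a $c_0$-loop. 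The only way it becomes a $\gamma$-loop is by sliding the paired auxiliary loops together through the intervening strands using \autoref{lem:cloaking}, and it is precisely this slide---a $c_0$-strand pushed through the disk of a $\gamma_i$-killing-ring emerges as a $\theta_{\gamma_i}^{\pm1}(c_0)$-strand---that constitutes the content of the theorem. As written, your proof asserts the conclusion of this step rather than proving it.

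Second, the scalar bookkeeping is not justified. A cancelling pair of auxiliary loops consists of two parallel copies of a curve that is in general essential in the handlebody and linked with the remaining loops and string data; such a pair does not bound disks and cannot be evaluated by $\sum_i d_i^2 = p^2$, which applies only to an unknotted, unlinked loop. (Also, the loops coming from inverse twists should be decorated by $\theta$, not $\overline\theta$.) The paper avoids both problems by inducting on the number $N_C$ of basic twists needed to produce $C$ from a generating curve: writing $C=\theta_a(C')$ with $N_{C'}<N_C$, the relation $\theta_{\theta_a(C')}\circ\theta_a=\theta_a\circ\theta_{C'}$ together with the invertibility of $\isd(\theta_a)$ \emph{uniquely determines} $\isd(\theta_C)$, so one only ever has to verify a two-loop identity, and that identity is exactly the cloaking lemma with an inverse twist applied to one boundary. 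I recommend restructuring your argument this way: the uniqueness trick replaces the global cancellation entirely, and the $1/p$ factors then take care of themselves.
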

\begin{proof}
Given a surface with boundary, choose a family of generating curves forming a basis for its homology, with each curve locally homotopic to one of the standard curves presented in \autoref{basicdehntwists}. Any curve $C$ can be obtained up to homotopy by starting with a generating curve,  and taking its image under successive Dehn twists about generating curves. Write $N_C$ for the least possible number of Dehn twists required to construct $C$ in this manner.

We will prove our statement by induction on $N_C$. Suppose $N_C = 0$; then our curve is homotopic to a generating curve, and the result follows by \autoref{basicdehntwists}. Otherwise, suppose $N_C > 0$. Then there must exist some curve $C'$ and generating curve $a$ such that $N_{C'} < N_C$, and such that $C = \theta_a(C')$. We must show that the theorem holds for the Dehn twist $\theta _C = \theta _{\theta _a (C')}$.

Dehn twists about curves which are themselves in the image of a Dehn twist are controlled by a standard relation~\cite[Fact 3.7]{fm2011-primer}, which specializes to our case as follows:
\begin{equation}
\label{composite_dehn}
\theta _{\theta_a(C')} \circ \theta_a = \theta _a \circ \theta _{C'}
\end{equation}
By induction, the actions of $\isd(\theta _a)$ and $\isd(\theta _{C'})$ on internal string diagrams are given by the `embed below the surface' procedure stated in this theorem. Since $\theta_a$ is invertible, this equation uniquely determines $\theta _C = \theta _{\theta_a(C')}$ as an element of the mapping class group; likewise, since $\isd$ is a functor, the image of this equation under $\isd$ uniquely determines $\isd(\theta_C)$.  To prove the theorem for the Dehn twist $\theta_C$ it therefore suffices to see that equation~\eqref{composite_dehn} holds when each of the four Dehn twists is interpreted to mean, ``apply the `embed below the surface, label with a $\theta^{\inv}$, and divide by $p$' procedure".

If curves $a$ and $C'$ do not intersect, the desired equation holds clearly, so we assume that they do intersect.   In this case, we must check the `embed below the surface' implementation of~\eqref{composite_dehn}, namely:
\begin{equation}
\label{intersecting_Dehn_twists}
{\color{red}\frac{1}{p^2}}
\begin{tz}
\strand[red strand] (-0.75,0) to [out=up, in=-135] (-0.25,0.89) to [out=-55, in=left, in looseness=2, out looseness=0.5] (1,0.25) to [out=right, in=down] (2.25,1) to [out=up, in=right] (1,1.75) to [out=left, in=135, out looseness=1, in looseness=1] (-0.32,1.1) to [out=35, in=down] +(0.15,0.16) node (q) {};
\draw [red strand] (q.center) to [out=up, in=down] node [pos=0.75] (t1) {} (-0.5,2) to (-0.5,2.5);
\strand[red strand] (0,1) to [out=up, in=up] node (t2) [pos=1] {} (2,1) to [out=down, in=down] (0,1);
\node [tiny label, draw=red, text=red, rotate=0] at (t2.center) {$\overline\theta$};
\node at (1,1) {$\cdots$};
\node at ([xshift=4pt] -0.75,-0.1) [anchor=south west, inner sep=0pt] {${\theta _a(C')}$};
\node at (2,1) [anchor=east, inner sep=5pt] {$a$};
\node [tiny label, draw=red, text=red, rotate=0] at (-0.5, 2.1) {$\overline\theta$};
\end{tz}
\quad=\quad
{\color{red}\frac{1}{p^2}}
\begin{tz}
\begin{knot}
\strand[red strand] (0,0) to [out=up, in=down] (0.25,1) to [out=up, in=down] node (t1) [pos=0.7] {} (0,2) to (0,2.5);
\strand[red strand] (0,1) to [out=up, in=up] node (t2) [pos=1] {} (2,1) to [out=down, in=down] (0,1);
\flipcrossings{2}
\end{knot}
\node [tiny label, draw=red, text=red, rotate=0] at (t2.center) {$\overline\theta$};
\node at (1,1) {$\cdots$};
\node at ([xshift=4pt] 0,0) [anchor=south west, inner sep=0pt] {$C'$};
\node at (2,1) [anchor=east, inner sep=5pt] {$a$};
\node [tiny label, draw=red, text=red, rotate=0] at (0, 2.1) {$\overline\theta$};
\end{tz}
\end{equation}
On the left-hand side we embed (below the surface) a loop along the curve $a$, and then along the curve $\theta _a(b)$; on the right-hand side we embed (below the surface) along $b$, and then along $a$.

By applying an inverse Dehn twist to the left-hand cylinder of both sides of the cloaking identity of \autoref{lem:cloaking}, we obtain exactly equation~\eqref{intersecting_Dehn_twists}. This demonstrates that Dehn twists act in the necessary manner for any oriented structure. The ellipses in these expressions emphasize that the proof of this equation is local to any neighbourhood of the strands, and applies regardless of any surrounding string diagram, which may in general link the displayed strands nontrivially.
\end{proof}

\subsection{Mapping class group actions}
\label{mappingactions}
\stringscalecobordisms{1}
In this section we illustrate the use of internal string diagrams by verifying a braid relation in the mapping class group of a twice-punctured genus-one oriented surface (see~\cite[Section~3.5.1]{fm2011-primer}), namely that $\theta_a \theta_b \theta_a = \theta_b \theta_a \theta_b$ where $a$ and $b$ are longitudinal and meridional curves (in fact this relation holds for any two curves intersecting once on any surface). The relation is:
\begin{equation}
\label{mcg}
\smallbordisms
\begin{tz}[xscale=2]
\node (1) at (0,0) {$\begin{tz}
        \node (A) [Pants, top] at (0,0) {};
        \node (B) [Copants, anchor=leftleg] at (A.leftleg) {};
        \selectpart[green, inner sep=1pt]{(A-leftleg)};
        \selectpart[red]{(A-belt) (A-leftleg) (A-rightleg) (B-belt)};
\end{tz}$};
\node (2) at (1,1) {$\begin{tz}
        \node (A) [Pants, top] at (0,0) {};
        \node [Copants, anchor=leftleg] at (A.leftleg) {};
        \selectpart[green, inner sep=1pt]{(A-leftleg)};
\end{tz}$};
\node (3) at (2,1) {$\begin{tz}
        \node (A) [Pants, top] at (0,0) {};
        \node [Copants, anchor=leftleg] at (A.leftleg) {};
\end{tz}$};
\node (4) at (3,0) {$\begin{tz}
        \node (A) [Pants, top] at (0,0) {};
        \node [Copants, anchor=leftleg] at (A.leftleg) {};
\end{tz}$};
\node (5) at (1,-1) {$\begin{tz}
        \node (A) [Pants, top] at (0,0) {};
        \node [Copants, anchor=leftleg] at (A.leftleg) {};
\end{tz}$};
\node (6) at (2,-1) {$\begin{tz}
        \node (A) [Pants, top] at (0,0) {};
        \node [Copants, anchor=leftleg] at (A.leftleg) {};
        \selectpart[green, inner sep=1pt]{(A-leftleg)};
\end{tz}$};
\begin{scope}[double arrow scope]
\draw (1) to node [auto, red] {$A$} (2);
\draw (2) to node [auto] {$\theta$} (3);
\draw (3) to node [auto] {$A$} (4);
\draw (1) to node [auto, swap, green] {$\theta$} (5);
\draw (5) to node [auto, swap] {$A$} (6);
\draw (6) to node [auto, swap] {$\theta$} (4);
\end{scope}
\end{tz}
\end{equation}
Geometrically, the 2\-morphism $A$ represents the Dehn twist illustrated in expression~\eqref{Atwist}, and the 2\-morphism $\theta$ is the Dehn twist about the left circle.

We begin with the following lemma.
\begin{lemma}
\label{lem:crosstwist}
In a modular tensor category, the following identity holds:
\begin{equation}
\begin{tz}
\draw [draw=none, use as bounding box] (-0.5,0) rectangle (1,2);
\begin{knot}
\strand [red strand] (0,0) to (0,2);
\strand [red strand] (0.5,0) to (0.5,2);
\strand [red strand] (-0.5,1) to [out=up, in=up] (1,1) node (t1) {} to [out=down, in=down] (-0.5,1);
\flipcrossings{1,4}
\end{knot}
\node [tiny label, draw=red, text=red, rotate=0] at (t1.center) {$\overline\theta$};
\end{tz}
\quad=\quad
{\color{red} p^-}
\begin{tz}
\draw [draw=none, use as bounding box] (0,0) rectangle (0.5,2);
\begin{knot}
\strand [red strand] (0.5,0) to (0.5,0.25) to [out=up, in=down] (0,0.875) to [out=up, in=down] (0.5,1.5) to node [pos=0.4] (t1) {} (0.5,2);
\strand [red strand] (0,0) to (0,0.25) to [out=up, in=down] (0.5,0.875) to [out=up, in=down] (0,1.5) to node [pos=0.4] (t2) {} (0,2);
\flipcrossings{1}
\end{knot}
\node [tiny label, draw=red, text=red, rotate=0] at (t1.center) {$\theta$};
\node [tiny label, draw=red, text=red, rotate=0] at (t2.center) {$\theta$};
\end{tz}
\end{equation}
\end{lemma}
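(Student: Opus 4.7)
The plan is to reduce the two-strand identity to the one-strand version, which is essentially the defining property of $p^-$.

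First I would observe that the closed loop on the LHS, decorated with $\overline\theta$, encircles two parallel strands carrying objects, say $A$ and $B$. By a standard move in the graphical calculus for ribbon categories, the two parallel strands can be regarded as a single strand labeled by $A \otimes B$, since the loop passes around both. Hence the LHS equals a single closed loop decorated with $\overline\theta$ encircling a strand labeled $A \otimes B$.

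Next, I would invoke the defining equation for $p^-$, namely the second identity in \eqref{eq:defppluspminus}: a closed loop decorated with $\overline\theta$ around a single strand labeled by an object $X$ evaluates to $p^- \cdot \theta_X$ on that strand. Applying this with $X = A \otimes B$ gives
\begin{equation*}
\text{LHS} \;=\; p^- \cdot \theta_{A \otimes B}.
\end{equation*}

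Finally, I would apply the balanced condition \eqref{eq:balanced}, which states that
\begin{equation*}
\theta_{A \otimes B} \;=\; (\theta_A \otimes \theta_B) \circ c_{B,A} \circ c_{A,B},
\end{equation*}
where $c$ denotes the braiding. The composite $c_{B,A} \circ c_{A,B}$ is precisely the double-braid (full twist) on two strands drawn on the RHS, and the individual $\theta_A$ and $\theta_B$ are the two $\theta$ labels on the top of the RHS picture. This matches the right-hand side exactly, yielding the identity with prefactor $p^-$.

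There is no real obstacle here: each step is a direct application of a structural identity. The only subtle point is ensuring that the graphical identification of the LHS as a loop around a single $A \otimes B$ strand respects the crossing conventions (this is the role of \texttt{flipcrossings} in the source picture, which ensures the loop goes consistently over or under both strands so that the two strands truly fuse into one for the purposes of the encircling move).
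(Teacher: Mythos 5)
Your proof is correct and is essentially the paper's argument: both reduce the identity to the defining equation \eqref{eq:defppluspminus} of $p^-$, sandwiched between elementary ribbon-calculus moves. The only difference is presentational — you phrase the bracketing steps algebraically (fuse the encircled strands into $A\otimes B$, apply the $p^-$ relation to that object, then expand $\theta_{A\otimes B}$ via the balancing axiom \eqref{eq:balanced}), whereas the paper carries out the same reductions as knot isotopies in the diagram.
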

\begin{proof}
We prove this with the following sequence, in which the first and third equalities use properties of the ribbon calculus, and the middle equality uses equation~\eqref{eq:defppluspminus} defining the constant $p^-$:
\begin{equation}
\begin{tz}
\draw [draw=none, use as bounding box] (-0.5,0) rectangle (1,2);
\begin{knot}
\strand [red strand] (0,0) to (0,2);
\strand [red strand] (0.5,0) to (0.5,2);
\strand [red strand] (-0.5,1) to [out=up, in=up] (1,1) node (t1) {} to [out=down, in=down] (-0.5,1);
\flipcrossings{1,4}
\end{knot}
\node [tiny label, draw=red, text=red, rotate=0] at (t1.center) {$\overline\theta$};
\end{tz}
\quad=\quad
\begin{tz}[xscale=-1, yscale=-1]
\draw [draw=none, use as bounding box] (-1,0) rectangle (1.5,2);
\begin{knot}
\strand [red strand] (0.5,0) to (0.5,2);
\strand [red strand] (-0.5,1) to [out=up, in=up] (1,1) node (t1) {} to [out=down, in=down] (-0.5,1);
\strand [red strand] (-0.5,2) to [out=down, in=up] (-1,1) to [out=down, in=down] (0,0.5) to (0,1.5) to [out=up, in=up] (1.5,1) to [out=down, in=up, out looseness=1.5] (0,0);
\flipcrossings{2,3,4,5}
\end{knot}
\node [tiny label, draw=red, text=red, rotate=0] at (t1.center) {$\overline\theta$};
\end{tz}
\quad=\quad
{\color{red} p^-}
\begin{tz}[xscale=-1, yscale=-1]
\draw [draw=none, use as bounding box] (-0.5,0) rectangle (1,2);
\begin{knot}
\strand [red strand] (0.5,0) to (0.5,0.5) to [out=up, in=down] (0,1) to [out=up, in=down] (0.5,1.5) node (t1) {} to (0.5,2);
\strand [red strand] (-0.5,2) to [out=down, in=up] (-0.5,1) to (-0.5,0.5) to [out=down, in=down, looseness=1.5] (0,0.5) to [out=up, in=down] (0.5,1) to [out=up, in=down] (0,1.5) node (t2) {} to [out=up, in=up, in looseness=2.5] (1.0,1) to [out=down, in=up, out looseness=1.5] (0,0);
\flipcrossings{1,3,4}
\end{knot}
\node [tiny label, draw=red, text=red, rotate=0] at (t1.center) {$\theta$};
\node [tiny label, draw=red, text=red, rotate=0] at (t2.center) {$\theta$};
\end{tz}
\quad=\quad
{\color{red} p^-}
\begin{tz}
\draw [draw=none, use as bounding box] (0,0) rectangle (0.5,2);
\begin{knot}
\strand [red strand] (0.5,0) to (0.5,0.25) to [out=up, in=down] (0,0.875) to [out=up, in=down] (0.5,1.5) to node [pos=0.4] (t1) {} (0.5,2);
\strand [red strand] (0,0) to (0,0.25) to [out=up, in=down] (0.5,0.875) to [out=up, in=down] (0,1.5) to node [pos=0.4] (t2) {} (0,2);
\flipcrossings{1}
\end{knot}
\node [tiny label, draw=red, text=red, rotate=0] at (t1.center) {$\theta$};
\node [tiny label, draw=red, text=red, rotate=0] at (t2.center) {$\theta$};
\end{tz}
\end{equation}
\end{proof}

We now verify the mapping class group relation~\eqref{mcg} as follows. The composite $\theta A \theta$ has the following effect on internal string diagrams:
\def\quad{\hspace{5pt}}
\begin{equation}
\begin{tz}
\node (A) [Pants, top, belt scale=1.5] at (0,0) {};
\node (B) [Copants, anchor=leftleg, belt scale=1.5] at (A.leftleg) {};
\begin{scope}[internal string scope]
\draw ([yshift=\toff] A-belt.in-leftthird)
    to +(0,-\toff)
    to [out=down, in=up] (A.leftleg)
    to [out=down, in=up] (B-belt.in-leftthird)
    to +(0,-\boff);
\draw ([yshift=\toff] A-belt.in-rightthird)
    to +(0,-\toff)
    to [out=down, in=up] (A.rightleg)
    to [out=down, in=up] (B-belt.in-rightthird)
    to +(0,-\boff);
\end{scope}
\selectpart[green, inner sep=1pt] {(A-leftleg)};
\end{tz}
\quad\xmapsto{\isd(\theta)}\quad
\begin{tz}
\node (A) [Pants, top, belt scale=1.5] at (0,0) {};
\node (B) [Copants, anchor=leftleg, belt scale=1.5] at (A.leftleg) {};
\begin{scope}[internal string scope]
\draw ([yshift=\toff] A-belt.in-leftthird)
    to +(0,-\toff)
    to [out=down, in=up] node [pos=0.8] (t1) {} (A.leftleg)
    to [out=down, in=up] (B-belt.in-leftthird)
    to +(0,-\boff);
\draw ([yshift=\toff] A-belt.in-rightthird)
    to +(0,-\toff)
    to [out=down, in=up] (A.rightleg)
    to [out=down, in=up] (B-belt.in-rightthird)
    to +(0,-\boff);
\node [tiny label, draw=red, text=red, rotate=-10] at (t1.center) {$\theta$};
\end{scope}
\end{tz}
\quad\xmapsto{\isd(A)}\quad
{\color{red} \frac 1 p}
\begin{tz}
\node (A) [Pants, top, right leg scale=1.5, belt scale=1.5] at (0,0) {};
\node (B) [Copants, anchor=leftleg, right leg scale=1.5, belt scale=1.5] at (A.leftleg) {};
\begin{knot}
\strand [red strand] ([yshift=\toff] A-belt.in-leftthird)
    to +(0,-\toff)
    to [out=down, in=up] (A-rightleg.in-leftquarter)
    to [out=down, in=up] node [pos=0.8] (t1) {} (B-belt.in-leftthird)
    to +(0,-\boff);
\strand [red strand] ([yshift=\toff] A-belt.in-rightthird)
    to +(0,-\toff)
    to [out=down, in=up] (A-rightleg.in-rightquarter)
    to [out=down, in=up] (B-belt.in-rightthird)
    to +(0,-\boff);
\strand [red strand] (A.leftleg) to [out=down, in=down, looseness=1.3] (A.rightleg) to [out=up, in=up, looseness=1.3] node (t2) [pos=0.85] {} (A.leftleg);
\flipcrossings{2}
\end{knot}
\node [tiny label, draw=red, text=red, rotate=-25] at (t2.center) {$\overline\theta$};
\selectpart[green, inner sep=1pt] {(A-leftleg)};
\end{tz}
\quad\xmapsto{\isd(\theta)}\quad
{\color{red} \frac 1 p}
\begin{tz}
\node (A) [Pants, top, right leg scale=1.5, belt scale=1.5] at (0,0) {};
\node (B) [Copants, anchor=leftleg, right leg scale=1.5, belt scale=1.5] at (A.leftleg) {};
\begin{knot}
\strand [red strand] ([yshift=\toff] A-belt.in-leftthird)
    to +(0,-\toff)
    to [out=down, in=up] (A-rightleg.in-leftquarter)
    to [out=down, in=up] node [pos=0.8] (t1) {} (B-belt.in-leftthird)
    to +(0,-\boff);
\strand [red strand] ([yshift=\toff] A-belt.in-rightthird)
    to +(0,-\toff)
    to [out=down, in=up] (A-rightleg.in-rightquarter)
    to [out=down, in=up] (B-belt.in-rightthird)
    to +(0,-\boff);
\strand [red strand] (A.leftleg) to [out=down, in=down, looseness=1.3] (A.rightleg) to [out=up, in=up, looseness=1.3] node (t2) [pos=0.85] {} (A.leftleg);
\flipcrossings{2}
\end{knot}
\end{tz}
\end{equation}
We now investigate the action of $A \theta A$, where in the final step we apply \autoref{lem:crosstwist}:
\begin{gather}\nonumber
\begin{tz}
\node (A) [Pants, top, belt scale=1.5] at (0,0) {};
\node (B) [Copants, anchor=leftleg, belt scale=1.5] at (A.leftleg) {};
\begin{scope}[internal string scope]
\draw ([yshift=\toff] A-belt.in-leftthird)
    to +(0,-\toff)
    to [out=down, in=up] (A.leftleg)
    to [out=down, in=up] (B-belt.in-leftthird)
    to +(0,-\boff);
\draw ([yshift=\toff] A-belt.in-rightthird)
    to +(0,-\toff)
    to [out=down, in=up] (A.rightleg)
    to [out=down, in=up] (B-belt.in-rightthird)
    to +(0,-\boff);
\end{scope}
\end{tz}
\quad\xmapsto{\isd(A)}\quad
{\color{red} \frac 1 p}
\begin{tz}
\node (A) [Pants, top, belt scale=1.5] at (0,0) {};
\node (B) [Copants, anchor=leftleg, belt scale=1.5] at (A.leftleg) {};
\begin{knot}
\strand [red strand] ([yshift=\toff] A-belt.in-leftthird)
    to +(0,-\toff)
    to [out=down, in=up] (A-rightleg.in-leftquarter)
    to [out=down, in=up] node [pos=0.7] (t1) {} (B-belt.in-leftthird)
    to +(0,-\boff);
\strand [red strand] ([yshift=\toff] A-belt.in-rightthird)
    to +(0,-\toff)
    to [out=down, in=up] (A-rightleg.in-rightquarter)
    to [out=down, in=up] (B-belt.in-rightthird)
    to +(0,-\boff);
\strand [red strand] (A.leftleg) to [out=down, in=down, looseness=1.5] (A.rightleg) to [out=up, in=up, looseness=1.5] node (t2) [pos=0.9] {} (A.leftleg);
\flipcrossings{2}
\end{knot}
\node [tiny label, draw=red, text=red, rotate=-20] at (t1.center) {$\overline\theta$};
\node [tiny label, draw=red, text=red, rotate=-10] at (t2.center) {$\overline\theta$};
\selectpart[green, inner sep=1pt] {(A-leftleg)};
\end{tz}
\quad\xmapsto{\isd(\theta)}\quad
{\color{red} \frac 1 p}
\begin{tz}
\node (A) [Pants, top, belt scale=1.5] at (0,0) {};
\node (B) [Copants, anchor=leftleg, belt scale=1.5] at (A.leftleg) {};
\begin{knot}
\strand [red strand] ([yshift=\toff] A-belt.in-leftthird)
    to +(0,-\toff)
    to [out=down, in=up] (A-rightleg.in-leftquarter)
    to [out=down, in=up] node [pos=0.7] (t1) {} (B-belt.in-leftthird)
    to +(0,-\boff);
\strand [red strand] ([yshift=\toff] A-belt.in-rightthird)
    to +(0,-\toff)
    to [out=down, in=up] (A-rightleg.in-rightquarter)
    to [out=down, in=up] (B-belt.in-rightthird)
    to +(0,-\boff);
\strand [red strand] (A.leftleg) to [out=down, in=down, looseness=1.5] (A.rightleg) to [out=up, in=up, looseness=1.5] node (t2) [pos=0.9] {} (A.leftleg);
\flipcrossings{2}
\end{knot}
\node [tiny label, draw=red, text=red, rotate=-20] at (t1.center) {$\overline\theta$};
\end{tz}
\\\quad\xmapsto{\isd(A)}\quad
{\color{red} \frac 1 {p^2}}
\def\ss{0.417*\cobwidth}
\begin{tz}
\node (A) [Pants, top, right leg scale=2.5, left leg scale=1, belt scale=2] at (0,0) {};
\node (B) [Copants, anchor=leftleg, right leg scale=2.5, left leg scale=1, belt scale=2] at (A.leftleg) {};
\begin{knot}
\strand [red strand] ([yshift=\toff] A-belt.in-leftthird)
    to +(0,-\toff)
    to [out=down, in=up] ([xshift=-3*\ss] A-rightleg.west)
    to [out=down, in=up] node [pos=0.7] (t1) {} (B-belt.in-leftthird)
    to +(0,-\boff);
\strand [red strand] ([yshift=\toff] A-belt.in-rightthird)
    to +(0,-\toff)
    to [out=down, in=up] ([xshift=-1*\ss] A-rightleg.west)
    to [out=down, in=up] (B-belt.in-rightthird)
    to +(0,-\boff);
\strand [red strand] ([xshift=-5*\ss] A-rightleg.west) to [out=down, in=down, looseness=1.5] ([xshift=-2*\ss] A-rightleg.west) to [out=up, in=up, looseness=1.5] node (t2) [pos=0.15] {} ([xshift=-5*\ss] A-rightleg.west);
\strand [red strand] (A.leftleg) to [out=down, in=down, looseness=1.3] ([xshift=-4*\ss] A-rightleg.west) to [out=up, in=up, looseness=1.3] node (t3) [pos=0.9] {} (A.leftleg);
\flipcrossings{2,3}
\end{knot}
\node [tiny label, draw=red, text=red, rotate=-20] at (t1.center) {$\overline\theta$};
\node [tiny label, draw=red, text=red, rotate=30] at (t2.center) {$\overline\theta$};
\node [tiny label, draw=red, text=red, rotate=-10] at (t3.center) {$\overline\theta$};
\end{tz}
\quad=\quad
{\color{red} \frac {p^-}{p^2}}
\begin{tz}
\node (A) [Pants, top, right leg scale=1.5] at (0,0) {};
\node (B) [Copants, anchor=leftleg, right leg scale=1.5] at (A.leftleg) {};
\begin{knot}
\strand [red strand] ([yshift=\toff] A-belt.in-leftthird)
    to +(0,-\toff)
    to [out=down, in=up] (A-rightleg.in-leftquarter)
    to [out=down, in=up] node [pos=0.8] (t1) {} (B-belt.in-leftthird)
    to +(0,-\boff);
\strand [red strand] ([yshift=\toff] A-belt.in-rightthird)
    to +(0,-\toff)
    to [out=down, in=up] (A-rightleg.in-rightquarter)
    to [out=down, in=up] (B-belt.in-rightthird)
    to +(0,-\boff);
\strand [red strand] (A.leftleg) to [out=down, in=down, looseness=1.3] (A.rightleg) to [out=up, in=up, looseness=1.3] node (t2) [pos=0.85] {} (A.leftleg);
\flipcrossings{2}
\end{knot}
\end{tz}
\end{gather}
For an anomaly-free TQFT $p^- = p$, so the mapping class group relation is verified.

\subsection{Lens spaces}
\label{sec:lens}
Given integers $p, q \in \mathbb{Z}$, the lens space $L(p,q)$ (see~\cite{PrasolovSossinsky}) is the 3\-dimensional manifold obtained by gluing two copies of the solid torus $T$ along its boundary $\Sigma$ along a diffeomorphism $h : \Sigma \rightarrow \Sigma$, $h \in SL(2, \mathbb{Z})$ such that 
\[
h (\alpha)= q \alpha + p \beta
\]
where $\alpha = (1,\, 0)^T$ and $\beta = (0, \, 1)^T$ are the standard meridian and longitude in $H_1(\Sigma, \mathbb{Z})$ respectively. In terms of this basis, the matrices representing $\theta$ and $\III$ look as follows (we will use the same symbols for the induced maps on homology):
\begin{align}
\theta &= \left( \begin{array}{cc} 1 & 1 \\ 0 & 1 \end{array} \right)
&
\III &= \left( \begin{array}{cc} 0 & 1 \\ -1 & 0 \end{array} \right)
\end{align}
Note that the diffeomorphism class of $L(p,q)$ only depends on the ratio $p/q$, and does not depend on~$h(\beta)$. 
\begin{lemma}
\label{lens_lemma}
For integers $p, q \in \ZZ$, the matrix 
\begin{equation} \label{lens1}
h = \III \theta^{\minus m_n} \III^{\inv} \cdots \theta^{\minus m_1} \III^{\inv}
\end{equation}
satisfies $h(\alpha) = q \alpha + p \beta$, where $m_i$ arise from a continued fraction expansion of $p/q$ as follows:
\begin{equation}
\label{lens2}
\begin{aligned}\frac{p}{q} = m_n - \cfrac{1}{m_{n-1} - \cfrac{1}{\cdots
- \cfrac{1}{m_2 - \cfrac{1}{m_1}}}}
\end{aligned}
\end{equation}
\end{lemma}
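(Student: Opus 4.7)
The plan is to proceed by induction on the length $n$ of the continued fraction expansion, showing that left-multiplication by each block $\III\,\theta^{-m_k}\,\III^{-1}$ implements exactly one step of the continued-fraction recursion $r_k = m_k - 1/r_{k-1}$ under the action on $\alpha$.

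For the base case $n=1$ (so $p/q = m_1$, giving $(q,p) = (1,m_1)$), a direct computation suffices: $\III^{-1}\alpha = \beta$, then $\theta^{-m_1}\beta = -m_1\alpha + \beta$, and finally $\III(-m_1\alpha + \beta) = \alpha + m_1\beta$, which equals $q\alpha + p\beta$ as claimed.

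For the inductive step, set $h_k := \III\,\theta^{-m_k}\,\III^{-1}\cdots\theta^{-m_1}\,\III^{-1}$ and let $p_k/q_k$ denote the truncated continued fraction $m_k - 1/(m_{k-1} - \cdots - 1/m_1)$. The key algebraic identity, obtained by separating out the leading $\III$ on the right-hand tail and using $\III^2 = -I$, is
\[
h_k \;=\; -\,\III\,\theta^{-m_k}\,h_{k-1}.
\]
Assuming $h_{k-1}\alpha = q_{k-1}\alpha + p_{k-1}\beta$ by induction, a direct $2{\times}2$ matrix computation yields
\[
h_k\alpha \;=\; -p_{k-1}\,\alpha \;+\; (q_{k-1} - m_k p_{k-1})\,\beta,
\]
and the ratio of coefficients $(q_{k-1}-m_k p_{k-1})/(-p_{k-1}) = m_k - q_{k-1}/p_{k-1} = m_k - 1/r_{k-1}$ is precisely the continued-fraction recursion defining $r_k = p_k/q_k$.

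No substantial obstacle is expected: the proof reduces to matching two elementary recursions, one in $SL(2,\mathbb Z)$ via $2{\times}2$ matrix multiplication and one for continued fractions. The only delicate point is bookkeeping the signs produced by $\III^2 = -I$ at each inductive step; these accumulate to an overall sign on $(q,p)$, which is immaterial since (as noted just before the lemma) the diffeomorphism class of $L(p,q)$ depends only on the ratio $p/q$, so $(q,p)$ is determined only up to a common sign.
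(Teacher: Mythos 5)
Your argument is correct, but it takes a genuinely different route from the paper: the paper offers no computation at all for this lemma, simply citing Jeffrey's Proposition~2.5 from \cite{jef92-csw}, whereas you give a self-contained induction. I checked your steps: with $\theta = \left(\begin{smallmatrix}1 & 1\\ 0 & 1\end{smallmatrix}\right)$ and $\III = \left(\begin{smallmatrix}0 & 1\\ -1 & 0\end{smallmatrix}\right)$ one indeed gets $\III^{-1}\alpha = \beta$, $\theta^{-m_1}\beta = -m_1\alpha+\beta$, $\III(-m_1\alpha+\beta) = \alpha+m_1\beta$ for the base case, and the identity $h_k = -\III\,\theta^{-m_k}h_{k-1}$ follows from $\III^{-2} = -I$; the resulting coefficient ratio $(q_{k-1}-m_kp_{k-1})/(-p_{k-1}) = m_k - 1/r_{k-1}$ is exactly the continued-fraction recursion. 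The one point worth tightening is the sign: the factor $-I$ at each stage means $h_k\alpha = (-1)^{k+1}(q_k\alpha + p_k\beta)$ relative to the standard numerator/denominator recursion, so strictly your inductive hypothesis should carry the sign $(-1)^{k}$ rather than be stated as an exact equality. As you observe, this is harmless: the expansion \eqref{lens2} determines $(p,q)$ only up to a common sign (it depends only on the ratio), and the paper's remark preceding the lemma that $L(p,q)$ depends only on $p/q$ makes the ambiguity immaterial for the application in Lemma~\ref{lens_lemma2}. What your approach buys is a verifiable, elementary proof in place of an external citation; what the citation buys is brevity and consistency with Jeffrey's surgery conventions, which the paper relies on elsewhere.
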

\begin{proof}
This is a reformulation of~\cite[Proposition~2.5]{jef92-csw}.
\end{proof}
It is well known that $L(p,q)$ can be obtained via surgery on a linear chain of linked circles with framing numbers given by the coefficients $m_i$~\cite{PrasolovSossinsky}.  
\begin{lemma} \label{lens_lemma2} The lens space $L(p,q)$ is given by the following composite of generators of $\O$:
\smallbordisms
\begin{equation}
\label{lens_eq}
L(p,q) \gap=\gap
\begin{tz}
\draw[green] (0,0) rectangle (\cobwidth, \cobwidth);
\end{tz}
\longxdoubleto{\nu}
\begin{tz}
    \node[Cap] (A) at (0,0) {};
    \node[Cup] (B) at (0,0) {};
    \selectpart[green, inner sep=1pt] {(A-center)};
\end{tz}
\longxdoubleto{\epsilon ^\dagger}
\begin{tz}
    \node[Cap] (A) at (0,0) {};
    \node[Pants, anchor=belt] (B) at (A) {};
    \node[Copants, anchor=leftleg] (C) at (B.leftleg) {};
    \node[Cup] (D) at (C.belt) {};
    \selectpart[green] {(A-center) (B-leftleg) (B-rightleg) (C-belt)};
\end{tz}
\longxdoubleto{\tilde h}
\begin{tz}
    \node[Cap, bot=false] (A) at (0,0) {};
    \node[Pants, anchor=belt] (B) at (A) {};
    \node[Copants, anchor=leftleg] (C) at (B.leftleg) {};
    \node[Cup] (D) at (C.belt) {};
    \node (E) [Cobordism Bottom End 3D] at (0,0) {};    
    \selectpart[green] {(E) (B-leftleg) (B-rightleg) (C-belt)};
\end{tz}
\longxdoubleto{\epsilon}
\begin{tz}
    \node[Cap] (A) at (0,0) {};
    \node[Cup] (B) at (0,0) {};
    \selectpart[green, inner sep=1pt] {(A-center)};
\end{tz}
\longxdoubleto{\nu ^\dagger}
\begin{tz}
\end{tz}
\end{equation}
Here $\tilde h$ is
\[
\tilde{h} = \theta^{m_n} \, \III \, \theta^{m_{n-1}} \cdots \III \, \theta^{m_1}
\]
where the $m_i$ are given by \eqref{lens2}.
\end{lemma}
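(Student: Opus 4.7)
The plan is to interpret the composite \eqref{lens_eq} as a genus-one Heegaard splitting of a closed 3-manifold with gluing diffeomorphism $\tilde h$, and then to identify the resulting manifold as $L(p,q)$ by invoking \autoref{lens_lemma} together with the continued-fraction expansion \eqref{lens2}.

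First, I would verify the Heegaard interpretation. Reading \eqref{lens_eq} from left to right, the 2-morphism $\nu$ supplies a 3-ball with $S^2$ boundary; $\epsilon^\dagger$ attaches a 1-handle to the 3-ball, converting it into a solid torus $T_1$ with torus boundary; the mapping cylinder $\tilde h$ applies a self-diffeomorphism to this boundary torus; and $\epsilon$ followed by $\nu^\dagger$ dually caps off with a second solid torus $T_2$. Thus \eqref{lens_eq} represents the closed 3-manifold $T_1 \cup_{\tilde h} T_2$ obtained by gluing two solid tori along their boundary via $\tilde h$.

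Next, I would fix $SL(2,\mathbb{Z})$-conventions. Choose a basis $(\alpha,\beta)$ of $H_1$ of the Heegaard torus so that $\alpha$ is the meridian of $T_1$ and $\beta$ is the meridian of $T_2$; with these choices $T_1 \cup_{\mathrm{id}} T_2$ is the standard genus-one Heegaard splitting of $S^3$. In this basis, the 2-morphism $\theta$ is a Dehn twist acting by $T := \bigl(\begin{smallmatrix}1 & 1\\ 0 & 1\end{smallmatrix}\bigr)$, and $\III$ acts by the modular $S$-transformation $\bigl(\begin{smallmatrix}0 & -1\\ 1 & 0\end{smallmatrix}\bigr)$, consistent with the computation of the action of $\III$ on the torus in \autoref{lem:III_action}.

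Finally, the gluing $T_1 \cup_f T_2$ is $L(p,q)$ precisely when $f(\beta)$ lies in the homology class $p\alpha + q\beta$ on $\partial T_1$ (up to sign). A direct matrix computation using $\tilde h = \theta^{m_n} \III \theta^{m_{n-1}} \cdots \III \theta^{m_1}$ shows that $\tilde h(\beta) = p\alpha + q\beta$ whenever the $m_i$ are given by the continued-fraction expansion \eqref{lens2}; equivalently, one verifies the algebraic identity $\tilde h = \III\, h^{-1}\, \III^{-1}$ in $SL(2,\mathbb{Z})$, where $h$ is the matrix from \autoref{lens_lemma}. Since conjugation by $\III$ and matrix inversion correspond geometrically to swapping meridian/longitude roles and reversing the direction of gluing (both of which preserve the underlying lens space), \autoref{lens_lemma} then identifies the composite \eqref{lens_eq} with $L(p,q)$.

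The main obstacle will be pinning down orientation and meridian conventions: correctly identifying the meridians of the two solid tori produced by $\epsilon^\dagger \circ \nu$ and $\nu^\dagger \circ \epsilon$, and matching $\theta$ and $\III$ with the standard generators $T, S$ of $SL(2,\mathbb{Z})$. Once these conventions are fixed, the verification that $\tilde h(\beta) = p\alpha + q\beta$ reduces to a routine induction on the length of the continued-fraction expansion, mirroring the induction that proves \autoref{lens_lemma}.
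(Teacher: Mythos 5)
Your overall strategy coincides with the paper's: read \eqref{lens_eq} as a genus-one Heegaard splitting glued along $\tilde h$, and reduce to \autoref{lens_lemma}. But the step you defer to the end as "the main obstacle" — identifying the meridian of the solid torus $F(\epsilon^\dagger\circ\nu)$ and matching $\theta$, $\III$ against the standard generators in the Heegaard basis — is not a routine convention check; it is the entire content of the lemma. The paper resolves it by observing that the cobordism picture \eqref{littlepic2} is only a two-dimensional \emph{time slice} of the solid torus, whose interior lies ``in the past'' rather than inside the drawn surface; consequently the gluing map must be expressed after the substitutions $\theta \mapsto A^{\inv}$ and $\III \mapsto \III^{\inv}$ in the formula \eqref{lens1} for $h$, and the resulting word $\III^{\inv}(A^{m_n}\III)\cdots(A^{m_1}\III)$ is then simplified to $\theta^{m_n}\III\cdots\III\theta^{m_1}$ using $A=\III\theta\III^{\inv}$, which follows from the braid relation of Section~\ref{mappingactions}. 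Without some version of this observation, you cannot know whether the generator $\theta$ of the presentation twists about the curve that bounds in $F(\epsilon^\dagger\circ\nu)$ or about its dual, and the "routine induction" you invoke has no starting point.

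Separately, the concrete identity you offer in place of this analysis, $\tilde h = \III\, h^{\inv}\, \III^{\inv}$, is false. Computing from \eqref{lens1}, $\III h^{\inv}\III^{\inv}$ equals, up to the central element $\III^2=-I$, the \emph{reversed} word $\theta^{m_1}\III\theta^{m_2}\cdots\III\theta^{m_n}$, which is generally a different matrix from $\tilde h = \theta^{m_n}\III\cdots\III\theta^{m_1}$: for $(m_1,m_2)=(2,3)$ one finds $\tilde h(\beta)=-(5\alpha+2\beta)$ while $\III h^{\inv}\III^{\inv}(\beta)=-(5\alpha+3\beta)$. The two do determine the same lens space, but only because $L(5,2)\cong L(5,3)$ via $q\mapsto q^{\inv}\bmod p$ — so it is your fallback remark about lens spaces being insensitive to these operations, not the claimed identity, that would carry the argument, and that remark again presupposes the convention-matching you have not performed.
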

\begin{proof} The lens space $L(p,q)$ is formed by gluing two copies of the standard solid torus $T$ to each other along its boundary $\Sigma$ via the explicit diffeomorphism $h$ from \autoref{lens_lemma}. Under the geometric realization functor $F \colon \bicat{F}(\O) \rightarrow \Bord$, the solid torus is represented as
\[
 \widetilde{T} = F(\epsilon^\dagger \circ \nu).
\]
Note that the interior of $\widetilde{T}$ is not the na\"{i}ve interior of the surface embedded in~$\mathbb{R}^3$:
\mediumbordisms
\begin{equation} \label{littlepic2}
\begin{tz}  \node[Cap] (A) at (0,0) {};
        \node[Pants, anchor=belt] (B) at (A) {};
        \node[Copants, anchor=leftleg] (C) at (B.leftleg) {};
        \node[Cup] (D) at (C.belt) {};  \end{tz}
\end{equation}
Rather, this picture is a two-dimensional time slice of $\widetilde{T}$; the interior of $\widetilde{T}$ lies `in the past'. Concretely, this amounts to making the substitutions $\theta \mapsto A^{\inv}$, $\III \mapsto \III^{\inv}$ in formula~\eqref{lens1}. Hence $L(p,q)$ is represented in $\bicat{F}(\O)$ as \eqref{lens_eq}, where
\begin{eqnarray*}
        \tilde{h} & = & \III^{\inv} (A^{m_n} \III) \cdots (A^{m_1} \III) \\
         & = & \III^{\inv} (\III \theta^{m_n}) \cdots (\III \theta^{m_1}) \\
         & = & \theta^{m_n} \III  \theta^{m_{n-1}} \cdots \III \theta^{m_1}.
\end{eqnarray*}
In the second line we have used the braid relation $A \theta A = \theta A \theta$ from Section \ref{mappingactions}, which after inserting the definition $\III = \theta A \theta$ implies that $A = \III \theta \III^{\inv}$.  
\end{proof}

\begin{corollary} The invariant of an oriented TQFT $Z$ on a lens space is the following:
\[
  Z(L(p,q)) = \frac{1}{p^{n+1}} \begin{tz}[scale=1.5]
                \begin{knot}
                        \strand [red strand] (0.8,0) to[out=up, in=left] (1.3, 0.5)  to[out=right, in=up] (1.8,0) to[out=down, in=right] (1.3, -0.5) node [tiny label, draw=red, text=red, inner sep=1pt] {$\theta^{m_{n}}$} to[out=left, in=down] (0.8,0);
                        \strand [red strand] (1.6, 0) to[out=up, in=left] (2.1, 0.5);
                        \strand [red strand] (1.6, 0) to[out=down, in=left] (2.1, -0.5);
                        \node at (2.5, 0) {$\cdots$};
                        \strand [red strand] (2.9, 0.5) to[out=right, in=up] (3.4, 0) to[out=down, in=right] (2.9, -0.5);
                        \strand [red strand] (3.2,0) to[out=up, in=left] (3.7, 0.5)  to[out=right, in=up] (4.2,0) to[out=down, in=right] (3.7, -0.5) node [tiny label, draw=red, text=red, inner sep=1pt] {$\theta^{m_2}$} to[out=left, in=down] (3.2,0);                
                        \strand [red strand] (4.0,0) to[out=up, in=left] (4.5, 0.5)  to[out=right, in=up] (5,0) to[out=down, in=right] (4.5, -0.5) node [tiny label, draw=red, text=red, inner sep=1pt] {$\theta^{m_1}$} to[out=left, in=down] (4,0);
                        \flipcrossings{1, 3, 5};
                \end{knot}
                
  \end{tz}
\]
\end{corollary}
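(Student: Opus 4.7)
The plan is to apply Lemma~\ref{lens_lemma2} and then compute the invariant by successively applying the internal string diagram calculus developed in Sections~\ref{sec:internal}--\ref{sec:modcatmodobj} to each generator in the given composite. Since $Z \simeq \isd$ on equivalence classes, it suffices to track the action of each factor of the composite on internal string diagrams, beginning with the empty diagram on the empty 1-manifold.

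First I would apply $\isd(\nu)$ to create the empty internal diagram on the 2-sphere, and then $\isd(\epsilon^{\dagger})$, which by Proposition~\ref{thm:epsilondagaction} (specialized to the sphere, as in equation~\eqref{eq:generalepsilonop}) yields a factor of $\tfrac{1}{p}$ times a torus decorated by a single unlabeled closed loop (that is, $\sum_i d_i$ times a loop labeled by the simple object $S_i$). Next I would apply $\isd(\tilde h)$ for $\tilde h = \theta^{m_n}\III\theta^{m_{n-1}}\III\cdots \III\theta^{m_1}$, processing the factors right-to-left: each $\isd(\theta^{m_i})$, by the Dehn twist calculation of Section~\ref{sec:dehn} (specifically expression~\eqref{Atwist} iterated $m_i$ times), decorates the currently active loop with $\theta^{m_i}$; each $\isd(\III)$, by Corollary~\ref{lem:III_action}, contributes a factor of $\tfrac{1}{p}$ and introduces a new loop Hopf-linked to the previous one, which then becomes the active loop for the next Dehn twist. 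After all $n$ twist blocks and $n-1$ interleaved $\III$'s, the internal string diagram on the torus is exactly the chain of $n$ Hopf-linked loops decorated with $\theta^{m_i}$. Finally $\isd(\epsilon)$ closes the tube of the torus to produce a 2-sphere with the chain of linked loops in its interior, and $\isd(\nu^{\dagger})$ contributes a final factor of $\tfrac{1}{p}$ while evaluating the resulting closed diagram in the ribbon category.

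Counting scalar factors: one $\tfrac{1}{p}$ from $\epsilon^{\dagger}$, one from $\nu^{\dagger}$, and $n-1$ from the $\III$'s, giving the stated prefactor $\tfrac{1}{p^{n+1}}$. The remaining ribbon-category evaluation is precisely the linked chain displayed in the corollary.

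The main obstacle will be verifying that the iterative action of $\III$ genuinely builds a \emph{chain} (rather than a more complicated tangle) of Hopf-linked loops. The cleanest way to handle this is to carry along only a neighborhood of the \emph{active} loop at each step, using the locality of the $\III$-action in Corollary~\ref{lem:III_action} together with the cloaking identity of Lemma~\ref{lem:cloaking} to verify that the non-active loops (already decorated with their $\theta^{m_j}$) can be safely pushed away from the region where the next $\III$ and $\theta^{m_{j+1}}$ operate, so that each step only modifies the rightmost loop and links a single new loop to it.
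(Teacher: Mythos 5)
Your argument is correct and is essentially the paper's own proof (which simply cites Lemma~\ref{lens_lemma2} and Corollary~\ref{lem:III_action}), spelled out step by step with the right bookkeeping of the $n+1$ factors of $1/p$ (one from $\epsilon^\dagger$, one from $\nu^\dagger$, and $n-1$ from the interleaved $\III$'s). One small correction: the action of the generator $\theta^{m_i}$ on a leg of the torus is given by \autoref{thm:btinternalstring} (it inserts a $\theta^{m_i}$ on the strand passing through that leg), not by iterating \eqref{Atwist} --- the latter is the Dehn twist realized by the composite $A$ and would instead introduce a new $\overline\theta$-decorated loop.
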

\begin{proof} This follows from \autoref{lens_lemma2} and \autoref{lem:III_action}. 
\end{proof}

\subsection{Torus bundles}
\label{sec:torusbundle}
Let $\tau \in SL(2, \mathbb{Z})$ be a diffeomorphism of the torus up to isotopy, and let $M_\tau$ be the associated torus bundle over $S^1$:
\[
 M_\tau = M \times [0,1] \, / \, (x,0) \sim (\tau(x), 1)
\]
\begin{lemma} For an oriented TQFT $Z$, the invariant of the torus bundle $M_\tau$ is $Z(M_\tau) = \Tr(Z(\tau))$.
\end{lemma}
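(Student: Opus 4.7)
The plan is to identify $M_\tau$ with a categorical trace in $\Bord$ and then use that $Z$, as a symmetric monoidal 2-functor, preserves all duality data and hence traces.

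First I would realize $\tau$ as a 2-morphism in the bordism 2-category. Viewing the closed surface $T^2$ as a 1-morphism $T^2 \colon \emptyset \to \emptyset$ in $\Bord$, the diffeomorphism $\tau$ determines a 2-morphism $C_\tau \colon T^2 \Rightarrow T^2$ given by the mapping cylinder of $\tau$ (a 3-manifold with corners, with the two boundary copies of $T^2$ as its source and target). Moreover, every 1-morphism in the oriented bordism 2-category is fully dualizable, and the 1-morphism $T^2$ is in fact self-dual, with evaluation and coevaluation 2-morphisms $\mathrm{ev}_{T^2}$ and $\mathrm{coev}_{T^2}$ given by explicit 3-bordisms obtained by thickening the cup and cap 2-dimensional cobordisms in the obvious way.

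Next I would verify that, up to diffeomorphism of 3-manifolds,
\[ \operatorname{Tr}(C_\tau) \;:=\; \mathrm{ev}_{T^2} \circ (C_\tau \otimes \id_{T^2}) \circ \mathrm{coev}_{T^2} \]
is precisely $M_\tau$, viewed as a 2-morphism $\id_\emptyset \Rightarrow \id_\emptyset$ in $\Bord$. Geometrically, the ev/coev composite bends one of the boundary tori of $C_\tau$ around through an identity cylinder and glues it to the other, producing exactly the quotient $T^2 \times [0,1]/(x,0)\!\sim\!(\tau(x),1)$. Carrying this out rigorously amounts to a standard handle-decomposition or Morse-theoretic argument in three-dimensional topology, and is the only nontrivial step of the proof.

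With this geometric identification in hand, applying $Z$ is formal: as a symmetric monoidal 2-functor, $Z$ transports the duality data on $T^2$ in $\Bord$ to duality data on $Z(T^2)$ in $\twovect$, and in particular preserves categorical traces of 2-endomorphisms. Hence
\[ Z(M_\tau) \;=\; Z(\operatorname{Tr}(C_\tau)) \;=\; \operatorname{Tr}(Z(C_\tau)) \;=\; \operatorname{Tr}(Z(\tau)), \]
where the final equality uses that the categorical trace in $\twovect$ of a natural endomorphism of the 1-morphism $V \otimes (-) \colon \vect \to \vect$ associated to a vector space $V$ coincides with the ordinary linear trace of the underlying map $V \to V$. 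The main obstacle is the geometric identification in the second paragraph; once that is secured, the remainder is a direct appeal to trace-preservation for symmetric monoidal functors.
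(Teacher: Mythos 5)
Your proposal is correct, and the underlying decomposition is the same as the paper's: both realize $M_\tau$ as the categorical trace of the mapping cylinder, i.e.\ as $\mathrm{ev}\circ(\tau\sqcup\id)\circ\mathrm{coev}$ for a self-duality of the torus viewed as a 1\-morphism $\emptyset\to\emptyset$. Where the two arguments diverge is in execution. The paper does not invoke the abstract principle that symmetric monoidal functors preserve traces; instead it writes the coevaluation as an explicit word $X$ in the generators of the presentation ($\nu$, $\epsilon^\dagger$, $\mu$, $\alpha$, $\phi_2$, $\mu^\dagger$, \dots), takes $M_\tau = X^\dagger\circ(\tau\sqcup\id)\circ X$, and computes the image of each piece via internal string diagrams. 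This has two payoffs: it discharges your ``geometric identification'' step entirely inside the algebraic framework (the paper emphasizes there is no ambient cobordism theory in play, so the identification must be an identity of composites of generators, not a Morse-theoretic fact quoted from outside), and it makes visible that the normalization factors ($1/p$ from $\epsilon^\dagger$, $p$ and $\delta_{ij}$ from $\mu^\dagger$, the quantum dimensions $d_i$) cancel to leave the bare matrix trace $\sum_i Z(\tau)_{ii}$. Your route is cleaner conceptually but owes two supplements to be complete in this setting: the explicit expression of $\mathrm{coev}$ and $\mathrm{ev}$ in the generators (exactly the paper's $X$ and $X^\dagger$), and the observation that $Z(T^2)$ is a \emph{finite-dimensional} vector space (from semisimplicity with finitely many simples), without which the final identification of the categorical trace in $\twovect$ with the linear trace would fail.
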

\begin{proof}
A presentation for $M_\tau$ is $M_\tau = X^\dagger \circ (\tau \sqcup \id) \circ X$, where $X$ is the composite whose image $Z(X)$ under the TQFT $Z$ is given as follows:
\smallbordisms
\setlength\obscurewidth{2.4pt}
\[
\begin{tz}[xscale=3.8, yscale=3]
\node (1) at (0,0)
{
$\begin{tikzpicture}
        \draw[green] (0,0) rectangle (\cobwidth, \cobwidth);
\end{tikzpicture}$
};
\node (2) at (0.5,0)
{
$\begin{tikzpicture}
        \node[Cap] (A) at (0,0) {};
        \node[Cup] (B) at (0,0) {};
        \selectpart[green, inner sep=1pt] {(A-center)};
\end{tikzpicture}$
};
\node (3) at (1,0)
{
$ {\color{red} \frac{1}{p}}
\begin{tz}
        \node[Cap] (A) at (0,0) {};
        \node[Pants, anchor=belt] (B) at (A) {};
        \node[Copants, anchor=leftleg] (C) at (B.leftleg) {};
        \node[Cup] (D) at (C.belt) {};
        \draw[red strand] (B.leftleg) to[looseness=2, out=up, in=up] (B.rightleg) to[looseness=2, out=down, in=down]  (B.leftleg);
        \selectpart[green, inner sep=1pt] {(B-leftleg)};
\end{tz}$
};

\node (4) at (2,0)
{
$ {\color{red} \frac{1}{p}}
\begin{tz}
        \node[Cap] (A) at (0,0) {};
        \node[Pants, anchor=belt, wide] (B) at (A) {};
        \node[Pants, anchor=belt] (C) at (B.leftleg) {};
        \node[Cup] (D) at (C.rightleg) {};
        \node[Cyl, tall, anchor=top] (E) at (C.leftleg) {};
        \node[Copants, anchor=leftleg] (F) at (E.bottom) {};
        \node[Cap] (G) at (F.rightleg) {};
        \node[Copants, anchor=leftleg, wide] (H) at (F.belt) {};
        \node[Cup] (I) at (H.belt) {};
        \node[Cyl, veryverytall, anchor=top] (J) at (B.rightleg) {};
        \draw[red strand] (B.leftleg) to[out=up, in=up, looseness=1.2] (B.rightleg) to (H.rightleg) to[out=down, in=down, looseness=1.2] (H.leftleg) to[out=up, in=down] (F.leftleg) to (C.leftleg) to[out=up, in=down] (B.leftleg);
        \selectpart[green] {(C-rightleg) (G-center)};
\end{tz}$
};

\node (5) at (2.7,0)
{
$ {\color{red} \frac{1}{p}}
\begin{tz}
        \node[Cap] (A) at (0,0) {};
        \node[Pants, anchor=belt, wide] (B) at (A) {};
        \node[Pants, anchor=belt] (C) at (B.leftleg) {};
        \node[Copants, anchor=leftleg] (D) at (C.leftleg) {};
        \node[Copants, wide, anchor=leftleg] (E) at (D.belt) {};
        \node[Cup] (F) at (E.belt) {};
        \node[Cyl, anchor=top, tall] (G) at (B.rightleg) {};
        \draw[red strand] (B.leftleg) to[looseness=1.2, out=up, in=up] (B.rightleg) to (G.bottom) to[out=down, in=down, looseness=1.2] (E.leftleg) to[out=up, in=down] (D.leftleg) to [out=up, in=down] (B.leftleg);
        \selectpart[green] {(A-center) (C-leftleg) (B-rightleg)};
\end{tz}$
};

\node (6) at (2.7,-1)
{
$ {\color{red} \frac{1}{p}}
\begin{tz}
        \node[Cap] (A) at (0,0) {};
        \node[Pants, anchor=belt, wide] (B) at (A) {};
        \node[Pants, anchor=belt] (C) at (B.rightleg) {};
        \node[Cyl, anchor=top] (D) at (B.leftleg) {};
        \node[Copants, anchor=leftleg] (E) at (D.bottom) {};
        \node[Cyl, anchor=top] (F) at (C.rightleg) {};
        \node[Copants, wide, anchor=rightleg] (G) at (F.bottom) {};
        \node[Cup] (H) at (G.belt) {};
        \node[Cyl, anchor=top] (G) at (B.leftleg) {};
        \draw[red strand] (B.leftleg) to[out=up, in=up, looseness=1.2] (B.rightleg) to[out=down, in=up] (C.rightleg) to (F.bottom) to[out=down, in=down, looseness=1.2] (E.belt) to[out=up, in=down] (E.leftleg) to (B.leftleg);
        \selectpart[green] {(B-leftleg) (B-rightleg) (E-belt) (F-bottom)};
\end{tz}$
};

\node (7) at (2,-1)
{
$ {\color{red} \frac{1}{p}}
\begin{tz}
        \node[Cap] (A) at (0,0) {};
        \node[Pants, anchor=belt] (B) at (A) {};
        \node[Copants, anchor=leftleg, belt scale=2] (C) at (B.leftleg) {};
        \node[Pants, anchor=belt, belt scale=2] (D) at (C.belt) {};
        \node[Copants, anchor=leftleg] (E) at (D.leftleg) {};
        \node[Cup] (F) at (E.belt) {};
        \selectpart[green, inner sep=1pt] {(C-belt)};
        \draw [red strand] (B.leftleg) to [out=up, in=up, looseness=1.9] (B.rightleg) to [out=down, in=up] (C-belt.in-rightthird) to [out=down, in=up] (D.rightleg) to [out=down, in=down, looseness=1.9] (D.leftleg) to [out=up, in=down] (C-belt.in-leftthird) to [out=up, in=down] (B.leftleg);
\end{tz}$
};

\node (8) at (1,-1)
{
$ {\color{red} \frac{1}{p} \displaystyle{\sum_i}}
 \begin{tz}
        \node[Cap] (A) at (0,0) {};
        \node[Pants, anchor=belt] (B) at (A) {};
        \node[Copants, anchor=leftleg] (C) at (B.leftleg) {};
        \node[Cup] (D) at (C.belt) {};
        \node[Cap] (E) at ([yshift=-2*\cobheight] D.center) {};
        \node[Pants, anchor=belt] (F) at (E) {};
        \node[Copants, anchor=leftleg] (G) at (F.leftleg) {};
        \node[Cup] (H) at (G.belt) {};
\draw[red strand] (B.leftleg)
  to[out=up, in=up, looseness=2] (B.rightleg)
  to[out=down, in=down, looseness=2]
  node[pos=0.25, below right, red] {$i$}
    (B.leftleg);
        \draw[red strand] (F.leftleg) to[out=up, in=up, looseness=2] (F.rightleg) to[out=down, in=down, looseness=2]
  node[pos=0.25, below right, red] {$i$}
    (F.leftleg);
\end{tz}$
};

\draw[|->] (1) -- node[above] {$\isd(\nu)$} (2);
\draw[|->] (2) -- node[above] {$\isd(\epsilon^\dagger)$} (3);
\draw[|->] (3) -- node[above] {$\isd(\rho^{\inv}), Z(\check{\rho}^{\inv})$} (4);
\draw[|->] (4) -- node[above] {$\isd(\mu)$} (5);
\draw[|->] (5) -- node[right] {$\isd(\alpha)$} (6);
\draw[|->] (6) -- node[below] {$\isd(\phiM)$} (7);
\draw[|->] (7) -- node[below] {$\isd(\mu^\dagger)$} (8);
\end{tz}
\]
Hence $\isd(M_\tau)$ is equal to the following composite:
\[
\begin{tz}[xscale=6, yscale=4]
\node (1) at (0,0)
{$\begin{tikzpicture}
        \draw[green] (0,0) rectangle (\cobwidth, \cobwidth);
\end{tikzpicture}$
};
\node (2) at (0.4,0)
{${\color{red} \frac{1}{p} \displaystyle{\sum_i}}
\begin{tz}
        \node[Cap] (A) at (0,0) {};
        \node[Pants, anchor=belt] (B) at (A) {};
        \node[Copants, anchor=leftleg] (C) at (B.leftleg) {};
        \node[Cup] (D) at (C.belt) {};
        
        \node[Cap] (E) at ([yshift=-2*\cobheight] D.center) {};
        \node[Pants, anchor=belt] (F) at (E) {};
        \node[Copants, anchor=leftleg] (G) at (F.leftleg) {};
        \node[Cup] (H) at (G.belt) {};
        \draw[red strand] (B.leftleg) to[out=up, in=up, looseness=2] (B.rightleg) to[out=down, in=down, looseness=2]
  node[pos=0.25, below right, red] {$i$}
    (B.leftleg);
        \draw[red strand] (F.leftleg) to[out=up, in=up, looseness=2] (F.rightleg) to[out=down, in=down, looseness=2]
  node[pos=0.25, below right, red] {$i$}
    (F.leftleg);
        \selectpart[green] {(A-center) (B-leftleg) (B-rightleg)
         (C-belt)};
\end{tz}$};
\node (3) at (1,0)
{${\color{red}\displaystyle{\sum_{i,j}} \isd(A)_{ji}}
\begin{tz}
        \node[Cap] (A) at (0,0) {};
        \node[Pants, anchor=belt] (B) at (A) {};
        \node[Copants, anchor=leftleg] (C) at (B.leftleg) {};
        \node[Cup] (D) at (C.belt) {};
        \node[Cap] (E) at ([yshift=-2*\cobheight] D.center) {};
        \node[Pants, anchor=belt] (F) at (E) {};
        \node[Copants, anchor=leftleg] (G) at (F.leftleg) {};
        \node[Cup] (H) at (G.belt) {};
        \draw[red strand] (B.leftleg) to[out=up, in=up, looseness=2] (B.rightleg) to[out=down, in=down, looseness=2]
  node[pos=0.25, below right, red] {$j$}
  (B.leftleg);
        \draw[red strand] (F.leftleg) to[out=up, in=up, looseness=2] (F.rightleg) to[out=down, in=down, looseness=2]
  node[pos=0.25, below right, red] {$i$}
    (F.leftleg);
\end{tz}$};
\node (4) at (1.8,0)
{$ {\color{red} \displaystyle{\sum_{i}} \isd(A)_{ii} \frac{1}{d_i}}
\begin{tz}
        \draw[red] (0,0) to[out=up, in=up, looseness=2] ([xshift=2*\cobwidth] 0,0) to[out=down, in=down, looseness=2]
  node[below right]{$i$}
    (0,0);
\end{tz}$};
\draw[|->] (1) -- node[above] {$\isd(X)$} (2);
\draw[|->] (2) -- node[above] {$\isd(A)$} (3);
\draw[|->] (3.east) -- node[above] {$\isd(X^\dagger)$} (4);
\end{tz}
\]
Note that in the computation of $\isd(X^\dagger)$, the sum over $i$ and $j$ picks up a $\delta_{ij}$ term when $\isd(\mu^\dagger)$ is applied just after the $\isd(\alpha^{-1})$ step. The final result is precisely $\Tr(Z(\tau))$.
\end{proof}

\appendix

\section{A bestiary of 2--vector spaces}
\def\R{\mathrm{R}}
\def\hatboxtimes{\mathop{\widehat{\boxtimes}}}
\label{sec:bestiary}

There are numerous symmetric monoidal 2\-categories which can serve as potential targets for extended topological field theories. Let $\vect$ denote the symmetric monoidal category of $k$\-vector spaces and linear maps, with the symmetric monoidal structure given by the tensor product. When we say in this section that a structure is linear, we mean with respect to the field $k$, which is taken to be algebraically closed. We will not in general require our vector spaces to be finite-dimensional. We will consider the following target 2\-categories, each of which has objects given by some sort of `2\-vector space': 
\begin{itemize}
        \item $\kvtwovect$, Kapranov and Voevodsky's 2\-category of 2\_vector spaces~\cite{kv94-2categories};
        \item $\ALG$, the 2\-category of algebras, bimodules, and maps, and certain restrictions such as $\alg$ and $\ALG^\L$ defined below;
        \item $\lincat$, the 2\-category of finite abelian linear categories, right exact functors, and natural transformations;
        \item $\twovect$, the 2\-category of Cauchy-complete (i.e. additive and idempotent complete) $\vect$-enriched categories, linear functors, and natural transformations;
        \item $\bimod$, the 2\-category of all $\vect$-enriched categories, bimodules, and maps of bimodules. 
\end{itemize}
They are standard in the literature, and we do not give full definitions of them here; for more details, see~\cite{kv94-2categories, ds97-mbh, t98-sskl, borceux1}.
A notable definition of 2\_vector space which is not among this family is given by Baez and Crans~\cite{bc04-hda4}, with an `internal' rather than an `enriched' style; it would be interesting to consider the classification problem for that target.

For each case listed above, TQFTs valued in that target all factor through the simplest in the list, Kapranov and Voevodsky's 2\-category $\kvtwovect$ of 2\_vector spaces. We will prove this by considering dualizability properties of these 2\-categories. This builds on observations of Tillmann~\cite{t98-sskl}. Dualizability is a fundamental concept in the study and classification of TQFTs, especially extended TQFTs. The reason is two-fold. First, the bordism $n$\-category demonstrates a high degree of dualizablity. In all dimensions and for all topological structures it is \emph{fully-dualizable}. Second, duality is \emph{hereditary}, meaning that the images under any functor of morphisms and objects which are dualizable are themselves dualizable. This means that TQFTs with values in a given $n$\-category will always factor through the fully-dualizable sub-$n$\-category. Often this subcategory is vastly simpler.

In fact, in the cases considered here our bordism categories satisfy an even stronger \textit{ambidextrous} duality. Our analysis would be somewhat simpler if we restricted ourselves to just the ambidextrous case alone. However instead we will focus on  full-dualizablity, with an eye to the future and greater generality.

\subsection{A result on Cauchy completion}
\label{sec:cauchycompletionresult}

We give here the proof of the result on Cauchy-complete linear categories that was stated in \autoref{sec:linearcategories}.

\begin{customprop}{\ref{pro:CauchyComplete}}
For a linear category $\cat C$, the following are equivalent:
\begin{enumerate}
\item $\cat C$ is Cauchy-complete;
\item $\cat C$ admits finite direct sums and all idempotents split;
\label{item:idempotentssplit}
\item every bimodule $P: k \proarrow \cat C$ that is a left adjoint, is representable;
\item for all small $\vect$-enriched categories $\cat B$, every bimodule \mbox{$S: \cat B \proarrow \cat C$} that is a left adjoint, is representable.
\end{enumerate} 
\end{customprop}

\begin{proof}
The equivalence of (1) and (2) is a result of Street~\cite{street-absolutecolimits}.
Clearly (4) $\Rightarrow$ (3).  (Note that in item (3), the field $k$ is viewed as a one object category with hom space $k$.)  The collection of bimodules $P: k \proarrow \cat C$ that are left-adjoints is closed under forming (a) direct sums of bimodules, and (b) retracts. It follows that if $\cat C$ is Cauchy-complete (so that all such $P$ are represented by objects in $\cat C$) then $\cat C$ necessarily admits direct sums and is idempotent-complete, hence $(3) \Rightarrow (2)$.
                
The proofs in the other direction are adapted from~\cite[Theorem~7.9.3]{borceux1}, which treats the $\set$-enriched case. We will first show that $(2) \Rightarrow (3)$.
        Let $P: k \proarrow \cat C$ be a bimodule that is left-adjoint to the bimodule $Q: \cat C \proarrow k$. These bimodules are nothing more than functors of the following types:
        \begin{align*}
                P &: \cat C^\op \to \vect \\
                Q &: \cat C \to \vect
        \end{align*}
The composite $P \circ Q$ is the bimodule given by $(P \circ Q)(c, c') = P(c) \otimes Q(c')$. The composite $Q \circ P$ is a linear functor from $k$ to $\vect$, and thus picks out the single vector space
\begin{equation*}
        (Q \circ P)(*) = \bigoplus_{c \in C} Q(c) \otimes P(c) {\,/\,} {\sim} .
\end{equation*} 

The counit is a natural transformation with components given by
\begin{equation*}
        \varepsilon_{c,c'}: P(c) \otimes Q(c') \to \Hom_C(c,c').
\end{equation*}
The unit is specified by a map of vector spaces
\begin{equation*}
        \eta: k \to (Q \circ P)(*)
\end{equation*} 
and thus specifies an element of this vector space, which we will denote as 
\begin{equation*}
        \eta_* = \sum_{i=1}^n q_i \otimes p_i 
\end{equation*}  
where the $c_i$ are objects in $C$ and $q_i \in Q(c_i)$, $p_i \in P(c_i)$. 

Now suppose that $\cat C$ is Cauchy complete. Since it admits direct sums, we may form the object $x = \bigoplus_{i=1}^n c_i$. We have an endomorphism $e: x \to x$, which in components is given by
\begin{equation*}
        e_{ij} = \varepsilon_{c_i, c_j}(p_i, q_j) \in \Hom_C(c_i, c_j).
\end{equation*}
The triangle equations, which express the fact that $\eta$ and $\varepsilon$ are the unit and counit of an adjunction, show first that $e$ is an idempotent, and hence, since idempotents split in $\cat C$, that it is determined by $y$, a direct summand of $x$, and secondly that $P$ is representable by the object $y$.  

Finally we will show that (2) $\Rightarrow$ (4). Each object $b \in \cat B$ corresponds to a functor $b: k \to \cat B$, and hence to a left-adjoint bimodule $b_* : k \proarrow \cat B$. The composite $S \circ b_*: k \proarrow \cat C$ is a left-adjoint bimodule and hence, by assumption, it is represented by an object $F(b)$. In other words we have constructed objects $F(b) \in \cat C$ for each $b \in \cat B$, and isomorphisms 
\begin{equation*}
        S(c, b) \cong \Hom_{\cat C}(c, F(b))
\end{equation*}
which are natural in $c$. Moreover for each morphism $b \to b'$ in $\cat(B)$ we have a linear map
\begin{equation*}
        \Hom_{\cat C}(c, F(b)) \cong S(c, b) \to S(c, b') \cong \Hom_{\cat C}(c, F(b')),
\end{equation*} 
which is again natural in $c$. It follows from the full-faithfulness of the (enriched) Yoneda embedding that this linear map is induced by a map $F(b) \to F(b')$ in $\cat C$, and hence $F$ is the desired functor.        
\end{proof}

\subsection{2-categories of 2-vector spaces}
\label{sec:linearrep}

In \autoref{sec:Cauchycomplete} we considered the symmetric monoidal 2\-categories $\twovect$ and $\bimod$ in detail. We will not repeat that here, but refer the reader to the earlier section. 

The 2\-category of algebras, bimodules, and bimodule maps is one of the quintessential examples of a symmetric monoidal 2\-category.
\begin{defn}
        The symmetric monoidal 2\-category $\ALG$ is defined as follows:
        \begin{itemize}
        \item objects are $k$-algebras;
        \item 1\-morphisms from $A$ to $B$ are $A$-$B$-bimodules;
        \item 2\-morphisms are bimodule maps;
        \item the monoidal structure is the tensor product $\otimes_k$ over $k$.
        \end{itemize}
        The composition of bimodules ${}_A M_B$, ${}_B N_C$ is given by relative tensor product of bimodules $M \otimes_B N$. 
        
        We will also consider the sub-2\-categories $\alg$ and $\ALG^\L$. The former is the sub-2\-category consisting of finite-dimensional $k$-algebras and finite-dimensional bimodules. The latter is the sub-2\-category of all algebras, but with only the left-adjoint bimodules as morphisms. These are precisely those $A$-$B$-bimodules $M$ which are finitely generated and projective when considered as a right $B$-modules~\cite[Lemma~3.60]{CSPthesis}.
\end{defn}

There is a fully-faithful inclusion functor $\ALG \to \bimod$ which sends an algebra $A$ to a one\-object  $\vect$-enriched category with morphism space $A$.  Via this functor we can regard $\ALG$, and hence $\alg$ and $\ALG^\L$, as sub-2\-categories of $\bimod$. In particular an $A$-$B$-bimodule $M$ is the same thing as a bimodule $M: A \proarrow B$.  If $N$ is a $B$-$C$-bimodule, the composite bimodule $(N \circ M): A \proarrow C$ is given by the tensor product of bimodules $M \otimes_B N$. Note that by convention this translation is order-reversing.

We may also specialize our 2\-category to obtain Kapranov and Voevodsky's 2\-category. 
\begin{defn}[See~\cite{kv94-2categories}]
        The symmetric monoidal 2\-category $\kvtwovect$ is defined as follows:
        \begin{itemize}
        \item objects are the natural numbers $\NN$;
        \item 1\-morphisms are matrices of finite-dimensional vector spaces;
        \item 2\-morphisms are matrices of linear maps;
        \item the monoidal structure is induced by the addition of natural numbers and the tensor product of matrices. 
        \end{itemize}
        Composition is given by using the tensor product and direct sum of vector spaces in place of the usual multiplication and addition in the formula for matrix multiplication. 
\end{defn}

The 2\-category $\kvtwovect$ is equivalent to the full sub-2\-category of $\alg$ spanned by the objects of the form $\oplus_n k$. That is, the object $n \in \kvtwovect$ corresponds to the algebra given by the direct sum of $n$ copies of the ground field $k$. Since $k$ is algebraically closed, any finite-dimensional semisimple algebra over $k$ is a finite sum of finite-dimensional matrix algebras over $k$. As $M_n(k)$ is Morita equivalent to $k$, we conclude that $\kvtwovect$ is also equivalent to the full sub-2\-category of $\alg$ consisting of the finite-dimensional semi-simple $k$-algebras.

\begin{defn}
A linear category is a $\vect$-enriched category which is also additive and such that the abelian group structures on hom sets induced by the additive structure and induced by the $\vect$-enrichment agree. A linear functor is a functor which is simultaneously additive and enriched. 
\end{defn}

\begin{defn}[See~\cite{Douglas:2014aa, MR1106898}]
The symmetric monoidal 2\-category $\lincat$ is defined as follows:
        \begin{itemize}
        \item objects are \emph{finite} abelian linear categories; that is, they are abelian linear categories $\cat C$ which further satisfy:
        \begin{itemize}
                \item $\cat C$ has finite-dimensional hom spaces for morphisms;
                \item every object of $\cat C$ has finite length\footnote{See \autoref{def:finlength}};
                \item $\cat C$ has enough projectives;
                \item $\cat C$ has finitely many isomorphism classes of simple objects. 
        \end{itemize}
        \item 1\-morphisms are right-exact linear functors;
        \item 2\-morphisms are natural transformations;
        \item the monoidal structure is the Deligne tensor product $\boxtimes$.
        \end{itemize}
\end{defn}

\subsection{Comparison functors}

There are several symmetric monoidal functors comparing these various symmetric monoidal 2\-categories. These are summarized in Figure~\ref{fig:diagramof2Vects} below. Besides the functor $\Phi$, which we have already described in detail, and the obvious inclusion functors, we have the following:
\begin{itemize}
        \item $\mathit{Rep}: \alg \to \lincat$, which associates to an algebra $A$ the category of finite-dimensional right $A$-modules and to a bimodule ${}_A M_B$ the right exact functor $(-) \otimes_A M$. This is known to be an equivalence of symmetric monoidal 2\-categories (see~\cite{Douglas:2014aa}, \cite[Lemma~3.58]{CSPthesis}.)
        \item $(\hat{-}): \ALG^\L \to \twovect$, which associates to an algebra $A$ its category of finitely generated projective modules $\hat{A}$. Tensoring with an arbitrary bimodule ${}_AM_B$ will not generally preserve finitely generated projective modules, and so we do not get a functor defined on all of $\ALG$. To preserve projectives, $M$ must be finitely generated and projective as a $B$-module. This is precisely the condition that $M$ is a left-adjoint~\cite[Lemma~3.60]{CSPthesis}. 
        \item $(-)^\fgp: \lincat \to \bimod$ which sends a finite linear category to its subcategory of compact projective objects. A general right exact functor will not preserve projective objects and hence does not restrict to a functor between these subcategories, however it does give rise to a bimodule.  
\end{itemize}

\noindent
Each of these functors is symmetric monoidal. That $\mathit{Rep}$ is symmetric monoidal follows from standard properties of the Deligne tensor product~\cite{MR1106898} (see also~\cite{Douglas:2014aa}). The key to showing that $(\hat{-})$ is symmetric monoidal is the observation that $P \otimes Q$ is a projective $A \otimes B$-module whenever $P$ and $Q$ are projective.  Finally that $\Phi$ is symmetric monoidal essentially follows from Proposition~\ref{pro:Cauchycompletionisequiv}, which shows in particular that $C \otimes D$ is equivalent in $\bimod$ to $C \hat \otimes D$.

\begin{defn}
        A functor of symmetric monoidal 2\-categories $F: \bicat{B} \to \bicat{C}$ is called:
        \begin{itemize}
                \item \emph{essentially surjective} if every object of $\bicat{C}$ is equivalent to one in the image of~$F$;
                \item \emph{locally fully-faithful} if it is fully-faithful on 2\-morphisms, i.e. for all $x,y\in\bicat{B}$, $F:\hom_\bicat{B}(x,y) \to \hom_\bicat{C}(F(x), F(y))$ is fully-faithful;
                \item \emph{fully-faithful} if it is locally fully-faithful and essentially full on 1\-morphisms, i.e. for all $x,y\in\bicat{B}$, $F:\hom_\bicat{B}(x,y) \to \hom_\bicat{C}(Fx, Fy)$ is an equivalence of categories. 
        \end{itemize}
\end{defn} 

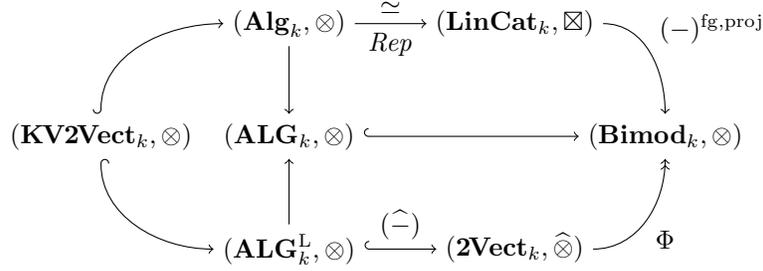
\begin{figure}[htbp]
        \begin{center}
        \begin{tikzpicture}
                        \node (KV) at (-0.5, 0 ) {$(\kvtwovect, \otimes)$};
                        \node (alg) at (2, 1.5) {$(\alg, \otimes)$};
                        \node (ALG) at (2, 0) {$(\ALG, \otimes)$};
                        \node (ALGe) at (2, -1.5) {$(\ALG^\L, \otimes)$};
                        \node (LINCAT) at (5, 1.5) {$(\lincat, \boxtimes)$};
                        \node (TWOVECT) at (5, -1.5) {$(\twovect, \hat \otimes)$};
                        \node (prof) at (7, 0) {$(\bimod, \otimes)$};
                        
                        \draw [right hook->] (KV) to [out = 90, in = 180] node [above left] {} (alg);
                        \draw [right hook->] (KV) to [out = -90, in = 180] node [above] {} (ALGe);
                        \draw [->] (alg) -- node [above] {} (ALG);
                        \draw [->] (ALGe) -- node [above] {} (ALG);
                        \draw [right hook->] (ALG) -- node [above] {} (prof);
                        \draw [->] (alg) -- node [above] {$\simeq$} node [below] {$\mathit{Rep}$} (LINCAT);
                        \draw [right hook->] (ALGe) -- node [above] {$(\hat{-})$} (TWOVECT);
                        \draw [->] (LINCAT) to [out= 0, in = 90] node [above right] {$(-)^\fgp$} (prof);
                        \draw [->>] (TWOVECT) to [out = 0, in = -90] node [below right] {$\Phi$} (prof);
        \end{tikzpicture}
        \end{center}
        \caption{The relation between various notions of 2\_vector space. All functors are locally fully-faithful. The fully-faithful functors are indicated by $\hookrightarrow$, and the essentially surjective functors are indicated by $\twoheadrightarrow$.}
        \label{fig:diagramof2Vects}
\end{figure}

All of the functors in Figure~\ref{fig:diagramof2Vects} are locally fully-faithful (i.e. fully-faithful on 2\-morphisms). Those that are fully-faithful have been indicated with a hook-arrow `$\hookrightarrow$'. The only fully-faithful functor which is not obviously so is $(\hat{-})$. We record this as a lemma.
\begin{lemma}\label{lem:Alttowovectfullyfaithful}
The functor $(\hat{-}): \ALG^\L \to \twovect$ is fully-faithful.
\end{lemma}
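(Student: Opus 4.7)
The plan is to verify fully-faithfulness by producing, for each pair of algebras $A, B$, an explicit inverse to the functor
\[
(\hat{-})_{A,B}: \hom_{\ALG^\L}(A, B) \longrightarrow \hom_{\twovect}(\hat{A}, \hat{B})
\]
that sends an $A$-$B$-bimodule $M$ to the linear functor $(-) \otimes_A M : \hat A \to \hat B$. Throughout, I will use the standard identification of $\hat A$ with the category of finitely generated projective right $A$-modules, under which the canonical inclusion $i: A \to \hat A$ picks out the free rank-one module.

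First I would handle the action on 2-morphisms. Given a natural transformation $\alpha: (-) \otimes_A M \Rightarrow (-) \otimes_A N$, its component $\alpha_A: M \cong A \otimes_A M \to A \otimes_A N \cong N$ at the free rank-one module is a right $B$-module map, and naturality with respect to left multiplication by each $a \in A$ (viewed as a right $A$-linear endomorphism of $A$) forces $\alpha_A$ to be left $A$-linear as well. Conversely, any bimodule map induces a natural transformation, and these constructions are mutually inverse and $k$-linear; this establishes a linear bijection on 2-morphisms.

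Next I would show essential fullness on 1-morphisms. Given a linear functor $F: \hat A \to \hat B$, set $M := F(A)$. The ring homomorphism
\[
A \;\cong\; \End_{\hat A}(A)^{\mathrm{op}} \xrightarrow{\;F\;} \End_{\hat B}(M)^{\mathrm{op}}
\]
equips $M$ with a left $A$-action commuting with its right $B$-action, so $M$ is an $A$-$B$-bimodule; moreover $M = F(A)$ lies in $\hat B$, hence is finitely generated projective as a right $B$-module, which is exactly the condition for $M$ to lie in $\hom_{\ALG^\L}(A,B)$. It remains to produce a natural isomorphism $\eta: (-) \otimes_A M \Rightarrow F(-)$, with component at $A$ given by the canonical identification $A \otimes_A M \cong M = F(A)$.

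The main obstacle is extending $\eta_A$ to a natural isomorphism on all of $\hat A$. The key input is that $\hat A$ is, by construction, obtained from $A$ by forming finite biproducts and splitting idempotents; both of these are absolute colimits, hence preserved by every $\vect$-enriched functor, including both $(-) \otimes_A M$ and $F$. Concretely, every $P \in \hat A$ can be presented as the splitting of an idempotent $e \in M_n(A) = \End_{\hat A}(A^n)$, and applying either functor to such a presentation yields the same splitting of $F(e) = e \cdot \id_{M^n}$ in $\hat B$; comparing these splittings via $\eta_A$ produces the required isomorphism $\eta_P: P \otimes_A M \xrightarrow{\sim} F(P)$, and naturality follows from the universal property of splittings. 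This is the linear, enriched analogue of the Eilenberg--Watts theorem, adapted to the Cauchy-completion setting, and completes the proof.
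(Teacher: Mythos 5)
Your proof is correct and follows essentially the same route as the paper: take $M := F(A)$, observe it is finitely generated projective as a right $B$-module, check the isomorphism $F(-)\cong(-)\otimes_A M$ on finite-rank free modules by additivity and extend to all of $\hat A$ via retracts, with the bijection on $2$-morphisms coming from evaluating at the free rank-one module. The extra detail you supply (the left $A$-action via $\End_{\hat A}(A)$ and the naturality check) is exactly what the paper leaves implicit, so there is nothing to add.
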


\begin{proof}
        Let $A$ and $B$ be algebras and let $\hat{A}$ and $\hat{B}$ be their Cauchy completions, i.e. their categories of finitely generated projective modules. Let $F: \hat{A} \to \hat{B}$ be a linear functor. Then $F(A_A) = M$ is an $A$-$B$ bimodule which is finitely generated and projective as a right $B$-module. We will see that $F(-) \cong (-) \otimes_A M_B$ as linear functors. This is clear on the subcategory of finite rank free $A$-modules since $F$ is additive:
        \begin{equation*}
                F( \oplus_n A) \cong \oplus_n F(A) \cong \oplus_n M_B \cong (\oplus_n A) \otimes_A M_B.
        \end{equation*}
        But every finitely generated projective module is a retract of a finite rank free module, and so it follows that $F(-) \cong (-) \otimes_A M_B$. Thus $(\hat{-})$ is essentially full on 1\-morphisms. A similar argument shows that natural transformations $(-) \otimes_A M_B \to (-) \otimes_A N_B$ are in bijection with $A$-$B$ bimodule maps ${}_AM_B \to {}_A N_B$. 
\end{proof}

Proposition~\ref{pro:Cauchycompletionisequiv} implies that $\Phi$ is essentially surjective, which we indicate with a double-headed arrow $\twoheadrightarrow$. Furthermore, the diagram in Figure~\ref{fig:diagramof2Vects} commutes up to canonical natural isomorphism. The only non-obvious commutativity occurs in the upper and lower right-hand squares, which both follow from \autoref{pro:Cauchycompletionisequiv}. In this case it is the fact that for an algebra $A$ the bimodule $\Phi(i): A \proarrow \hat{A} \simeq (A)^\fgp$ is an equivalence in $\bimod$.    

\begin{remark}
The composite $\kvtwovect \to \ALG^\L \to \twovect$ is fully-faithful on 2\-morphisms and essentially full on 1\-morphisms. It identifies $\kvtwovect$ with the Cauchy-complete categories equivalent to a finite product of $(\vect)^\fd$, finite-dimensional vector spaces. 
\end{remark}

\subsection{Preliminaries on fully-dualizable 2-categories}




Recall from Section \ref{sec:linearcategories} that for a 2\-category $\bicat B$ we write $\bicat B^\L$ for the largest sub-2\-category consisting of all the objects of $\bicat B$, but only those 1\-morphisms of $\bicat B$ which admit right-adjoints in $\bicat B$ (hence are left-adjoints). Similarly we will let $\bicat{B}^\R$ denote the largest sub-2\-category with only those 1\-morphisms which admit left-adjoints (hence are right-adjoints). Note that if a 1\-morphism is a left adjoint, while it will be a 1\-morphism in $\bicat B ^\L$, it will not necessarily be a left adjoint in $\bicat B ^\L$.

We can iterate this construction. The 2\-category $(\bicat{B}^\L)^\L$, which is a sub-2\-category of $\bicat{B}^\L$, has 1\-morphisms given by the 1\-morphisms of $\bicat B$ which admit right adjoints which themselves admit right adjoints. $\bicat{B}^{\L^n} := ((\bicat{B}^\L) \dots)^\L$ ($n$ times) has 1\-morphisms given by the 1\-morphisms of \bicat B which admit a length-$n$ chain of successive right adjoints. The intersection of these sub-2\-categories for all $n$ we will denote as $\bicat{B}^{\L^\infty}$. Its 1\-morphisms admit an infinite chain of successive right adjoints.  A similar analysis holds for $\bicat{B}^\R$, $\bicat{B}^{\R^n}$,  and $\bicat{B}^{\R^\infty}$.

\begin{defn}
        A 2\-category $\bicat{B}$ \emph{admits duals for 1\-morphisms} if every 1\-morphisms has both a left and right adjoint; this is the case precisely when the inclusion $(\bicat{B}^{\L^\infty})^{\R^\infty} \subseteq \bicat{B}$ is an equality. 
\end{defn}


\begin{remark}
        In the above definition we have made an arbitrary choice for the order of applying the functors $(-)^\L$ and $(-)^\R$, but in fact this choice is irrelevant since these functors commute, $( \bicat B ^\L)^\R = ( \bicat B ^\R)^\L$. Hence you obtain an equivalent definition by applying these functors in any order, provided both $(-)^\L$ and $(-)^\R$ each occur infinitely often.
\end{remark}

From the definition we see immediately that adjoints are hereditary in the following sense: if $f \in \bicat{B}$ is a left-adjoint 1\-morphism and $F: \bicat{B} \to \bicat{C}$ is a functor, then $F(f)$ is a left-adjoint 1\-morphism in $\bicat{C}$. It follows that $F$ restricts to a functor $F: \bicat{B}^\L \to \bicat{C}^\L$. This yields:

\begin{lemma}
        If $\bicat{B}$ is a 2\-category that admits duals for 1\-morphisms and $\bicat{C}$ is any 2\-category, then every functor $F: \bicat{B} \to \bicat{C}$ factors as a strict composite
        \begin{equation*}
                \bicat{B} \to (\bicat{C}^{\L^\infty})^{\R^\infty} \subseteq \bicat{C}, 
        \end{equation*}
        and moreover this factorization is unique. \qed
\end{lemma}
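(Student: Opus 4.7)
The approach is to exploit the hereditary property of adjoints under functors, which was recorded in the paragraph immediately preceding the lemma: if $f$ is a left adjoint in $\bicat{B}$ and $F:\bicat{B}\to\bicat{C}$ is any 2-functor, then $F(f)$ is a left adjoint in $\bicat{C}$ (and dually for right adjoints). Since the sub-2-categories $\bicat{B}^{\L}$ and $\bicat{B}^{\R}$ are defined by the existence of right (respectively left) adjoints for their 1-morphisms, it follows formally that $F$ restricts to functors $F:\bicat{B}^{\L}\to\bicat{C}^{\L}$ and $F:\bicat{B}^{\R}\to\bicat{C}^{\R}$.

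First, I would iterate this observation. For each $n\geq 1$, applying the restriction result $n$ times yields factorizations $F:\bicat{B}^{\L^n}\to\bicat{C}^{\L^n}$ and $F:\bicat{B}^{\R^n}\to\bicat{C}^{\R^n}$, and intersecting over all $n$ gives $F:\bicat{B}^{\L^\infty}\to\bicat{C}^{\L^\infty}$. Applying the dual argument to this restricted functor and iterating again gives $F:(\bicat{B}^{\L^\infty})^{\R^\infty}\to(\bicat{C}^{\L^\infty})^{\R^\infty}$. (Here I am implicitly using the fact, noted in the preceding remark, that $(-)^{\L}$ and $(-)^{\R}$ commute, so that the order in which the operators are applied does not matter.)

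Next, I would invoke the hypothesis on $\bicat{B}$. By assumption $\bicat{B}$ admits duals for 1-morphisms, which by definition means that the inclusion $(\bicat{B}^{\L^\infty})^{\R^\infty}\subseteq\bicat{B}$ is an equality. Substituting into the previous step, the restricted functor has domain $\bicat{B}$, and we obtain the desired factorization $\bicat{B}\to(\bicat{C}^{\L^\infty})^{\R^\infty}\subseteq\bicat{C}$ as a strict composite.

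Uniqueness is automatic: the inclusion $(\bicat{C}^{\L^\infty})^{\R^\infty}\hookrightarrow\bicat{C}$ is a sub-2-category inclusion, hence is injective on objects and 1-morphisms and fully faithful on 2-morphisms. Consequently any two factorizations of $F$ through this inclusion must agree on each level of cell, because they must agree after composition with the (injective-faithful) inclusion. I do not expect any real obstacle in this argument; the whole proof is essentially a formal consequence of hereditarity of adjunctions together with the definitional unpacking of $(\bicat{B}^{\L^\infty})^{\R^\infty}=\bicat{B}$, and the only mild subtlety is keeping track of the commutation of $(-)^{\L}$ and $(-)^{\R}$ when iterating.
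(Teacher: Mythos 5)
Your proof is correct and is exactly the argument the paper intends: the lemma is stated with an immediate \qed because it follows directly from the hereditarity observation in the preceding paragraph, iterated through the $(-)^{\L}$ and $(-)^{\R}$ constructions and combined with the hypothesis $(\bicat{B}^{\L^\infty})^{\R^\infty}=\bicat{B}$. Your spelled-out version, including the remark that uniqueness is automatic since the factorization is through a sub-2-category inclusion, fills in precisely the details the paper leaves implicit.
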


This lemma justifies calling $(\bicat{B}^{\L^\infty})^{\R^\infty}$ the maximal sub-2\-category of $\bicat{B}$ which admits duals for 1\-morphisms. We will use the more concise notation \mbox{$\bicat{B}^{\d_1} := (\bicat{B}^{\L^\infty})^{\R^\infty}$}.

\begin{lemma}
        If $(f:x \to y) \in \bicat{B}$ is a 1\-morphism which admits an ambidextrous adjoint, then $(f:x \to y) \in \bicat{B}^{\d_1}$.
\end{lemma}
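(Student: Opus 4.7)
The plan is to exploit the fact that an ambidextrous adjunction generates a bi-infinite chain of adjoints, and then to feed this chain through the nested definition of $\bicat B^{\d_1} = (\bicat B^{\L^\infty})^{\R^\infty}$ by a double induction.

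First I would unpack the hypothesis: by assumption there exists $g : y \to x$ which is simultaneously a right and a left adjoint of $f$, so $f \dashv g$ and $g \dashv f$. Splicing these together yields an infinite chain of successive adjoints
\[
\cdots \dashv f \dashv g \dashv f \dashv g \dashv \cdots
\]
in $\bicat B$, in which every morphism in sight is either $f$ or $g$.

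Next I would prove by induction on $n$ that both $f$ and $g$ lie in $\bicat B^{\L^n}$ for every $n \geq 0$, hence in $\bicat B^{\L^\infty}$. The base case is immediate since $g$ is a right adjoint to $f$ and $f$ is a right adjoint to $g$, so both are in $\bicat B^\L$. For the inductive step, assume $f,g \in \bicat B^{\L^n}$; then membership of $f$ in $\bicat B^{\L^{n+1}}$ requires only that $f$ admit a right adjoint inside $\bicat B^{\L^n}$, and $g$ serves as such an adjoint by the inductive hypothesis (and symmetrically for $g$).

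Having established $f,g \in \bicat B^{\L^\infty}$, I would then run the dual induction one level up. To show $f \in (\bicat B^{\L^\infty})^{\R^m}$ for all $m$, I note that $f$ admits the left adjoint $g$, which lies in $\bicat B^{\L^\infty}$ by the previous paragraph; so $f \in (\bicat B^{\L^\infty})^{\R}$. Iterating, $f \in (\bicat B^{\L^\infty})^{\R^{m+1}}$ provided $g$ admits a left adjoint in $(\bicat B^{\L^\infty})^{\R^m}$, and the morphism $f$ itself fits the bill by the inductive hypothesis (and the symmetric statement for $g$). Taking the intersection over all $m$ gives $f \in (\bicat B^{\L^\infty})^{\R^\infty} = \bicat B^{\d_1}$, completing the argument.

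There is essentially no obstacle here: the whole proof is a bookkeeping exercise, and the only thing to watch is the mutual recursion — at each level one must verify $f$ and $g$ simultaneously, since the adjoint needed to witness membership of one is always the other. No new adjoint data ever has to be constructed, because the single ambidextrous pair $(f,g)$ already witnesses every required adjunction in every iterated sub-2-category.
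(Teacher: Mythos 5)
Your proof is correct, and it reaches the conclusion by a different mechanism than the paper's. The paper forms the sub-2-category $\bicat{C}\subseteq\bicat{B}$ generated by $f$ and its ambidextrous adjoint $g$ (the two objects $x,y$, the composites of $f$ and $g$, and all 2-morphisms of $\bicat{B}$ between them), observes that $\bicat{C}$ admits duals for 1-morphisms by construction, and then applies the heredity lemma --- every functor out of a 2-category with duals for 1-morphisms factors through $(-)^{\d_1}$ --- to the inclusion $\bicat{C}\hookrightarrow\bicat{B}$. You instead unwind the definition $\bicat{B}^{\d_1}=(\bicat{B}^{\L^\infty})^{\mathrm{R}^\infty}$ and track $f$ and $g$ simultaneously through the two towers by mutual induction. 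Your route is self-contained (it does not rely on the preceding heredity lemma) and never has to discuss composites such as $fg$ and $gf$, which the paper's argument must implicitly handle when asserting that every 1-morphism of $\bicat{C}$ sits in a doubly infinite adjoint chain; the paper's route is shorter and reuses machinery already in place. One small point worth making explicit in your write-up: for the inductive step to go through, the unit and counit witnessing $f\dashv g$ and $g\dashv f$ must still be available inside $\bicat{B}^{\L^n}$ (and later inside $(\bicat{B}^{\L^\infty})^{\mathrm{R}^m}$). This holds because these sub-2-categories contain all 2-morphisms of $\bicat{B}$ between their 1-morphisms and are closed under composition (a composite of left adjoints is a left adjoint), so the identities and the composites $fg$, $gf$ appearing in the triangle identities are present at every stage.
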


\begin{proof}
        Let $g:y \to x$ be any ambidextrous adjoint to $f$. Let $\bicat{C}$ be the sub-2\-category of $\bicat{B}$ consisting of the two objects $x$ and $y$, the 1\-morphisms $f$ and $g$ as well as their various composites, and all 2\-morphisms coming from $\bicat{B}$. The 2\-category $\bicat{C}$ admits duals for 1\-morphisms by construction (since any 1\-morphism does fit into a doubly-infinite sequence of successive adjunctions). Thus we have $\bicat{C} = \bicat{C}^{\d_1} \subseteq \bicat{B}^{\d_1} \subseteq \bicat{B}$. 
\end{proof}

\begin{corollary}\label{cor:modulardualmorphism}
        Let $\mathcal Q$ be one of the geometrical presentations $\M$, $\O$, $\M_k$, $\N_k$ from \autoref{sec:geometricalpresentations}. Then the associated symmetric monoidal 2\-category $\bicat{F}(\mathcal Q)$ admits duals for 1\-morphisms. 
\end{corollary}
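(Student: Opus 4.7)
The plan is to reduce the statement, via the preceding lemma, to a check on the generating 1-morphisms of each presentation. Since $\bicat{F}(\mathcal Q)^{\d_1}$ is defined by an intersection of sub-2-categories of the form $\bicat{B}^{\L^n}$ and $\bicat{B}^{\R^n}$, and adjoints (when they exist) are compatible with horizontal composition and with the symmetric monoidal structure, the collection of 1-morphisms lying in $\bicat{F}(\mathcal Q)^{\d_1}$ is closed under horizontal composition, identities, and monoidal product. Consequently it suffices to show that each generating 1-morphism of $\mathcal Q$ lies in $\bicat{F}(\mathcal Q)^{\d_1}$.

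First I would observe that the extending presentations $\O$, $\M_k$, and $\N_k$ add only 2-morphism generators and relations on top of $\M$; in particular they introduce no new generating 1-morphisms. Hence it is enough to verify the claim for $\mathcal Q = \M$, whose generating 1-morphisms are exactly $\tikztinypants$, $\tikztinycopants$, $\tikztinycup$, and $\tikztinycap$ inherited from $\P^\L$. The strategy for each of these is to exhibit an ambidextrous adjoint and then invoke the preceding lemma to conclude membership in $\bicat{F}(\mathcal Q)^{\d_1}$.

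Concretely, the adjunction $\tikztinypants \dashv \tikztinycopants$ is witnessed by the generating 2-morphisms $\eta$ and $\epsilon$ of $\P^\L$ (Definition~\ref{rightadjointmonoidpresentation}), whose zigzag identities are imposed as relations. The reverse adjunction $\tikztinycopants \dashv \tikztinypants$ is witnessed by the additional generators $\eta^\dag$ and $\epsilon^\dag$ of $\M$ (equations~\eqref{A1} together with the adjunction relations~\eqref{adj_eta_epsilon_monoid}). Thus $\tikztinycopants$ is a chosen ambidextrous adjoint to $\tikztinypants$, so both morphisms lie in $\bicat{F}(\M)^{\d_1}$. Similarly, $\tikztinycup \dashv \tikztinycap$ is witnessed by $\mu$ and $\nu$, while $\tikztinycap \dashv \tikztinycup$ is witnessed by $\mu^\dag$ and $\nu^\dag$ with the adjunction relations in~\eqref{adj_nu_mu_monoid} of $\M$; this places $\tikztinycup$ and $\tikztinycap$ in $\bicat{F}(\M)^{\d_1}$ as well.

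The only subtle point — and the main thing to be careful with — is verifying that the closure properties used in the reduction (closure of $\bicat{F}(\mathcal Q)^{\d_1}$ under composition, identities, and the symmetric monoidal product) are genuinely available. These are formal consequences of the fact that left and right adjoints compose contravariantly and distribute over $\otimes$ in a symmetric monoidal 2-category, so the composite or tensor product of 1-morphisms each admitting a doubly-infinite chain of successive adjoints again admits such a chain; this is where the absence of surprises in the formalism matters. Once that is in hand, the verification above handles all the generators, and the conclusion $\bicat{F}(\mathcal Q) = \bicat{F}(\mathcal Q)^{\d_1}$ follows for $\mathcal Q \in \{\M, \O, \M_k, \N_k\}$. \qed
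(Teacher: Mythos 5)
Your proof is correct and takes essentially the same route as the paper, which simply observes that each generating 1\-morphism admits an ambidextrous adjoint (pants/copants via $\eta,\epsilon,\eta^\dag,\epsilon^\dag$ and cup/cap via $\mu,\nu,\mu^\dag,\nu^\dag$) and appeals to the preceding lemma. Your additional remarks — that the extensions $\O$, $\M_k$, $\N_k$ introduce no new 1\-morphism generators, and that membership in $\bicat{F}(\mathcal Q)^{\d_1}$ is closed under composition and the monoidal product — merely make explicit what the paper leaves implicit.
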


\begin{proof}
        Each of the generating 1\-morphisms of these presentations admits an ambidextrous adjoint. 
\end{proof}

\begin{lemma}\label{lem:ffrestrictstoffonduals}
        Let $F: \bicat{B} \to \bicat{C}$ be a fully-faithful functor between symmetric monoidal 2\-categories (so we may regard $\bicat{B}$ as a full sub-2\-category of $\bicat{C}$). Then the restriction $\bicat{B}^{\d_1} \to \bicat{C}^{\d_1}$ is again fully-faithful. \qed
\end{lemma}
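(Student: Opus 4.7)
The plan is to verify the two conditions defining a fully-faithful 2-functor on the restriction: (i) local full-faithfulness on 2-morphisms, and (ii) essential fullness on 1-morphisms. Condition (i) is immediate, because $\bicat{B}^{\d_1}$ and $\bicat{C}^{\d_1}$ are sub-2-categories retaining \emph{all} 2-morphisms between their 1-morphisms, so for any pair of 1-morphisms $f',g' \in \bicat{B}^{\d_1}(x,y)$ the hom-category $\Hom_{\bicat{B}^{\d_1}}(f',g')$ agrees with $\Hom_{\bicat{B}}(f',g')$, and likewise on the target side; the hypothesis on $F$ then gives the required equivalence of 2-morphism categories. So the real content is condition (ii).

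For (ii), fix $x,y \in \bicat{B}$ and a 1-morphism $f \in \bicat{C}^{\d_1}(Fx,Fy)$. Since $F$ is essentially full on 1-morphisms, there exists $f' \in \bicat{B}(x,y)$ with an equivalence $F(f') \simeq f$ in $\bicat{C}$. Having an infinite chain of adjoints on both sides is invariant under equivalence of 1-morphisms, so $F(f')$ itself lies in $\bicat{C}^{\d_1}$. The goal is then to show $f' \in \bicat{B}^{\d_1}$, and this equivalence will automatically take place in the sub-2-category $\bicat{C}^{\d_1}$ once we know both sides lie in it.

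The heart of the argument is a single lifting step: given $f' \in \bicat{B}$ such that $F(f')$ admits a right adjoint $g$ in $\bicat{C}$, produce a right adjoint to $f'$ in $\bicat{B}$. Apply essential fullness of $F$ to obtain $g' \in \bicat{B}(y,x)$ and a 2-isomorphism $F(g') \simeq g$. Conjugating the unit $\eta : \id_{Fx} \Rightarrow g \circ F(f')$ and counit $\epsilon : F(f') \circ g \Rightarrow \id_{Fy}$ by this equivalence and by the pseudofunctorial comparisons $F(g' \circ f') \simeq F(g') \circ F(f')$ and $F(f' \circ g') \simeq F(f') \circ F(g')$, one gets 2-morphisms $\tilde\eta : F(\id_x) \Rightarrow F(g' \circ f')$ and $\tilde\epsilon : F(f' \circ g') \Rightarrow F(\id_y)$. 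By local full-faithfulness of $F$ these have unique preimages $\hat\eta, \hat\epsilon$ in $\bicat{B}$, and the triangle identities for $\hat\eta, \hat\epsilon$ hold in $\bicat{B}$ because they hold in $\bicat{C}$ after applying $F$, and $F$ is faithful on 2-morphisms. Thus $f' \dashv g'$ in $\bicat{B}$.

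To finish, iterate. Because $\bicat{C}^{\d_1}$ is closed under taking adjoints of its 1-morphisms, the right adjoint $g$ of $F(f')$ itself has a right adjoint in $\bicat{C}$, which by the lifting step above produces a further right adjoint to $g'$ in $\bicat{B}$. Proceeding inductively generates an infinite chain of right adjoints to $f'$ in $\bicat{B}$, and the symmetric argument (starting from the left adjoint of $F(f')$ in $\bicat{C}$, which exists since $f \in \bicat{C}^{\d_1} \subseteq \bicat{C}^{\R^\infty}$) produces the infinite chain of left adjoints. Hence $f' \in \bicat{B}^{\d_1}$, completing the proof. The main obstacle I anticipate is purely bookkeeping: carrying through the non-strict pseudofunctorial comparisons $F(g' \circ f') \simeq F(g') \circ F(f')$ when transporting $\eta, \epsilon$ along $F(g') \simeq g$, and then invoking faithfulness at exactly the right level to transfer the triangle identities back to $\bicat{B}$.
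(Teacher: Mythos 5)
The paper states this lemma without proof (it is asserted with a bare \qed), so there is no argument of the authors' to compare against; your proposal is a correct and complete justification. The key point --- that a fully-faithful 2-functor reflects adjunctions, so the doubly-infinite adjoint chains witnessing $f \in \bicat{C}^{\d_1}$ lift along $F$ to chains in $\bicat{B}$ --- is exactly the content the authors treat as evident, and your handling of the transported units/counits and of transferring the triangle identities back via faithfulness on 2-morphisms is sound.
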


Now we turn to duals for objects. 

\begin{defn} \label{duals_for_objects}
        Let $(\bicat{B}, \otimes, I)$ be a symmetric monoidal 2\-category. An object $x \in \bicat{B}$ is \emph{dualizable} if there exists an object $x^*$ and morphisms $e: x \otimes x^* \to I$ and $c: I \to x^* \otimes x$ satisfying the zig-zag equations up to isomorphism; that is, there exist isomorphisms of the following types:
        \begin{align}
                (e \otimes \id_{x}) \circ (\id_{x} \otimes c) &\cong \id_{x} \label{zig-zag1} \\
                (\id_{x^*} \otimes e) \circ (c \otimes \id_{x^*}) &\cong \id_{x^*} \label{zig-zag2}
        \end{align} 
        We will let $\bicat{B}^{\d_0}$ denote the full sub-2\-category spanned by all the objects of $\bicat{B}$ which are dualizable.
\end{defn}

\noindent
Note that we do not specify the isomorphisms which witness the zig-zag equations, nor require them to satisfy any coherence equations, though we could do so~\cite[Theorem~2.19]{pstragowski-thesis}. Moreover $x \otimes y$ is dualizable when $x$ and $y$ are, as is the unit object~$I$. It follows that $\bicat{B}^{\d_0}$ is a sub--symmetric monoidal 2\-category. 

We will also make no left/right distinction. The symmetric monoidal structure makes any, say, left dual canonically into an ambidextrous dual. This also has the consequence that $(\bicat{B}^{\d_0})^{\d_0} = \bicat{B}^{\d_0}$; and so, in contrast to the case of adjoints, there is no need to iterate the process of taking sub-2\-categories of dualizable objects. 

\begin{defn}
        A symmetric monoidal 2\-category \emph{admits duals for objects} if $\bicat{B} = \bicat{B}^{\d_0}$. 
\end{defn}

\begin{lemma}\label{lem:ribbondualobject}
        Let $\mathcal Q$ denote any presentation that 2\-extends the ribbon presentation $\Rib$ (for example any of the modular presentations $\M$, $\O$, $\M_k$, $\N_k$ from \autoref{sec:modularobjectdefinition}), then $\bicat{F}(\mathcal Q)$ admits duals for objects. 
\end{lemma}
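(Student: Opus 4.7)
The sole generating object of $\Rib$ is the circle $X$, and a 2-extension adds only 2-morphism generators and relations. Thus every object of $\bicat F(\mathcal Q)$ is a finite tensor product of copies of $X$. The class of dualizable objects is closed under tensor product and contains the unit, so it suffices to show $X$ is dualizable in $\bicat F(\Rib)$; the dualizability then transports verbatim to any 2-extension $\bicat F(\mathcal Q)$, since the generating 1-morphisms and the 2-morphisms that witness dualizability persist.

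I will take $X^* := X$ and define
\[
e := \tikztinycap \circ \tikztinypants \colon X \otimes X \to I, \qquad c := \tikztinycopants \circ \tikztinycup \colon I \to X \otimes X,
\]
each a composite of generating 1-morphisms already present in $\P^\L \subseteq \Rib$. Modulo the associator and the unitors, the zigzag composite $(\id_X \otimes e) \circ (c \otimes \id_X)$ is precisely the snake surface $N$ of equation~\eqref{eq:snakesurface}, and the opposite zigzag is its left-right mirror.

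The main input is that $N$ is 2-isomorphic to the identity 1-morphism on $X$ in $\bicat F(\Rib)$. This follows from reading equation~\eqref{eq:fgidentity} as a sequence of 2-morphisms in $\bicat F(\Rib)$ prior to applying any representation: the displayed chain is a composite of invertible 2-morphisms---whiskerings of $\phiM^\inv$, $\phiM$, $\rho^{\pm 1}$, $\lambda$, $\check\lambda^\inv$, and $\check\rho$---that passes explicitly through $N$ in the middle. Taking the second half, namely $N$ through $\phiM$ followed by $\check\rho$ and $\lambda$ back to the identity cylinder, yields the desired invertible 2-morphism $N \Rightarrow \id_X$. The mirror zigzag is handled by an analogous argument using $\phiN$ in place of $\phiM$.

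The hard part will be the associator-and-unitor bookkeeping needed to identify the abstract zigzag composite of \autoref{duals_for_objects} with the surface $N$ on the nose, and to reinterpret the whiskerings implicit in \eqref{eq:fgidentity} as 2-morphisms between the precise 1-cells involved in the snake. This is a coherence check rather than genuinely new content, since the requisite chain of Frobeniusator- and unitor-based 2-isomorphisms has already been assembled in the proof of \autoref{prop:Frob_to_rigid}; I simply discard the stronger linear consequence (the equation $gf = \id$ in the associated tensor category) that the proposition extracts and retain only the weaker underlying 2-isomorphism, which is all that \autoref{duals_for_objects} demands.
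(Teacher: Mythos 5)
Your proposal is correct and follows the paper's own route: the paper also takes the circle to be self-dual with $e$ the capped pants and $c$ the cupped copants, and disposes of the zig-zag equations by invoking the invertibility of $\phiN$ and $\phiM$. You have merely made explicit the ``easy calculation'' the paper leaves implicit, by extracting the relevant invertible sub-chain of~\eqref{eq:fgidentity}, which is a fair reading of the same argument.
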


\begin{proof}
        It is enough to show that the single generating object admits duals. In fact this object is self dual, and has the following evaluation and coevaluation morphisms $e$ and $c$:
\smallbordisms
        \begin{equation*}
                \begin{tikzpicture}
                                \node[Pants] (A) at (0,0) {};
                                \node[Cap] at (A.belt) {};
                        \end{tikzpicture} \hspace{2cm}
                \begin{tikzpicture}
                                \node[Copants, top] (A) at (0,0) {};
                                \node[Cup] at (A.belt) {};
                        \end{tikzpicture} 
        \end{equation*}
That these satisfy the zig-zag equations is a easy calculation, utilizing the fact that $\phiN$ and $\phiM$ are invertible. 
\end{proof}

\noindent
As with adjoints, dualizability of objects is hereditary: if $\bicat{C}$ admits duals for objects then any functor $F:\bicat{C} \to \bicat{B}$ factors as a composite $\bicat{C} \to \bicat{B}^{\d_0} \subseteq \bicat{B}$. 

Combining the above operations we have inclusions:
\begin{equation*}
        (\bicat{B}^{\d_1})^{\d_0} \subseteq (\bicat{B}^{\d_0})^{\d_1} \subseteq \bicat{B}.
\end{equation*}
In general these are both proper inclusions. In particular the objects of $(\bicat{B}^{\d_0})^{\d_1}$ may no longer be dualizable in this sub-2\-category. 

\begin{defn}
        A symmetric monoidal 2\-category $\bicat{B}$ is \emph{fully-dualizable} if \mbox{$\bicat{B} = (\bicat{B}^{\d_1})^{\d_0} =: \bicat{B}^\fd$}. An object of $(\bicat{B}^{\d_1})^{\d_0}$ is called a \emph{fully-dualizable object}. 
\end{defn}

\begin{proposition}
        Let $\mathcal Q$ be one of the modular presentations $\M$, $\O$, $\M_k$, $\N_k$ from \autoref{sec:modularobjectdefinition}. Then the associated symmetric monoidal 2\-category $\bicat{F}(\mathcal Q)$ is fully-dualizable and hence any representation \mbox{$C: \bicat{F}(\mathcal Q) \to \bicat C$} factors uniquely through the fully-dualizable sub-2\-category $\bicat{C}^\fd$. 
\end{proposition}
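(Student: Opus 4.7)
The plan is to assemble this proposition from two results proved immediately before it in the excerpt, together with the universal properties of $(-)^{\d_1}$ and $(-)^{\d_0}$ already recorded in the preliminaries on fully-dualizable 2-categories. There is essentially no new content to produce; the work was done in \autoref{cor:modulardualmorphism} and \autoref{lem:ribbondualobject}.

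First, I would observe that each of $\M$, $\O$, $\M_k$, $\N_k$ is a 2-extension of the ribbon presentation $\Rib$ (either directly, or via the chain $\Rib \hookrightarrow \M$ extended further). Hence \autoref{lem:ribbondualobject} applies and yields $\bicat{F}(\mathcal Q) = \bicat{F}(\mathcal Q)^{\d_0}$; and \autoref{cor:modulardualmorphism} gives $\bicat{F}(\mathcal Q) = \bicat{F}(\mathcal Q)^{\d_1}$. Combining these two equalities produces
\[
\bicat{F}(\mathcal Q) \;=\; \bigl(\bicat{F}(\mathcal Q)^{\d_1}\bigr)^{\d_0} \;=\; \bicat{F}(\mathcal Q)^\fd,
\]
so $\bicat{F}(\mathcal Q)$ is fully-dualizable.

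For the factorization statement, I would apply the heredity of duals twice, in the order reflected by the definition $\bicat{C}^\fd = (\bicat{C}^{\d_1})^{\d_0}$. Given any symmetric monoidal functor $C: \bicat{F}(\mathcal Q) \to \bicat C$, since $\bicat{F}(\mathcal Q)$ admits duals for 1-morphisms, the heredity principle recorded just before \autoref{cor:modulardualmorphism} produces a unique factorization $\bicat{F}(\mathcal Q) \to \bicat{C}^{\d_1} \subseteq \bicat{C}$. Since $\bicat{F}(\mathcal Q)$ also admits duals for objects, the analogous heredity principle for $(-)^{\d_0}$ (stated immediately after \autoref{lem:ribbondualobject}) produces a further unique factorization of the resulting functor through $(\bicat{C}^{\d_1})^{\d_0} = \bicat{C}^\fd$, completing the proof.

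There is no real obstacle here, since the two ingredients have already been established; the only thing to verify is that each presentation in the list indeed 2-extends $\Rib$ so that \autoref{lem:ribbondualobject} applies uniformly, but this is immediate from the definitions in \autoref{sec:modularobjectdefinition}.
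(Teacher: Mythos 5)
Your proposal is correct and matches the paper's own proof, which likewise cites \autoref{cor:modulardualmorphism}, \autoref{lem:ribbondualobject}, and the heredity of full dualizability. The only difference is that you spell out the two heredity steps (first through $\bicat{C}^{\d_1}$, then through $(\bicat{C}^{\d_1})^{\d_0}$) that the paper compresses into one sentence.
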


\begin{proof}
        This is a direct consequence of \autoref{cor:modulardualmorphism}, \autoref{lem:ribbondualobject}, and the fact that full-dualizability is hereditary. 
\end{proof}

\subsection{Fully-dualizable 2--vector spaces are semisimple}

The following question is then natural to ask: what are the fully-dualizable sub-2\-categories of the various 2-categories of algebras and linear categories considered above? As we have seen in the last section any representation of (the 2\-categories generated by) the modular presentations must factor through these  sub-2\-categories. The 2-category $(\alg)^\fd$ corresponds to the full sub-2\-category spanned by the separable $k$-algebras \cite{CSPthesis, Davidovich-PhD, perry-spintfts}.  Since we assume $k$ to be an algebraically-closed field, every separable $k$-algebra is Morita equivalent to a finite direct sum $\oplus_n k$, which can be used to establish that the  fully-dualizable sub-2\-category, in this case, coincides with Kapranov and Voevodsky's 2\-category. In what follows we will not rely on those results, we only need the following special case
 (which may also be established by direct inspection).

\begin{lemma}
Kapranov and Voevodsky's 2\-category is fully-dualizable: $\kvtwovect = (\kvtwovect)^\fd$. \qed
\end{lemma}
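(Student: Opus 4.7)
The plan is to verify the two conditions defining full-dualizability directly: every 1-morphism admits an infinite chain of successive adjoints, and every object is dualizable, both within $\kvtwovect$ itself.

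For the 1-morphism condition, I would take an arbitrary 1-morphism $M : n \to m$ --- that is, an $m \times n$ matrix whose entries $M_{ij}$ are finite-dimensional $k$-vector spaces --- and propose as its right adjoint the transposed dual matrix $M^* : m \to n$ with entries $(M^*)_{ji} := (M_{ij})^*$. The unit $c : \id_n \Rightarrow M^* \circ M$ and counit $e : M \circ M^* \Rightarrow \id_m$ are assembled entrywise from the standard coevaluation $k \to V \otimes V^*$ and evaluation $V^* \otimes V \to k$ for finite-dimensional $V$. Because finite-dimensional vector spaces have ambidextrous duals in $\vect$, the same matrix $M^*$ is also a left adjoint to $M$; and because the same construction applies to $M^*$ itself (giving back something canonically isomorphic to $M$), every 1-morphism sits inside a doubly-infinite chain of iterated adjoints. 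Hence $\kvtwovect^{\L^\infty} = \kvtwovect^{\R^\infty} = \kvtwovect$, so $\kvtwovect^{\d_1} = \kvtwovect$.

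For the object condition, the object $n$ will be its own dual. Writing $n \otimes n = n^2$ and indexing the tensor-product object by pairs $(i,j)$ with $1 \le i,j \le n$, I would take the evaluation $e : n^2 \to 1$ to be the $1 \times n^2$ matrix whose $(i,j)$ entry is $k$ when $i = j$ and $0$ otherwise, and similarly for the coevaluation $c : 1 \to n^2$. The zig-zag composites $(e \otimes \id_n) \circ (\id_n \otimes c)$ and $(\id_n \otimes e) \circ (c \otimes \id_n)$ can then be computed directly using the formula for composition of matrices of vector spaces: each yields a matrix that is isomorphic, via the canonical unitors $k \otimes V \cong V \cong V \otimes k$, to the identity $n \times n$ matrix on $n$, as required by Definition~\ref{duals_for_objects}.

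The main (and really only) technical step is the entrywise zig-zag verification, which is routine. Combining the two conditions gives $\kvtwovect = (\kvtwovect^{\d_1})^{\d_0} = \kvtwovect^\fd$, as claimed.
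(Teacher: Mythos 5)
Your proof is correct, and it is exactly the ``direct inspection'' that the paper alludes to: the paper itself gives no written argument for this lemma (it only remarks that the statement may be established by direct inspection or deduced from the characterization of $(\alg)^\fd$ via separable algebras). Both halves of your verification --- the entrywise ambidextrous adjoint $M^*$ for 1\-morphisms, covered by the paper's observation that a 1\-morphism with an ambidextrous adjoint lies in $\bicat B^{\d_1}$, and the self-duality of $n$ via the Kronecker-delta evaluation and coevaluation matrices --- are sound.
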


\noindent In a different direction, Tillmann has considered the sub-2\-category of dualizable objects in $\twovect$:

\begin{proposition}[{\cite[Theorem~2.5]{t98-sskl}}]
\label{prop:equivalentalgebra}
An object $\cat C$ in $\twovect$ is dualizable if and only if there exists a finite-dimensional semi-simple $k$-algebra $A$ with \mbox{$\cat C \simeq \hat{A}$}, the category of finitely generated projective $A$-modules.
\end{proposition}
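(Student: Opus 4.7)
The plan is to prove the two directions separately, with the converse carrying essentially all of the technical content.

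For the forward direction, suppose $\cat{C} \simeq \hat{A}$ for a finite-dimensional semisimple $k$-algebra $A$. Since $k$ is algebraically closed, the Artin--Wedderburn theorem gives $A \cong \prod_{i=1}^N M_{n_i}(k)$, and each matrix factor is Morita equivalent to $k$, so $\hat{A}$ is equivalent to the finite direct sum $(\vect^{\mathrm{fd}})^{\oplus N}$. This object is manifestly self-dual in $\twovect$: the evaluation functor $(V_1,\ldots,V_N) \boxtimes (W_1,\ldots,W_N) \mapsto \bigoplus_i V_i \otimes_k W_i$, together with a coevaluation picking out $\bigoplus_i (k_i, k_i)$, satisfies the zigzag equations up to isomorphism. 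Equivalently, finite-dimensional semisimple $A$ is separable, and hence dualizable in $\alg$; since the symmetric monoidal functor $(\hat{-}) : \ALG^\L \to \twovect$ preserves duality, it carries this structure over to $\hat{A}$ in $\twovect$.

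For the converse, let $\cat{C}$ be dualizable in $\twovect$ with dual $\cat{C}^*$, evaluation $e : \cat{C} \hat{\otimes} \cat{C}^* \to \vect$, and coevaluation $c : \vect \to \cat{C}^* \hat{\otimes} \cat{C}$. The first step is to extract a finite set of generators of $\cat{C}$ from $c$. The object $c(k)$ lives in the Cauchy completion of the enriched tensor product $\cat{C}^* \boxtimes \cat{C}$, so up to idempotent splitting it may be represented as a retract of a finite direct sum $\bigoplus_{i=1}^n Y_i^* \boxtimes X_i$ with $X_i \in \cat{C}$ and $Y_i^* \in \cat{C}^*$. Evaluating the zigzag equation \eqref{zig-zag1} on an arbitrary $X \in \cat{C}$ and unpacking the definitions yields a natural isomorphism exhibiting $X$ as a retract of $\bigoplus_i e(X \boxtimes Y_i^*) \otimes_k X_i$, i.e., a retract of a finite direct sum of copies of the fixed objects $X_1,\ldots,X_n$. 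Since $\cat{C}$ is Cauchy-complete, this forces $\cat{C}$ to be generated, as a Cauchy-complete $\vect$-enriched category, by the single object $X := X_1 \oplus \cdots \oplus X_n$, and hence $\cat{C} \simeq \hat{A}$ where $A := \End_\cat{C}(X)^\op$ is a finite-dimensional $k$-algebra.

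It remains to show that $A$ is semisimple, which is the main obstacle of the argument. The plan is to apply the generator extraction above to both $\cat{C}$ and $\cat{C}^*$, obtaining $\cat{C} \simeq \hat{A}$ and $\cat{C}^* \simeq \hat{B}$ for finite-dimensional algebras $A$ and $B$, and then transfer the duality data back to $\ALG^\L$ along the fully-faithful symmetric monoidal functor $(\hat{-})$ of \autoref{lem:Alttowovectfullyfaithful}. Concretely, $e$ and $c$ correspond under this fully-faithful functor to a bimodule ${}_A M_{B^\op}$ and a bimodule ${}_{B^\op} N_A$ respectively, and the zigzag isomorphisms translate to isomorphisms $M \otimes_{B^\op} N \cong A$ of $A$-bimodules (and symmetrically for $B$). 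This is precisely the defining condition for $A$ and $B$ to be Morita dual via the bimodules $M$ and $N$, and, in the finite-dimensional setting over a field, forces $A$ to be separable: the element of $A \otimes_k A^\op$ obtained by pulling the coevaluation through these identifications provides a splitting of the multiplication map $A \otimes_k A \to A$ as $A$-bimodules. Since $k$ is algebraically closed, separability is equivalent to semisimplicity, and the proof is complete.
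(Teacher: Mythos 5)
Your forward direction and your extraction of a finite generator from the coevaluation are both sound; the latter is essentially the paper's \autoref{finiteretract}. One small omission there: the finite-dimensionality of $\End_{\cat C}(X)$ does not follow just from ``$X$ is a retract of $\bigoplus_i V_i\otimes X_i$'' (bounding $\End(X)$ by the endomorphisms of that direct sum is circular, since those involve the unknown spaces $\Hom(X_i,X_j)$); it follows from the \emph{naturality} of the zig-zag isomorphism, which exhibits $\Hom_{\cat C}(A,B)$ as a retract of the finite-dimensional subspace $\bigoplus_i\Hom_k(E_{A,Y_i},E_{B,Y_i})\otimes\id_{X_i}$ --- this is the paper's \autoref{hom-finite} and you need it before you may call $A=\End_{\cat C}(X)^{\op}$ a finite-dimensional algebra.

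The genuine gap is in the semisimplicity step. The isomorphisms $M\otimes_{B^{\op}}N\cong A$ and $N\otimes_A M\cong B^{\op}$ say only that $M$ and $N$ implement a Morita equivalence between $A$ and $B^{\op}$, and this \emph{never} forces separability: every finite-dimensional algebra is Morita equivalent to itself via the identity bimodule, yet $k[x]/x^{2}$ is not separable and its category of finitely generated projectives is not dualizable. All of the content lies in the fact that $e$ and $c$ are honest linear functors, i.e.\ correspond to \emph{left-adjoint} bimodules, and your argument never uses this. A direct attempt to extract a separability idempotent from $M$ and $N$ founders: the zig-zag exhibits $A$ as a retract of $({}_AM\otimes_k A_A)^{n}$ as an $A$-bimodule (using that $N$ is finitely generated projective over $B\otimes A$), which yields projectivity of $A$ over $A\otimes A^{\op}$ only if ${}_AM$ is projective as a one-sided $A$-module --- and a finite-dimensional $A\otimes B$-module need not restrict to a projective $A$-module. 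The standard repair is uniqueness of duals: in $\bimod$ every algebra $A$ is dualizable with dual $A^{\op}$ and canonical coevaluation ${}_kA_{A^{\op}\otimes A}$; since $\hat{A}$ is dualizable in $\twovect\simeq\bimod^{\L}$, uniqueness of duality data forces that canonical coevaluation to be a left adjoint in $\bimod$, which by the characterization of left-adjoint bimodules says precisely that $A$ is finitely generated projective as an $A$-$A$-bimodule, i.e.\ separable, whence semisimple. Note that the paper's own proof avoids algebras entirely: it shows $\cat C\simeq\Fun_k(\cat D,\Vect^{\fd})$ is abelian, that $E$ carries short exact sequences to (necessarily split) short exact sequences of vector spaces, hence that every short exact sequence in $\cat C$ is a retract of a split one and therefore splits (\autoref{splittinglem}), and concludes from ``every object projective plus a generator implies semisimple with finitely many simples.''
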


\noindent We will give an alternative proof of Tillmann's result in the next section. 

We can also relate $\twovect$ to $\smash{\bimod^\L}$ via the functor $\Phi: \twovect \rightarrow \bimod $ which sends a linear functor $F : \cat{C} \rightarrow D$ to its associated representable bimodule $F_* \colon \cat{C} \proarrow \cat{D}$. 

\begin{proposition}\label{pro:leftadjointbimodule}
        The functor $\Phi = (-)_*$ induces an equivalence of symmetric monoidal 2\-categories $\twovect \simeq \smash{\bimod^\L}$.
\end{proposition}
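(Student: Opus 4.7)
The plan is to verify three things: that $\Phi$ factors through $\bimod^\L$, that it is essentially surjective on objects, and that it is locally an equivalence of hom categories; the symmetric monoidal compatibility then follows by a direct comparison of the two tensor products. The first observation is essentially free: for any linear functor $F \colon C \to D$, \autoref{adj_lemma} gives the adjunction $F_* \dashv F^*$ in $\bimod$, so $\Phi(F) = F_*$ is always a left adjoint, and $\Phi$ does land in $\bimod^\L$.

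For essential surjectivity on objects, let $C$ be an arbitrary $\vect$-enriched category viewed as an object of $\bimod^\L$, and let $i \colon C \to \hat C$ be the canonical inclusion into its Cauchy completion (so $\hat C \in \twovect$). By \autoref{pro:Cauchycompletionisequiv}, the bimodule $i_* \colon C \proarrow \hat C$ is an equivalence in $\bimod$; since any equivalence in a 2-category is an adjoint equivalence, both $i_*$ and its inverse $i^*$ are simultaneously left and right adjoints, hence belong to $\bimod^\L$. Thus $C \simeq \Phi(\hat C)$ in $\bimod^\L$, as required.

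For the local equivalence $\twovect(C,D) \xrightarrow{\sim} \bimod^\L(C,D)$ with $C,D$ Cauchy-complete, full faithfulness on 2-morphisms is an enriched Yoneda computation: a natural transformation of functors $\{D(d,F(c)) \to D(d,G(c))\}_{d \in D}$ corresponds, by Yoneda, to a morphism $F(c) \to G(c)$, and naturality in $c$ matches on both sides. Essential surjectivity on 1-morphisms is the key content and is exactly the assertion of \autoref{pro:CauchyComplete}(4) applied with $\cat B = C$: since $D$ is Cauchy-complete, every left-adjoint bimodule $M \colon C \proarrow D$ is representable, producing a linear functor $F \colon C \to D$ with $F_* \cong M$. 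This is the main step that genuinely uses Cauchy completeness, and is the part that would be the obstacle if one tried to drop that hypothesis.

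Finally, to promote this equivalence to a symmetric monoidal one, one must compare the Cauchy-completed tensor product $\hat\otimes$ on $\twovect$ with the enriched tensor product $\boxtimes$ on $\bimod$. The canonical map $C \boxtimes D \to C \hat\otimes D$ is the inclusion into a Cauchy completion, so by \autoref{pro:Cauchycompletionisequiv} it induces an equivalence in $\bimod$ (indeed in $\bimod^\L$, as above), giving the required coherent isomorphism $\Phi(C \hat\otimes D) \simeq \Phi(C) \boxtimes \Phi(D)$; the symmetry and unit structures are inherited directly from $\vect$ on both sides, and coherence is straightforward to check.
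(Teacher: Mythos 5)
Your proof is correct and follows essentially the same route as the paper: essential surjectivity on objects via Proposition~\ref{pro:Cauchycompletionisequiv}, essential fullness on 1\-morphisms via Proposition~\ref{pro:CauchyComplete}(4), and full faithfulness on 2\-morphisms by the enriched Yoneda lemma. The paper's proof is terser and handles the symmetric monoidal compatibility separately in its earlier discussion of the comparison functors, but the substance is identical.
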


\begin{proof}
        By Proposition~\ref{pro:Cauchycompletionisequiv} the functor $(-)_*$ is essentially surjective. By Proposition~\ref{pro:CauchyComplete}(4) it is essentially full on 1\-morphisms. By the enriched Yoneda lemma, it is fully-faithful on 2\-morphisms, and hence an equivalence.  
\end{proof}

\noindent With these preliminaries established we are now ready for the main result of this appendix.

\begin{theorem}
\label{thm:equivalenttargets}
        For each of the symmetric monoidal 2\-categories $\bicat{C} =$ $\ALG$, $\alg$, $\ALG^\L$, $\lincat$, $\twovect$, or $\bimod$, the natural inclusion of $\kvtwovect$ into the fully-dualizable objects induces an equivalence
        \begin{equation*}
                \kvtwovect \simeq \bicat{C}^\fd
        \end{equation*}
        with Kapranov and Voevodsky's 2\-category of 2\_vector spaces.
\end{theorem}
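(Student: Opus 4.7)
The plan is to reduce everything to the single computation $\twovect^\fd \simeq \kvtwovect$, exploiting the commutative diagram of Figure~\ref{fig:diagramof2Vects} together with the hereditary nature of full-dualizability. Since $\kvtwovect$ is itself fully-dualizable (as noted in the lemma preceding this theorem), any symmetric monoidal functor out of it lands in the fully-dualizable sub-2-category of the target, so in each case the inclusion $\kvtwovect \to \bicat C$ factors through $\bicat C^\fd$. Moreover each such inclusion is fully-faithful, being a composite of the fully-faithful functors in the diagram; combined with \autoref{lem:ffrestrictstoffonduals} and the observation that $(-)^{\d_0}$ is just passage to a full sub-2-category (hence preserves fully-faithfulness), the restricted functor $\kvtwovect \to \bicat C^\fd$ is automatically fully-faithful. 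Only essential surjectivity remains.

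The central computation is $\twovect^\fd \simeq \kvtwovect$. I plan to start from Tillmann's \autoref{prop:equivalentalgebra}, which says that $\twovect^{\d_0}$ consists, up to equivalence, of categories of the form $\hat A$ for $A$ a finite-dimensional semisimple $k$-algebra; since $k$ is algebraically closed every such $A$ is Morita equivalent to $\oplus_n k$, so each $\hat A$ is equivalent to $(\vect^\fd)^n$, placing $\twovect^{\d_0}$ in the essential image of $\kvtwovect$. To upgrade $\twovect^{\d_0}$ to $\twovect^\fd$ one must verify that all linear functors between finite products of $\vect^\fd$ admit iterated ambidextrous adjoints: such a functor decomposes as a matrix of functors $\vect^\fd \to \vect^\fd$, each of the form $(-) \otimes V$ for some $V \in \vect^\fd$ and hence with ambidextrous adjoint $(-) \otimes V^*$, so iteration stays within this class. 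Consequently $\twovect^{\d_0} = \twovect^\fd \simeq \kvtwovect$.

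The remaining targets I would dispatch by chasing the diagram. For $\bimod$: every 1-morphism in $\bimod^{\d_1}$ is in particular a left adjoint and hence lies in $\bimod^\L$, so $\bimod^{\d_1} = (\bimod^\L)^{\d_1}$ and therefore $\bimod^\fd = (\bimod^\L)^\fd$; combining \autoref{pro:leftadjointbimodule} ($\Phi: \twovect \simeq \bimod^\L$) with \autoref{pro:Cauchycompletionisequiv} (every object of $\bimod$ is equivalent to its Cauchy completion), $\Phi$ restricts to an equivalence $\twovect^\fd \simeq \bimod^\fd$. For $\ALG^\L$: the fully-faithful embedding $(\hat -): \ALG^\L \to \twovect$ of \autoref{lem:Alttowovectfullyfaithful} is essentially surjective onto $\twovect^\fd$ by Tillmann's explicit description, which by fully-faithfulness yields $(\ALG^\L)^\fd \simeq \twovect^\fd$. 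For $\lincat$: the equivalence $\mathit{Rep}: \alg \simeq \lincat$ gives $\lincat^\fd \simeq \alg^\fd$. Finally, for $\alg$ and $\ALG$: both embed fully-faithfully into $\bimod$, and since the composite $\kvtwovect \hookrightarrow \ALG \hookrightarrow \bimod$ is already known to be an equivalence onto $\bimod^\fd$, a two-out-of-three argument for fully-faithful functors forces $\kvtwovect \hookrightarrow \ALG^\fd$ to be essentially surjective as well; the argument for $\alg$ is identical.

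The main obstacle I anticipate is the identification $\bimod^\fd = (\bimod^\L)^\fd$, which requires carefully verifying that iterated left-adjoint conditions in $\bimod^\L$ exactly recover the same sub-2-category of morphisms as iterated adjoint conditions in $\bimod$. A secondary delicacy is confirming that Tillmann's \autoref{prop:equivalentalgebra}, which classifies dualizable objects in $\twovect$, indeed implies the analogous classification for fully-dualizable ones; the key point is that the finite-dimensional semisimple structure makes the adjoint-existence question for 1-morphisms essentially trivial, but this deserves explicit verification in the proof.
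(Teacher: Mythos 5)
Your proposal is correct and follows essentially the same route as the paper: both arguments rest on Tillmann's classification of dualizable objects in $\twovect$ (\autoref{prop:equivalentalgebra}), the diagram of fully-faithful comparison functors together with \autoref{lem:ffrestrictstoffonduals} and \autoref{pro:leftadjointbimodule}, and a two-out-of-three argument forcing the fully-faithful inclusions to be essentially surjective (the paper just packages all targets into one chain $\kvtwovect \hookrightarrow \alg^\fd \hookrightarrow \ALG^\fd \hookrightarrow \twovect^\fd \simeq \bimod^\fd$ rather than treating them one at a time). The obstacle you flag about $\bimod^{\d_1} = (\bimod^\L)^{\d_1}$ is purely formal, since a doubly-infinite adjoint chain in a locally full sub-2-category closed under the relevant 1-morphisms is the same as one in the ambient 2-category, which is exactly how the paper dismisses the analogous identification $(\ALG)^{\d_1} \simeq (\ALG^\L)^{\d_1}$.
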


\begin{proof}
        We begin with a few observations. First as a formal consequence of the definition we have $(\ALG)^{\mathrm d_1} \simeq(\ALG^\L)^{\mathrm d_1}$, and as a consequence of \autoref{pro:leftadjointbimodule} we have $(\twovect)^{\mathrm d_1} \simeq (\bimod)^{\mathrm d_1}$. Next we observe that if $B$ is a finite-dimensional algebra and ${}_AM_B$ is an $A$-$B$-bimodule which is a finitely generated projective $B$-module, then $M$ and its dual $\Hom_B(M_B, B_B)$ are also finite-dimensional. It follows that the inclusion $\alg^\L \to \ALG^\L$ is fully-faithful. Moreover, we have already observed in \autoref{lem:Alttowovectfullyfaithful} that $(\hat{-}): \ALG^\L \to \twovect$ is fully-faithful. By virtue of \autoref{lem:ffrestrictstoffonduals} we have established a chain of fully-faithful inclusions
        \begin{equation*}
                \kvtwovect \hookrightarrow (\alg)^{\mathrm d_1} \hookrightarrow (\ALG)^{\mathrm d_1} \hookrightarrow (\twovect)^{\mathrm d_1} \simeq (\bimod)^{\mathrm d_1},
        \end{equation*}
which induces another chain of fully-faithful inclusions
        \begin{equation*}
                \kvtwovect \hookrightarrow (\alg)^\text{fd} \hookrightarrow (\ALG)^\text{fd} \hookrightarrow (\twovect)^\text{fd} \simeq (\bimod)^\text{fd}.
        \end{equation*}
If the composite $\kvtwovect \to (\twovect)^\text{fd}$ were essentially surjective on objects, then each of these inclusions is an equivalence, and the theorem is established.         
        
By Tillmann's result given in Prop.~\ref{prop:equivalentalgebra}, the composite $\kvtwovect \to (\twovect)^\text{fd} \to (\twovect)^{\mathrm d_0}$ is essentially surjective, and since the second functor is injective on objects it follows that      
 $\kvtwovect \to (\twovect)^\text{fd}$ is essentially surjective, as desired. 
\end{proof}

\begin{corollary}\label{cor:modissemisimple}
        If $\mathcal Q$ is one of the modular presentations $\M$, $\O$, $\M_k$, $\N_k$ from \autoref{sec:geometricalpresentations}, then representations of $\mathcal Q$ in any of the symmetric monoidal 2\-categories $\ALG$, $\alg$, $\ALG^\L$, $\lincat$, $\twovect$, or $\bimod$, are equivalent and factor through the natural inclusion of $\kvtwovect$. \qed
\end{corollary}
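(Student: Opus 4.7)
The plan is to combine the fully-dualizability of $\bicat{F}(\mathcal Q)$ with \autoref{thm:equivalenttargets}. By the proposition immediately preceding the corollary, for any modular presentation $\mathcal Q$ (i.e.\ $\M$, $\O$, $\M_k$, or $\N_k$) the symmetric monoidal 2\-category $\bicat{F}(\mathcal Q)$ is fully-dualizable, being built from generating 1\-morphisms admitting ambidextrous adjoints (by \autoref{cor:modulardualmorphism}) and a generating object which is self-dual (by \autoref{lem:ribbondualobject}). Since full-dualizability is hereditary under symmetric monoidal functors, any representation $Z : \bicat{F}(\mathcal Q) \to \bicat{C}$ into one of the target 2\-categories $\bicat{C} \in \{\ALG, \alg, \ALG^\L, \lincat, \twovect, \bimod\}$ factors uniquely as a composite
\[
\bicat{F}(\mathcal Q) \to \bicat{C}^\fd \hookrightarrow \bicat{C}.
\]

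Next I would invoke \autoref{thm:equivalenttargets}, which supplies, for each of the six target 2\-categories $\bicat{C}$, an equivalence of symmetric monoidal 2\-categories $\kvtwovect \simeq \bicat{C}^\fd$ induced by the natural inclusion. Substituting into the factorization above, every representation $Z$ factors through $\kvtwovect$ via this equivalence, which establishes both claims of the corollary simultaneously: representations in any of these six targets are equivalent to (and canonically determined by) representations in $\kvtwovect$, and each one factors through the natural inclusion $\kvtwovect \hookrightarrow \bicat{C}$.

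This argument is essentially formal once the two ingredients are in place, so there is no serious obstacle at this stage; all the real work has been packaged into the hereditary factorization property and into \autoref{thm:equivalenttargets}. The only point requiring a moment of care is verifying that the several different factorizations (one for each target $\bicat C$) are mutually compatible under the comparison functors of \autoref{fig:diagramof2Vects}, but this follows from the commutativity (up to canonical natural isomorphism) of that diagram together with uniqueness of the factorization through the fully-dualizable subcategory.
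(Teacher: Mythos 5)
Your argument is exactly the paper's: the corollary is stated with an immediate \qed precisely because it follows by combining the proposition that $\bicat F(\mathcal Q)$ is fully-dualizable (so every representation factors uniquely through $\bicat C^{\mathrm{fd}}$) with \autoref{thm:equivalenttargets} identifying $\bicat C^{\mathrm{fd}} \simeq \kvtwovect$ for each of the six targets. Your proposal is correct and takes the same route.
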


\subsection{Dualizable 2-vector spaces are semisimple}
We now show that the dualizable objects in $\twovect$ are precisely the finite semisimple linear categories. We are grateful to Andr\'{e} Henriques for the following proof of \autoref{prop:equivalentalgebra}, which we accomplish in a series of lemmas.


Suppose that $\cat{S}$ is dualizable. Then there exists a Cauchy-complete linear category $\cat{D}$, together with unit and counit functors as follows, where $\widehat \boxtimes$ represents the Cauchy completion of the enriched tensor product:
\begin{align} \label{witnesssemi}
 N &\colon \Vect^\text{fd} \rightarrow D \hatboxtimes C
 &
 E &\colon C \hatboxtimes D \rightarrow \Vect^\text{fd}
\end{align}
Since every finite-dimensional vector space is a finite direct sum of the ground field $k$, the functor $N$ is determined by its value on $k$, which is the object $N(k) \in D \hatboxtimes C$.  
For each pair of objects $A \in \cat{C}$, $B \in \cat{D}$, consider $A \boxtimes B$ as an object in $\cat{C} \hatboxtimes \cat{D}$ using the canonical inclusion functor $\cat{C} \boxtimes {D} \hookrightarrow \cat{C} \hatboxtimes \cat{D}$.
Every object in $D \hatboxtimes C$ may be written as a formal retract of a finite direct sum of objects of the form $Y \boxtimes X$, with $Y \in D$ and $X \in C$. Thus write
\[
 N(k) =  \left( \bigoplus_{i=1}^n Y_i \boxtimes X_i, e \right)
\]
for some objects $X_i \in \cat{C}$, $Y_i \in \cat{D}$, $i=1 \ldots n$, and $e$ the idempotent which defines the retraction. Also  write 
\[
 E_{A,B} = E(A \boxtimes B)
\]
for the finite-dimensional vector spaces defined by $E$. 


\begin{lemma} For all $A, B \in \cat{C}$, the vector space $\Hom_{\cat{C}}(A, B)$ is finite-dimensional. \label{hom-finite}
\end{lemma}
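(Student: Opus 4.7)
The strategy is to use the duality of \eqref{witnesssemi} to identify $\cat{C}$ with a category of linear functors valued in $\Vect^\text{fd}$, and thereby bound each $\Hom_\cat{C}(A,B)$ by a space of natural transformations with finite-dimensional components. The first step will be to apply the zig-zag isomorphism $(\id_\cat{D} \hatboxtimes E) \circ (N \hatboxtimes \id_\cat{D}) \cong \id_\cat{D}$ to an arbitrary $Y \in \cat{D}$. Since $N(k)$ is a retract of $\bigoplus_{i=1}^n Y_i \boxtimes X_i$, this presents $Y$ as a retract of $\bigoplus_{i=1}^n E_{X_i,Y} \otimes Y_i \cong \bigoplus_{i=1}^n Y_i^{\oplus \dim E_{X_i,Y}}$; finiteness of the index set and of each $E_{X_i,Y}$ then makes every object of $\cat{D}$ a retract of a finite direct sum of the fixed family $Y_1, \dots, Y_n$.

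Next, for each $A \in \cat{C}$ I would introduce the linear functor $\Phi_A := E(A \boxtimes -) : \cat{D} \to \Vect^\text{fd}$. Unpacking Definition~\ref{duals_for_objects}, the zig-zag identities produce a natural equivalence $\cat{C} \simeq \Hom_\twovect(\cat{D}, \Vect^\text{fd})$ sending $A \mapsto \Phi_A$, with inverse $\Phi \mapsto (\Phi \hatboxtimes \id_\cat{C})(N(k))$. Consequently $\Hom_\cat{C}(A, B) \cong \mathrm{Nat}(\Phi_A, \Phi_B)$, the space of natural transformations $\Phi_A \Rightarrow \Phi_B$.

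Such a natural transformation is a family $\eta_Y : E_{A,Y} \to E_{B,Y}$ for $Y \in \cat D$; by the first step, together with the additivity of $\Phi_A$ and $\Phi_B$, the whole family is determined by the finite tuple $(\eta_{Y_1}, \dots, \eta_{Y_n})$, since naturality with respect to the projections and inclusions realizing an arbitrary $Y$ as a retract of $\bigoplus_{i} Y_i^{\oplus n_i}$ fully reconstructs $\eta_Y$ from the $\eta_{Y_i}$. Restriction therefore yields an injection $\mathrm{Nat}(\Phi_A, \Phi_B) \hookrightarrow \prod_{i=1}^n \mathrm{Hom}_k(E_{A,Y_i}, E_{B,Y_i})$ into a finite product of finite-dimensional vector spaces, which gives the required bound. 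The main point requiring care will be the identification $\cat{C} \simeq \Hom_\twovect(\cat{D}, \Vect^\text{fd})$, which is a standard consequence of dualizability in a symmetric monoidal 2-category but does require some unpacking; this could alternatively be bypassed by applying the retract decomposition from the first step to both $\cat C$ and $\cat D$ and then pairing via $E$ to bound the residual $\Hom_\cat{C}(X_j, X_i)$ directly.
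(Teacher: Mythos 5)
Your argument is correct, but it is not the route the paper takes, and it is worth seeing how the two differ. The paper's proof is a one-step application of the \emph{first} zig-zag isomorphism \eqref{zig-zag1}: since $A$ is naturally a retract of $\bigoplus_{i=1}^n E_{A,Y_i}\otimes X_i$, the hom space $\Hom_{\cat C}(A,B)$ is exhibited directly as a retract of the subspace $\bigoplus_{i=1}^n \Hom_k(E_{A,Y_i},E_{B,Y_i})\otimes \id_{X_i}$ of $\Hom_{\cat C}\bigl(\bigoplus_i E_{A,Y_i}\otimes X_i,\ \bigoplus_i E_{B,Y_i}\otimes X_i\bigr)$, which is manifestly finite-dimensional; no detour through $\cat D$ is needed. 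You instead use the \emph{second} zig-zag to establish the retract property for objects of $\cat D$, then pass through the equivalence $\cat C\simeq \bar{\cat D}=\Fun_k(\cat D,\Vect^{\fd})$, $A\mapsto E(A\boxtimes -)$, and bound natural transformations by their components on $Y_1,\dots,Y_n$. This works — the restriction map into $\prod_i\Hom_k(E_{A,Y_i},E_{B,Y_i})$ is indeed injective because linear functors preserve the (absolute) retract decompositions, and there is no circularity since the equivalence $\cat C\simeq\bar{\cat D}$ is itself proved from the zig-zags alone without appealing to finite-dimensionality of hom spaces. The trade-off is that you front-load the equivalence $\cat C\simeq\bar{\cat D}$, which the paper only establishes in the lemma immediately following this one (via the functors $I$ and $J$); your route buys a cleaner conceptual picture (hom spaces as spaces of natural transformations determined on finitely many objects) at the cost of more machinery, whereas the paper's is shorter and entirely self-contained at this point in the development. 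Your closing remark about bypassing the functor-category identification by pairing the retract decompositions via $E$ is essentially the paper's actual argument.
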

\begin{proof}
Let $A, B \in \cat{C}$ be any two objects. The zig-zag equation \eqref{zig-zag1} implies that $\Hom_{\cat{C}}(A, B)$ is a retract of the finite-dimensional vector space
\begin{equation*}
        \bigoplus_{i = 1}^n \Hom(E_{A, Y_i}, E_{B, Y_i}) \otimes \mathrm{id}_{X_i},
\end{equation*}
a subspace of 
\begin{equation*}
        \Hom_{\cat{C}}\left( \bigoplus_{i = 1}^n E_{A, Y_i} \otimes X_i, \bigoplus_{i = 1}^n E_{B, Y_i} \otimes X_i  \right).
\end{equation*} 
Hence $\Hom_{\cat{C}}(A, B)$ is finite-dimensional. 
\end{proof}
On the other hand, the functors $N$ and $E$ from \eqref{witnesssemi} induce functors
\begin{equation} \label{witness_iso}
 I \colon \cat{C} \rightarrow \bar{\cat{D}}, \quad J \colon \bar{\cat{D}} \rightarrow \cat{C}
\end{equation}
where $\bar{\cat{D}} = \Fun_k (\cat{D}, \Vect^\text{fd})$. These functors are best understood in terms of wire diagrams for the symmetric monoidal 2\-category $\twovect$, where they are drawn as
\[
 \begin{tz}[yscale=1.5]
    \draw (0,0) node[left] {$C$} to node[fill=white, draw] {$I$} (0,-1) node[left]{$ \bar{\cat{D}}$};
 \end{tz}
 \qquad \text{and} \qquad
 \begin{tz}[yscale=1.5]
    \draw (0,0) node[left] {$\bar{\cat{D}}$} to node[fill=white, draw] {$J$} (0,-1) node[left]{$ \cat{C}$};
 \end{tz}
 \]
 respectively. Here composition of 1\-morphisms runs from top to bottom. The functor $I$ is defined as the unique functor such that
\[
\begin{tz}[yscale=1.3]
    \draw (0,0) node[left] {$C$} to node[draw, fill=white] {$I$} (0,-1) node[left] {$\bar{\cat{D}}$} to[out=down, in=down] node[draw, fill=white] {ev} (1,-1) to  (1,0) node[right] {$D$};
\end{tz}
\quad = \quad
\begin{tz}[yscale=1.3]
    \draw (0,0) node[left] {$C$} to (0,-1) to[out=down, in=down] node[draw, fill=white] {$E$} (1,-1) to  (1,0) node[right] {$D$};
\end{tz} \, ,
\]
where $ev \colon \bar{D} \mathop{\hat{\boxtimes}} {D} \rightarrow \Vect^\text{fd}$ is induced by the evaluation functor. The functor $J$ is defined by composing $N$ and $\text{ev}$ together,
\[
 \begin{tz}[yscale=2]
    \draw (0,0) node[left] {$\bar{\cat{D}}$} to node[fill=white, draw] {$J$} (0,-1) node[left]{$ \cat{C}$};
 \end{tz} \quad = \quad
\begin{tz}[yscale=1]
    \draw (0,0) node[left] {$\bar{\cat{D}}$} to (0,-1) to[out=down, in=down] node[draw, fill=white] {ev} (1,-1) to[out=up, in=up] node[draw, fill=white] {$N$} (2,-1) to (2,-2) node[right] {$C$};
\end{tz} \, .
\]
The zig-zag equations \eqref{zig-zag1} and \eqref{zig-zag2} then immediately establish the following.
\begin{lemma} The functors $I$ and $J$ furnish an equivalence of categories \mbox{$\cat{C} \simeq \bar{\cat{D}}$}. \qed
\end{lemma}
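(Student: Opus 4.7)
The plan is to verify directly that the composites $J\circ I$ and $I\circ J$ are naturally isomorphic to the respective identity functors, by reducing each composite to one of the two zig-zag equations via wire-diagram calculus in $\twovect$.

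First I would unwind the definitions on objects. For $C\in\cat{C}$, the functor $I(C)\in\bar{\cat{D}}$ sends $D\in\cat{D}$ to $E(C\boxtimes D)$; this comes directly from the defining equation of $I$ together with the fact that $\mathrm{ev}$ extracts, from an object of $\bar{\cat{D}}\hatboxtimes\cat{D}$ of the form $F\boxtimes D$, the vector space $F(D)$. For $F\in\bar{\cat{D}}$, using the expression $N(k)=\bigl(\bigoplus_i Y_i\boxtimes X_i,\,e\bigr)$, the object $J(F)\in\cat{C}$ is the retract (via $e$) of $\bigoplus_i F(Y_i)\otimes X_i$.

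Next I would verify $J\circ I\cong\id_{\cat{C}}$ by wire-diagram manipulation. Stacking the defining wire diagrams of $J$ and $I$ produces a diagram with one $\cat{C}$-input wire at the top, an inserted $N$ (coevaluation) creating a $\cat{D}\hatboxtimes\cat{C}$ pair, and an $\mathrm{ev}$ cap that — upon using the defining identity $\mathrm{ev}\circ(I\hatboxtimes\id_{\cat{D}})=E$ — becomes an $E$ cap on the original $\cat{C}$-wire and the new $\cat{D}$-wire. The remaining wire diagram is exactly the left-hand side of the first zig-zag equation \eqref{zig-zag1}, and so $J\circ I\cong\id_{\cat{C}}$.

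Then I would verify $I\circ J\cong\id_{\bar{\cat{D}}}$ by the dual manipulation. For $F\in\bar{\cat{D}}$, the composite produces a wire diagram with one $\bar{\cat{D}}$-input, an $\mathrm{ev}$ pairing $F$ with the $\cat{D}$-leg of $N(k)$ to build $J(F)\in\cat{C}$, and then an outer $E$ pairing that with a fresh $\cat{D}$-wire to deliver the functor $D\mapsto E(J(F)\boxtimes D)$. Rewriting using $\mathrm{ev}\circ(I\hatboxtimes\id_{\cat{D}})=E$ in reverse on the outer $E$ converts this into the composite $(\id_{\bar{\cat{D}}}\hatboxtimes E)\circ(N\hatboxtimes\id_{\bar{\cat{D}}})$ applied to $F$, which is the left-hand side of the second zig-zag equation \eqref{zig-zag2}; hence $I\circ J\cong\id_{\bar{\cat{D}}}$. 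The main obstacle is the bookkeeping in the second step: unlike the first, it requires recognizing that the canonical pairing $\mathrm{ev}\colon\bar{\cat{D}}\hatboxtimes\cat{D}\to\Vect^{\mathrm{fd}}$ behaves as the ``evaluation side'' of the duality defining $I$ on every object — formally a use of the enriched Yoneda lemma — so that the zig-zag identity pulled back through $I$ yields the required isomorphism $F\cong I(J(F))$ naturally in $F$.
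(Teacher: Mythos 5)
Your proposal is correct and follows the same route the paper intends: the paper simply declares the lemma immediate from the zig-zag equations \eqref{zig-zag1} and \eqref{zig-zag2}, and your argument fills in exactly those details, identifying $J\circ I$ with the composite in \eqref{zig-zag1} and reducing $I\circ J\cong\id_{\bar{\cat{D}}}$ to \eqref{zig-zag2} evaluated against $F$. (The only blemish is notational: your intermediate expression $(\id_{\bar{\cat{D}}}\hatboxtimes E)\circ(N\hatboxtimes\id_{\bar{\cat{D}}})$ does not typecheck as written, though the intended step --- applying the linear functor $F$ to the natural isomorphism of \eqref{zig-zag2} and using that $F$ preserves the relevant direct sums and retracts --- is sound.)
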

\begin{corollary}\label{cor:Cabelian} $\cat{C}$ is an abelian category. \qed
\end{corollary}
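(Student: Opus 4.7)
The plan is to transport the abelian structure across the equivalence $\cat{C} \simeq \bar{\cat{D}} = \Fun_k(\cat{D}, \Vect^{\mathrm{fd}})$ just established, using the fact that $\Vect^{\mathrm{fd}}$ is itself abelian and that abelian structure on functor categories is computed pointwise.

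Concretely, I would proceed as follows. First, observe that $\bar{\cat{D}}$ is a $k$-linear category in the obvious way: pointwise addition and $k$-action on natural transformations, with the zero functor as zero object. Next, given any natural transformation $\alpha \colon F \Rightarrow G$ in $\bar{\cat{D}}$, define $K(d) := \ker(\alpha_d \colon F(d) \to G(d))$ and $Q(d) := \mathrm{coker}(\alpha_d)$ for each $d \in \cat{D}$; since $\Vect^{\mathrm{fd}}$ is closed under kernels and cokernels, $K(d)$ and $Q(d)$ are finite-dimensional. Functoriality of $K$ and $Q$ in $d$ is automatic from the universal property of (co)kernels, as is $k$-linearity; the resulting functors $K, Q \colon \cat{D} \to \Vect^{\mathrm{fd}}$ are then the kernel and cokernel of $\alpha$ in $\bar{\cat{D}}$. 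Finite biproducts exist pointwise for the same reason. The exactness axiom (image $=$ coimage) is checked pointwise and reduces to the corresponding statement in $\Vect^{\mathrm{fd}}$. Hence $\bar{\cat{D}}$ is abelian, and transporting along the equivalence $J \colon \bar{\cat{D}} \to \cat{C}$ of the previous lemma yields that $\cat{C}$ is abelian.

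There is essentially no obstacle here: the finite-dimensionality hypothesis on the target (which is available because $E$ lands in $\Vect^{\mathrm{fd}}$, cf.~Lemma~\ref{hom-finite}) is exactly what ensures that pointwise (co)kernels of functors valued in $\Vect^{\mathrm{fd}}$ remain in $\Vect^{\mathrm{fd}}$. The only mildly delicate point to flag is that the equivalence $\cat{C}\simeq\bar{\cat{D}}$ is $k$-linear (visible from the wire-diagram definitions of $I$ and $J$, which are built from $k$-linear functors $E$, $N$, and evaluation), so the transported abelian structure on $\cat{C}$ is compatible with its existing $k$-linear structure.
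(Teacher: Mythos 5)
Your argument is correct and is exactly the one the paper intends (the corollary is stated with \qed precisely because it follows immediately from the equivalence $\cat{C}\simeq\bar{\cat{D}}$ together with the standard fact that a functor category valued in the abelian category $\Vect^{\mathrm{fd}}$ is abelian, with (co)kernels and biproducts computed pointwise). Your additional remarks on finite-dimensionality of pointwise (co)kernels and on $k$-linearity of the transporting equivalence are accurate but routine.
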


\begin{corollary}\label{cor:SEStoSES}
        For each $Y \in \cat{D}$ and short exact sequence $0 \to A \to B \to C \to 0$ in $\cat{C}$ we have a (necessarily split) short exact sequence of vector spaces,
        \begin{equation*}
                0 \to E_{A, Y} \to E_{B, Y} \to E_{C, Y} \to 0.
        \end{equation*}
\end{corollary}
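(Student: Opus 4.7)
The plan is to transport the short exact sequence $0 \to A \to B \to C \to 0$ through the equivalence $I: \cat{C} \xrightarrow{\simeq} \bar{\cat{D}}$ from \eqref{witness_iso}, and then read off exactness pointwise. The first step is to identify $I$ explicitly on objects: unwinding its defining diagram (composition with evaluation recovers $E$), one has $I(A)(Y) = E_{A, Y}$ for every $Y \in \cat D$, with functoriality in both variables. Hence applying $I$ to the given exact sequence produces a sequence of linear functors whose value at any fixed $Y$ is precisely the sequence $0 \to E_{A,Y} \to E_{B,Y} \to E_{C,Y} \to 0$ we are trying to verify is exact.

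Next I would observe that the functor category $\bar{\cat D} = \Fun_k(\cat D, \Vect^{\text{fd}})$ carries a natural abelian structure in which kernels and cokernels are computed pointwise: for any natural transformation $\alpha: F \Rightarrow G$, the pointwise kernel $Y \mapsto \ker(\alpha_Y)$ lands in $\Vect^{\text{fd}}$ (kernels and cokernels of linear maps between finite-dimensional vector spaces are finite-dimensional), and functoriality in $Y$ follows from naturality of $\alpha$; the same holds for cokernels. Standard arguments then show that $\bar{\cat D}$ is abelian and that a sequence in $\bar{\cat D}$ is exact if and only if it is pointwise exact in $\Vect^{\text{fd}}$.

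Combining these two observations, the abelian structure on $\cat C$ furnished by \autoref{cor:Cabelian} is just the one transported from the pointwise abelian structure on $\bar{\cat D}$ through the equivalence $I \dashv\hspace{-3pt}\vdash J$. Hence the short exact sequence $0 \to A \to B \to C \to 0$ in $\cat C$ corresponds under $I$ to a pointwise short exact sequence in $\bar{\cat D}$, and evaluating at $Y \in \cat D$ yields the claimed short exact sequence of finite-dimensional vector spaces. The parenthetical splitting is automatic since $k$ is a field.

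The only real point requiring care is the compatibility of the abelian structure on $\cat C$ (which was only asserted abstractly in \autoref{cor:Cabelian}) with the pointwise one on $\bar{\cat D}$. This is essentially tautological because \autoref{cor:Cabelian} is itself deduced via the equivalence $\cat C \simeq \bar{\cat D}$; nevertheless, I would explicitly flag it, since without fixing this identification the statement ``short exact sequence in $\cat C$'' would be ambiguous.
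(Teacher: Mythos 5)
Your proposal is correct and follows essentially the same route as the paper: transport the sequence through the equivalence $I:\cat{C}\xrightarrow{\simeq}\bar{\cat{D}}$, use that exactness in the functor category $\bar{\cat{D}}$ is detected pointwise, and evaluate at $Y$, with the splitting automatic over a field. Your explicit identification $I(A)(Y)=E_{A,Y}$ and the remark about compatibility of the abelian structures are points the paper leaves implicit, but they are correct and do not change the argument.
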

\begin{proof}
        Since $I: \cat{C} \to \bar{\cat{D}}$ is an equivalence, 
        \begin{equation*}
                0 \to I(A) \to I(B) \to I(C) \to 0
        \end{equation*}
        is a short exact sequence in the functor category $\bar{\cat{D}}$. Since a sequence in a functor category is short exact if and only if it is pointwise,
         the corollary follows by evaluating the above sequence on the object $Y$.  
\end{proof}

Before proving Prop.~\ref{prop:equivalentalgebra} we would like to introduce some notation. 

\begin{defn} \label{def:finlength}
        An object $X$ in an abelian linear category $\cat{C}$ is called {\em simple} if there are precisely two subobjects, the zero object and $X$ itself. The category $\cat{C}$ is called {\em semisimple} if every object is a finite direct sum of simple objects. An object $X$ has {\em finite length} if every strictly decreasing chain of subobjects $X = X_0 \supsetneq X_1 \supsetneq X_2 \supsetneq  \cdots$ has finite length.
\end{defn}

By Schur's Lemma, an object $X$ is a simple object if and only if $\End_\cat{C}(X,X)$ is a division algebra over $k$. Since we are assuming that $k$ is algebraically closed, if $\cat{C}$ has finite-dimensional hom spaces, then this is the case if and only if $\End_\cat{C}(X,X) \cong k$ is the ground field. 


\begin{lemma}
        Let $\cat{C}$ be an abelian linear category which has finite-dimensional hom spaces. If every object is projective, then every object has finite length. 
\end{lemma}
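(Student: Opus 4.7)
The plan is to argue by contradiction: assume there is a strictly decreasing infinite chain $X = X_0 \supsetneq X_1 \supsetneq X_2 \supsetneq \cdots$ of subobjects, and produce infinitely many linearly independent elements of $\End_{\cat C}(X)$, contradicting finite-dimensionality.

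First, for each $i \geq 0$, consider the short exact sequence $0 \to X_{i+1} \to X_i \to Y_i \to 0$ where $Y_i := X_i/X_{i+1}$. Since $X_i$ is a subobject of $X$ (and subobjects of projectives in our setting are projective, but more simply) we only need that $Y_i$ is projective, which it is by hypothesis. Hence the sequence splits, producing a direct sum decomposition $X_i \cong X_{i+1} \oplus Y_i$ in which $Y_i$ is nonzero (since $X_{i+1} \subsetneq X_i$). Iterating these splittings yields, for every $n \geq 1$, a decomposition $X \cong X_n \oplus Y_0 \oplus Y_1 \oplus \cdots \oplus Y_{n-1}$ with every $Y_i \neq 0$.

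Next, from this decomposition I extract for each $i < n$ the idempotent $e_i \in \End_{\cat C}(X)$ that is the projection onto the $Y_i$ summand followed by the inclusion of $Y_i$ back into $X$. By construction these idempotents are nonzero and pairwise orthogonal: $e_i e_j = \delta_{ij} e_i$. Hence they are linearly independent: if $\sum_{i=0}^{n-1} c_i e_i = 0$, then postcomposing with $e_j$ gives $c_j e_j = 0$, and since $e_j \neq 0$ we get $c_j = 0$ for every $j$. Therefore $\dim_k \End_{\cat C}(X) \geq n$ for every $n$, contradicting the assumption that hom spaces in $\cat C$ are finite-dimensional.

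I do not anticipate a real obstacle; the only subtlety is verifying that the $e_i$ can be defined as endomorphisms of the \emph{same} object $X$ (rather than of the varying $X_i$), which is why one iterates the splitting of $X_i \cong X_{i+1} \oplus Y_i$ all the way up to obtain a single decomposition $X \cong X_n \oplus \bigoplus_{i<n} Y_i$ and reads off the orthogonal idempotents from it.
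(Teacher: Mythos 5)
Your proof is correct and takes essentially the same approach as the paper: both exploit that projectivity of the cokernel splits each inclusion $X_{i+1}\hookrightarrow X_i$ and then use finite-dimensionality of $\End_{\cat C}(X)$ to bound the length of the chain. The only difference is bookkeeping — the paper observes directly that $\dim\End_{\cat C}(X_{i+1})<\dim\End_{\cat C}(X_i)$ whenever the cokernel is nonzero, whereas you assemble the splittings into a single decomposition of $X$ and count pairwise orthogonal nonzero idempotents, which amounts to the same dimension estimate.
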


\begin{proof}
        Let $Y \subsetneq X$ be a proper subobject, and let $C$ be the cokernel of the inclusion. Since $C$ is projective, we have a splitting $X \cong Y \oplus C$. Hence we have an ismorphism 
        \begin{equation*}
                \End_\cat{C}(X) \cong \End_\cat{C}(Y) \oplus \End_{\cat{C}}(C) \oplus \Hom_{\cat{C}}(C,Y) \oplus \Hom_{\cat{C}}(Y,C).
        \end{equation*} 
        Since $C \neq 0$, we have $\End_{\cat{C}}(C) \neq 0$, and hence $\dim \End_\cat{C}(X) > \dim \End_\cat{C}(Y)$. It follows that any strictly decreasing chain of proper subobjects of $X$ has length bounded by $\dim \End_\cat{C}(X)$.
\end{proof}

\begin{corollary}\label{cor:everynonzerohassimplesub}
        If $C$ is abelian linear category which has finite-dimensional hom spaces and where every object is projective, then every non-zero object contains a simple proper subobject. \qed
\end{corollary}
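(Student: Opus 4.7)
The plan is to extract a minimal non-zero subobject of $X$ using the finite-length property established in the preceding lemma, and then observe that any minimal non-zero subobject is automatically simple.

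First, given a non-zero object $X \in \cat{C}$, I would apply the previous lemma to conclude that every strictly decreasing chain $X \supseteq X_0 \supsetneq X_1 \supsetneq X_2 \supsetneq \cdots$ of non-zero subobjects of $X$ has finite length (bounded in fact by $\dim \End_\cat{C}(X)$). This descending chain condition on the poset of non-zero subobjects of $X$ guarantees the existence of a minimal non-zero subobject $Y \subseteq X$.

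Next, I would verify that $Y$ is simple. Suppose $Z \subseteq Y$ is any non-zero subobject. Then $Z$ is a non-zero subobject of $X$ contained in $Y$, and by minimality of $Y$ we must have $Z = Y$. Hence the only subobjects of $Y$ are the zero object and $Y$ itself, so $Y$ is simple by Definition~\ref{def:finlength}.

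There is no substantive obstacle here: the corollary is a direct unpacking of the finite-length condition from the previous lemma. The word ``proper'' in the statement should be read in the sense that $Y$ may be taken to be any minimal non-zero subobject --- which will be a proper subobject when $X$ is not itself simple, and equal to $X$ otherwise (in which case $X$ already furnishes the required simple object for the anticipated downstream induction establishing semisimplicity).
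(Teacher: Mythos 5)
Your proof is correct and is exactly the argument the paper intends: the corollary is stated with no proof (just \qed) as an immediate consequence of the finite-length lemma, and your unpacking via a minimal non-zero subobject (which exists by the descending chain condition and is simple by minimality) is the standard way to make that implication explicit. Your reading of ``proper'' is also the right one, since the corollary is later applied in the form ``every non-zero object admits a non-zero simple subobject.''
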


\begin{proposition}\label{prop:semisimpleiseveryobjprojective}
        Let $\cat{C}$ be an abelian linear category which has finite-dimensional hom spaces. Then $\cat{C}$ is a semisimple category if and only if every object is projective. 
\end{proposition}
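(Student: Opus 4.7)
The strategy is to handle the two implications separately, using the finite-length argument already in hand for the backward direction, and a standard Schur-type splitting argument for the forward direction.

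For the backward direction, assume every object of $\cat{C}$ is projective. Given a nonzero object $X$, \autoref{cor:everynonzerohassimplesub} supplies a simple subobject $S \hookrightarrow X$, yielding a short exact sequence $0 \to S \to X \to X/S \to 0$. Since $X/S$ is projective, this sequence splits, giving $X \cong S \oplus X/S$. Iterating on $X/S$ produces a strictly decreasing chain of quotients, each contributing one simple summand. By the finite length property established in the lemma preceding \autoref{cor:everynonzerohassimplesub}, this process terminates after finitely many steps, exhibiting $X$ as a finite direct sum of simple objects. Hence $\cat{C}$ is semisimple.

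For the forward direction, assume $\cat{C}$ is semisimple. It suffices to show that every short exact sequence $0 \to A \to B \xrightarrow{p} C \to 0$ splits, for then $\Hom(-, B) \to \Hom(-, C)$ is surjective and every object is projective. By writing $C = \bigoplus_i S_i$ as a finite direct sum of simples and pulling back along each inclusion $S_i \hookrightarrow C$, one reduces to the case where $C = S$ is simple. Decompose $B = \bigoplus_j T_j$ into simple summands. Since $p$ is an epimorphism, at least one component $p|_{T_j}\colon T_j \to S$ is nonzero; because $T_j$ and $S$ are simple, any such nonzero morphism has zero kernel and hence, being epi, is an isomorphism. Its inverse composed with the inclusion $T_j \hookrightarrow B$ provides the desired section $S \to B$.

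I do not expect a real obstacle here: both directions are essentially routine consequences of tools already assembled in the appendix, namely \autoref{cor:Cabelian} (abelianness of $\cat{C}$), the finite-dimensionality of $\Hom$-spaces from \autoref{hom-finite}, the finite length lemma, and Schur's lemma over the algebraically closed field $k$. The only mildly delicate point is the reduction of the splitting problem to the case of simple quotient in the forward direction, which uses the standard observation that splittings glue over finite direct sum decompositions of the quotient.
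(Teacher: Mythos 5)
Your proof is correct, but both directions take a genuinely different route from the paper's. For the direction ``every object projective $\Rightarrow$ semisimple,'' the paper does not iterate: it forms, for each object $X$, the single subobject $X_\Lambda = \bigoplus_{[S]\in\Lambda}\Hom(S,X)\otimes S$ collecting all isotypic components at once (using a bound on $\dim\End(X_I)$ to see that only finitely many components contribute), and then shows the cokernel vanishes by exactness of $\Hom(S,-)$ together with \autoref{cor:everynonzerohassimplesub}. Your iterative argument --- split off one simple subobject at a time using projectivity of the quotient, and terminate by the finite-length lemma --- is more elementary and reaches the same conclusion; the paper's version has the side benefit of exhibiting the canonical isotypic decomposition, which it exploits again in the other direction. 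For ``semisimple $\Rightarrow$ every object projective,'' the paper reduces to simple objects and checks directly that $\Hom(S,-)$ preserves epimorphisms via naturality of the projection $X \to \Hom(S,X)\otimes S$, whereas you prove the equivalent statement that every short exact sequence splits, by decomposing the middle term into simples and locating a summand mapping isomorphically onto a simple quotient. Both are standard; your route carries the two small extra steps you correctly flag, namely that sections glue over a finite direct-sum decomposition of the quotient, and that splitting of all short exact sequences implies projectivity of all objects (via a pullback). One minor correction: \autoref{cor:Cabelian} and \autoref{hom-finite} concern the dualizable category appearing in the proof of \autoref{prop:equivalentalgebra} and are not inputs here --- abelianness and finite-dimensionality of hom spaces are hypotheses of the proposition itself.
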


\begin{proof}
        Suppose first that $\cat{C}$ is semisimple, that is that every object splits as a finite direct sum of simple objects. 
         Then we must show that every object $P$ is projective. Since projectives are closed under finite direct sum, it is enough to show that simple objects are projective. For every object $X$ and every simple object $S$, the decomposition of $X$ as a direct sum of simples induces a projection $X \to \Hom(S, X) \otimes S$, which as a projective is an epimorphism. Thus for $X \to Y$ we have a natural square:
         \begin{center}
         \begin{tikzpicture}
                        \node (LT) at (0, 1.5) {$X$};
                        \node (LB) at (0, 0) {$Y$};
                        \node (RT) at (3, 1.5) {$\Hom(S, X) \otimes S$};
                        \node (RB) at (3, 0) {$\Hom(S, Y) \otimes S$};
                        \draw [->] (LT) -- node [left] {$$} (LB);
                        \draw [->>] (LT) -- node [above] {$$} (RT);
                        \draw [->] (RT) -- node [right] {$$} (RB);
                        \draw [->>] (LB) -- node [below] {$$} (RB);
         \end{tikzpicture}
         \end{center}
         whose horizontal arrows are epimorphisms. It follows that if $X \to Y$ is an epimorphism, then so is $\Hom(S,X) \to \Hom(S,Y)$, and hence $S$ is projective. 
         
         Conversely, suppose that every object of $\cat{C}$ is projective. Let $\Lambda$ be the set of isomorphism classes of simple objects in $\cat{C}$. For each object $X$ and each finite subset $I \subseteq \Lambda$ we may form the subobject 
         \begin{equation*}
                X_I = \bigoplus_{i \in I} \Hom(S_i, X) \otimes S_i  \to X
         \end{equation*}
         where $S_i$ is any pre-chosen representative of the isomorphism class $i \in I$. To see that $X_I$ is really a subobject, we let $K_I$ be the kernel of $X_I \to X$. We first observe that for any simple object $S$ we have:
         \begin{equation*}
                \Hom(S, X_I) = \begin{cases}
                        \Hom(S,X) & \text{if } [S] \in I \\
                        0 & \text{else}
                \end{cases}
         \end{equation*}
         It follows that for all simple objects $S$ that $\Hom(S, K_I) = 0$, and hence that $K_I = 0$ by Corollary~\ref{cor:everynonzerohassimplesub}. Since every object is projective, $X$ splits as a direct sum $X \cong X_I \oplus C_I$, where $C_I$ is the cokernel of $X_I \to X$. Thus we have $\dim \End(X_I) \leq \dim \End(X)$. 
         
Let $P(\Lambda)$ denote the collection of all subsets of $\Lambda$ and $P^f(\Lambda)$      denote the collection of all finite subsets. We define a function:
\begin{align*}
        d: & P^f(\Lambda) \to \NN \\
        &I  \mapsto \dim \End(X_I)
\end{align*}     
This function has the properties that
\begin{enumerate}
        \item $d(I) \leq \dim \End(X)$, i.e., it is globally bounded, and
        \item $d(I) \leq d(J)$, whenever $I \subseteq J$. 
\end{enumerate}
It follows that we may extend $d$ to all of $P(\Lambda)$ by the formula $d(K) = \max_{I \subseteq K, I \in P^f(\Lambda)} d(I)$, and that this extension retains these two properties. In short the subobject $X_\Lambda = \bigoplus_{[S] \in \Lambda} \Hom(S, X) \otimes S$ is well defined and is a finite direct sum of simple objects. We will show that $X \cong X_\Lambda$. 

To this end consider the short exact sequence:
\begin{equation*}
        0 \to X_\Lambda \to X \to C_\Lambda \to 0.
\end{equation*}
Since every object is projective, the functor $\Hom(Y, -)$ is exact for every $Y$, and hence we have an exact sequence
\begin{equation*}
        0 \to \Hom(Y, X_\Lambda) \to \Hom(Y, X) \to \Hom(Y, C_\Lambda) \to 0.
\end{equation*}
However when $Y = S$ is a simple object, the first map is an isomorphism. Thus $\Hom(S, C_\Lambda) = 0$ for all simple objects $S$, which by Corollary~\ref{cor:everynonzerohassimplesub} shows that $C_\Lambda = 0$. 
\end{proof}

\begin{defn}
        In a $\vect$-enriched category, an object $Q$ is a {\em generator} if the functor \mbox{$\Hom_{\cat{C}}(Q, -): \cat{C} \to \vect$} is faithful.
\end{defn}

\noindent Under the assumption that $\cat{C}$ has finite-dimensional hom vector spaces,  the following three notions coincide for an object $Q \in \cat{C}$:
\begin{itemize}
        \item $Q$ is a generator;
        \item for all objects $X \in \cat{C}$, there exists a surjection $\oplus_n Q \to X$ from a finite direct sum of copies of $Q$;
        \item the evaluation map $\Hom_\cat{C}(Q, X) \otimes Q \to X$ is surjective. 
\end{itemize}
The equivalence of these three descriptions is well-known and not difficult to verify (see for example~\cite[Lemma~2.22]{Douglas:2014aa}).
\begin{lemma}\label{lem:projectivezerotogeniszero}
        Let $\cat{C}$ be an abelian linear category which has finite-dimensional hom spaces. Let
$X \in \cat{C}$ be a projective object and $Q \in \cat{C}$ be a generator. If $\Hom_{\cat{C}}(X, Q) = 0$, then $X =0$ is the zero object. 
\end{lemma}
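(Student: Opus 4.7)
The plan is to exploit the two characterizations of a generator recalled immediately before the lemma: since $Q$ is a generator and $\cat C$ has finite-dimensional hom spaces, for the object $X$ there exists a finite integer $n$ and an epimorphism $e: \bigoplus_{i=1}^n Q \twoheadrightarrow X$. (Concretely, one may take $e$ to be the evaluation map $\Hom_{\cat C}(Q,X) \otimes Q \to X$, which is surjective and has finite-dimensional source since $\Hom_{\cat C}(Q,X)$ is finite-dimensional by assumption.)

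Now I invoke projectivity of $X$: the identity map $\id_X : X \to X$ lifts through the epimorphism $e$ to a section $s: X \to \bigoplus_{i=1}^n Q$ with $e \circ s = \id_X$. The section $s$ is determined by its $n$ components $s_i := \pi_i \circ s : X \to Q$, where $\pi_i$ is the $i$-th projection from the direct sum. Each $s_i$ is an element of $\Hom_{\cat C}(X,Q)$, which is zero by hypothesis, so $s_i = 0$ for all $i$, and therefore $s = 0$.

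Combining, $\id_X = e \circ s = e \circ 0 = 0$, so $X$ is the zero object. There is no real obstacle here; the only point to verify is that the generator condition genuinely yields a finite direct sum of copies of $Q$ surjecting onto $X$ (so that projectivity applies cleanly), which is exactly the equivalence of the three bulleted conditions stated right before the lemma.
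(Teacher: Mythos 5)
Your proof is correct and is essentially the paper's argument in different words: the paper applies the exact functor $\Hom_{\cat C}(X,-)$ to the evaluation surjection $\Hom_{\cat C}(Q,X)\otimes Q \to X$ to conclude $\Hom_{\cat C}(X,X)=0$, which is precisely your lifting of $\id_X$ through the epimorphism followed by the observation that every component of the lift lies in $\Hom_{\cat C}(X,Q)=0$. The only cosmetic difference is that the paper works with $\Hom_{\cat C}(Q,X)\otimes Q$ directly rather than rewriting it as $\bigoplus^n Q$, which is harmless since $\Hom_{\cat C}(Q,X)$ is finite-dimensional.
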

\begin{proof}
        We have an exact sequence
        \begin{equation*}
                \Hom_{\cat{C}}(Q, X) \otimes Q \to X \to 0
        \end{equation*}
        to which we may apply the exact functor $\Hom_{\cat{C}}(X, -)$. The resulting exact sequence shows that $\Hom_{\cat{C}}(X,X) = 0$, and hence $X = 0$.
\end{proof}

\begin{proposition}
        Let $\cat{C}$ be an abelian linear category in which all hom spaces are finite-dimensional. Then the following are equivalent: 
        \begin{enumerate}
                \item $\cat{C}$ is semisimple (i.e., every object is a finite direct sum of simple objects), and there are only finitely many isomorphism classes of simple objects; 
                \item $\cat{C}$ is a  Kapranov-Voevodsky 2\_vector space; 
                \item $\cat{C} \simeq \hat{A}$, the category of finite-dimensional representations of a finite-dimensional semisimple $k$-algebra $A$;
                \item $\cat{C}$ is semisimple, and there exists a generator;
                \item every object is projective and there exists a generator;
                \item every short exact sequence in $C$ splits and there exists a generator;
        \end{enumerate}
\end{proposition}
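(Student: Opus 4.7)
The plan is to establish a cycle of implications $(1) \Rightarrow (2) \Rightarrow (3) \Rightarrow (4) \Rightarrow (5) \Leftrightarrow (6) \Rightarrow (1)$, leaning heavily on Proposition~\ref{prop:semisimpleiseveryobjprojective} (which characterizes semisimplicity as ``every object projective'' under the standing hypotheses) to move between the semisimple-flavoured conditions and the projective-flavoured ones.

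For the first three implications I would proceed as follows. For $(1) \Rightarrow (2)$: because $k$ is algebraically closed and hom spaces are finite-dimensional, Schur's lemma gives $\End_\cat{C}(S) \cong k$ for every simple $S$; writing $S_1, \ldots, S_n$ for representatives of the finitely many isomorphism classes of simples, the functor $(V_1, \ldots, V_n) \mapsto \bigoplus V_i \otimes S_i$ is an equivalence $(\vect^{\mathrm{fd}})^n \xrightarrow{\sim} \cat{C}$, so $\cat{C}$ is a KV 2-vector space. For $(2) \Rightarrow (3)$: the model $(\vect^{\mathrm{fd}})^n$ is precisely $\widehat{A}$ for $A = k^n$, a finite-dimensional semisimple $k$-algebra. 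For $(3) \Rightarrow (4)$: $\widehat{A}$ is semisimple by the classical structure theory of finite-dimensional semisimple algebras over an algebraically closed field, and $A$ itself, viewed as a free $A$-module, is a generator of $\widehat{A}$.

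For the middle block, $(4) \Rightarrow (5)$ is a direct application of Proposition~\ref{prop:semisimpleiseveryobjprojective}, which turns semisimplicity into projectivity of every object (the generator is simply retained). The equivalence $(5) \Leftrightarrow (6)$ is formal: if every object is projective then any SES $0 \to A \to B \to C \to 0$ splits, since $\id_C$ lifts through $B \twoheadrightarrow C$; conversely, if every SES splits, then for any fixed $Y$, every SES $0 \to A \to B \to Y \to 0$ splits, which is the definition of $Y$ being projective.

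The closing implication $(5) \Rightarrow (1)$ is where the generator hypothesis earns its keep. By Proposition~\ref{prop:semisimpleiseveryobjprojective}, $\cat{C}$ is semisimple, so the generator $Q$ decomposes as a finite direct sum $Q \cong \bigoplus_{i=1}^n Q_i^{m_i}$ of pairwise non-isomorphic simples (finiteness of $n$ follows from finite length, which the preceding lemma in the excerpt supplies). For any simple $S \in \cat{C}$, the generator property gives an epimorphism $Q^{\oplus r} \twoheadrightarrow S$, so $\Hom_\cat{C}(Q, S) \neq 0$, whence Schur's lemma forces $S \cong Q_j$ for some $j$. Thus $\{Q_1, \ldots, Q_n\}$ is a complete set of representatives of isomorphism classes of simples, establishing~(1). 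The only step that requires any care is this last one, and the potential pitfall is forgetting that without the generator hypothesis a semisimple category might contain infinitely many isomorphism classes of simples; that is precisely the obstacle ruled out by the presence of $Q$.
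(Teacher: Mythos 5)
Your proof is correct and follows essentially the same route as the paper: the bulk of the implications are formal or follow from Proposition~\ref{prop:semisimpleiseveryobjprojective}, and the one substantive step is showing that a semisimple category with generator $Q$ has only the finitely many simples occurring as summands of $Q$. The only (minor) difference is in that last step, where you argue directly from the epimorphism $Q^{\oplus r} \twoheadrightarrow S$ and Schur's lemma that $\Hom(Q,S) \neq 0$ forces $S$ to be a summand of $Q$, whereas the paper instead supposes $\Hom(S,Q)=0$ and invokes Lemma~\ref{lem:projectivezerotogeniszero} (using projectivity of $S$); both are sound.
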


\begin{proof}
        It is immediate (1) $\Leftrightarrow$ (2) $\Rightarrow$ (3), and that (5) $\Leftrightarrow$ (6). Proposition~\ref{prop:semisimpleiseveryobjprojective} shows that (4)  $\Leftrightarrow$ (5), and we deduce that (3) $\Rightarrow$ (4) since the generator is supplied by the algebra itself, viewed as a module. The proposition is established if we show that (4) $\Rightarrow$ (1), i.e., that if $\cat{C}$ is semisimple  and there exists a generator, then  there are only finitely many isomorphism classes of simple objects.
                
Let $Q \in \cat{C}$ be a generator. Then, since $\cat{C}$ is semisimple, $Q = \oplus_i S_i$ where the sum is finite and each $S_i$ is simple. There are finitely many simple objects which occur in this sum. Suppose that $S$ is a simple object which is not isomorphic to any of the $S_i$ factors of $Q$. Then it follows that $\Hom(S, Q) = 0$, but since $S$ is also projective, \autoref{lem:projectivezerotogeniszero} applies and shows that $S = 0$, contradicting the fact that $S$ is simple. Thus any simple object in $\cat{C}$ is isomorphic to one of the finitely many simple summands of $Q$. 
\end{proof}

\noindent
Returning to the proof of \autoref{prop:equivalentalgebra}, we have already seen that if $\cat{C}$ is a dualizable object in $\twovect$, then $\cat{C}$ has finite-dimensional hom\_vector spaces (\autoref{hom-finite}) and is abelian (\autoref{cor:Cabelian}).  \autoref{prop:equivalentalgebra} will be established by showing that there is a generator and that every short exact sequence splits. Recall that $N(k) \in D \mathop{\hat{\boxtimes}} C$ may be written as $(\oplus_i Y_i \boxtimes X_i, e)$.

\begin{lemma}   
        Every object in $\cat{C}$ is a retract of a finite direct sum of the objects $X_i$; in particular the object $Q = \oplus_i X_i$ is a generator. \label{finiteretract}
\end{lemma}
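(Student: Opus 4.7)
The plan is to apply the first zig-zag equation \eqref{zig-zag1}, namely $(E \otimes \id_{\cat{C}}) \circ (\id_{\cat{C}} \otimes N) \cong \id_{\cat{C}}$, to an arbitrary object $A \in \cat{C}$ and then trace through the description of $N(k)$ as the retract $\bigl( \bigoplus_{i=1}^n Y_i \boxtimes X_i, e \bigr)$.

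First I would compute $(\id_{\cat{C}} \otimes N)(A)$ in $\cat{C} \hatboxtimes \cat{D} \hatboxtimes \cat{C}$. Since $N$ is a linear functor $\vect^{\mathrm{fd}} \to \cat{D} \hatboxtimes \cat{C}$ and $N(k)$ is the retract of $\bigoplus_{i=1}^n Y_i \boxtimes X_i$ cut out by $e$, the object $A \boxtimes N(k)$ is the retract of $\bigoplus_{i=1}^n A \boxtimes Y_i \boxtimes X_i$ cut out by $\id_A \boxtimes e$. Applying $E \otimes \id_{\cat{C}}$ and using linearity together with $E(A \boxtimes Y_i) = E_{A, Y_i}$, which is a finite-dimensional vector space by \autoref{hom-finite}, yields a retract of $\bigoplus_{i=1}^n E_{A, Y_i} \otimes X_i$. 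This direct sum is, up to the identification $\vect^{\mathrm{fd}} \hatboxtimes \cat{C} \simeq \cat{C}$ acting as $V \boxtimes X \mapsto V \otimes X = X^{\oplus \dim V}$, a finite direct sum of copies of the $X_i$'s. By the zig-zag isomorphism, $A$ is isomorphic to this retract, so $A$ is a retract of a finite direct sum of the $X_i$, as desired.

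This immediately yields the second statement: setting $Q = \bigoplus_i X_i$, every object $A$ is a retract, hence a quotient, of a finite direct sum of copies of $Q$, so the evaluation map $\Hom_{\cat{C}}(Q, A) \otimes Q \to A$ is surjective; by the equivalence of characterizations of generators recalled just before \autoref{lem:projectivezerotogeniszero}, $Q$ is a generator. The main delicate point is justifying the computation of $E \otimes \id_{\cat{C}}$ on the factorable tensor $A \boxtimes Y_i \boxtimes X_i$; this relies on the symmetric monoidal structure of $\twovect$ and, crucially, on the finite-dimensionality of $E_{A, Y_i}$, which is precisely what dualizability buys us over an arbitrary Cauchy-complete linear category.
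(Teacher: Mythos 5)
Your proof is correct and takes essentially the same route as the paper, which disposes of the lemma in one sentence by observing that the first zig-zag equation exhibits $A$ as a retract of $\bigoplus_{i=1}^n E_{A,Y_i}\otimes X_i$; you have simply spelled out that computation in more detail. One minor point: the finite-dimensionality of $E_{A,Y_i}$ is immediate from $E$ taking values in $\Vect^{\text{fd}}$ rather than a consequence of \autoref{hom-finite} (whose own proof in fact relies on this observation).
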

\begin{proof}
This is a consequence of the first zig-zag equation \eqref{zig-zag1}, which says precisely that $A$ is a retract of $\bigoplus_{i=1}^n E_{A, Y_i} \otimes X_i$ for each object $A \in \cat{C}$.
\end{proof}

\noindent We also have a version for morphisms.

\begin{lemma}
        Each morphism $f \colon X \to Y$ in $\cat{C}$ is a retract of the morphism $\bigoplus_{i=1}^n E_{f, id_{Y_i}} \otimes \id_{X_i} $. \qed
\end{lemma}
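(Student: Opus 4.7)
The plan is to promote the object-level retract from Lemma~\ref{finiteretract} to a retract in the arrow category, by exploiting the naturality of the zig-zag isomorphism. This is exactly the kind of thing one expects to be a direct corollary of the preceding lemma, since the zig-zag isomorphism witnessing $(e \otimes \id_\cat{C}) \circ (\id_\cat{C} \otimes c) \cong \id_\cat{C}$ is an invertible 2-morphism between 1-morphisms $\cat{C} \to \cat{C}$ in $\twovect$, not merely a family of isomorphisms on individual objects.

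More concretely, first I would note that the composite 1-morphism $G := (E \otimes \id_\cat{C}) \circ (\id_\cat{C} \otimes N) : \cat{C} \to \cat{C}$ is (up to the usual Cauchy-completion identifications) the functor sending an object $A$ to $\bigoplus_{i=1}^n E_{A,Y_i} \otimes X_i$ and a morphism $f : X \to Y$ to $G(f) = \bigoplus_{i=1}^n E_{f,\id_{Y_i}} \otimes \id_{X_i}$. The first zig-zag equation supplies an invertible 2-morphism $\zeta : \id_\cat{C} \Rightarrow G$, i.e.\ a natural isomorphism whose component at $A$ is a morphism $\zeta_A : A \to \bigoplus_{i=1}^n E_{A,Y_i} \otimes X_i$ with two-sided inverse $\zeta_A^{-1}$; these are exactly the split monomorphism and its retraction produced in the proof of Lemma~\ref{finiteretract}.

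Then I would invoke naturality of $\zeta$ at $f$, which reads
\begin{equation*}
G(f) \circ \zeta_X = \zeta_Y \circ f,
\end{equation*}
equivalently
\begin{equation*}
f = \zeta_Y^{-1} \circ \Bigl(\bigoplus_{i=1}^n E_{f,\id_{Y_i}} \otimes \id_{X_i}\Bigr) \circ \zeta_X.
\end{equation*}
Combined with $\zeta_X^{-1}\circ\zeta_X=\id_X$ and $\zeta_Y^{-1}\circ\zeta_Y=\id_Y$, this exhibits $f$ as a retract of $\bigoplus_{i=1}^n E_{f,\id_{Y_i}} \otimes \id_{X_i}$ in the arrow category of~$\cat{C}$, as required.

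There is no real obstacle here; the only mildly non-trivial point is to recognize that the zig-zag 2-isomorphism in the definition of dualizability is genuinely a natural transformation of functors in $\twovect$ and therefore applies simultaneously to all morphisms $f$, not just to all objects $A$. Once that is in place the statement follows immediately, which is why the authors leave it as a \qed-lemma.
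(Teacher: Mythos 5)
Your proof is correct and is precisely the argument the paper leaves implicit behind its \qed: the retraction of Lemma~\ref{finiteretract} is the component at $A$ of a 2-morphism in $\twovect$, hence natural in $A$, and naturality at $f$ is exactly the statement that $f$ is a retract of $\bigoplus_i E_{f,\id_{Y_i}}\otimes\id_{X_i}$ in the arrow category. The only slight imprecision is that the composite $(E\otimes\id_\cat{C})\circ(\id_\cat{C}\otimes N)$ sends $A$ to the retract of $\bigoplus_i E_{A,Y_i}\otimes X_i$ cut out by the idempotent $e$ rather than to the full direct sum, but since $e$ is a fixed morphism its splitting is likewise natural in $A$, so composing the two natural retractions gives exactly what you claim.
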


\noindent Our final lemma proves \autoref{prop:equivalentalgebra}.

\begin{lemma}
\label{splittinglem}
Every short exact sequence in $\cat{C}$ splits. \end{lemma}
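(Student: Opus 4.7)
The plan is to use the zig-zag data to express the identity functor on $\cat C$ as a natural retract of a functor that manifestly sends short exact sequences to split exact ones, and then transfer the resulting splitting back to the original sequence. Concretely, let $F : \cat C \to \cat C$ be the functor defined on objects by $F(A) := \bigoplus_{i=1}^n E_{A, Y_i} \otimes X_i$, using the decomposition $N(k) = (\bigoplus_i Y_i \boxtimes X_i, e)$ of the coevaluation. The first zig-zag equation \eqref{zig-zag1}, interpreted as an isomorphism of linear functors $\cat C \to \cat C$, exhibits $\id_{\cat C}$ as a retract of $F$ via natural transformations $s : \id_{\cat C} \Rightarrow F$ and $r : F \Rightarrow \id_{\cat C}$ with $r \circ s = \id$ (the ``idempotent piece'' $e$ is absorbed into $r$ and $s$).

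Given a short exact sequence $0 \to A \xrightarrow{i} B \xrightarrow{p} C \to 0$ in $\cat C$, I would first apply $F$. For each fixed $Y_i$, \autoref{cor:SEStoSES} shows that
\[
0 \to E_{A, Y_i} \to E_{B, Y_i} \to E_{C, Y_i} \to 0
\]
is a short exact sequence of vector spaces, hence is automatically split. Tensoring with $X_i$ preserves this split exactness, and taking the finite direct sum over $i$ produces a split short exact sequence
\[
0 \to F(A) \xrightarrow{F(i)} F(B) \xrightarrow{F(p)} F(C) \to 0
\]
in $\cat C$. Let $\sigma : F(C) \to F(B)$ be a chosen splitting, i.e.\ $F(p) \circ \sigma = \id_{F(C)}$.

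Now I would define the candidate splitting of the original sequence by $t := r_B \circ \sigma \circ s_C : C \to B$. Using naturality of $r$ applied to the morphism $p : B \to C$ and then the retraction identity $r_C \circ s_C = \id_C$, one computes
\[
p \circ t \;=\; p \circ r_B \circ \sigma \circ s_C \;=\; r_C \circ F(p) \circ \sigma \circ s_C \;=\; r_C \circ s_C \;=\; \id_C,
\]
so $t$ splits $p$ and the original sequence splits, completing the lemma.

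The main subtlety I anticipate is the bookkeeping needed to turn the zig-zag isomorphism in $\twovect$ into genuine natural transformations $s, r$ on $\cat C$: one must identify the composite $(E \otimes \id_{\cat C}) \circ (\id_{\cat C} \otimes N)$ with the functor $F$ (modulo absorbing the idempotent $e$ using Cauchy completeness of $\cat C$), and verify that naturality of $r$ in the sense used above really comes from naturality of the zig-zag isomorphism. Once this identification is in place, everything else is automatic from the exactness of each $E_{-, Y_i}$ on the fixed short exact sequence and the splitting of short exact sequences of vector spaces.
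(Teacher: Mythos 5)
Your proposal is correct and follows essentially the same route as the paper: the paper likewise exhibits the short exact sequence as a (natural) retract of $0 \to \bigoplus_i E_{A,Y_i}\otimes X_i \to \bigoplus_i E_{B,Y_i}\otimes X_i \to \bigoplus_i E_{C,Y_i}\otimes X_i \to 0$, observes that this sequence splits because each constituent sequence of finite-dimensional vector spaces from \autoref{cor:SEStoSES} splits, and concludes that a retract of a split short exact sequence is split. Your explicit construction of $t = r_B\circ\sigma\circ s_C$ and the naturality check simply spell out that last step, which the paper leaves implicit.
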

\begin{proof}
By the previous lemma, the short exact sequence $0 \to A \xto f B \xto g C \to 0$ is a retract of the following sequence:
\begin{equation}
\label{eqn:SES}
0 \to \bigoplus_i E_{Y_i, A} \otimes X_i \xto{\bigoplus_i E_{\id, f} \otimes \id} \bigoplus_i E_{Y_i, B} \otimes X_i \xto{\bigoplus_i E_{\id, g} \otimes \id} \bigoplus_i E_{Y_i, C} \otimes X_i \to 0
\end{equation}
This is again a short exact sequence by \autoref{cor:SEStoSES}, and in particular it is determined by the following short exact sequences of vector spaces:
\[
0 \xrightarrow{} E_{Y_i, A} \xrightarrow{E_{\id, f}} E_{Y_i, B} \xrightarrow{E_{\id, g}} E_{Y_i, C} \xrightarrow{} 0
\]
Each of these splits, and hence the sequence~\eqref{eqn:SES} splits. Thus the original short exact sequence $0 \to A \to B \to C \to 0$ is a retract of a split short exact sequence, hence is itself split. 
\end{proof}


\bibliographystyle{plainurl}
\bibliography{references}

\end{document}